\DeclareFontFamily{OT1}{rsfs}{}
\DeclareFontShape{OT1}{rsfs}{m}{n}{ <-7> rsfs5 <7-10> rsfs7 <10-> rsfs10}{}
\DeclareMathAlphabet{\mycal}{OT1}{rsfs}{m}{n}
\def\NN{{\mathbb N}}
\def\ZZ{{\mathbb Z}}
\def\RR{{\mathbb R}}
\def\CC{{\mathbb C}}
\def\mcA{{\mycal A}}
\def\mcB{{\mycal B}}
\def\mcF{{\mycal F}}
\newtheorem{theorem}{Theorem}[section]
\newtheorem{corollary} [theorem] {Corollary}
\newtheorem{lemma} [theorem] {Lemma}
\newtheorem{remark}[theorem]{Remark}
\numberwithin{equation}{section}
\def\vareps{{\varepsilon}}
\newcounter{marnote}
\begin{document}

\title{Mountain pass for the Ginzburg-Landau energy in a strip: solitons and solitonic vortices}

\author{ Amandine Aftalion~\thanks{Universit\'e Paris-Saclay, CNRS,  Laboratoire de math\'ematiques d'Orsay, 91405, Orsay, France. Email: amandine.aftalion@math.cnrs.fr.}~ and Luc Nguyen~\thanks{Mathematical Institute and St Edmund Hall, University of Oxford, Andrew Wiles Building, Radcliffe Observatory Quarter, Woodstock Road, Oxford OX2 6GG, UK. Email: luc.nguyen@maths.ox.ac.uk}}

\date{\today}

\maketitle

\begin{abstract} 
Motivated by recent experiments, we study critical points of the Ginzburg-Landau energy in an infinite strip where phase imprinting is applied to half of the domain. We prove that there is a critical width of the cross section below which the soliton solution is a  mountain pass solution and the minimizer within the subspace of odd functions. Above the critical width, we find that the mountain pass solution is a vortex with a solitonic behaviour in the infinite direction, called a solitonic vortex. Moreover, depending on the width, we prove that the minimizer in a space with some symmetries can display one or several solitonic vortices. While the problem shares some similarities with the analysis of stability and minimality of the Ginzburg-Landau vortex of degree one in a disk or the whole plane, the change in geometry introduces subtle analytical differences. Extensions to the case of an infinite cylinder in 3D are also given.
\end{abstract}

{\bf Keywords:} dark soliton, solitonic vortex, mountain-pass solution Bose-Einstein condensate


\section{Introduction}

We study the Ginzburg-Landau energy in an infinite strip or cylinder with what is known experimentally as the phase imprinting condition. More precisely, for $d > 0$, we let $\Omega_d = \{(x,x') \in \RR^N: |x'| < d\}$ with $N = 2$ or $N = 3$, and consider the Ginzburg-Landau functional for maps $u: \Omega_d \rightarrow \RR^2 \cong \CC$:
\[
\mcF_d[u] = \int_{\Omega_d} \Big( \frac{1}{2}|\nabla u|^2 + \frac{1}{4} (1 - |u|^2)^2\Big)\,dx\,dx'.
\]
The experimental procedure of phase imprinting, that we will describe below, stipulates that  one half of the condensate has a conjugate phase with respect to the other half, which can be translated as $\bar u(-x,x')=u(x,x')$ or equivalently, if $u= (u_1,u_2)$,
\begin{equation}
u_1(-x,x') = u_1(x,x'), \quad u_2(-x,x') = - u_2(x,x').
	\label{Eq:Imprinting}
\end{equation}
We are interested in critical points of $\mcF_d$ which satisfy \eqref{Eq:Imprinting} together with the boundary condition
\begin{equation}
\begin{cases}
\partial_\nu u = 0 \text{ on } \partial \Omega_d,\\
u(x,\cdot) \rightarrow (0,\pm 1) \text{ as } x \rightarrow \pm \infty.
\end{cases}
\label{BC}
\end{equation}
The Euler-Lagrange equation for these critical points is
\begin{equation}
-\Delta u = (1 - |u|^2)u \text{ in } \Omega_d.
	\label{EL}
\end{equation} 
An important and explicit critical point of $\mcF_d$ satisfying \eqref{Eq:Imprinting}, known as the soliton solution, is given by
\begin{equation}\label{usdef}
u_s(x,x') =  \tanh\frac{x}{\sqrt{2}}\, e_2, \quad e_2 = (0,1).
\end{equation} Soliton stability or instability is a phenomenon that has been extensively studied from both mathematical and physical perspectives \cite{dauxois,manton,newell,rousset09,tao}. The mathematical results of this paper are motivated by recent experiments that we will now describe.
\subsection{Motivation} A soliton is a one-dimensional solitary wave propagating at a constant speed. The simplest situation that we will consider here is actually the case of zero speed. Solitons have been observed in a wide variety of nonlinear media such as in nonlinear optics, in crystals, and more recently in ultracold atomic vapors. Recent experiments for  both fermions and bosons with repulsive interaction, has renewed the interest in solitons \cite{dalibard2025cours}. In ultracold quantum gases, solitons offer a unique window into the interplay between interactions and coherence, enabling controlled studies of non-equilibrium dynamics, integrability, and quantum transport. Ultracold atoms can be controlled at will and therefore allow the study of collective behavior. They are described by a wave function whose phase plays the role of orientation in magnetization. One characteristic is the observation of defects. The simplest defects in cold atoms are dark solitons, which are one-dimensional solitary waves; they correspond to an envelope with a density dip with a phase shift of $\pi$ across the density minimum. 
 Other types of defects are vortices in which the order parameter rotates around a density dip  or a  vortex ring or closed loop where the density vanishes. Matter waves and dark solitons can be created in experiments using various methods and in particular phase imprinting and density engineering.
 The first gray solitons in Bose-Einstein condensates were experimentally realized by \cite{bbde1} and \cite{densch}. In both experiments, the soliton is produced by phase imprinting. Half of the condensate is illuminated by a non-resonant laser beam for a short period of time. The illuminated part acquires a phase proportional to the light intensity, which is adjusted to be $\pi$. At the boundary between the illuminated and dark areas, the strong phase gradient sets in motion a density wave corresponding to a soliton.

Several experimental groups have attempted to study solitons by imposing  phase imprinting in an elongated condensate for bosonic atoms, rubidium \cite{PRLsoli} and sodium \cite{kibble,PRLsol2} and  for fermionic atoms (lithium) \cite{heavysol,PRLrings,PRLsol}. At first, they thought they had observed solitons \cite{kibble,heavysol}. Further studies were needed to fully understand the phenomenon \cite{chevysol}: in the case of lithium, they thought it was not a soliton but a vortex ring \cite{PRLrings}, until \cite{PRLsol,LevinPRL14} argued that in fact it was a single straight vortex called solitonic vortex; in the case of sodium, it was also confirmed that it was not a soliton but a solitonic vortex \cite{PRLsol2}.
 A solitonic vortex is a vortex in a band whose transverse size is much smaller than its length and which has the same asymptotic phase profile as a soliton. In fact, the solitonic vortex is exponentially localized in the longitudinal direction on the scale of the transverse dimension. This is a remarkable property since the energy density of isolated vortices or vortex-antivortex pairs decreases algebraically, not exponentially, in the absence of boundaries \cite{BBH,betsaut2,bgs,betsaut,JR}. Mathematically, the exponential localization and phase difference between the two ends of the band result from the infinite and periodic structure of the image vortices \cite{AS-23}.

In experiments, the emergence of  the solitonic vortex may be due to the destabilization of a soliton after what is known as a snake instability. It has been observed that the soliton is only stable with a sufficiently small number of particles or in a narrow band. The mode associated with snake instability is the solitonic vortex. The snake instability metaphor has been used to describe the curvature of the soliton wavefront and the decomposition of the soliton into vortex rings. In a series of studies, Brand and Reinhardt \cite{brand}, Mu\~noz Mateo and Brand \cite{Brand-PRL14, mateo}, Komineas \cite{Komi}, Komineas et al. \cite{kopapaL,kopapawaves, kopapasol} analyze how the soliton destabilizes and bifurcates depending on the width of the band both in 2D and 3D.
In some intermediate cases, the soliton initially deforms into a vortex-antivortex pair or a 3D vortex ring. In \cite{Brand-PRL14}, they also find numerically the  $\Phi$ structure that was exhibited in \cite{zwierlein}, which appears after the vortex ring. These structures eventually decompose into a stable solitonic vortex. Numerical calculations reveal that the vortex-antivortex pair or vortex ring is unstable, but its lifetime is long enough to be observed in both simulations and experiments. 

The study of the bifurcation of the soliton in an infinite strip has been made in \cite{AGS} and \cite{AS-23}. However, some questions motivated by the experiments remain open. In particular, we would like to tackle here the issue of connecting the behaviours for small and large $d$ regimes, and to better understand the function spaces in which stability or instability occurs. Our results do not concern the time dependent problem, where much remains to be explored. For example, it would be interesting to investigate the orbital stability of the soliton in the small width regime, in the spirit of the analysis of \cite{GS15}, as well as the orbital stability of the solitonic vortex in the large width regime. The latter question would require further analysis of the solitonic vortex than what is undertaken here, e.g. in the spirit of \cite{ collot25, GPS22}.

  In this paper, motivated by these recent experiments on fermions, and experiments in progress on bosons after the thesis of Franco Rabec \cite{rabec}, we analyze the geometry of the infinite strip and cylinder, with an antisymmetric condition in the infinite direction, modelling phase imprinting. This brings new interesting mathematical questions since vortices no longer have an algebraic decay at infinity, they do not behave like $e^{i\theta}$, but like the hyperbolic tangent in the infinite direction, that is a soliton solution. The issue is to analyze the minimization or extremization of the Ginzburg-Landau energy in a suitable space and describe solitons and solitonic vortices.

 \subsection{Main results}
 
Recall that the soliton solution is called $u_s$ and given by \eqref{usdef}. A natural space on which one considers the functional $\mcF_d$ is the affine space $\mcA_d^{\rm x}$ of maps in $u_s + H^1(\Omega_d,\RR^2)$ satisfying the symmetry conditions \eqref{Eq:Imprinting}. However, there $\inf_{\mcA_d^{\rm x}} \mcF_d$ is zero and the infimum is not achieved. For example, a minimizing sequence is given by e.g. $u^{(n)}(x,y) = (\cos \chi(x/n), \sin \chi(x/n))$ for any smooth odd function $\chi \in C^\infty(\RR)$ satisfying $\chi = \frac{\pi}{2}$ in $(1,\infty)$. As is known, part of the issues relates to the fact that $\mcA_d^{\rm x}$ contains maps which do not vanishes anywhere (e.g. the maps $u^{(n)}$). As noted in Remark 2 of the paper of Mironescu \cite{mironescu} on the stability of the degree one vortex in the whole plane, if one finds a minimization space with a zero somewhere, the mathematical analysis becomes possible. A somewhat artificial way to prescribe the zero at the origin can be made by imposing imparity of solutions as in \cite{AS-23} and in some results below.
  On the other hand, without imposing the zero at the origin, we still need some kind of vanishing and    we will show that the soliton for small width, or the solitonic vortex above a critical width is a moutain pass solution for the Ginzburg-Landau problem.

\subsubsection{The case $N = 2$}

 In Aftalion and Sandier \cite{AS-23}, where only $N = 2$ is considered, the choice made to enforce a zero for $u$ was to suppose an additional symmetry, namely the oddness of $u$:
\begin{equation}
u_1(-x,-y) = - u_1(x,y), \quad u_2(-x,-y) = - u_2(x,y),
	\label{Eq:Imparity}
\end{equation} 
where we have used the variable $y$ instead of $x'$. Note that \eqref{Eq:Imparity} forces $u$ to be zero at the origin. Let $\mcA_d$ be the set of maps in $u_s + H^1(\Omega_d,\RR^2)$ satisfying the symmetry conditions \eqref{Eq:Imprinting} and \eqref{Eq:Imparity}. In \cite{AS-23}, it is showed, among other statements, that:
\begin{itemize}
\item For any $d > 0$, there is a minimizer of $\mcF_d$ in $\mcA_d$. 
\item When $d$ is sufficiently small, $u_s$ is the unique minimizer of $\mcF_d$ in $\mcA_d$.
\item When $d$ is sufficiently large, minimizers of $\mcF_d$ in $\mcA_d$ are different from $u_s$, have a zero with degree $\pm 1$ at the origin, and have energy $\pi \ln d + O(1)$ as $d \rightarrow \infty$, where the constant term is lower than that of the vortex in the whole plane. This minimizer is known as the solitonic vortex solution. The degree $-1$ solitonic vortex solution can be obtained from the degree $1$ solitonic vortex solution by the transformation $(u_1, u_2) \mapsto (-u_1, u_2)$.
\end{itemize}

Our first theorem gives a clean-cut refinement of the above result: for $d \leq \sqrt{2}\pi/2$, the soliton solution is the unique minimizer of $\mcF_d$ in $\mcA_d$, while for $d > \sqrt{2}\pi/2$, there are exactly two solitonic vortex solutions and they are the only minimizers of $\mcF_d$ in $\mcA_d$.

\begin{theorem}
\label{Thm1}
Let $N = 2$. 
\begin{enumerate}[(i)]
\item If $0 < d \leq \sqrt{2}\pi/2$, then the soliton solution $u_s$ is the unique minimizer of $\mcF_d$ in $\mcA_d$.

\item Suppose $d > \sqrt{2}\pi/2$. Then the soliton solution $u_s$ is an unstable critical point of $\mcF_d$ in $\mcA_d$ and $\mcF_d$ has exactly two minimizers $u_d^\pm =  (\pm U_d, V_d)$ in $\mcA_d$. Moreover,
\begin{align}
\begin{cases}
U_d^2 + V_d^2 < 1 \text{ in } \bar\Omega_d,\\
U_d(x,y) < 0 \text{ for } y > 0, U_d(x,y) = 0 \text{ for } y = 0, U_d(x,y) > 0 \text{ for } y < 0,\\
V_d(x,y) > 0 \text{ for } x > 0, V_d(x,y) = 0 \text{ for } x = 0, V_d(x,y) < 0 \text{ for } x < 0.
\end{cases}
\label{Eq:UVSign}
\end{align}
In particular, the origin is the unique zero of $u_d^\pm$ and $u_d^\pm$ has degree $\pm 1$ at the origin.
 
\end{enumerate}
\end{theorem}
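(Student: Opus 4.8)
The plan is to analyze the problem variationally, splitting into the subcritical regime $d \le \sqrt{2}\pi/2$ and the supercritical regime $d > \sqrt{2}\pi/2$, with the threshold arising from the spectral gap of the cross-sectional Laplacian. First I would recall from \cite{AS-23} that $\mcF_d$ admits a minimizer in $\mcA_d$, so the issue is to identify it. For part (i), the strategy is to show that $u_s$ is a strict local minimizer whose basin is in fact global. I would compute the second variation of $\mcF_d$ at $u_s$: writing $u = u_s + \varphi$ with $\varphi = (\varphi_1, \varphi_2) \in H^1(\Omega_d, \RR^2)$ satisfying \eqref{Eq:Imprinting} and \eqref{Eq:Imparity}, the Hessian quadratic form decouples into a $\varphi_2$-part, which is nonnegative because $V = \tanh(x/\sqrt 2)$ solves the scalar Allen--Cahn-type ODE and the associated one-dimensional operator $-\partial_{xx} + (3V^2-1)$ is nonnegative with kernel spanned by $V'$ (which is even, hence excluded by imparity in the $\varphi_2$-slot — I must check the parity bookkeeping carefully), and a $\varphi_1$-part governed by the operator $L\varphi_1 = -\Delta \varphi_1 + (V^2 - 1)\varphi_1 = -\Delta\varphi_1 - \mathrm{sech}^2(x/\sqrt2)\,\varphi_1$. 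The key computation is that the bottom of the spectrum of $-\partial_{xx} - \mathrm{sech}^2(x/\sqrt2)$ on the line is $-1/2$, attained at $\mathrm{sech}(x/\sqrt2)$ (a standard Pöschl--Teller fact), while $-\Delta'$ on the cross-section $\{|x'|<d\}$ with Neumann conditions has first nonzero eigenvalue $\pi^2/(4d^2)$ and the constant as ground state — but the constant mode in $\varphi_1$ is even in $x$ and odd in $x'$, which the imparity/imprinting conditions kill, forcing the cross-sectional part to cost at least $\pi^2/(4d^2)$. Hence $L \ge \pi^2/(4d^2) - 1/2 \ge 0$ exactly when $d \le \sqrt 2\pi/2$, giving nonnegativity of the full Hessian; an argument upgrading this to strict global minimality (e.g. a convexity-along-rays trick exploiting that $u_s$ depends on one variable only, or a direct comparison using that any competitor can be ``projected'' onto $u_s$ decreasing energy) then yields uniqueness.

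For part (ii), when $d > \sqrt2\pi/2$ the Hessian at $u_s$ has a negative direction given by $\mathrm{sech}(x/\sqrt2)\cos(\pi x'/(2d))$ in the $\varphi_1$-slot (which does satisfy both symmetry constraints), so $u_s$ is unstable in $\mcA_d$ and in particular not the minimizer; combined with the existence of a minimizer from \cite{AS-23}, the minimizer must be a genuine solitonic vortex. The next step is the qualitative description \eqref{Eq:UVSign}. I would first establish $|u|<1$ strictly in $\bar\Omega_d$ for any minimizer: $1-|u|^2 \ge 0$ can be shown by a truncation/comparison argument (replacing $u$ by $u/\max(|u|,1)$ does not increase energy and lands in $\mcA_d$), and then the strong maximum principle applied to $\Delta(|u|^2) = 2|\nabla u|^2 - 2|u|^2(1-|u|^2) \ge -2|u|^2(1-|u|^2)$, together with the Hopf lemma at the Neumann boundary, upgrades this to strict inequality. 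For the sign and nodal structure, write $u = (U, V)$; the imprinting condition forces $U$ even and $V$ odd in $x$, and imparity then forces $U$ odd and $V$ even in $x'$ — in particular $U \equiv 0$ on $\{x'=0\}$ and $V \equiv 0$ on $\{x=0\}$. The heart of the matter is to rule out further nodal sets: I would use a rearrangement/reflection argument showing that a minimizer may be taken with $U$ of one sign on each half $\{\pm x' > 0\}$ and $V$ of one sign on each half $\{\pm x > 0\}$ — concretely, replacing $U$ on $\{x'>0\}$ by $-|U|$ and on $\{x'<0\}$ by $|U|$ (and similarly for $V$ across $\{x=0\}$) is energy-nonincreasing and compatible with the symmetries and with $u$ being a nonconstant solution of \eqref{EL}; then the strong maximum principle applied to $U$ and $V$ separately (each solves a linear Schrödinger-type equation $-\Delta w = (1-|u|^2)w$) forbids interior or boundary zeros away from the prescribed nodal lines. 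Once the signs are fixed, $u^\pm_d = (\pm U_d, V_d)$ are the two minimizers, they differ by $(u_1,u_2)\mapsto(-u_1,u_2)$, their only common zero is the origin, and a standard degree computation on a small circle around the origin (where $U$ changes sign across $\{x'=0\}$ and $V$ across $\{x=0\}$, transversally by the maximum principle) gives winding number $\pm 1$.

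The main obstacle I anticipate is the nodal analysis in part (ii): proving that a minimizer has \emph{exactly} the nodal structure \eqref{Eq:UVSign} and no spurious zeros. The sign-fixing rearrangement must be shown to be admissible (it must not destroy the $H^1$ regularity needed, and one must argue the rearranged map is still a minimizer, hence a smooth solution, so that the strong maximum principle applies), and one must verify that the rearranged components $U, V$ are not identically zero — this uses the instability of $u_s$ from the first part to exclude $U\equiv 0$, and a separate argument (e.g. that $V\equiv 0$ would force $u$ real-valued with $|u|\le 1$ and $u\to 0$ at both ends, contradicting that $u - u_s \in H^1$ since $u_s\to(0,\pm1)$) to exclude $V \equiv 0$. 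A secondary technical point is the precise parity bookkeeping in the second-variation computation, ensuring that exactly the right modes are excluded by \eqref{Eq:Imprinting}–\eqref{Eq:Imparity} so that the threshold is \emph{sharp} at $d = \sqrt2\pi/2$ rather than merely an upper or lower bound; this requires separating variables in $\varphi_1$ as a sum over the Neumann eigenfunctions $\cos(k\pi x'/(2d))$ and treating the resulting one-dimensional operators individually.
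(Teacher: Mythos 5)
Your outline gets the threshold computation, the instability direction, and the qualitative/nodal analysis of the minimizer essentially right (your sign rearrangement on the half-strips plus the strong maximum principle is in substance Step~1 of the paper's Theorem~\ref{Thm:UV}, and your exclusion of $U\equiv 0$ via instability of $u_s$ and of $V \equiv 0$ via the condition at infinity matches the paper). But there is a genuine gap at the heart of both parts: the mechanism that converts spectral information into \emph{global} minimality and into the count of \emph{exactly two} minimizers. For part (i) you compute the second variation and then defer to ``a convexity-along-rays trick'' or a ``projection'' to upgrade to global minimality; neither works here, since $\mcF_d$ is quartic and non-convex and the Hessian at $u_s$ is only degenerate-nonnegative. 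The paper's device (Theorem~\ref{Thm:S=>M}, following the scheme of \cite{INSZ_CRAS18,INSZ-ENS}) is the exact algebraic identity \eqref{Eq:usM-0}: the energy difference equals one half of the \emph{reduced} quadratic form $Q(w)=\int(|\nabla w|^2-(1-|u_s|^2)|w|^2)$ plus the manifestly nonnegative quartic remainder $\frac14\int(2u_s\cdot w+|w|^2)^2$. So what must be shown nonnegative is $Q$, not the Hessian. For the $w_2$-component this is a different and harder statement than the one you prove: the relevant operator is $-\Delta-\mathrm{sech}^2(x/\sqrt2)$ (not $-\Delta+3\tanh^2-1$), which has bottom of spectrum $-1/2$ on unconstrained functions; its nonnegativity holds only because the symmetry forces $w_2(0,\cdot)=0$, and is proved by the Hardy-type factorization $w_2=u_{s,2}\,\zeta$, which turns $Q(w_2)$ into $\int u_{s,2}^2|\nabla\zeta|^2\geq 0$ (see \eqref{Eq:usM-1}). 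Without this identity-plus-factorization your part (i) is incomplete, and the sharpness claim at $d=\sqrt2\pi/2$ for \emph{minimality} (as opposed to stability) is unsupported.

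The same missing mechanism undermines the ``exactly two minimizers'' claim in part (ii). Your rearrangement argument produces \emph{a} minimizer $(U,V)$ with the sign structure \eqref{Eq:UVSign} and shows $(\pm U,V)$ are both minimizers, but it does not show that every minimizer is one of these two: a priori there could be several sign-structured minimizers, and a general minimizer need not coincide with its rearrangement. The paper closes this in Steps~2--3 of Theorem~\ref{Thm:UV} by establishing, via the same quartic expansion and the factorizations $w_1=U\eta$, $w_2=V\zeta$ (admissible because $w_1=0$ on $\{y=0\}$ where $U$ vanishes and $w_2=0$ on $\{x=0\}$ where $V$ vanishes, with Hopf's lemma controlling the quotients), the quantitative bound \eqref{Eq:UV-1}; equality then forces $w_1/U$ and $w_2/V$ to be constants, and the boundary conditions plus the two Euler--Lagrange equations pin down the constants. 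You need some such rigidity statement to get ``exactly two.'' Two smaller corrections: the unstable direction for $d>\sqrt2\pi/2$ must be odd in $y$ (the admissible first Neumann mode on $(-d,d)$ is $\sin(\pi y/(2d))$, not $\cos(\pi y/(2d))$, which is neither a Neumann eigenfunction nor odd); and in part (i) the kernel mode $V'$ of the Allen--Cahn operator plays no role in what is actually needed, since the issue is the reduced form $Q$, not the Hessian.
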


\begin{remark}
 Let $d > \sqrt{2}\pi/2$. As $x \rightarrow \pm\infty$, $u_d^\pm - u_s$ decays faster than any polynomial -- see Remark \ref{Rem:PD}. This is very different from the asymptotic behavior of the entire vortex solution of the Ginzurg-Landau functional.
\end{remark}

The proof uses several ideas developed in \cite{ IN-IHP24, INSZ_CRAS18,INSZ-ENS} for the analysis of local stability of the degree one vortex in a disk or a ball in arbitrary dimensions. (See also \cite{INSZ_CRAS, INSZ3} for earlier related work for the Landau-de Gennes functional.)  It relies on the following two ingredients. The first ingredient is the statement that if the soliton is stable in the first direction, then it is the unique minimizer (Theorem \ref{Thm:S=>M}). The second 
ingredient is the statement that, when $u_s$ is unstable, there exists a critical point satisfying the sign conditions on the last two lines of \eqref{Eq:UVSign}, and any such critical point is minimizing (Theorem \ref{Thm:UV}). The critical value $\sqrt{2}\pi/2$ is found as  the value above which there is a non-zero function $\varphi$ such that
$$\int_{\Omega_d} \Big(|\nabla \varphi|^2 - \mathrm{sech}^2\frac{x}{\sqrt{2}} \varphi^2\Big)\,dx\,dy \geq 0$$ (see Corollary \ref{Cor:d1}). 

We note that although the solitonic vortices $u_d^\pm$ are minimizers of $\mcF_d$ in $\mcA_d$, they are unstable critical points of $\mcF_d$ in $\mcA_d^{\rm x}$. See Remark \ref{Rem:uvUnstable}.

Our next result concerns a mountain pass characterization of the soliton solution and the solitonic vortex solutions in $\mcA_d^{\rm x}$. Fix an arbitrary decreasing odd function $\chi \in C^\infty(\RR)$ satisfying $\chi \equiv \frac{\pi}{2}$ in $[1,\infty)$, and define
\begin{equation}
\psi_n^\pm(x,\cdot) = (\pm \cos \chi(x/n), \sin \chi(x/n)).
	\label{Eq:psinDef}
\end{equation}
Then $\psi_n^\pm \in \mcA_d^{\rm x}$ and
\[
\mcF_d[\psi_n^\pm] \rightarrow 0 \text{ as } n \rightarrow \infty.
\]
Note that $\psi_n^\pm \in C^\infty(\bar\Omega_d,\mathbb{S}^1)$ and $\psi_n^+$ and $\psi_n^-$ are not homotopic as $\mathbb{S}^1$-valued maps. We suppose $n$ is sufficiently large such that
\begin{equation}
\mcF_d[\psi_n] < \begin{cases}
\mcF_d[u_s] &\text{ if } 0 < d \leq \sqrt{2}\pi/2,\\
\mcF_d[u_d^\pm] &\text{ if } d >\sqrt{2}\pi/2.
\end{cases}
\label{nCond}
\end{equation}
Define
 \begin{align}
\Gamma_d 
	&= \Big\{h \in C([-1,1],\mcA_d^{\rm x}): h(-1)  = \psi_n^-, h(1) = \psi_n^+\Big\}, \label{Eq:GamDef}\\
c_{d} 
	&= \inf_{h \in \Gamma_d} \max_{t \in [-1,1]} \mcF_d[h(t)].
\label{Eq:MPEnergy}
\end{align}

\begin{theorem} \label{Thm:MPC}
Let $N = 2$, $d > 0$ and let $\psi_n$ be given by \eqref{Eq:psinDef} with $n = n(d)$ sufficiently large such that \eqref{nCond} holds. Then the mountain pass energy $c_d$ defined by \eqref{Eq:MPEnergy} satisfies
\begin{align*}
c_d  &= \mcF_d[u_s]  \text{ if } d \leq \sqrt{2}\pi/2,\\
\frac{1}{16} \leq c_d  &= \mcF_d[u_d^\pm] <  \mcF_d[u_s] \text{ if } d > \sqrt{2}\pi/2.
\end{align*}
\end{theorem}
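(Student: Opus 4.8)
The strategy is to show that the mountain pass value $c_d$ coincides with $m_d := \inf_{\mcA_d} \mcF_d$. By Theorem~\ref{Thm1}, $m_d = \mcF_d[u_s]$ when $0 < d \le \sqrt2\pi/2$ and $m_d = \mcF_d[u_d^\pm]$ when $d > \sqrt2\pi/2$, and in the latter range $u_s \in \mcA_d$ is not a minimizer, so $m_d < \mcF_d[u_s]$; hence ``$c_d = m_d$'' delivers every assertion of the theorem except the bound $c_d \ge \tfrac1{16}$. A recurring tool is the isometric involution $\Sigma u(x,y) := (-u_1(x,-y),\, u_2(x,-y))$: it preserves $\mcF_d$ and $\mcA_d^{\rm x}$, its fixed-point set is exactly $\mcA_d$, and it exchanges the two endpoints, $\Sigma \psi_n^\mp = \psi_n^\pm$. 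One may also assume competitor maps satisfy $|u| \le 1$, since $u \mapsto u/\max(|u|,1)$ does not increase $\mcF_d$ and is continuous.

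\emph{Lower bound $c_d \ge m_d$.} To each $u \in \mcA_d^{\rm x}$ with $\mcF_d[u] < \infty$ one attaches a half-integer \emph{winding} $W(u) \in \tfrac12 + \ZZ$, recording the net phase change of $u/|u|$ from $x = -\infty$ to $x = +\infty$ along a curve joining the two ends of the strip and avoiding the zero set of $u$; it is unambiguous and locally constant on the open set of maps whose zeros, if any, all have local degree $0$, and $W(\psi_n^\pm) = \pm\tfrac12$. The crux is the identity
\[
\inf\Big\{ \mcF_d[u] : u \in \mcA_d^{\rm x},\ u \text{ has a zero of nonzero degree}\Big\} = m_d,
\]
which says precisely that $W$ is well defined and continuous on the sublevel set $\{\mcF_d < m_d\}$. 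Granting it, $\psi_n^+$ and $\psi_n^-$ lie in distinct connected components of $\{\mcF_d < m_d\}$, so every $h \in \Gamma_d$ must reach the level $m_d$, i.e. $c_d \ge m_d$.

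\emph{Upper bound $c_d \le m_d$.} It suffices to exhibit, for each $\vareps > 0$, some $h \in \Gamma_d$ with $\max_t \mcF_d[h(t)] < m_d + \vareps$. For $0 < d \le \sqrt2\pi/2$ one stays inside the finite-energy $y$-independent sector — a genuinely one-dimensional problem — and joins $\psi_n^\pm$ to $u_s$ through profiles of energy $\le \mcF_d[u_s] = m_d$, using that the scalar soliton profile is reachable ``from below'' from the low-energy $\mathbb{S}^1$-valued configurations $\psi_n^\pm$. For $d > \sqrt2\pi/2$ one instead follows the unstable manifold in $\mcA_d^{\rm x}$ of the minimizer $w := u_d^+$ of $\mcF_d$ in $\mcA_d$: by Theorem~\ref{Thm1} and the instability of $u_d^+$ in $\mcA_d^{\rm x}$, the unstable direction is $\Sigma$-odd, the descent along it strictly lowers the energy below $m_d$, and the two branches issuing from $w$ have $\Sigma$-conjugate limits; identifying these limits as lying in the $\psi_n^+$- and $\psi_n^-$-valleys (via $W$, a Palais--Smale property, and the absence of critical points with energy in $(0,m_d)$) produces the required path, concatenated through $w$, with maximum $m_d$. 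Finally, $c_d \ge \tfrac1{16}$ for $d > \sqrt2\pi/2$ follows either as a byproduct of the obstruction argument — the ``wall'' separating $\psi_n^+$ from $\psi_n^-$ has height bounded below by an absolute constant — or, once $c_d = m_d$ is known, from the monotonicity of $d \mapsto m_d = \inf_{\mcA_d}\mcF_d$ (proved by restriction of competitors to thinner strips) together with $m_{\sqrt2\pi/2} = \tfrac{2\pi}{3}$.

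\emph{Main obstacle.} I expect the essential difficulty to be the identification of the wall height with $m_d$: one must show that \emph{any} finite-energy map with an ambiguous winding — equivalently, carrying a genuine degree $\pm1$ zero — costs at least $m_d$, that this infimum is attained, and that the extremal has a single zero, necessarily at the origin, which can be symmetrized into $\mcA_d$ and compared with Theorem~\ref{Thm1}. The delicate points are making ``has a zero'' meaningful for $H^1$ (rather than continuous) maps — handled by approximation together with a degree theory in the spirit of Brezis--Nirenberg — and excluding the escape of the zero to $x = \pm\infty$. The analogous subtlety on the upper-bound side is the control of the descent from $u_d^+$ into the correct valleys, which hinges on ruling out spurious critical points of energy below $m_d$.
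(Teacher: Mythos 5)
Your proposal rests on the identity
\[
\inf\Big\{ \mcF_d[u] : u \in \mcA_d^{\rm x},\ u \text{ has a zero of nonzero degree}\Big\} = m_d,
\]
and this is false for $d > \sqrt{2}\pi/2$: the left-hand side is \emph{strictly smaller} than $m_d$. Indeed, by Remark \ref{Rem:uvUnstable} the solitonic vortex $u_d^+$ is an unstable critical point of $\mcF_d$ in $\mcA_d^{\rm x}$, with a negative direction of the form $v = \partial_y u_d^+ + sw$ (a $y$-translation corrected near the boundary). The zero of $u_d^+$ at the origin is non-degenerate ($\partial_y U_d(0,0)<0$, $\partial_x V_d(0,0)>0$ by the Hopf lemma), so for small $t\neq 0$ the perturbation $u_d^+ + tv$ is still a finite-energy map in $\mcA_d^{\rm x}$ carrying a degree-one zero near the origin, yet $\mcF_d[u_d^+ + tv] < \mcF_d[u_d^+] = m_d$. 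Hence your ``wall'' dips below the level $m_d$, the sublevel set $\{\mcF_d < m_d\}$ is not separated by the winding invariant $W$ in the way you claim, and the lower bound $c_d \ge m_d$ does not follow from this topological argument. (The conclusion $c_d \ge m_d$ is nevertheless true, but the paper obtains it by an entirely different, non-topological mechanism: along any path one finds, by the intermediate value theorem, a time $t_0$ at which the \emph{linear} constraint $\int_{\Omega_d}(1-U_d^2-V_d^2)h(t_0)_1 = 0$ holds, and then invokes the spectral inequality of Lemma \ref{Lem:EC} — that the second eigenvalue of the weighted Neumann problem with weight $1-U_d^2-V_d^2$ equals $1$ with multiplicity exactly $2$ — to conclude $\mcF_d[h(t_0)]\ge m_d$. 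That eigenvalue statement is the real content of the proof and occupies Lemmas \ref{Lem:ECS}--\ref{Lem:Sib3rd}, via Courant's nodal domain theorem; nothing in your sketch substitutes for it.)

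There are two further gaps on the upper-bound side for $d>\sqrt{2}\pi/2$. Your descent along the unstable manifold of $u_d^+$ in $\mcA_d^{\rm x}$ requires both a Palais--Smale property (which fails on the infinite strip; the paper repeatedly works around this) and ``the absence of critical points with energy in $(0,m_d)$'', which the paper explicitly states as an open problem it cannot prove. The paper's actual construction (Lemma \ref{Lem:M2}) avoids all of this: it translates the vortex in $y$ until it sits on the boundary (energy unchanged), then adds a multiple of $\sqrt{1-|u|^2}$ to make the first component positive, and finally deforms modulus and phase to reach $\psi_n^\pm$ with an arbitrarily small energy increase. Your derivation of $c_d\ge\tfrac1{16}$ from monotonicity of $m_d$ is fine in principle but is contingent on $c_d=m_d$, which your argument does not deliver; the paper proves this bound unconditionally and quantitatively in Lemma \ref{Lem:M3X}. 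The small-$d$ case of your sketch is essentially the paper's Lemma \ref{Lem:M1} and is not at issue.
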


 The proof of the upper bound for $c_d$ involves an explicit construction of a path in $\Gamma_d$ connecting $u_s$ when $d \leq \sqrt{2}\pi/2$, or $u_d^+$ when $d > \sqrt{2}\pi/2$, to $\psi_n^\pm$ such that the largest energy along that path is more or less the desired value. In both cases, we use the fact that any map with a non-negative first component can be continuously deformed to $\psi_n^+$ with essentially no increase in energy. When $d \leq \sqrt{2}\pi/2$, the soliton solution $u_s$ can be continuously deformed to a map with a non-negative first component by exploiting its instability in $\mcA_d^{\rm x}$. When $d > \sqrt{2}\pi/2$, this can be achieved simply by translating the solitonic vortex along the $y$ direction until the vortex lies on the boundary.

For the lower bound for $c_d$, in addition to the ideas from \cite{ IN-IHP24, INSZ_CRAS18,INSZ-ENS}, we further establish and make use of the following two inequalities:
\begin{itemize}
\item If $d \leq \sqrt{2}\pi/2$ and if $\psi \in H^1(\Omega_d)$ satisfies $\int_{-d}^d \psi(0,y)\,dy = 0$, then
\begin{equation}
\int_{\Omega_d} \big[|\nabla \psi|^2 - (1 - |u_s|^2) \psi^2\big]\,dx\,dy \geq 0;
	\label{Eq:MPI-1}
\end{equation}
\item If $d > \sqrt{2}\pi/2$ and if $\psi \in H^1(\Omega_d)$ satisfies $\int_{\Omega_d} (1 - U_d^2 - V_d^2) \psi\,dx\,dy = 0$, then
\begin{equation}
\int_{\Omega_d} \big[|\nabla \psi|^2 - (1 - U_d^2 - V_d^2) \psi^2\big]\,dx\,dy \geq 0.
	\label{Eq:MPI-2}
\end{equation}
\end{itemize}
The proof of \eqref{Eq:MPI-1} can be done by a routine Fourier decomposition in the $y$-direction. The proof of \eqref{Eq:MPI-2} is more delicate. We first show that it holds for $d$ close to $\sqrt{2}\pi/2$ (Lemma \ref{Lem:ECS-Low}). We then argue by contradiction: If it does not hold for all $d > \sqrt{2}\pi/2$, then we deduce that the second eigenvalue of a certain weighted Neumann eigenvalue problem on $\Omega_d$ has multiplicity three and an associated eigenfunction has a zero set which is a closed simple curve which meet the boundary $\partial \Omega_d$ exactly and non-tangentially at $(0,d)$ (Lemma \ref{Lem:ECS-d*}). We then show that the even extension of this eigenfunction across $y = d$ is the third eigenfunction a different but related weighted Neumann eigenvalue problem on the strip $\{0 < y < 2d\}$ (Lemma \ref{Lem:Sib3rd}). It turns out that, in the new domain, this third eigenfunction has four nodal domains, contradicting Courant's nodal domain theorem. Throughout the argument, the fact that $1 - U_d^2 - V_d^2$ decays faster than the usual Hardy weight provides important compactness.

We note that there are other critical points of $\mcF_d$ in $\mcA_d^{\rm x}$ when $d$ is sufficiently large. In \cite{AGS}, under solely the phase imprinting condition \eqref{Eq:Imprinting}, Aftalion, Gravejat and Sandier showed that, as the width $d$ enlarges and for every $k \in \NN^*$, there exist two branches $u_{d,k}^\pm$ of critical point of $\mcF_d$ in $\mcA_d^{\rm x}$ bifurcating from the soliton solution at the bifurcating points $d_k = \sqrt{2}\pi k/2$. These two branches  can be obtained from one another by a global switch of sign in the first component and have $k$ vortices lying on the $y$-axis with alternate degrees $\pm 1$. Moreover, for $d$ close to $d_k$, $u_{d,k}^\pm$ and $u_s$ are the only critical points of $\mcF_d$ near $u_s$. 
 We will prove below, in Theorem \ref{Thm2}, that $u_{d,k}^\pm$ exist for all $d > \sqrt{2}\pi k/2$ and are minimizers of $\mcF_d$ in a suitable space with symmetries on strips of size $d/k$. We remark that, for $k = 2$, these dipole solutions have a vortex at $y = d/2$ and an anti-vortex at $y = -d/2$, have some symmetry with respect to $y=0$ and they can be considered as the 2D analogue of the previously mentioned 3D vortex ring solution.

\begin{remark}\label{Rem:Fdk}
Observe that, for $N = 2$, $k > 1$ and $d > \sqrt{2}\pi k/2$,
\[
\mcF_d[u_{d,k}^\pm] > \mcF_d[u_d^\pm].
\]
In particular, the dipole solution in two dimensions, when it exists, has higher energy than the solitonic vortex solutions.
\end{remark}
For odd $k$, the remark follows from the fact that $u_d^\pm$ are the unique minimizers of $\mcF_d$ in $\mcA_d$, therefore $u_{d,k}^\pm$ have bigger energy. For $k=2$, by considering a periodic extension of $u_{d,2}^\pm$ using the Neumann boundary condition, and then translating this solution to have a vortex at the origin, yields a test function in $\mcA_d$. For even $k$, we refer to Subsection \ref{SSec2.2}.

 We believe that $\mcF_d$ has no critical point in $\mcA_d^{\rm x}$ which has energy less than $\mcF_d[u_s]$ when $d \leq \sqrt{2}\pi/2$, or $\mcF_d[u_d^\pm]$ when $d > \sqrt{2}\pi/2$, but we do not have a proof of this statement.

\subsubsection{The case $N = 3$}

Theorem \ref{Thm1} has a direct analogue in three dimensions. To catch the three dimensional solitonic vortex solutions, with the vortex line being the intersection of the $z$-axis with $\Omega_d$, we consider the following symmetry ansatz:
\begin{equation}
\begin{cases}
u_1(x,y,-z)  = u_1(x,y,z) = - u_1(x,-y,z) ,\\
u_2(x,y,-z) = u_2(x, y, z) = u_2(x, - y, z),
\end{cases}
	\label{Eq:3DSVSym}
\end{equation}
which replaces \eqref{Eq:Imparity}. Here we have written $x' = (y,z)$. 

We continue to use $\mcA_{d}$ to denote the set of maps $u$ in $u_s + H^1(\Omega_d,\RR^2)$ satisfying the phase imprinting condition \eqref{Eq:Imprinting} as well as \eqref{Eq:3DSVSym}. Let $j_{1,1}'$ be the smallest positive zero of $J_1'$, the derivative of the Bessel function $J_1$ of the first kind and of order one.

\begin{theorem}
\label{Thm1Ext2}
Let $N = 3$.
\begin{enumerate}[(i)]
\item If $0 < d \leq \sqrt{2}j_{1,1}'$, then the soliton solution $u_s$ is the unique minimizer of $\mcF_d$ in $\mcA_d$.

\item If $d > \sqrt{2}j_{1,1}'$, then the soliton solution $u_s$ is an unstable critical point of $\mcF_d$ in $\mcA_d$ and $\mcF_d$ has exactly two minimizers in $\mcA_d$ of the form $(\pm u_1, u_2)$ with $u_1 \neq 0$. Moreover, the zero set of these minimizers is a line segment, namely the intersection of $z$-axis with $\Omega_d$.
\end{enumerate}
\end{theorem}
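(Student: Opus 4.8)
The plan is to replay, essentially verbatim, the proof of Theorem \ref{Thm1}, the only two places where the dimension enters being (a) the cross-sectional eigenvalue problem that fixes the stability threshold, and (b) the compactness and uniqueness mechanisms behind Theorems \ref{Thm:S=>M} and \ref{Thm:UV}. For (a): the second variation of $\mcF_d$ at $u_s$ along a first-component perturbation $\varphi\,e_1$ is, exactly as for $N = 2$,
\[
Q_d[\varphi] = \int_{\Omega_d}\Big(|\nabla\varphi|^2 - (1 - |u_s|^2)\varphi^2\Big) = \int_{\Omega_d}\Big(|\nabla\varphi|^2 - \mathrm{sech}^2\tfrac{x}{\sqrt2}\,\varphi^2\Big),
\]
with $\varphi$ ranging over the $H^1(\Omega_d)$-functions inheriting the symmetries of $u_1$: even in $x$, even in $z$, odd in $y$. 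Expanding $\varphi$ in the Neumann eigenfunctions of $-\Delta$ on the disk $D_d = \{(y,z): y^2 + z^2 < d^2\}$, which in polar coordinates $(y,z) = (r\cos\theta, r\sin\theta)$ are $J_m(j_{m,k}'r/d)\cos m\theta$ and $J_m(j_{m,k}'r/d)\sin m\theta$ with eigenvalue $(j_{m,k}'/d)^2$, the parity requirements ``even in $z$, odd in $y$'' retain precisely the modes $J_m(j_{m,k}'r/d)\cos m\theta$ with $m$ odd, whose lowest eigenvalue is $\mu_1(d) = (j_{1,1}'/d)^2$. Since $-\partial_{xx} - \mathrm{sech}^2(x/\sqrt2)$ has the even, positive ground state $\mathrm{sech}(x/\sqrt2)$ with eigenvalue $-\tfrac12$, separation of variables gives $Q_d[\varphi] \geq (\mu_1(d) - \tfrac12)\,\|\varphi\|_{L^2}^2$ on the admissible class, with equality at $\varphi_*(x,y,z) := \mathrm{sech}(x/\sqrt2)\,J_1(j_{1,1}'r/d)\cos\theta$, which is admissible and satisfies $u_s + t(\varphi_*,0) \in \mcA_d$. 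Hence $Q_d \geq 0$ on the admissible class if and only if $\mu_1(d) \geq \tfrac12$, i.e. $d \leq \sqrt2\,j_{1,1}'$, and $Q_d[\varphi_*] < 0$ when $d > \sqrt2\,j_{1,1}'$, so $u_s$ is unstable in $\mcA_d$ above that width. This spectral computation is the sole appearance of $j_{1,1}'$.

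Granting that Theorems \ref{Thm:S=>M} and \ref{Thm:UV}, whose proofs are dimension-agnostic, apply to $\Omega_d$ with $N = 3$ and the symmetry class $\mcA_d$ built from \eqref{Eq:Imprinting} and \eqref{Eq:3DSVSym}, the conclusions follow. Part (i): for $d \leq \sqrt2\,j_{1,1}'$ the soliton is stable in the first direction by the above, so Theorem \ref{Thm:S=>M} gives that $u_s$ is the unique minimizer of $\mcF_d$ in $\mcA_d$. Part (ii): for $d > \sqrt2\,j_{1,1}'$ instability of $u_s$ lets Theorem \ref{Thm:UV} produce a minimizing critical point $(U_d, V_d)$ with $U_d \leq 0$ for $y > 0$, $V_d \geq 0$ for $x > 0$ (and the mirror signs elsewhere forced by oddness of $U_d$ in $y$ and of $V_d$ in $x$), any such critical point being a minimizer; the substitution $(u_1, u_2) \mapsto (-u_1, u_2)$ together with the counting argument of Theorem \ref{Thm1}(ii) yields exactly two minimizers, both with $u_1 \not\equiv 0$ (were $u_1 \equiv 0$, the map would minimize $\mcF_d$ among maps in $\mcA_d$ with vanishing first component, but by the one-dimensional reduction in the second component that minimum is $\mcF_d[u_s]$, contradicting $\inf_{\mcA_d}\mcF_d < \mcF_d[u_s]$). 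Finally, a strong maximum principle and Hopf boundary lemma applied to the linear elliptic equations satisfied by $U_d$ on $\Omega_d \cap \{y > 0\}$ and by $V_d$ on $\Omega_d \cap \{x > 0\}$ (using $U_d \not\equiv 0$ and $V_d \not\equiv 0$) sharpen the weak signs to strict ones, whence $\{U_d = 0\} \cap \Omega_d = \{y = 0\} \cap \Omega_d$ and $\{V_d = 0\} \cap \Omega_d = \{x = 0\} \cap \Omega_d$; intersecting, the zero set of $(U_d, V_d)$ is $\{x = y = 0\} \cap \Omega_d$, the claimed segment of the $z$-axis. (Every element of $\mcA_d$ vanishes there already, since $u_1$ is odd in $y$ and $u_2$ is odd in $x$; the substance of (ii) is the absence of further zeros.)

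I expect the load-bearing point to be item (b): checking that the hypotheses of Theorems \ref{Thm:S=>M} and \ref{Thm:UV} genuinely hold in three dimensions, in particular that minimizers of $\mcF_d$ in $\mcA_d$ exist --- a concentration--compactness matter on the unbounded cylinder $\Omega_d \cong \RR \times D_d$ --- and that the sign-definite uniqueness step transfers. I do not foresee a real obstruction: the compactness is driven by the exponential decay of $1 - |u_s|^2 = \mathrm{sech}^2(x/\sqrt2)$ in the unbounded variable $x$, which is independent of the cross-sectional dimension, and all energy terms remain subcritical for $H^1(\Omega_d)$ when $N = 3$. The only real care is the systematic bookkeeping of the symmetry group \eqref{Eq:3DSVSym}: that the symmetry-restricted $L^2(D_d)$ is indeed spanned by the cross-sectional modes used above, and that every test map and deformation continues to respect \eqref{Eq:Imprinting} and \eqref{Eq:3DSVSym}.
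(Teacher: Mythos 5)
Your proposal is correct and follows essentially the same route as the paper: the paper reduces Theorem \ref{Thm1Ext2} (as the case $\ell=1$ of a more general statement) to the stability-in-the-first-direction criterion of Theorem \ref{Thm:S=>M}, identifies the threshold $\sqrt{2}j_{1,1}'$ via exactly the cross-sectional Poincar\'e inequality with constant $(j_{1,1}')^2$ that your Bessel-mode decomposition proves, and for $d>\sqrt{2}j_{1,1}'$ reruns Steps 1--3 of Theorem \ref{Thm:UV} on the fundamental domain $(0,\infty)\times dS$ to get existence, the sign structure, the two-minimizer count, and the identification of the zero set with the $z$-axis segment. Your explicit test function $\mathrm{sech}(x/\sqrt{2})\,J_1(j_{1,1}'r/d)\cos\theta$ and the $u_1\not\equiv 0$ argument by one-dimensional reduction are precisely the paper's instability and non-degeneracy steps.
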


In fact, in the body of the paper, we construct critical points whose zero set is any $\ell$ vortex lines on the $(y,z)$-plane which intersect equiangularly at the origin (see Theorem \ref{Thm1Ext2Gen}). These are recognized as the solutions constructed numerically in Mu\~noz Mateo and Brand \cite{Brand-PRL14} with quantum numbers $(0,\ell)$. They are also known as the solitonic vortex solutions when $\ell = 1$ and spoke wheel solutions when $\ell > 1$. As a consequence of our result, for any given $\ell$, the numerically constructed solutions in \cite{Brand-PRL14} with quantum numbers $(p,\ell)$ will have strictly larger energy when $p > 0$.

Our next result is a generalization of Theorem \ref{Thm:MPC} in 3D. 

\begin{theorem} \label{Thm:MPC-3D}
Let $N = 3$, $d > 0$ and let $\psi_n$ be given by \eqref{Eq:psinDef} with $n = n(d)$ sufficiently large so that $\mcF_d[\psi_n] < \mcF_d[u_s]$. We have the following estimates for the mountain pass energy $c_d$ defined in \eqref{Eq:MPEnergy}:
\begin{align}
c_d  &= \mcF_d[u_s]  \text{ if } d \leq \sqrt{2} j'_{1,1},\label{Eq:3DMP-1}\\
0 < \underline{c} \leq c_d  &<  \mcF_d[u_s] \text{ if } d > \sqrt{2} j'_{1,1},\label{Eq:3DMP-2}
\end{align}
where $\underline{c}$ is a universal positive constant.
\end{theorem}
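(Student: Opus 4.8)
The plan is to adapt the proof of Theorem~\ref{Thm:MPC}: establish the upper and lower bounds for $c_d$ separately, the dichotomy at $d=\sqrt 2\,j_{1,1}'$ being governed by the three‑dimensional analogue of \eqref{Eq:MPI-1}. This analogue asserts that if $d\le\sqrt 2\,j_{1,1}'$ and $\psi\in H^1(\Omega_d)$ satisfies $\int_{\{|x'|<d\}}\psi(0,x')\,dx'=0$, then $\int_{\Omega_d}\big[|\nabla\psi|^2-\mathrm{sech}^2\tfrac{x}{\sqrt2}\,\psi^2\big]\ge0$. As in two dimensions it follows by expanding $\psi$ in Neumann eigenfunctions of the disc $\{|x'|<d\}$, whose smallest non‑zero eigenvalue is $(j_{1,1}'/d)^2$, and using that $-\tfrac{d^2}{dx^2}-\mathrm{sech}^2\tfrac{x}{\sqrt2}$ has ground‑state energy $-\tfrac12$ with an even eigenfunction and no odd bound state; the non‑zero modes then contribute non‑negatively exactly when $(j_{1,1}'/d)^2\ge\tfrac12$, i.e.\ $d\le\sqrt 2\,j_{1,1}'$, while the zero mode is controlled by the constraint.

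\emph{Upper bound.} We invoke the three‑dimensional form of the deformation lemma from the proof of Theorem~\ref{Thm:MPC}: any $u\in\mcA_d^{\rm x}$ with $u_1\ge0$ can be deformed inside $\mcA_d^{\rm x}$ to $\psi_n^+$ without increasing $\mcF_d$, and symmetrically with signs reversed. For every $d$ the soliton $u_s$ is an unstable critical point of $\mcF_d$ in $\mcA_d^{\rm x}$: along $t\mapsto u_s+t(\mathrm{sech}\tfrac{x}{\sqrt2},0)=(t\,\mathrm{sech}\tfrac{x}{\sqrt2},\tanh\tfrac{x}{\sqrt2})$ a direct computation shows $\mcF_d$ stays $\le\mcF_d[u_s]$, with equality only at $t=0$, and $u_1$ has the sign of $t$. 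Joining $\psi_n^-$ to $\psi_n^+$ through $u_s$ along this line and the deformation lemma gives a path in $\Gamma_d$ of maximal energy $\mcF_d[u_s]$, so $c_d\le\mcF_d[u_s]$ for all $d$. When $d>\sqrt 2\,j_{1,1}'$ there is a second $\mcA_d^{\rm x}$‑admissible destabilising direction at $u_s$, namely $\big((\mathrm{sech}\tfrac{x}{\sqrt2})\,\phi(x'),0\big)$ with $\phi$ a first non‑trivial Neumann eigenfunction of $\{|x'|<d\}$, on which the Hessian of $\mcF_d$ at $u_s$ equals $\big((j_{1,1}'/d)^2-\tfrac12\big)\int_{\RR}\mathrm{sech}^2\tfrac{x}{\sqrt2}\,dx\,\int_{\{|x'|<d\}}\phi^2\,dx'<0$; it is $L^2$‑orthogonal to $(\mathrm{sech}\tfrac{x}{\sqrt2},0)$, so one may route a path around $u_s$ inside the two‑plane they span, staying strictly below $\mcF_d[u_s]$ and then descending to $\psi_n^\pm$. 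Hence $c_d<\mcF_d[u_s]$ in this regime. (Alternatively, deforming the three‑dimensional solitonic vortex $u_d^+$ of Theorem~\ref{Thm1Ext2} toward $\partial\Omega_d$ gives the sharper, but unclaimed, bound $c_d\le\mcF_d[u_d^\pm]<\mcF_d[u_s]$, using that $u_s$ is not minimising in $\mcA_d$.)

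\emph{Lower bound for $d\le\sqrt 2\,j_{1,1}'$.} Put $\mathcal{W}=\{u\in\mcA_d^{\rm x}:\int_{\{|x'|<d\}}u_1(0,x')\,dx'=0\}$. Since $\int_{\{|x'|<d\}}(\psi_n^\pm)_1(0,x')\,dx'=\pm\pi d^2$ and $t\mapsto\int_{\{|x'|<d\}}h(t)_1(0,x')\,dx'$ is continuous along any $h\in\Gamma_d$ (the trace $v\mapsto v(0,\cdot)$ being bounded from $H^1(\Omega_d)$ to $L^1(\{|x'|<d\})$), every path in $\Gamma_d$ meets $\mathcal{W}$, and it suffices to prove $\mcF_d\ge\mcF_d[u_s]$ on $\mathcal{W}$. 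With $w=u-u_s$, integration by parts using \eqref{EL} and \eqref{BC} (boundary terms vanishing by the exponential decay of $\nabla u_s$) yields the identity
\[
\mcF_d[u]-\mcF_d[u_s]=\tfrac12\int_{\Omega_d}\big[|\nabla w|^2-(1-|u_s|^2)\,|w|^2\big]+\int_{\Omega_d}\big(u_s\cdot w+\tfrac12|w|^2\big)^2 .
\]
The second integral is $\ge0$. In the first, $w=(u_1,\,u_2-\tanh\tfrac{x}{\sqrt2})$ and $1-|u_s|^2=\mathrm{sech}^2\tfrac{x}{\sqrt2}$: the first component of $w$ obeys $\int_{\{|x'|<d\}}u_1(0,x')\,dx'=0$ since $u\in\mathcal{W}$, and the second vanishes identically on $\{x=0\}$ because $u_2$ and $\tanh\tfrac{x}{\sqrt2}$ are both odd in $x$ by \eqref{Eq:Imprinting}. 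Applying the spectral inequality above to each component of $w$ makes the first integral $\ge0$. Thus $\mcF_d[u]\ge\mcF_d[u_s]$ on $\mathcal{W}$, and with the upper bound $c_d=\mcF_d[u_s]$.

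\emph{Lower bound for $d>\sqrt 2\,j_{1,1}'$.} Here the identity with $u_d^+$ in place of $u_s$ would require the analogue of \eqref{Eq:MPI-2}, whose two‑dimensional proof uses a nodal‑domain analysis (the nodal set being a simple closed curve meeting $\partial\Omega_d$ non‑tangentially at one point) that has no counterpart for the disc cross‑section; instead we use a clearing‑out estimate. Let $\mathcal{V}=\{u\in\mcA_d^{\rm x}:u(p)=0\text{ for some }p\in\bar\Omega_d\}$. The standard Ginzburg--Landau clearing‑out ($\varepsilon$‑regularity) lemma, together with the uniform interior ball condition for $\Omega_d$ (boundary curvature $1/d<1/(\sqrt 2\,j_{1,1}')$), gives $\inf_{\mathcal{V}}\mcF_d\ge\underline c$ for a universal $\underline c>0$. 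On the other hand $\psi_n^\pm\notin\mathcal{V}$ and every path in $\Gamma_d$ meets $\mathcal{V}$: otherwise, approximating $h$ in $C([-1,1];H^1(\Omega_d))$ by a path of smooth maps — which alters $\max_t\mcF_d$ by as little as desired and retains the smooth endpoints $\psi_n^\pm$ — would produce a homotopy of smooth zero‑free maps from $\psi_n^-$ to $\psi_n^+$ fixing the asymptotic values $(0,\pm1)$; restricting to the axis $\{x'=0\}$ and lifting the phase, the change of argument between $x=-\infty$ and $x=+\infty$ is a homotopy invariant differing by $2\pi$ on $\psi_n^+$ and $\psi_n^-$, contradicting that $\psi_n^\pm$ are not homotopic as $\mathbb{S}^1$‑valued maps. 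Hence $c_d\ge\underline c$, and with the upper bound $0<\underline c\le c_d<\mcF_d[u_s]$. The main obstacle is this topological step: making the obstruction rigorous for merely $H^1$ competitors (the smoothing reduction that legitimises restriction to a line and the use of a winding number) and securing the $d$‑independence of $\underline c$. It is also here that the result is weaker than Theorem~\ref{Thm:MPC}, since the delicate inequality \eqref{Eq:MPI-2} — which in two dimensions would upgrade the conclusion to $c_d=\mcF_d[u_d^\pm]$ — is not available in three dimensions.
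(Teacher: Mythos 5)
Your upper bounds and your lower bound in the regime $d \leq \sqrt{2}j_{1,1}'$ follow the paper's route essentially verbatim: the path through $u_s$ along the destabilising direction $(\mathrm{sech}\frac{x}{\sqrt{2}},0)$, the two-parameter family $u_s + (a_1 A + a_2 A\phi(x'/d),0)$ routed around $u_s$ on a small circle when $d > \sqrt{2}j_{1,1}'$, and the Neumann eigenfunction expansion on the disc combined with \eqref{Eq:usM-1} for the zero mode (your parenthetical spectral justification of the zero-mode step is looser than the factorisation $a_0 = \tanh\frac{x}{\sqrt{2}}\,\zeta$ that actually proves it, but the inequality you invoke is the correct one). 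The problem is the lower bound $c_d \geq \underline{c}$ for $d > \sqrt{2}j_{1,1}'$, where your clearing-out argument fails at its first step: the claim $\inf_{\mathcal{V}}\mcF_d \geq \underline{c} > 0$ is false. A point has zero $H^1$-capacity in dimension $\geq 2$, so for instance $v_\delta = (1-\eta_\delta)\psi_n^+$ with $\eta_\delta(x) = \eta((x-p)/\delta)$ a standard bump equal to $1$ at $p$ vanishes at $p$ yet satisfies $\mcF_d[v_\delta] \leq \mcF_d[\psi_n^+] + C\delta \rightarrow \mcF_d[\psi_n^+]$, which is as small as you like; hence $\inf_{\mathcal{V}}\mcF_d = 0$. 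The $\varepsilon$-regularity/clearing-out lemma is a statement about \emph{solutions} of \eqref{EL} (or minimizers), not about arbitrary $H^1$ competitors on a path, and no condition on the path forces $h(t)$ to solve the equation. The accompanying topological step (winding along the axis for merely-$H^1$, non-continuous maps whose boundary values at $x = \pm\infty$ are only prescribed modulo $H^1$ perturbations) is a second unresolved difficulty that you yourself flag, but the capacity issue alone sinks the argument.

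The paper's Step 4 avoids all of this by reusing exactly the machinery you already set up for the small-$d$ case: by the intermediate value theorem there is $t_0$ with $\int_{\{|x'|<d\}} h(t_0)_1(0,x')\,dx' = 0$, and then the argument of Lemma \ref{Lem:M3X} applies --- the Poincar\'e inequality on the cross-section plus the fundamental theorem of calculus in $x$ show that $|h(t_0)|\le \tfrac12$ on a cross-sectional set of measure comparable to $d^{N-1}$ at $x=0$ whenever $\|\nabla h(t_0)\|_{L^2}^2$ is small, and the Modica--Mortola bound $\frac{1}{2}(\partial_x|u|)^2 + \frac{1}{4}(1-|u|^2)^2 \geq \frac{1}{\sqrt{2}}(1-|u|^2)\partial_x|u|$ integrated in $x$ over that set yields a lower bound proportional to $d^{N-1}$, hence a universal constant since $d > \sqrt{2}j_{1,1}'$. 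This is purely variational, needs no regularity of $h(t_0)$ and no homotopy invariant, and is the step you should substitute for the clearing-out paragraph.
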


\begin{remark}
For $d$ close to but larger than $\sqrt{2} j'_{1,1}$, it can be shown using the same argument used in the 2D setting that the mountain pass energy $c_d$ is at least the energy of the solitonic vortex solutions. Whether $c_d$ is exactly the energy of the solitonic vortex solutions requires further work, which we have not pursued.
\end{remark}

We point out that, unlike the two-dimensional setting, Theorem \ref{Thm1Ext2} and its proof do not provide any insight on the existence of the previously mentioned vortex ring solution. (This corresponds to the numerically constructed solution in \cite{Brand-PRL14} with quantum numbers $(1,0)$.) We have attempted to prove its existence using a mountain pass argument based on energy estimates like those in Theorem \ref{Thm:MPC-3D}, but fell just short of this goal. Let us describe this in more details. Let $\mcA_d^{\rm rad}$ denote the space of maps $u$ in $\mcA_d^{\rm x}$ which satisfy 
\begin{equation}
u(x,x') = u(x,Rx') \text{ for any rotation $R$ on the $(y,z)$-plane},
	\label{Eq:RadSym}
\end{equation}
and define
 \begin{align}
\Gamma_d^{\rm rad}
	&= \Big\{h \in C([-1,1],\mcA_d^{\rm rad}): h(-1)  = \psi_n^-, h(1) = \psi_n^+\Big\}, \label{Eq:GamRadDef}\\
c_{d}^{\rm rad}
	&= \inf_{h \in \Gamma_d^{\rm rad}} \max_{t \in [-1,1]} \mcF_d[h(t)].
\label{Eq:MPRadEnergy}
\end{align}
Arguing as in the proof of Theorem \ref{Thm:MPC-3D}, we get the estimates
\begin{align}
c_d^{\rm rad}  &= \mcF_d[u_s]  \text{ if } d \leq \sqrt{2} j'_{0,1},\label{Eq:3DMPRad-1}\\
0 < \underline{c}^{\rm rad} \leq c_d^{\rm rad}  &<  \mcF_d[u_s] \text{ if } d > \sqrt{2} j'_{0,1},\label{Eq:3DMPRad-2}
\end{align}
where $j_{0,1}'$ be the smallest positive zero of $J_0'$, the derivative of the Bessel function $J_0$ of the first kind and of order zero. However, due to the failure of the Palais-Smale condition, we do not know if $c_d^{\rm rad}$ is a critical value for $\mcF_d$ in $\mcA_d^{\rm rad}$ for $d > \sqrt{2} j'_{0,1}$. In fact, applying the mountain pass theorem on finite cylinders and sending the length of the cylinders to infinity, we obtain a candidate solution:

\begin{theorem}\label{Thm:RingCand}
Let $N = 3$ and $d > \sqrt{2}j'_{0,1}$. There exists a pair of finite energy solutions $\hat u_d^\pm = (\pm\hat U_d, \hat V_d) \in H^1_{\rm loc}(\Omega_d,\RR^2)$  to the problem
\begin{equation}
\begin{cases}
-\Delta \hat u_d^\pm = (1 - |\hat u_d^\pm|^2) \hat u_d^\pm \text{ in } \Omega_d,\\
\partial_n \hat u_d^\pm = 0 \text{ on } \partial\Omega_d,
\end{cases}
	\label{Eq:CritNoBC}
\end{equation}
which satisfies the phase imprinting condition \eqref{Eq:Imprinting}, the symmetry condition \eqref{Eq:RadSym}, and the following properties.
\begin{enumerate}[(i)]
\item $\hat U_d$ changes sign on the cross-section $\{x = 0\} \cap \Omega_d$. In particular, $\hat u_d^\pm$ are different from the soliton solution and the zero set of $\hat u_d^\pm$ is the union of a (possibly infinite) number of circles on the cross-section $\{x = 0\} \cap \Omega_d$.
\item It holds that
\begin{equation}
\begin{cases}
\hat U_d^2 + \hat V_d^2 < 1 \text{ in } \bar \Omega_d,\\
\hat V_d > 0 \text{ for } x > 0, \hat V_d = 0 \text{ for } x = 0, \text{ and } \hat V_d < 0 \text{ for } x < 0.
\end{cases}
	\label{Eq:hatUVSign}
\end{equation}
\item As $x \rightarrow \pm\infty$, $\hat u_d(x, \cdot) \rightarrow (a_d,\pm b_d)$ for some constant vector $(a_d,b_d) \in \mathbb{S}^1$ with $b_d \in (0,1]$.
\end{enumerate}
\end{theorem}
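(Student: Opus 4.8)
The plan is to obtain $\hat u_d^\pm$ as a limit of mountain pass solutions on finite cylinders. First I would fix $L > 0$ large and work on the bounded cylinder $\Omega_{d,L} = \Omega_d \cap \{|x| < L\}$, imposing the phase imprinting symmetry \eqref{Eq:Imprinting}, the radial symmetry \eqref{Eq:RadSym}, Neumann conditions on the lateral boundary $\{|x'| = d\}$, and on the two end caps $\{x = \pm L\}$ the fixed Dirichlet data $u = (\pm\sin\chi(\cdot), \cdot)$ coming from the truncated test maps $\psi_n^\pm$ — more precisely a profile that matches $\psi_n^\pm$ near the ends so that the finite-cylinder functional $\mcF_{d,L}$ is coercive on the corresponding affine space $\mcA_{d,L}^{\rm rad}$. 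On such a bounded domain the Palais–Smale condition holds, so the mountain pass theorem applied to the family of paths joining the end configurations of $\psi_n^-$ and $\psi_n^+$ produces a critical point $u_{d,L}^+$ of $\mcF_{d,L}$ at the mountain pass level $c_{d,L}^{\rm rad}$, which by the lower bound construction behind \eqref{Eq:3DMPRad-2} stays bounded below by a positive constant $\underline c^{\rm rad}$ uniformly in $L$, and is bounded above by a constant independent of $L$ (using the explicit competitor path, whose energy is essentially that of a solitonic-vortex-type profile built from $\psi_n$ and the soliton).

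Next I would establish the sign and symmetry structure. The radial symmetry \eqref{Eq:RadSym} is preserved because the minimax is run inside $\mcA_{d,L}^{\rm rad}$. To get the sign conditions \eqref{Eq:hatUVSign}, I would argue as in the analysis behind Theorem \ref{Thm:UV}: replacing $(U,V)$ by $(|U| \,\mathrm{sgn}\,\text{(appropriate)}, |V|\,\mathrm{sgn}\,x)$ or, more carefully, replacing $V$ by a function that has the sign of $x$ and noting this does not increase energy while still producing a path of the same type, so the mountain pass solution can be taken with $\hat V_d$ having the sign of $x$ (and vanishing on $\{x=0\}$ by the phase imprinting symmetry), and with $|\hat u_d^\pm| < 1$ by the maximum principle applied to $1 - |u|^2$ together with elliptic regularity and the strong maximum principle. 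That $\hat U_d$ changes sign on the slice $\{x = 0\}$ is exactly the statement that $\hat u_d^\pm \neq u_s$: if $\hat U_d \equiv 0$ on $\{x=0\}$ then by the symmetries $\hat U_d \equiv 0$ everywhere, forcing $\hat u_d^\pm$ to be a one-dimensional profile, hence $u_s$ by ODE uniqueness, contradicting $c_{d,L}^{\rm rad} < \mcF_d[u_s]$. Consequently the zero set of $\hat u_d^\pm$, being where $\hat U_d = \hat V_d = 0$, lies in $\{x = 0\}$ and is invariant under rotations in the $(y,z)$-plane, hence is a union of circles (and possibly the center point).

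Then I would pass to the limit $L \to \infty$. Interior elliptic estimates plus the uniform energy bound give $H^1_{\rm loc}$ and $C^2_{\rm loc}$ subconvergence of $u_{d,L}^+$ to a solution $\hat u_d^+$ of \eqref{Eq:CritNoBC} satisfying all the symmetry and sign conditions, and finite energy follows from lower semicontinuity together with the uniform upper bound on $c_{d,L}^{\rm rad}$; the strict lower bound $\underline c^{\rm rad} > 0$ passes through provided the limit is not trivial, which I would ensure by a no-vanishing argument (the sign condition on $\hat V_d$ plus a Pohozaev or clearing-out estimate prevents the energy from escaping to infinity or concentrating away, so $\hat u_d^+$ is not equal to $u_s$). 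For the asymptotic behavior (iii), finite energy forces $1 - |\hat u_d^+|^2 \in L^2$ and $\nabla \hat u_d^+ \in L^2$, so along sequences $x_j \to +\infty$ the translates converge to an $x$-independent solution on the cross-section $\{|x'| < d\}$ with Neumann data; such a solution of $-\Delta_{x'} w = (1-|w|^2) w$ on the disk with $\partial_\nu w = 0$ and finite energy, being radial, is a constant $(a_d, b_d) \in \mathbb S^1$, and $b_d > 0$ follows from $\hat V_d > 0$ for $x > 0$ together with a Harnack-type lower bound; uniqueness of the limit (independence of the sequence $x_j$) comes from the monotonicity/sign structure in $x$. The main obstacle is precisely the failure of the Palais–Smale condition in the limit: I must ensure that the finite-$L$ mountain pass levels do not degenerate and that no energy is lost in the limit, i.e. that $\hat u_d^+$ is genuinely nontrivial — this is where the uniform positive lower bound $\underline c^{\rm rad}$ and a careful concentration-compactness / no-vanishing analysis in the infinite direction, exploiting the exponential localization afforded by the soliton asymptotics and the sign of $\hat V_d$, do the essential work.
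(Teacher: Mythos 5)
Your overall architecture (mountain pass on finite cylinders, uniform energy bounds above and below, then a limit $R \to \infty$) is the same as the paper's, but two steps that you treat as routine are exactly where the real work lies, and your proposed shortcuts for them do not close.

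First, the sign change of $\hat U_d$ on $\{x=0\}$. You assert this is ``exactly the statement that $\hat u_d^\pm \neq u_s$'' and reduce it to excluding $\hat U_d \equiv 0$ on the slice. That is the wrong negation: the alternative to ``changes sign'' is ``has a fixed sign,'' e.g.\ $\hat U_d \geq 0$ on $\{x=0\}$ without vanishing identically, and for that case your ODE-uniqueness argument says nothing. The paper handles this by building into the minimax a partial-minimization operator $j$: for each $v$, $j(v)$ minimizes the energy on $\{x>0\}$ among maps with the same trace on $\{x=0\}$, extended back by reflection; Ekeland's principle is applied to the $j$-modified paths so that the resulting critical point is a \emph{fixed point} of $j$. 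This fixed-point property is what propagates a sign of $\hat U$ on the slice to a sign on the whole cylinder, after which the strong maximum principle leaves only two cases, both excluded by energy comparison: $\hat U > 0$ everywhere forces the solution to be the global minimizer $\underline{u}_{R,d}^\pm$ via Lemma \ref{Lem:Pos=>Min} (energy $\underline{c}_{R,d} < c_{R,d}^{\rm rad}$), while $\hat U \equiv 0$ forces it to be $u_{s,R}$ (energy $> c_{R,d}^{\rm rad}$ by Lemma \ref{Lem:MPR}). Your looser ``replace $V$ by a function with the sign of $x$'' yields the sign of $\hat V_d$ but not this minimality-on-a-half-cylinder property, which is needed again after passing to the limit (Step 3 of the proof of Lemma \ref{Lem:MPLimit}) to show that the sign change survives as $R \to \infty$; there the same dichotomy is closed by noting that a positive $\hat U_d$ would make $\hat u_d$ a global minimizer in an affine class whose energy infimum is zero, impossible since $\hat V_d$ is nonconstant.

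Second, the claim $b_d > 0$ in (iii). You derive it from ``$\hat V_d > 0$ for $x>0$ together with a Harnack-type lower bound.'' Positivity on compact sets is perfectly compatible with $\hat V_d \to 0$ as $x \to \infty$, and Harnack gives no uniform lower bound at infinity; this is precisely the scenario that must be excluded. The paper's argument is substantial: if $b_d = 0$, the concavity of the cross-sectional average $\bar V$ on $(0,\infty)$ together with $\bar V(0)=0$ forces $\hat V_d \equiv 0$, so $\hat U_d$ solves the scalar equation $-\Delta \hat U_d = (1-\hat U_d^2)\hat U_d$ with limit $1$ at $+\infty$; a sliding-method comparison with the translated profiles $\tanh(\sqrt{2-4\delta}\,(x-x_2)/2)$ then shows $\hat U_d$ must equal $\tanh((x-x_0)/\sqrt 2)$, contradicting its behavior as $x \to -\infty$. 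Without this (or an equivalent) exclusion of $b_d=0$, conclusion (iii) --- and with it the nontriviality of the limiting solution --- is not established.
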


\begin{remark}
It is an open problem if $(a_d,b_d) = (0,1)$. If this holds, $\hat u_d^\pm$ are then critical points of $\mcF_d$ in $\mcA_d^{\rm rad}$. 
\end{remark}

\section{$2D$ equilibrium configurations}

In this section, we study the two-dimensional case. Beside the main aim of proving Theorem \ref{Thm1} (see Subsection \ref{SSec2.1}) and Theorem \ref{Thm:MPC} (see Subsection \ref{SSec2.2}), we also establish the instability of the soliton solution and solitonic vortex solution in $\mcA_d^{\rm x}$ as well as the super-polynomial decay at infinity of the difference of these solutions.

\subsection{Minimizing $\mcF_d$ in $\mcA_d$}\label{SSec2.1}
\begin{theorem}\label{Thm:S=>M}
Let $d > 0$. If the soliton solution $u_s$ is stable in the first direction, i.e.
\begin{multline*}
\frac{d^2}{dt^2}\Big|_{t = 0} \mcF_d[u_s + t(\varphi,0)] = \int_{\Omega_d} \big(|\nabla \varphi|^2 - (1 - |u_s|^2) \varphi^2\big)\,dx \geq 0 \text{ for all } \varphi \in H^1(\Omega_d) \\
	\text{ satisfying }  \varphi(x,y) = \varphi(-x,y) =  - \varphi(-x,-y),
\end{multline*}
then $u_s$ is the unique minimizer of $\mcF_d$ in $\mcA_d$. 
\end{theorem}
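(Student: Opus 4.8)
The plan is to split the energy gap $\mcF_d[u]-\mcF_d[u_s]$ for an arbitrary $u=(u_1,u_2)\in\mcA_d$ into three pieces: the second variation of $\mcF_d$ at $u_s$ in the first direction (nonnegative by hypothesis), a quadratic form in the second component (nonnegative for every $d$ by a one-dimensional argument), and a manifestly nonnegative quartic remainder; uniqueness is then read off from the equality case. First I would set $\eta(x)=\tanh(x/\sqrt2)$, so that $u_s=\eta\,e_2$, $1-|u_s|^2=\mathrm{sech}^2(x/\sqrt2)$ and $-\eta''=(1-|u_s|^2)\eta$, write $w=u-u_s\in H^1(\Omega_d,\RR^2)$ and $w_2=u_2-\eta$, and record the symmetries forced by \eqref{Eq:Imprinting} and \eqref{Eq:Imparity}: $u_1$ is even in $x$ and odd in $y$, so it belongs to the symmetry class appearing in the hypothesis, while $w_2$ is odd in $x$ and even in $y$.

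Next I would expand the energy. Writing $1-|u|^2=(1-|u_s|^2)-B$ with $B:=2\eta w_2+u_1^2+w_2^2$, so that $\tfrac14\big[(1-|u|^2)^2-(1-|u_s|^2)^2\big]=-\tfrac12(1-|u_s|^2)B+\tfrac14 B^2$, and integrating the cross term $\int_{\Omega_d}\nabla u_s:\nabla w=\int_{\Omega_d}\eta'\,\partial_x w_2$ by parts against $-\eta''=(1-|u_s|^2)\eta$ (legitimate because $\eta',\eta''\in L^2(\Omega_d)$ decay exponentially and $w\in H^1$, so the boundary terms at $x=\pm\infty$ vanish), the terms linear in $w_2$ cancel and one is left with
\begin{align*}
\mcF_d[u]-\mcF_d[u_s]
&=\tfrac12\int_{\Omega_d}\big(|\nabla u_1|^2-(1-|u_s|^2)u_1^2\big)\\
&\quad+\tfrac12\int_{\Omega_d}\big(|\nabla w_2|^2-(1-|u_s|^2)w_2^2\big)+\tfrac14\int_{\Omega_d}B^2.
\end{align*}
The first integral is precisely the quadratic form in the hypothesis, hence $\geq0$; the third is $\geq0$ trivially. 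For the middle one I would discard $\int_{\Omega_d}|\partial_y w_2|^2\geq0$ and apply, on each $y$-slice, the one-dimensional inequality $\int_\RR(v')^2\geq\int_\RR\mathrm{sech}^2(x/\sqrt2)\,v^2$ valid for all odd $v\in H^1(\RR)$: after the rescaling $x=\sqrt2\,t$ this is the nonnegativity on the odd subspace of the Pöschl–Teller operator $-\partial_t^2-2\,\mathrm{sech}^2 t$, whose ground state $\mathrm{sech}\,t$ is even (equivalently, the substitution $v=(\tanh t)\,g$ yields $\int(v')^2-2\int\mathrm{sech}^2 t\,v^2=\int\tanh^2 t\,(g')^2\geq0$). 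Hence $\mcF_d[u]\geq\mcF_d[u_s]$.

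For uniqueness I would trace the equality case: if $\mcF_d[u]=\mcF_d[u_s]$, all three terms above vanish. From $\int B^2=0$ we get $B\equiv0$, i.e.\ $|u|^2\equiv|u_s|^2$. Vanishing of the middle term forces $\partial_y w_2\equiv0$, so $w_2$ is a function of $x$ only, which must then saturate the one-dimensional inequality; by its rigidity (equality forces $v$ to solve $-v''=2\,\mathrm{sech}^2 t\,v$, hence $v\in\mathrm{span}\{\tanh\}$, hence $v\equiv0$ since $v\in L^2$) we get $w_2\equiv0$, i.e.\ $u_2=\eta$. Substituting back into $B\equiv0$ yields $u_1\equiv0$, so $u=u_s$. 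I expect the crux to be the one-dimensional Pöschl–Teller estimate together with its equality case — the only place the explicit soliton profile is genuinely used; the remaining points (that the energy gap is a well-defined real number and that the integration by parts is legitimate, via $H^1(\Omega_d)\hookrightarrow L^4$ and the exponential decay of $1-|u_s|^2$; and density of odd $C_c^\infty$ functions to justify the slice computation) are routine. The stability hypothesis is used only to handle the first term, which is consistent with the fact that for $d>\sqrt2\pi/2$ the soliton ceases to be a minimizer in $\mcA_d$ precisely through a first-direction perturbation.
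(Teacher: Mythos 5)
Your proposal is correct and follows essentially the same route as the paper: the same expansion of $\mcF_d[u]-\mcF_d[u_s]$ into the second variation plus the nonnegative quartic term $\tfrac14\int(2u_s\cdot w+|w|^2)^2$, the stability hypothesis absorbing the $u_1$-part, and the factorization of $w_2$ through the zero mode $\tanh(x/\sqrt2)$ to handle the second component, with uniqueness read off from the equality cases exactly as in the paper. The only (immaterial) difference is organizational: you discard $\int|\partial_y w_2|^2$ and run the Pöschl–Teller factorization slice-by-slice in $x$, whereas the paper writes $\psi=u_{s,2}\zeta$ directly in two variables and obtains $\int u_{s,2}^2|\nabla\zeta|^2$ — since $u_{s,2}$ depends only on $x$, these are the same computation.
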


\begin{proof}
Let us first prove the minimality of $u_s$. We follow the strategy in \cite{INSZ_CRAS18,INSZ-ENS}. Let $w \in H^1(\Omega,\RR^2)$ be such that $u_s + w \in \mcA_d$. We compute
\begin{align}
\mcF_d[u_s + w] - \mcF_d[u_s]
	&= \int_{\Omega_d} \big(\nabla u_s : \nabla w - (1 - |u_s|^2) u_s \cdot w\big)\,dx\,dy\nonumber\\
		&\qquad + \frac{1}{2}\int_{\Omega_d} \Big(|\nabla w|^2 - (1 - |u_s|^2) |w|^2 + \frac{1}{2} (2 u_s \cdot w + |w|^2)^2\Big)\,dx\,dy\nonumber\\
	&= \frac{1}{2}\int_{\Omega_d} \Big(|\nabla w|^2 - (1 - |u_s|^2) |w|^2 + \frac{1}{2} (2 u_s \cdot w + |w|^2)^2\Big)\,dx\,dy.
	\label{Eq:usM-0}
\end{align}
Therefore, in order to prove the minimality of $u_s$, it suffices to show that
\[
\int_{\Omega_d} \Big(|\nabla w|^2 - (1 - |u_s|^2) |w|^2 \Big)\,dx\,dy \geq 0.
\]
The stability of $u_s$ in the first direction implies that
\[
\int_{\Omega_d} \Big(|\nabla w_1|^2 - (1 - |u_s|^2) w_1^2 \Big)\,dx\,dy \geq 0.
\]
Therefore, we only need to show that
\begin{multline*}
\int_{\Omega_d} \big(|\nabla \psi|^2 - (1 - |u_s|^2) \psi^2\big)\,dx\,dy \geq 0 \text{ for all } \psi\in H^1(\Omega_d) \\
	\text{ satisfying }  \psi(x,y) = -\psi(-x,y) =  - \psi(-x,-y).
\end{multline*}
In fact, we prove a slightly stronger inequality:
\begin{equation}
\int_{\Omega_d} \big(|\nabla \psi|^2 - (1 - |u_s|^2) \psi^2\big)\,dx\,dy \geq 0 \text{ for all } \psi\in H^1(\Omega_d)  \text{ satisfying }  \psi(0,\cdot) = 0.
	\label{Eq:usM-1}
\end{equation}
We will prove \eqref{Eq:usM-1} into two steps.

\medskip
\noindent\underline{Step 1:} Proof of \eqref{Eq:usM-1} when $\psi \in C_c^\infty(\bar\Omega_d)$. 

Let
\[
\chi(x,y) = \frac{1}{x} \psi(x,y) \text{ for } x \neq 0.
\]
Since $\psi(0,y) = 0$, we have that
\begin{align*}
\chi(x,y) 
	&= \frac{1}{x}\int_0^x \partial_x \psi(\xi,y)\,d\xi,\\
\partial_x \chi(x,y)
	&= \frac{1}{x^2} \Big[x \partial_x  \psi(x,y) -  \int_0^x \partial_x \psi(\xi,y)\,d\xi\Big]\\
	&= \frac{1}{x^2} \int_0^x \int_\xi^x \partial_x^2(t,y)dt\,d\xi,\\
\partial_y\chi(x,y) 
	&= \frac{1}{x}\int_0^x \partial_y \partial_x \psi(\xi,y)\,d\xi.
\end{align*}
In particular, $\chi \in C_c^{0,1}(\bar \Omega_d)$. It follows that we may write
\[
\psi(x,y) = \tanh \frac{x}{\sqrt{2}}\, \zeta(x,y) = u_{s,2}(x,y)  \zeta(x,y)\text{ with } \zeta \in C_c^{0,1}(\bar \Omega_d).
\]

We now compute
\begin{align}
&\int_{\Omega_d} \big(|\nabla \psi|^2 - (1 - |u_s|^2) \psi^2\big)\,dx\,dy\nonumber\\
	&\qquad= \int_{\Omega_d} \big(|\nabla (u_{s,2} \zeta)|^2 - (1 - |u_s|^2) u_{s,2}^2 \zeta^2\big)\,dx\,dy\nonumber\\
	&\qquad= \int_{\Omega_d} \Big( u_{s,2}^2 |\nabla \zeta|^2 + \nabla(\zeta^2 u_s) \cdot \nabla u_s - (1 - |u_s|^2) u_{s,2}^2 \zeta^2\big)\,dx\,dy\nonumber\\
	&\qquad= \int_{\Omega_d}   u_{s,2}^2 |\nabla \zeta|^2 \,dx\,dy \geq 0,
	\label{Eq:usM-2}
\end{align}
where for the last identity we integrated by parts (noting that $\partial_y u_s = 0$ and $\zeta$ has compact support) and used the identity $-\Delta u_{s,2} = (1 - |u_s|^2) u_{s,2}$. This concludes Step 1.

\medskip
\noindent\underline{Step 2:} Proof of \eqref{Eq:usM-1} for general $\psi$, which yields the minimality of $u_s$.

Using an even extension across $y = \pm d$, we may assume that $\psi \in H^1(\Omega_{3d})$. Using an even mollification kernel, we may find $\psi_j \in C^\infty(\bar \Omega_d) \cap H^1(\Omega_d)$ such that $\psi_j \rightarrow \psi$ in $H^1(\Omega_d)$ and $\psi_j(0, \cdot) = 0$. Multiplying $\psi_j$ by $\eta(x/k_j)$ for some even cut-off function $\eta \in C_c^\infty(-2,2)$ with $\eta \equiv 1$ in $(-1,1)$ and some suitable sequence $k_j \rightarrow \infty$, we may also assume that $\psi_j$ has compact support. By \eqref{Eq:usM-2} in Step 1, we have
\[
\int_{\Omega_d} \big(|\nabla \psi_j|^2 - (1 - |u_s|^2) \psi_j^2\big)\,dx\,dy
	= \int_{\Omega_d}   u_{s,2}^2 \Big|\nabla \big(\frac{\psi_j}{u_{s,2}}\big)\Big|^2 \,dx\,dy.
\]
Sending $j \rightarrow \infty$ and using Fatou's lemma on the right hand side, we obtain\footnote{By applying \eqref{Eq:usM-2} to $\psi_j - \psi_m$, it is readily seen that $u_{s,2} \nabla \big(\frac{\psi_j}{u_{s,2}}\big)$ is Cauchy and hence converges in $L^2(\Omega_d)$. Thus, the first inequality in \eqref{Eq:usM-3} is in fact an identity, but we do not need this fact.}
\begin{equation}
\int_{\Omega_d} \big(|\nabla \psi |^2 - (1 - |u_s|^2) \psi ^2\big)\,dx\,dy
	\geq \int_{\Omega_d}   u_{s,2}^2 \Big|\nabla \big(\frac{\psi}{u_{s,2}}\big)\Big|^2 \,dx\,dy
	\geq 0.
	\label{Eq:usM-3}
\end{equation}
We have thus proved \eqref{Eq:usM-1}. As explained earlier, this gives the minimality of  $u_s$.

Now, let us turn to the proof of the assertion that $u_s$ is the unique minimizer of $\mcF_d$ in $\mcA_d$. Indeed, suppose that $u_s + w \in \mcA_d$ is such that $\mcF_d[u_s] = \mcF_d[u_s + w]$. Then, the above argument gives
\[
0 =  \int_{\Omega_d} \big(|\nabla w_2|^2 - (1 - |u_s|^2) w_2^2 \big)\,dx\,dy =  \int_{\Omega_d}  (2 u_s \cdot w + |w|^2)^2 \,dx\,dy.
\]
By \eqref{Eq:usM-3} with $\psi = w_2$, we deduce that $\frac{w_2}{u_{s,2}}$ is constant. As $w_2 \in H^1(\Omega_d)$, we thus have $w_2 = 0$. Consequently
\[
0  =  \int_{\Omega_d}  (2 u_s \cdot w + |w|^2)^2 \,dx\,dy = \int_{\Omega_d} w_1^4\,dx\,dy,
\]
which implies that $w_1 = 0$. We have shown that $w = 0$, as desired.
\end{proof}

\begin{corollary}\label{Cor:d1}
Let $d > 0$. If $d \leq \sqrt{2}\pi/2$, then the soliton solution $u_s$ is the unique minimizer of $\mcF_d$ in $\mcA_d$. If $d > \sqrt{2}\pi/2$, then $u_s$ is an unstable critical point of $\mcF_d$ in $\mcA_d$.
\end{corollary}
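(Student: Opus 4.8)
The plan is to combine Theorem~\ref{Thm:S=>M} with a separation-of-variables analysis of the second variation of $\mcF_d$ at $u_s$ in its first component. Since $1 - |u_s|^2 = \mathrm{sech}^2 \tfrac{x}{\sqrt2}$, the whole question reduces to deciding for which $d$ the quadratic form
\[
Q_d[\varphi] := \int_{\Omega_d}\Big(|\nabla\varphi|^2 - \mathrm{sech}^2\tfrac{x}{\sqrt2}\,\varphi^2\Big)\,dx\,dy
\]
is nonnegative over all $\varphi \in H^1(\Omega_d)$ that are even in $x$ and odd in $y$ (which is precisely the symmetry class in Theorem~\ref{Thm:S=>M}, since $\varphi(x,y) = \varphi(-x,y) = -\varphi(-x,-y)$ forces evenness in $x$ and, via the first relation, oddness in $y$). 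If $Q_d \geq 0$, Theorem~\ref{Thm:S=>M} gives that $u_s$ is the unique minimizer of $\mcF_d$ in $\mcA_d$. If instead $Q_d[\varphi] < 0$ for some admissible $\varphi$, then $(\varphi,0)$ is an admissible variation, $u_s + t(\varphi,0) \in \mcA_d$, and a direct expansion gives $\frac{d^2}{dt^2}\Big|_{t=0}\mcF_d[u_s + t(\varphi,0)] = Q_d[\varphi] < 0$, so $u_s$ is an unstable critical point of $\mcF_d$ in $\mcA_d$.

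To evaluate $Q_d$, I would expand $\varphi(x,\cdot)$ in the $L^2(-d,d)$-orthogonal basis of odd Neumann eigenfunctions $e_j(y) = \sin\big(\tfrac{(2j+1)\pi y}{2d}\big)$, $j \geq 0$, with eigenvalues $\lambda_j = \big(\tfrac{(2j+1)\pi}{2d}\big)^2$. Writing $\varphi = \sum_{j\geq0} a_j(x)\,e_j(y)$ with $a_j \in H^1(\RR)$, orthogonality of the $e_j$ and, separately, of their derivatives gives the diagonalization
\[
Q_d[\varphi] = d\sum_{j\geq 0}\int_{\RR}\Big(|a_j'|^2 + \lambda_j\,a_j^2 - \mathrm{sech}^2\tfrac{x}{\sqrt2}\,a_j^2\Big)\,dx.
\]
For non-smooth or non-decaying $\varphi$ this term-by-term identity needs a short density argument (reduce to $\varphi$ smooth with compact support, exactly as in Step~2 of the proof of Theorem~\ref{Thm:S=>M}, or invoke $H^1$-convergence of the Neumann--Fourier series); this is the only place that requires genuine care, everything else being a direct computation.

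The remaining input is the one-dimensional spectral fact that $-\partial_x^2 - \mathrm{sech}^2\tfrac{x}{\sqrt2} \geq -\tfrac12$ on $\RR$, with positive ground state $g(x) = \mathrm{sech}\tfrac{x}{\sqrt2}$: a direct computation gives $-g'' - \mathrm{sech}^2\tfrac{x}{\sqrt2}\,g = -\tfrac12 g$, and the ground-state substitution $a = g\,\tilde a$ yields $\int_\RR(|a'|^2 - \mathrm{sech}^2\tfrac{x}{\sqrt2}\,a^2)\,dx = -\tfrac12\int_\RR a^2 + \int_\RR g^2|\tilde a'|^2 \geq -\tfrac12\int_\RR a^2$ for all $a \in H^1(\RR)$ (no boundary terms arise, since $g > 0$ everywhere, in contrast to the factorization by $u_{s,2}$ used earlier). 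Inserting this bound, each summand above is $\geq (\lambda_j - \tfrac12)\int_\RR a_j^2$, so $Q_d \geq 0$ as soon as $\lambda_0 = (\pi/2d)^2 \geq \tfrac12$, i.e. $d \leq \sqrt2\pi/2$; by Theorem~\ref{Thm:S=>M} this proves the first assertion. Conversely, if $d > \sqrt2\pi/2$ then $\lambda_0 < \tfrac12$, and testing with $\varphi(x,y) = \mathrm{sech}\tfrac{x}{\sqrt2}\,\sin\tfrac{\pi y}{2d} \in H^1(\Omega_d)$ (even in $x$, odd in $y$) gives $Q_d[\varphi] = d\,(\lambda_0 - \tfrac12)\int_\RR\mathrm{sech}^2\tfrac{x}{\sqrt2}\,dx < 0$, which as explained yields the instability of $u_s$ in $\mcA_d$. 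The main (and only) obstacle is thus the justification of the separation of variables for general $H^1$ test functions; the spectral gap computation is explicit and the threshold $\sqrt2\pi/2$ emerges exactly from the identity $\lambda_0 = \tfrac12$.
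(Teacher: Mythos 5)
Your proposal is correct and follows essentially the same route as the paper: reduce to stability in the first direction via Theorem~\ref{Thm:S=>M}, separate variables in $y$, and handle the $x$-dependence by the ground-state substitution $\varphi = A\eta$ with $A = \mathrm{sech}\frac{x}{\sqrt{2}}$ and the identity $A'' = (\tfrac12 - \mathrm{sech}^2\tfrac{x}{\sqrt2})A$, with the same destabilizing mode $A(x)\sin\tfrac{\pi y}{2d}$ for $d > \sqrt2\pi/2$. The only cosmetic difference is that the paper replaces your full Neumann--Fourier expansion in $y$ by a single application of Wirtinger's inequality $\int_{-d}^d(\partial_y\varphi)^2\,dy \geq \tfrac{\pi^2}{4d^2}\int_{-d}^d\varphi^2\,dy$ for $\varphi$ odd in $y$, which is exactly your bound $\lambda_j \geq \lambda_0$ and sidesteps the term-by-term justification you flag.
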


\begin{proof} Consider the case $d \leq \sqrt{2}\pi/2$.  By Theorem \ref{Thm:S=>M}, we only need to show that $u_s$ is stable in the first direction. Indeed, let $\varphi \in H^1(\Omega_d)$ be such that $\varphi(x,y) = \varphi(-x,y) = -\varphi(-x,-y)$. We need to show that 
\[
\int_{\Omega_d} \Big(|\nabla \varphi|^2 - \mathrm{sech}^2\frac{x}{\sqrt{2}} \varphi^2\Big)\,dx\,dy \geq 0.
\]

By density, we may assume that $\varphi \in C_c^\infty(\bar\Omega_d)$. Since $\varphi$ is odd with respect to $y$ and hence $\varphi = 0$ on $\{y = 0\}$. By Wirtinger's inequality, we have
\[
\int_{-d}^d (\partial_y \varphi)^2 \,dy \geq \frac{\pi^2}{4d^2} \int_{-d}^d \varphi^2\,dy.
\]
Therefore, we only need to show that
\[
\int_{\Omega_d} \Big((\partial_x \varphi)^2 + \frac{\pi^2}{4d^2} \varphi^2 - \mathrm{sech}^2\frac{x}{\sqrt{2}} \varphi^2\Big)\,dx\,dy \geq 0.
\]
Indeed, we factorize $\varphi = A \eta$ with $A(x) = \mathrm{sech} \frac{x}{\sqrt{2}} > 0$ and compute
\begin{align*}
&
\int_{\Omega_d} \Big((\partial_x \varphi)^2 + \frac{\pi^2}{4d^2} \varphi^2 - \mathrm{sech}^2\frac{x}{\sqrt{2}} \varphi^2\Big)\,dx\,dy\\
	&\qquad= \int_{\Omega_d} \Big((\partial_x (A\eta))^2 + \frac{\pi^2}{4d^2} A^2 \eta^2 - \mathrm{sech}^2\frac{x}{\sqrt{2}} A^2 \eta^2\Big)\,dx\,dy\\
	&\qquad = \int_{\Omega_d} \Big(A^2 (\partial_x \eta)^2 + \partial_x( \eta^2 A) \partial_x A + \frac{\pi^2}{4d^2} A^2 \eta^2 - \mathrm{sech}^2\frac{x}{\sqrt{2}} A^2 \eta^2\Big)\,dx\,dy\\
	&\qquad = \int_{\Omega_d} \Big(A^2 (\partial_x \eta)^2 + (\frac{\pi^2}{4d^2} - \frac{1}{2}) A^2 \eta^2 \Big)\,dx\,dy \geq 0,
\end{align*}
where for the last identity we integrated by parts and used the identity 
\[
\partial_x^2 A = (\frac{1}{2} - \mathrm{sech}^2\frac{x}{\sqrt{2}}) A.
\]
We have thus shown that, when $d \leq \sqrt{2}\pi/2$, $u_s$ is stable in the first direction, and hence, by Theorem \ref{Thm:S=>M}, is the unique minimizer of $\mcF_d$ in $\mcA_d$.

Consider the case $d > \sqrt{2}\pi/2$. Choosing $\varphi(x,y) = A(x) \sin \frac{\pi y}{2d}$, we compute
\begin{align*}
\frac{d^2}{dt^2}\Big|_{t = 0} \mcF[u_s + t(\varphi,0)] 
	&= \int_{\Omega_d} \big(|\nabla \varphi|^2 - (1 - |u_s|^2) \varphi^2\big)\,dx\,dy\\
	&= d \int_{-\infty}^\infty \Big[(A')^2 + \frac{\pi^2}{4d^2} A^2 - \mathrm{sech}^2  \frac{x}{\sqrt{2}} \, A^2\Big]\,dx\\
	&= d \int_{-\infty}^\infty \Big(\frac{3}{2} \tanh^2 \frac{x}{\sqrt{2}} + \frac{\pi^2}{4d^2} - 1\Big)\mathrm{sech}^2  \frac{x}{\sqrt{2}} \,dx\\
	&= d \int_{-1}^1 \Big(\frac{3}{2} \tau^2 + \frac{\pi^2}{4d^2} - 1\Big) \sqrt{2} d\tau\\
	&= d\sqrt{2}(\frac{\pi^2}{2d^2} - 1) < 0.
\end{align*}
This shows that $u_s$ is an unstable critical point of $\mcF_d$ in $\mcA_d$. The proof is complete.
\end{proof}

\begin{remark}\label{Rem:usUnstable}
We note that $u_s$ is an unstable critical point of $\mcF_d$ in $\mcA_d^{\rm x}$ for every $d > 0$. In other words, $u_s$ is unstable when the symmetry condition \eqref{Eq:Imparity} is lifted. 
\end{remark}

\begin{proof}
Let $\varphi(x,y) = A(x) = \mathrm{sech}\frac{x}{\sqrt{2}}$. Then $\varphi \in C^\infty(\Omega_d) \cap H^1(\Omega_d)$, $u_s + t(\varphi,0) \in \mcA_d^{\rm x}$ for all $t \in \RR$, and
\begin{align*}
\frac{d^2}{dt^2}\Big|_{t = 0} \mcF[u_s + t(\varphi,0)] 
	&= \int_{\Omega_d} \big(|\nabla \varphi|^2 - (1 - |u_s|^2) \varphi^2\big)\,dx\,dy\\
	&= 2d \int_{-\infty}^\infty \Big[(A')^2  - \mathrm{sech}^2  \frac{x}{\sqrt{2}} \, A^2\Big]\,dx\\
	&= 2d \int_{-\infty}^\infty \Big(\frac{3}{2} \tanh^2 \frac{x}{\sqrt{2}}  - 1\Big)\mathrm{sech}^2  \frac{x}{\sqrt{2}} \,dx\\
	&= 2d \int_{-1}^1 \Big(\frac{3}{2} \tau^2   - 1\Big) \sqrt{2} d\tau\\
	&= -2d\sqrt{2} < 0.
\end{align*}
This shows that $u_s$ is an unstable critical point of $\mcF_d$ in $\mcA_d^{\rm x}$.
\end{proof}

\begin{theorem}\label{Thm:UV}
Suppose that $d > \sqrt{2} \pi/2$. Then $\mcF_d$ has exactly two minimizers $u_d^\pm$ in $\mcA_d$. They take the form $u_d^\pm = (\pm U_d, V_d)$ with $U_d$ and $V_d$ satisfying \eqref{Eq:UVSign}. In particular, the origin is the unique zero of $u_d^\pm$ and $u_d^\pm$ has degree $\pm 1$ at the origin.
\end{theorem}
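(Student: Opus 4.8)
The plan is to follow the strategy used for the degree-one vortex in a disk (as in \cite{INSZ_CRAS18, INSZ-ENS, IN-IHP24}), adapted to the strip geometry. First I would establish existence of a minimizer of $\mcF_d$ in $\mcA_d$; this is already known from \cite{AS-23}, but the key point is to show that when $d > \sqrt{2}\pi/2$ no minimizer equals $u_s$. Indeed, by Corollary \ref{Cor:d1}, $u_s$ is an unstable critical point of $\mcF_d$ in $\mcA_d$, hence cannot be a minimizer; combined with the existence result, $\mcF_d$ has a minimizer $u_d = (U_d, V_d)$ in $\mcA_d$ with $U_d \not\equiv 0$. From the symmetry conditions \eqref{Eq:Imprinting} and \eqref{Eq:Imparity}, $U_d$ is even in $x$ and odd in $y$, while $V_d$ is odd in $x$ and even in $y$; in particular $U_d = 0$ on $\{y=0\}$ and $V_d = 0$ on $\{x=0\}$. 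I would also record $|u_d|^2 \le 1$ in $\bar\Omega_d$, which follows from the maximum principle applied to $1 - |u_d|^2$ using the equation \eqref{EL} and the Neumann condition (a standard argument: on the set where $|u_d|^2 > 1$, $-\Delta(1-|u_d|^2) \le \text{(negative terms)}$, forcing a contradiction at an interior or boundary maximum).

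Next I would upgrade the sign information. The component $U_d$, being even in $x$ and odd in $y$, restricted to $\Omega_d \cap \{y>0\}$, solves $-\Delta U_d = (1 - |u_d|^2) U_d$ with $U_d = 0$ on $\{y=0\}$, Neumann data on $|y|=d$, and $U_d \to 0$ at $x = \pm\infty$. Replacing $U_d$ by $-|U_d|$ on the upper half-strip and extending oddly does not increase energy (since $|\nabla |U_d|| = |\nabla U_d|$ a.e.) and preserves membership in $\mcA_d$, so by minimality $U_d$ does not change sign on $\{y>0\}$; WLOG $U_d < 0$ there after choosing the correct sign branch, i.e. we single out $u_d^-$, and strict negativity in the open half-strip follows from the strong maximum principle and Hopf lemma (using that $1 - |u_d|^2$ is bounded, so the zero-order coefficient is bounded and the strong maximum principle applies; $U_d \not\equiv 0$ was ensured above). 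The same reasoning applied to $V_d$ on $\{x>0\}$ — here one uses that $V_d$ is odd in $x$, that $V_d \to 1$ at $x = +\infty$ forces $V_d > 0$ somewhere, and a reflection/truncation argument across $\{x=0\}$ — gives $V_d > 0$ for $x>0$ and $V_d < 0$ for $x<0$. To get the strict inequality $U_d^2 + V_d^2 < 1$ in the \emph{closed} strip, I would apply the strong maximum principle to $1 - |u_d|^2 \ge 0$, which satisfies $-\Delta(1-|u_d|^2) + 2|u_d|^2 (1-|u_d|^2) = 2|\nabla u_d|^2 \ge 0$; if it vanished at an interior point it would vanish identically, which is impossible, and the Hopf lemma rules out boundary zeros given the Neumann condition. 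The statement about the origin being the unique zero and the degree being $\pm 1$ then follows: the zero set of $U_d$ is contained in $\{y=0\}$ and that of $V_d$ in $\{x=0\}$, so the common zero set is $\{(0,0)\}$; the degree is computed by noting that on a small circle around the origin $u_d^-$ is, up to normalization, close to $(-y,x)/r$-type behavior dictated by the sign pattern in \eqref{Eq:UVSign}, giving degree $-1$ (and $+1$ for $u_d^+ = (-U_d, V_d) = (|U_d|, V_d)$, obtained via $(u_1,u_2)\mapsto(-u_1,u_2)$).

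Finally, for \emph{exactly two} minimizers I would argue that any minimizer, after the sign normalization above, coincides with $u_d^-$. Suppose $\tilde u = (\tilde U, \tilde V)$ is another minimizer with $\tilde U \le 0$ on $\{y>0\}$ and $\tilde V \ge 0$ on $\{x>0\}$. The strategy here is the one behind Theorem \ref{Thm:UV}'s intended proof via Theorem \ref{Thm:S=>M}-type arguments: one shows the second variation of $\mcF_d$ at a minimizer in $\mcA_d$ controls differences, or more directly one uses a factorization $\tilde u = $ (something) $\cdot u_d^-$ analogous to the $\psi = u_{s,2}\zeta$ trick in Theorem \ref{Thm:S=>M}, exploiting that $U_d, V_d$ are nonvanishing away from their nodal lines. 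Concretely, writing the difference $w = \tilde u - u_d^-$ and expanding as in \eqref{Eq:usM-0}, minimality of both forces $\int_{\Omega_d}(|\nabla w|^2 - (1-|u_d^-|^2)|w|^2) = 0$ and $\int (2 u_d^-\cdot w + |w|^2)^2 = 0$; the linearized operator $-\Delta - (1-|u_d^-|^2)$ at the minimizer, restricted to the relevant symmetry class, should have trivial kernel except for directions already accounted for, yielding $w \equiv 0$. \textbf{The main obstacle} I anticipate is precisely this last uniqueness step: establishing that the second variation at $u_d^\pm$ is nondegenerate in $\mcA_d$ (equivalently, that the relevant Jacobi-type operator has no nontrivial kernel compatible with the symmetries). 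Unlike the stability proof for $u_s$, where $u_{s,2} = \tanh(x/\sqrt 2)$ is explicit and the factorization is clean, here $U_d$ and $V_d$ are only known qualitatively, so one must run the factorization argument with a non-explicit positive (away from nodal sets) solution, carefully handling the behavior near the nodal lines $\{x=0\}$ and $\{y=0\}$ and near the corner point $(0,0)$ where both components vanish — this is where the strip geometry genuinely differs from the disk case and where the super-polynomial decay of $1 - |u_d^\pm|^2$ at infinity (needed for the relevant Hardy-type inequalities and compactness) must be invoked.
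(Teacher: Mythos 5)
Your outline follows the same strategy as the paper (quarter-strip rearrangement for the sign conditions, maximum principle for $U_d^2+V_d^2<1$, and a Hardy-type factorization for minimality and uniqueness), but the central analytic step is left unproven: you explicitly defer the factorization inequality at the non-explicit minimizer as ``the main obstacle I anticipate,'' and without it neither the claim that any sign-normalized competitor coincides with $(U_d,V_d)$ nor the exact count of minimizers is established. The paper closes this gap in its Step~2 by proving, for every $w$ with $(U,V)+w\in\mcA_d$,
\[
\mcF_d[(U,V)+w]-\mcF_d[(U,V)]\;\geq\;\frac12\int_{\Omega_d}\Big[U^2\big|\nabla\big(\tfrac{w_1}{U}\big)\big|^2+V^2\big|\nabla\big(\tfrac{w_2}{V}\big)\big|^2\Big]\,dx\,dy .
\]
The ingredients you would need to supply are: (a) the Hopf lemma applied to \eqref{Eq:U}--\eqref{Eq:V}, giving $\partial_y U<0$ on $\{y=0\}$ and $\partial_x V>0$ on $\{x=0,\,-d<y<d\}$, so that the quotients $w_1/U$ and $w_2/V$ are Lipschitz across the nodal lines (the same $\frac1x\int_0^x\partial_x\psi$ computation as in Theorem~\ref{Thm:S=>M}); (b) an excision of the \emph{corner points} $(0,\pm d)$ — where the Hopf lemma for $V$ is unavailable — using that points have zero Newtonian capacity in $2$D; and (c) a density/Fatou passage to general $H^1$ perturbations. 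Note that the delicate points are $(0,\pm d)$, not the origin as you suggest: at $(0,0)$ both Hopf derivatives are nonzero and the quotients are well behaved. Once the inequality is in hand, equality for a second minimizer forces $u_1=c_1U$, $u_2=c_2V$; the paper then pins down $c_2=1$ from the limit of $V$ at $x\to+\infty$ and $c_1^2=1$ by comparing the two expressions for $-\Delta u_2$, rather than via the quartic term, but your variant would also work.

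Two smaller points. First, ``the minimizer is not $u_s$, hence $U_d\not\equiv0$'' is a non sequitur: a minimizer with vanishing first component could a priori be $(0,V)$ with $V\neq\tanh(x/\sqrt2)$. The paper rules this out by the one-dimensional fibering argument: if $U\equiv0$ then $\mcF_d[(0,V)]\geq\mcF_d[u_s]$ by minimizing $\int(\frac12(\partial_xV)^2+\frac14(1-V^2)^2)\,dx$ for each fixed $y$, which would make $u_s$ a minimizer and contradict Corollary~\ref{Cor:d1}. You need this step. Second, your rearrangement ``replace $U_d$ by $-|U_d|$'' shows only that \emph{some} minimizer has signed components; to conclude that $U_d$ itself does not change sign you must first note that the rearranged map is again a minimizer, hence a smooth solution of \eqref{EL}, and then apply the strong maximum principle to it (the paper avoids this by rearranging along the minimizing sequence on $Q_d$ from the start). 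Also $(-U_d,V_d)\neq(|U_d|,V_d)$ since $U_d$ changes sign across $\{y=0\}$; this is harmless for the degree count but should be corrected.
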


\begin{proof} \underline{Step 1:} Existence of a minimizer $(U,V)$ of $\mcF_d$ in $\mcA_d$ satisfying \eqref{Eq:UVSign}. 


Let $Q_d = (0,\infty) \times (0,d)$ and $\mcB_d$ be the set of maps $u = (u_1, u_2)$ in $u_s + H^1(Q_d,\RR^2) = e_2 + H^1(Q_d,\RR^2)$ such that $u_1 = 0$ on $\{y = 0\}$ and $u_2 = 0$ on $\{x = 0\}$. Consider
\[
\bar \mcF_d[u] = \int_{Q_d}\Big( \frac{1}{2}|\nabla u|^2 + \frac{1}{4} (1 - |u|^2)^2\Big)\,dx\,dy, \qquad u \in \mcB_d.
\]
We claim that $\bar \mcF_d$ has a minimizer $(U,V)$ in $\mcB_d$ with $U^2 + V^2 \leq 1$, $U \leq 0$ and $V \geq 0$. Let $\{u_m\} \subset \mcB_d$ be a minimizing sequence. By truncation, we may assume that $|u_m| \leq 1$. Replacing $u_m$ by $(-|u_{m,1}|, |u_{m,2}|)$, we may assume that $u_{m,1} \leq 0$ and $u_{m,2} \geq 0$. Passing to a subsequence, we may assume that $u_m \rightharpoonup (U,V)$ in $H^1_{\rm loc}(Q_d,\RR^2)$, $U^2 + V^2 \leq 1$, $U \leq 0$ and $V \geq 0$. Moreover, by Ekeland's variational principle (\cite{Ekeland74}), there exists a sequence $\{\tilde u_m\} \subset \mcB_d$ such that $u_m - \tilde u_m \rightarrow 0$ in $H^1(Q_d,\RR^2)$ and and $-\Delta \tilde u_m - (1 - |\tilde u_m|^2)\tilde u_m \rightarrow 0$ in $H^{-1}(Q_d,\RR^2)$. It follows that $\tilde u_m \rightarrow(U,V)$ in $H^1_{\rm loc}(Q_d,\RR^2)$ and $(U,V)$ satisfies the Euler-Lagrange equation of $\bar\mcF_d$. To see that $(U,V)$ is a minimizer of $\bar\mcF_d$ in $\mcB_d$, it suffices to show that $(U,V) \in \mcB_d$. By construction, we have $\nabla U, \nabla V \in (L^2(Q_d))^2$. Since $U = 0$ on $\{y = 0\}$, we have by Wirtinger's inequality that $U \in L^2(Q_d)$. It remains to show that $V - u_{s,2} \in L^2(Q_d)$, which is equivalent to $V - 1 \in L^2(Q_d)$. Arguing as in \cite[Section 2.1]{AS-23}, we have that all the partial derivatives of $U$ and $V$ are bounded in $Q_d$ and that $\nabla U \rightarrow 0, \nabla V \rightarrow 0, U \rightarrow 0, V \rightarrow  1$ uniformly  as $x \rightarrow +\infty$. Recall that
\[
\begin{cases}
-\Delta V = (1 - U^2 - V^2) V \text{ in } Q_d,\\
V = 0 \text{ on } \{x = 0\},\\
\partial_y V = 0 \text{ on } \{y = 0\} \cup \{y = d\}.
\end{cases}
\]
Multiplying the equation by $V - 1$, integrating over $[x_1, x_2] \times [0,d]$ and integrating by parts yield
\begin{multline*}
\int_{[x_1, x_2] \times [0,d]} \big[|\nabla V|^2 + (1 - U^2 - V^2) V |V - 1|\big]\,dx \\
= \int_0^d \partial_x V (V - 1)\big|_{x = x_2}\,dy
	- \int_0^d \partial_x V (V - 1)\big|_{x = x_1}\,dy.
\end{multline*}
By the boundedness of $\nabla V$ and the fact that $0 \leq V \leq 1$, the right hand side is bounded by a constant $C$ independent of $x_1$ and $x_2$. Since $U \in L^2$ and $0 \leq V \leq 1$, the contribution of the term $U^2V|V-1|$ to the integral on the left hand side is also bounded by a constant $C$ independent of $x_1$ and $x_2$. We deduce that there exists a constant $C$ such that
\[
\int_{[x_1, x_2] \times [0,d]}   (V-1)^2 V \,dx \leq C \text{ for all } 0 \leq x_1 < x_2 < \infty.
\]
Since $V \rightarrow 1$ uniformly as $x \rightarrow \infty$, the above inequality implies that $V - 1 \in L^2(Q_d)$. We have thus shown that $(U,V)$ is a minimizer of $\bar\mcF_d$ in $\mcB_d$.

By an abuse of notation, we also use $U$ to denote its extension to $\Omega_d$ which is even in $x$ and odd in $y$, and $V$ to denote its extension of $\Omega_d$ which is odd in $x$ and even in $y$. Since $\mcF_d[u] = 4\bar\mcF_d[u|_{Q_d}]$, we have that $(U,V)$ is a minimizer for $\mcF_d$ in $\mcA_d$.

It is clear that $V = 0$ on $\{x = 0\}$. Note that $V$ satisfies
\begin{equation}
\begin{cases}
-\Delta V = (1 - U^2 - V^2)V \text{ in } \{x > 0\},\\
V \geq 0 \text{ in } \{x > 0\},\\
V(x,y) \rightarrow 1 \text{ as } x \rightarrow \infty.
\end{cases}
	\label{Eq:U}
\end{equation}
By the strong maximum principle, we have that $V > 0$ in $\{x > 0\}$. Since $V$ is odd in $x$, this also gives $V < 0$ in $\{x < 0\}$.

Next, we have that $U = 0$ on $\{y = 0\}$ and $U$ satisfies
\begin{equation}
\begin{cases}
-\Delta U = (1 - U^2 - V^2)U \text{ in } \{0 < y < d\},\\
U \leq 0 \text{ in } \{0 < y < d\},\\
U(x,y) \rightarrow 0 \text{ as } x \rightarrow \pm\infty.
\end{cases}
	\label{Eq:V}
\end{equation}
By the strong maximum principle, either $U \equiv 0$ in $\{0 < y < d\}$ or $U < 0$ in $\{0 < y < d\}$. If $U \equiv 0$, we then have
\[
\mcF_d[(U,V)]
	\geq \int_{-d}^d \int_{-\infty}^\infty \Big( \frac{1}{2}(\partial_x V)^2 + \frac{1}{4} (1 - V^2)^2\Big)\,dx\,dy.
\]
Minimizing the inner integral for each fixed $y$, we deduce that
\[
\mcF_d[(U,V)]
	\geq \int_{-d}^d \int_{-\infty}^\infty \Big( \frac{1}{2}(\partial_x u_{s,2})^2 + \frac{1}{4} (1 - u_{s,2}^2)^2\Big)\,dx\,dy = \mcF_d[u_s].
\]
This implies that $u_s$ is also a minimizer of $\mcF_d$ in $\mcA_d$, which contradicts Corollary \ref{Cor:d1} as $d > \sqrt{2}\pi/2$. We must therefore have that $U < 0$ in $\{0 < y < d\}$ and hence $U > 0$ in $\{-d < y < 0\}$. 

We next show that $U^2 + V^2 < 1$ in $\bar \Omega_d$. Suppose by contradiction that there exists $(x_0, y_0) \in \bar\Omega_d$ such that $U^2(x_0,y_0) + V^2(x_0,y_0) = 1$. Extend $(U,V)$ to $\Omega_{3d}$ by evenly reflecting across $y = \pm d$. Then the function $\Psi := 1 - U^2 - V^2$ satisfies
\[
\begin{cases}
\Delta \Psi = - 2|\nabla U|^2 - 2|\nabla V|^2 + 2  (U^2 + V^2) \Psi \leq 2  (U^2 + V^2) \Psi \text{ in } \Omega_{3d},\\
\Psi \geq 0 \text{ in } \Omega_{3d},\\
\min_{\bar\Omega_{3d}} \Psi \text{ is attained at } (x_0,y_0) \in \bar \Omega_d \subset \Omega_{3d}.
\end{cases}
\]
By the strong maximum principle, this implies that $\Psi \equiv 0$ and $\nabla U \equiv \nabla V \equiv 0$ in $\Omega_{3d}$. The latter conclusion is inconsistent with the fact that $V \rightarrow \pm 1$ as $x\rightarrow \pm \infty$. We thus have that $U^2 + V^2 < 1$ in $\bar \Omega_d$.

Summarizing, we have shown that there is a minimizer $(U,V)$ of $\mcF_d$ in $\mcA_d$ which satisfies \eqref{Eq:UVSign}.

\medskip
\noindent\underline{Step 2:} We prove that if $w \in H^1(\Omega_d,\RR^2)$ is such that $(U,V) + w \in \mcA_d$, then
\begin{equation}
\mcF_d[(U,V) + w] - \mcF_d[(U,V)] \geq \frac{1}{2} \int_{\Omega_d}\Big[ U^2 \Big|\nabla\big(\frac{w_1}{U}\big)\Big|^2 + V^2 \Big|\nabla\big(\frac{w_2}{V}\big)\Big|^2\Big]\,dx\,dy.
	\label{Eq:UV-1}
\end{equation}

We follow an argument in \cite{INSZ_CRAS18, INSZ-ENS}. A computation similar to \eqref{Eq:usM-0} gives
\begin{align*}
\mcF_d[(U,V) + w] - \mcF_d[(U,V)]
	&= \frac{1}{2} \int_{\Omega_d} \Big[|\nabla w|^2 - (1 - U^2 - V^2) |w|^2 \\
		&\qquad+ \frac{1}{2}\big(2 (U,V) \cdot w + |w|^2\big)^2\Big]\,dx\,dy\\
	&\geq \frac{1}{2} \int_{\Omega_d} \Big[|\nabla w|^2 - (1 - U^2 - V^2) |w|^2 \Big]\,dx\,dy.
\end{align*}
Therefore, to prove \eqref{Eq:UV-1} it suffices to show that
\begin{align}
\int_{\Omega_d} \Big[|\nabla w_1|^2 - (1 - U^2 - V^2) w_1^2 \Big]\,dx\,dy
	& \geq  \int_{\Omega_d} U^2 \Big|\nabla\big(\frac{w_1}{U}\big)\Big|^2\,dx\,dy,
	\label{Eq:UV-1a}\\
\int_{\Omega_d} \Big[|\nabla w_2|^2 - (1 - U^2 - V^2) w_2^2 \Big]\,dx\,dy
	& \geq  \int_{\Omega_d}  V^2 \Big|\nabla\big(\frac{w_2}{V}\big)\Big|^2\Big]\,dx\,dy.
	\label{Eq:UV-1b}
\end{align}

By a density argument (as in Step 2 of the proof of Theorem \ref{Thm:S=>M}), it suffices to consider the case in which $w \in C_c^\infty(\bar\Omega_d,\RR^2)$. Since a point has zero Newtonian capacity in two dimensions, we may further assume that $w \in C_c^\infty(\bar\Omega_d \setminus \{(0,\pm d)\},\RR^2)$. Noting that by \eqref{Eq:U}, \eqref{Eq:V} and the Hopf lemma,
\[
\partial_y U < 0 \text{ on } \{y = 0\} \quad \text{ and } \quad \partial_x V > 0 \text{ on } \{x = 0, -d < y < d\}.
\]
Hence, the argument at the start of Step 1 of the proof of Theorem \ref{Thm:S=>M} shows that $\eta := \frac{w_1}{U}$ and $\zeta := \frac{w_2}{V}$ belong to $C_c^{0,1}(\bar\Omega_d \setminus \{(0,\pm d)\})$. 

Now, using
\[
\begin{cases}
-\Delta U = (1 - U^2 - V^2) U,\\
-\Delta V = (1 - U^2 - V^2) V,
\end{cases}
\]
we compute
\begin{align*}
&\int_{\Omega_d} \Big[|\nabla w_1|^2 - (1 - U^2 - V^2) w_1^2 \Big]\,dx\,dy\\
	&\qquad= \int_{\Omega_d} \Big[|\nabla (U\eta)|^2 - (1 - U^2 - V^2) U^2 \eta^2 \Big]\,dx\,dy\\
	&\qquad= \int_{\Omega_d} \Big[U^2 |\nabla \eta|^2 + \nabla (\eta^2 U) \cdot \nabla U - (1 - U^2 - V^2) U^2 \eta^2 \Big]\,dx\,dy\\
	&\qquad= \int_{\Omega_d} U^2 |\nabla \eta|^2 \,dx\,dy,
\end{align*}
and
\begin{align*}
&\int_{\Omega_d} \Big[|\nabla w_2|^2 - (1 - U^2 - V^2) w_2^2 \Big]\,dx\,dy\\
	&\qquad= \int_{\Omega_d} \Big[|\nabla (V\zeta)|^2 - (1 - U^2 - V^2) V^2 \zeta^2 \Big]\,dx\,dy\\
	&\qquad= \int_{\Omega_d} \Big[V^2 |\nabla \zeta|^2 + \nabla (\zeta^2 V) \cdot \nabla V - (1 - U^2 - V^2) V^2 \zeta^2 \Big]\,dx\,dy\\
	&\qquad= \int_{\Omega_d} V^2 |\nabla \zeta|^2 \,dx\,dy.
\end{align*}
This concludes the proof of \eqref{Eq:UV-1a}, \eqref{Eq:UV-1b} and hence \eqref{Eq:UV-1}.

\medskip
\noindent\underline{Step 3:} We prove that if $u$ is a minimizer of $\mcF_d$ in $\mcA_d$, then either $u = (U,V)$ or $u = (-U,V)$.

Since both $u$ and $(U,V)$ are minimizers of $\mcF_d$ in $\mcA_d$, we have by \eqref{Eq:UV-1} that $\frac{u_1 - U}{U}$ and $\frac{u_2 - V}{V}$ are constant in $\Omega_d$. In other words, $u_1 = c_1U$ and $u_2 = c_2 V$ for some constants $c_1, c_2$. By the boundary condition for $V$ as $x \rightarrow \infty$, we must have that $c_2 = 1$ and $u_2 = V$. It follows that, on one hand,
\[
-\Delta u_2 = (1 - u_1^2 - u_2^2)u_2 = (1 - c_1^2 U^2 - V^2)V,
\]
and on the other hand
\[
-\Delta u_2 = -\Delta V = (1 - U^2 - V^2)V.
\]
Since $U < 0$ and $V > 0$ in $Q_d$, these identities imply that $c_1^2 = 1$, i.e. $u_1 = \pm U$. The proof is complete.
\end{proof}

\begin{proof}[Proof of Theorem \ref{Thm1}]
The theorem follows from Corollary \ref{Cor:d1} and Theorem \ref{Thm:UV}.
\end{proof}

\begin{remark}\label{Rem:PD}
As $x \rightarrow \infty$, $u_d^\pm - u_s$ decays faster than any polynomials. In fact, for any $k > 0$ and bounded interval $I \subset (\sqrt{2}\pi/2,\infty)$, 
\begin{equation}
\sup_{d \in I} \sup_{(x,y) \in \Omega_d} (1 + |x|)^k |u_d^\pm - u_s|(x,y) < \infty.
	\label{Eq:UD}
\end{equation}
\end{remark}

\begin{proof}
\underline{Step 1:} Let us first prove that $(U_d,V_d) - u_s$ decays faster than any polynomials for a fixed $d > \sqrt{2}\pi/2$. 

We abbreviate $(U,V) = (U_d,V_d)$. It is convenient to extend $(U,V)$ to all of $\RR^2$ by repeatedly and evenly reflecting them across the lines $y = (2\ell + 1)d$ for $\ell \in \ZZ$.

We will only consider the asymptotic as $x \rightarrow +\infty$, as the other side then follows from the phase imprinting condition \eqref{Eq:Imprinting}. Let
\[
\omega(x) = \sup_{s > x} (|U(s,y)| + |V(s,y) - 1|).
\]
Fix $R_0 > 2$ such that $\frac{1}{2} \leq V(x,y) < 1$ for $x \geq R_0$.

In the rest of this step, $C$ denotes a constant depending only on $R_0$ and $d$ which may change from lines to lines.

Let $\eta \in C^\infty(\RR)$ be such that $\eta = 0$ in $(-\infty,1)$, $\eta = 1$ in $(2,+\infty)$ and $|\eta'| \leq 2$. Let $R > R_0$ and $\eta_R(x) = \eta(x/R)$. Multiplying the equation $-\Delta (V - 1) = (1 - U^2 - V^2) V$ by $\eta_R^2(V - 1)$ and integrating over $\Omega_d$, we get
\[
\int_{\Omega_d} \eta_R^2 \big[|\nabla V|^2 + (1 - U^2 - V^2)|V||V -1|\big]\,dx\,dy
	=\int_{\Omega_d} 2\eta_R \eta_R' (V-1)  \partial_x V\,dx\,dy.
\]
Using Cauchy-Schwarz' inequality, we deduce that
\begin{multline}
\int_{2R}^\infty \int_{-d}^d\big[|\nabla V|^2 + (1 - U^2 - V^2)|V||V - 1|\big]\,dy\,dx\\
	 \leq \frac{16}{R^2}  \int_{R}^{2R} \int_{-d}^d   (V-1)^2 \,dy\,dx \leq C R^{-1}\omega(R)^2.
	\label{Eq:D1}
\end{multline}
In a similar vein, we test $-\Delta U = (1 - U^2 - V^2) U$ against $\eta_R^2U$ and obtain
\[
\int_{2R}^\infty \int_{-d}^d \big[|\nabla U|^2 - (1 - U^2 - V^2)U^2\big]\,dy\,dx
	\leq \frac{16}{R^2}  \int_{R}^{2R} \int_{-d}^d  U^2 \,dy\,dx \leq CR^{-1}\omega(R)^2.
\]
Using $(1 - U^2 - V^2)U^2 \leq (1 - U^2 - V^2)(1 - V^2) \leq C(1 - U^2 - V^2)|V||V - 1|$, we obtain from \eqref{Eq:D1} that
\[
\int_{2R}^\infty \int_{-d}^d  (1 - U^2 - V^2)U^2 \,dy\,dx \leq CR^{-1}\omega(R)^2.
\]
Putting the last two estimates together, we get
\begin{equation}
\int_{2R}^\infty \int_{-d}^d  |\nabla U|^2 \,dy\,dx
	\leq C R^{-1}\omega(R)^2.
	\label{Eq:D2}
\end{equation}
In view of \eqref{Eq:D1} and \eqref{Eq:D2}, we may apply elliptic estimates (on square of unit size) to the equation obtained by differentiating \eqref{EL} to get
\begin{equation}
|\nabla U(x,y)| + |\nabla V(x,y)| \leq C x^{ - 1/2}\omega(x/2) \text{ for } x \geq 2.
	\label{Eq:D3}
\end{equation}
Since $U(x,0) = 0$, we deduce from \eqref{Eq:D3} that
\begin{equation}
|U(x,y)| = |U(x,y) - U(x,0)| \leq C x^{ - 1/2}\omega(x/2) \text{ for } x \geq 2.
	\label{Eq:D3X}
\end{equation}

Recall that the function $\Psi := 1 - U^2 - V^2$ satisfies
\[
\Delta \Psi = - 2|\nabla U|^2 - 2|\nabla V|^2 + 2  (U^2 + V^2) \Psi.
\]
Fix some point $(x_0,y_0)$ with $x_0 > 4R_0$. Then, by \eqref{Eq:D3} and the fact that $V \geq  \frac{1}{2}$ for $x > R_0$, we have  that
\[
-\Delta \Psi +  \frac{1}{2} \Psi \leq  Cx_0^{-1}\omega(x_0/4) \text{ in the disk } D_{x_0/2}(x_0,y_0).
\]
Since $\Psi \leq 1$ on the boundary of the disk, a simple comparison argument gives
\[
\Psi \leq Cx_0^{-1}\omega(x_0/4) + \exp \frac{(x - x_0)^2 + (y - y_0)^2 - \frac{1}{4}x_0^2}{4x_0} \text{ in } D_{x_0/2}(x_0,y_0).
\]
In particular, $\Psi(x_0,y_0) \leq Cx_0^{-1}\omega(x_0/4) + \exp(-x_0/16)$. We thus have
\begin{equation}
0 < 1 - U^2(x,y) - V^2(x,y) \leq Cx^{-1}\omega(x/4) + \exp(-x/16) \text{ for } x \geq 4.
	\label{Eq:D4}
\end{equation}
Combining \eqref{Eq:D3X} and \eqref{Eq:D4}, we obtain
\[
0 < 1 - V(x,y) \leq 1 - V^2(x,y) \leq Cx^{-1}\omega(x/4) + \exp(-x/16) \text{ for } x \geq 4,
\]
and hence
\begin{equation}
\omega(x) \leq Cx^{-1/2}\omega(x/4) + \exp(-x/16) \text{ for } x \geq 4.
	\label{Eq:D5}
\end{equation}
In particular, for every $k \geq 0$, there exists $C_k > 0$ such that
\[
\sup_{x > 1} \omega(x) x^{k+1/2} \leq C_k \sup_{x > 1} \omega(x) x^{k} + C_k.
\]
As $\omega$ is bounded, this shows that $\omega$ decays faster than any polynomials as $x \rightarrow \infty$, as desired.

\medskip
\noindent\underline{Step 2:} We turn to the proof of \eqref{Eq:UD}. By inspecting the argument in Step 1, we only need to show that the constant $R_0$ can be chosen independent of $d \in I$. 

In this step, $C$ denotes a universal constant larger than $1$, which is in particular independent of $d \in I$.

Suppose by contradiction that the constant $R_0$ cannot be chosen independent of $d \in I$. Then there exist sequences $\{d_m\} \subset I$ and $\{R_m\} \subset (1,\infty)$ such that $d_m \rightarrow d_\infty \in \bar I$, $R_m \geq m$ such that 
\begin{equation}
\max_{y \in [-d_m,d_m]} V_{d_m}(R_m,y) < \frac{1}{2}.
	\label{Eq:UD-1}
\end{equation}

By abusing of notation, we denote $(U_{\sqrt{2}d/2}, V_{\sqrt{2}d/2}) := u_s$. By the minimality of $(U_d,V_d)$ (see Corollary \ref{Cor:d1} and Theorem \ref{Thm:UV}), we have
\begin{equation}
\sup_{d \in \bar I} \mcF_d[U_d,V_d] \leq \sup_{d \in \bar I} \mcF_d[u_s] < \infty. 
	\label{Eq:UD-0}
\end{equation}
By elliptic estimates, we have 
\[
\sup_{d \in \bar I} \sup_{\Omega_d} \big[|\nabla^\ell U_d| + |\nabla^\ell V_d|\big] < \infty.
\]
Hence, by Theorem \ref{Thm:UV}, $(U_d,V_d)$ depends smoothly on $d \in I$. In addition, by the minimality of $(U_d,V_d)$, as $d \rightarrow \sqrt{2}d/2$, $(U_d,V_d)$ tends to a minimizer of $\mcF_{\sqrt{2}d/2}$, which must be $u_s$, by Corollary \ref{Cor:d1}. Hence $(U_d,V_d)$ depends smoothly on $d \in \bar I$. 

Let
\[
\bar V_d(x) = \frac{1}{2d}\int_{-d}^d V_d(x,y)\,dy.
\]
Since $-\Delta V_d = (1-U_d^2 - V_d^2) V_d > 0$ in $\{x > 0\}$, the function $\bar V_d$ is concave in $(0,\infty)$. As $V_d(x) \rightarrow 1$ as $x \rightarrow +\infty$, it follows that $\bar V_d' > 0$ in $(0,\infty)$ and $\bar V_d'(x) \searrow 0$ as $x \rightarrow +\infty$.

Since $|\nabla V_d| \leq C$, we deduce from \eqref{Eq:UD-1} that there exists an subinterval of $[-d_m,d_m]$ of length at least $\frac{1}{C}$ on which $V_{d_m}(R_m,\cdot) < \frac{3}{4}$. Since $V_{d_m} > 0$ in $\{x > 0\}$, it follows that $\bar V_{d_m}(R_m) \leq 1 - \frac{1}{C}$. As $\bar V_{d_m}' > 0$ in $(0,\infty)$, it further follows that $\bar V_{d_m}(x) \leq  1 - \frac{1}{C}$ for $x \in [0,R_m]$. Sending $m \rightarrow \infty$, we deduce that
\[
\bar V_{d_\infty}(x) \leq  1 - \frac{1}{C} \text{ in } [0,\infty),
\]
which contradicts the fact that $V_{d_\infty}(x,y) \rightarrow 1$ uniformly as $x \rightarrow \infty$. The proof is complete.
\end{proof}

\begin{remark}\label{Rem:uvUnstable}
We note that, like the soliton solution, the solitonic vortex solution $u_d^\pm$ are unstable critical points of $\mcF_d$ in $\mcA_d^{\rm x}$ for $d > \sqrt{2}\pi/2$. That is, they are unstable when the symmetry condition \eqref{Eq:Imparity} is removed.
\end{remark}

\begin{proof}
It suffices to consider the stability of $u_d^+$, which we abbreviate as $u_d$. The underlying idea is the following: If we consider $u_d$ as a map defined on $\RR^2$ which is $2d$-periodic with respect to $y$, then the restriction of its translations in the $y$-direction to $\Omega_d$ has the same energy as $u_d$. Since these translated maps do not have zero Neumann derivative along $\partial\Omega_d$, we can perturb them slightly to achieve smaller energy. Proceeding in this way, we seek $w \in H^1(\Omega_d,\RR^2)$ satisfying the phase imprinting condition \eqref{Eq:Imprinting} such that
\begin{equation}
II[\partial_y u_d + w] := \frac{d^2}{dt^2}\Big|_{t=0} \mcF_d[u_d + t(\partial_y u_d + w)] < 0.
	\label{Eq:uvUnstab-1}
\end{equation}

We compute
\begin{align*}
&II[\partial_y u_d + w]\\
	&\quad= \frac{1}{2} \int_{\Omega_d} \Big\{|\nabla (\partial_y u_d + w)|^2 - (1 - |u_d|^2) |\partial_y u_d + w|^2 + 2 [u_d \cdot(\partial_y u_d + w)]^2\Big\}\,dx\,dy\\
	&\quad = \frac{1}{2} \int_{\Omega_d} \Big\{|\nabla (\partial_y u_d)|^2 - (1 - |u_d|^2) |\partial_y u_d |^2 + 2 [u_d \cdot(\partial_y u_d) ]^2\Big\}\,dx\,dy\\
		&\quad\quad +    \int_{\Omega_d} \Big\{ \nabla (\partial_y u_d) : \nabla w - (1 - |u_d|^2) (\partial_y u_d) \cdot w + 2 [u_d \cdot(\partial_y u_d)] ( u_d \cdot w) \Big\}\,dx\,dy\\
		&\quad\quad +   \frac{1}{2} \int_{\Omega_d} \Big\{|\nabla w|^2 - (1 - |u_d|^2) |w|^2 + 2 (u_d \cdot w )^2\Big\}\,dx\,dy\\
	&\quad =: T_1 + T_2(w) + T_3(w).
\end{align*}

Note that $\partial_y u_d$ satisfies
\begin{equation}
\begin{cases}-\Delta \partial_y u_d = (1 - |u_d|^2) \partial_y u_d - 2 [u_d \cdot (\partial_y u_d)] u_d  \text{ in } \Omega_d,\\
\partial_y u_d = 0 \text{ on } \partial \Omega_d.
\end{cases}
	\label{Eq:pyud}
\end{equation}
Testing \eqref{Eq:pyud} agains $\partial_y u_d$, we see that $T_1 = 0$. (Alternatively, $T_1$ is the second variation of $\mcF_d$ at $u_d$ and along the translations of $u_d$ and is thus zero.) Therefore
\[
II[\partial_y u_d + w] = T_2(w) + T_3(w).
\]
Since $T_2(w)$ is linear in $w$ and $T_3(w)$ is quadratic in $w$, in order to achieve \eqref{Eq:uvUnstab-1}, it suffices to show the existence of some $w \in H^1(\Omega_d,\RR^2)$ satisfying the phase imprinting condition \eqref{Eq:Imprinting} such that $T_2(w) \neq 0$. To this end, we test \eqref{Eq:pyud} against $w$ and obtain
\begin{equation}
T_2(w) = - \int_{-\infty}^\infty \partial_y^2 u_d(x,d) \cdot w(x,d)\,dx +  \int_{-\infty}^\infty \partial_y^2 u_d(x,-d) \cdot w(x,-d)\,dx.
	\label{Eq:uvUnstab-2}
\end{equation}
Now, by unique continuation (applied to \eqref{Eq:pyud}, considered as a linear system of PDE for $\partial_y u$ with homogeneous Dirichlet boundary condition), if $\partial_y^2 u_d \equiv 0$ on $\partial\Omega_d$, then $\partial_y u_d \equiv 0$ in $\Omega_d$. Since $U_d = 0$ on the $x$-axis, this implies $U_d \equiv 0$ in $\Omega_d$, contradicting the fact that $U_d < 0$ in $\{0 < y < d\}$. Thus, $\partial_y^2 u_d$ is nonzero somewhere on $\partial\Omega_d$, say in a neighborhood of a point $p \in \partial\Omega_d$. Since $\partial_y^2 u_d$  satisfies the phase imprinting condition \eqref{Eq:Imprinting}, it is also non-zero in a neighborhood of the mirror image $\tilde p$ of $p$ about the $y$-axis. It is now clear from \eqref{Eq:uvUnstab-2} that we may choose $w$ satisfying the phase imprinting condition \eqref{Eq:Imprinting} and supported near $p$ and $\tilde p$ such that $T_2(w) \neq 0$. The proof is complete.
\end{proof}

\subsection{Minimizing $\mcF_d$ in other subspaces of $\mcA_d^{\rm x}$}\label{SSec2.2}

Recall that, for positive integer $k$ and $d$ larger than $\sqrt{2}\pi k/2$ but close to this value, it was shown in \cite{AGS} that there are exactly two (local) branches of critical points bifurcating from the soliton solution, which we denoted by $u_{d,k}^\pm$ in the introduction. The natural symmetry property of these critical points is
\begin{equation}
\begin{cases}
u_1(x,y) = -u_1(x,\tilde y), u_2(x,y) = u_2(x,\tilde y) &\text{ if } \frac{y + \tilde y}{2} \in \{-d + \frac{(2j+1)d}{k}: j = 0, \ldots, k-1\},\\
u_1(x,y) = u_1(x,\tilde y), u_2(x,y) = u_2(x,\tilde y) &\text{ if } \frac{y + \tilde y}{2} \in \{-d + \frac{2jd}{k}: j = 1, \ldots, k-1\}.
\end{cases}
	\label{Eq:kSym}
\end{equation}
In other words, if we equally subdivide $\Omega_d$ into $k$ substrips of width $\frac{2d}{k}$, then \eqref{Eq:kSym} demands that $u_1$ is even about the boundary of the substrips and odd about the center lines of the substrips while $u_2$ is even about both the boundary and the center lines of the substrips.

Let $\mcA_d^{k}$ be the set of maps in $u_s + H^1(\Omega_d,\RR^2)$ satisfying the symmetry conditions \eqref{Eq:Imprinting} and \eqref{Eq:kSym}. Note that $\mcA_d^1 = \mcA_d$ and $\mcA_d^k \subset \mcA_d^{\rm x}$ for all $k$ and $\mcA_d^k \subset \mcA_d$ for all odd $k$. Moreover, 
\[
\mcF_d[u] = k\mcF_{d/k}[u_\flat] \text{ with } u_\flat(x,y) := u(x, y + d/k - d).
\]
Hence, $u \in \mcA_d^k$ is a minimizer of $\mcF_d$ in $\mcA_d^k$ if and only if $u_\flat$ is a minimizer of $\mcF_{d/k}$ in $\mcA_{d/k}$. An immediate consequence of this statement and Theorem \ref{Thm1} is the following result.

\begin{theorem}\label{Thm2}
Let $N = 2$ and $k \in \NN^*$. If $0 < d \leq \sqrt{2}\pi k /2$, then the soliton solution $u_s$ is the unique minimizer of $\mcF_d$ in $\mcA_d^k$. If $d >  \sqrt{2}\pi k /2$, then $\mcF_d$ has exactly two minimizers $u_{d,k}^\pm$ in $\mcA_d^k$ given by
\[
u_{d,k}^\pm(x,y) = \begin{cases}
	\displaystyle u_{d/k}^\pm\Big(x, \big\{\frac{(y+d)k}{2d}\big\} \frac{2d}{k} - \frac{d}{k}\Big)
		&\text{ if } \Big\lfloor\frac{(y+d)k}{2d}\Big\rfloor \text{ is even},\\
	\displaystyle  u_{d/k}^\pm\Big(x, -\big\{\frac{(y+d)k}{2d}\big\} \frac{2d}{k} + \frac{d}{k}\Big) 
		&\text{ if } \Big\lfloor\frac{(y+d)k}{2d}\Big\rfloor \text{ is odd},
\end{cases}
\]
where $u_{d/k}^\pm$ are the miminizers of $\mcF_{d/k}$ in $\mcA_{d/k}$ given by Theorem \ref{Thm1} and $\{\cdot\}$ denotes the fractional part of a real number.
\end{theorem}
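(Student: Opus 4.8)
The plan is to deduce the statement from Theorem~\ref{Thm1} via the folding--unfolding correspondence between $\mcA_d^k$ and $\mcA_{d/k}$ foreshadowed just above the statement. The first step is to make this correspondence precise at the level of the function spaces. Given $u \in \mcA_d^k$, set $u_\flat(x,y) = u(x, y + \tfrac{d}{k} - d)$ for $|y| < \tfrac dk$; this is the restriction of $u$ to the bottom substrip $\{-d < y < -d + \tfrac{2d}{k}\}$, recentered at $\{y = 0\}$. Reading \eqref{Eq:kSym} with $\tfrac{y + \tilde y}{2} = -d + \tfrac dk$ (the $j = 0$ instance of the first line) shows that $u_{\flat,1}$ is odd in $y$ and $u_{\flat,2}$ is even in $y$; together with \eqref{Eq:Imprinting}, which $u_\flat$ inherits because translation in $y$ commutes with reflection in $x$, this is precisely \eqref{Eq:Imparity}. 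Since $u_\flat - u_s$ is a translate of the restriction of $u - u_s$ to a substrip, it belongs to $H^1(\Omega_{d/k},\RR^2)$, so $u_\flat \in \mcA_{d/k}$. Conversely, given $v \in \mcA_{d/k}$, one unfolds it by placing a recentered copy of $v$ on the bottom substrip and reflecting it evenly and successively across the interior interfaces $\{y = -d + \tfrac{2jd}{k}\}$, $j = 1, \dots, k-1$. Even reflections preserve $H^1$-regularity and, since $u_s$ is independent of $y$, this produces a well-defined element $u \in u_s + H^1(\Omega_d,\RR^2)$. A short computation using the oddness of $v_1$ and evenness of $v_2$ in $y$ confirms that $u$ satisfies all of \eqref{Eq:kSym} --- in particular the oddness of $u_1$ about each substrip center line follows automatically from the oddness of $v_1$ in $y$ together with the even reflections --- as well as \eqref{Eq:Imprinting}; hence $u \in \mcA_d^k$. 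The two operations $u \mapsto u_\flat$ and $v \mapsto u$ are mutually inverse, so $u \mapsto u_\flat$ is a bijection of $\mcA_d^k$ onto $\mcA_{d/k}$.

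Next I would record the energy identity $\mcF_d[u] = k\, \mcF_{d/k}[u_\flat]$ already quoted above: each of the $k$ substrips carries a translate of $u_\flat$ up to a reflection in $y$, and neither $|\nabla u_\flat|^2$ nor $|u_\flat|^2$ is affected by such a reflection. Consequently $\inf_{\mcA_d^k}\mcF_d = k \inf_{\mcA_{d/k}}\mcF_{d/k}$ and $u$ minimizes $\mcF_d$ over $\mcA_d^k$ if and only if $u_\flat$ minimizes $\mcF_{d/k}$ over $\mcA_{d/k}$.

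It then remains to apply Theorem~\ref{Thm1} with $d$ replaced by $d/k$. If $0 < d \le \sqrt 2\pi k/2$, then $d/k \le \sqrt 2\pi/2$, so $u_s$ is the unique minimizer of $\mcF_{d/k}$ over $\mcA_{d/k}$; being independent of $y$, its unfolding is again $u_s$, which is therefore the unique minimizer of $\mcF_d$ over $\mcA_d^k$. If $d > \sqrt 2\pi k/2$, then $d/k > \sqrt 2\pi/2$, so $\mcF_{d/k}$ has exactly two minimizers $u_{d/k}^\pm = (\pm U_{d/k}, V_{d/k})$ over $\mcA_{d/k}$, and their unfoldings are the only two minimizers of $\mcF_d$ over $\mcA_d^k$. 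Identifying the unfolding with the displayed formula is then bookkeeping: with $m = \lfloor \tfrac{(y+d)k}{2d}\rfloor$ the index of the substrip containing $y$ and $\big\{\tfrac{(y+d)k}{2d}\big\}\tfrac{2d}{k} - \tfrac dk \in [-\tfrac dk, \tfrac dk)$ the signed distance from $y$ to the center of that substrip, the alternating even reflections amount to evaluating $u_{d/k}^\pm$ at this signed distance when $m$ is even and at its negative when $m$ is odd, which is exactly the formula in the statement; the global sign flip in the first component survives the unfolding because it commutes with the reflections.

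The only genuine content lies in the first paragraph --- the verification that \eqref{Eq:kSym} restricted to one substrip is equivalent to \eqref{Eq:Imparity} and that the unfolding is well defined on $H^1$ --- and this is routine. I do not expect any analytic obstacle beyond what is already subsumed in Theorem~\ref{Thm1}.
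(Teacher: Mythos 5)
Your proposal is correct and follows essentially the same route as the paper, which simply records the folding identity $\mcF_d[u] = k\,\mcF_{d/k}[u_\flat]$ with $u_\flat(x,y) = u(x, y + d/k - d)$, observes that $u$ minimizes $\mcF_d$ in $\mcA_d^k$ iff $u_\flat$ minimizes $\mcF_{d/k}$ in $\mcA_{d/k}$, and invokes Theorem \ref{Thm1}. Your write-up merely makes explicit the verification (left implicit in the paper) that folding and unfolding give a bijection between $\mcA_d^k$ and $\mcA_{d/k}$, and that verification is accurate.
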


\begin{proof}[Proof of Remark \ref{Rem:Fdk}]
As mentioned earlier, when $k$ is odd, Remark \ref{Rem:Fdk} follows from the fact that $u_{d,k}^\pm \in \mcA_d$ and $u_d^\pm$ are the unique minimizers of $\mcF_d$ in $\mcA_d$. Assume that $k$ is even. Consider the map $v_{d,k}$ obtained by extending $u_{d/k}^\pm$ such that, when we equally subdivide $\Omega_d$ into $k$ substrips of width $\frac{2d}{k}$, the first component of $v_{d,k}$ is odd about the boundary of the substrips and even about the center lines of the substrips while the second component of $v_{d,k}$ is even about both the boundary and the center lines of the substrips. Then $v_{d,k}$ belongs to $\mcA_d$ and has the same energy as $u_{d,k}^\pm$. Moreover, since 
\[
\partial_y (v_{d,k})_1(x,d) = 
\begin{cases}
\partial_y U_{d/k}(x,0) & \text{ if } k \equiv 0 \mod 4,\\
-\partial_y U_{d/k}(x,0) &\text{ if } k \equiv 2 \mod 4,
\end{cases}
\]
and since $\partial_y U_{d/k}(x,0) < 0$ (which can be seen by applying the Hopf lemma to \eqref{Eq:U}), we have that $\partial_y   (v_{d,k})_1 \neq 0$ at $y = d$, and hence $v_{d,k}$ is different from $u_d^\pm$. We again have $\mcF_d[u_d^\pm] < \mcF_d[v_{d,k}] = \mcF_d[u_{d,k}^\pm]$.
\end{proof}

\subsection{Mountain pass characterization of $2D$ soliton and solitonic vortex}\label{SSec2.3}

In this section, we prove Theorem \ref{Thm:MPC}. We start with the case $0 < d \leq \sqrt{2}\pi/2$.

\begin{lemma}\label{Lem:M1}
Theorem \ref{Thm:MPC} holds for $0 < d \leq \sqrt{2}\pi/2$.
\end{lemma}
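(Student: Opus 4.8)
\textbf{Plan of proof for Lemma \ref{Lem:M1}.}

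The goal is to show, for $0 < d \leq \sqrt{2}\pi/2$, that $c_d = \mcF_d[u_s]$, i.e.\ to prove the matching upper and lower bounds $c_d \leq \mcF_d[u_s]$ and $c_d \geq \mcF_d[u_s]$.

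For the \emph{upper bound}, I would construct an explicit admissible path $h \in \Gamma_d$ along which the maximal energy is at most $\mcF_d[u_s]$. The idea is to go $\psi_n^- \leadsto u_s \leadsto \psi_n^+$ in three stages. The middle point is $u_s$; by the normalization \eqref{nCond}, $\mcF_d[\psi_n^\pm] < \mcF_d[u_s]$, so the endpoints already lie below the soliton level. The first leg: use the instability of $u_s$ in $\mcA_d^{\rm x}$ (Remark \ref{Rem:usUnstable}, via the perturbation $u_s + t(\mathrm{sech}(x/\sqrt2),0)$) to push $u_s$ down in energy to a map whose first component is (say) everywhere $\geq 0$ while staying in $\mcA_d^{\rm x}$ — this is a strictly energy-decreasing motion near $t=0$, so it dips below $\mcF_d[u_s]$. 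Once the first component is $\geq 0$, I claim the map can be continuously deformed to $\psi_n^+$ (the $\mathbb S^1$-valued map with $+\cos\chi$ first component) with essentially no increase of energy: e.g.\ first homotope to an $\mathbb S^1$-valued map by scaling the modulus up towards $1$ radially (which only decreases the potential term and controls the Dirichlet term — some care is needed where $|u|$ is small, but since the first component does not change sign one can lift to a phase and straighten it), then contract the phase to that of $\psi_n^+$, rescaling the $x$-variable to match $\chi(x/n)$. The second leg (from $u_s$ towards $\psi_n^-$) is the mirror image under $(u_1,u_2)\mapsto(-u_1,u_2)$, which is an energy-preserving symmetry of $\mcF_d$ and maps $\psi_n^+\mapsto\psi_n^-$. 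Concatenating and reparametrising on $[-1,1]$ gives $h\in\Gamma_d$ with $\max_t \mcF_d[h(t)] \leq \mcF_d[u_s]$, hence $c_d \leq \mcF_d[u_s]$.

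For the \emph{lower bound} $c_d \geq \mcF_d[u_s]$, the key point is a min-max obstruction argument using inequality \eqref{Eq:MPI-1}. Let $h\in\Gamma_d$ be arbitrary. Since $\psi_n^+$ and $\psi_n^-$ are not homotopic as $\mathbb S^1$-valued maps and, more to the point, because the phase-imprinting condition \eqref{Eq:Imprinting} forces the first component $h(t)_1(0,y)$ to be the only freedom on the axis $\{x=0\}$ while $h(t)_2(0,y)\equiv 0$ there: at $t=-1$ the slice $h(-1)(0,\cdot) = \psi_n^-(0,\cdot) = (-1,0)$, and at $t=1$ it is $(+1,0)$. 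Hence the continuous function $t \mapsto \int_{-d}^{d} h(t)_1(0,y)\,dy$ goes from $-2d$ to $+2d$, so there is some $t_\ast \in (-1,1)$ with $\int_{-d}^d h(t_\ast)_1(0,y)\,dy = 0$. Write $u = h(t_\ast) = u_s + w$ with $w \in H^1(\Omega_d,\RR^2)$. Exactly as in the first lines of the proof of Theorem \ref{Thm:S=>M} (the computation \eqref{Eq:usM-0}),
\[
\mcF_d[u] - \mcF_d[u_s] = \frac12\int_{\Omega_d}\Big(|\nabla w|^2 - (1-|u_s|^2)|w|^2 + \tfrac12(2u_s\cdot w + |w|^2)^2\Big)\,dx\,dy
	\geq \frac12\int_{\Omega_d}\big(|\nabla w|^2 - (1-|u_s|^2)|w|^2\big)\,dx\,dy.
\]
Now split $w = (w_1, w_2)$: for $w_2$ we have $\int (|\nabla w_2|^2 - (1-|u_s|^2)w_2^2) \geq 0$ by \eqref{Eq:usM-1} of Theorem \ref{Thm:S=>M} (since $u_s + w \in \mcA_d^{\rm x}$, $w_2$ is odd in $x$, so $w_2(0,\cdot)=0$, which is the hypothesis of \eqref{Eq:usM-1}); for $w_1$ we have $w_1(0,y) = u_1(0,y)$ (as $u_{s,1}=0$), and $\int_{-d}^d w_1(0,y)\,dy = \int_{-d}^d u_1(0,y)\,dy = 0$ by the choice of $t_\ast$, so \eqref{Eq:MPI-1} applies and gives $\int(|\nabla w_1|^2 - (1-|u_s|^2)w_1^2)\geq 0$. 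Therefore $\mcF_d[u] \geq \mcF_d[u_s]$, whence $\max_{t}\mcF_d[h(t)] \geq \mcF_d[u_s]$; taking the infimum over $h\in\Gamma_d$ gives $c_d\geq\mcF_d[u_s]$. Combined with the upper bound, $c_d=\mcF_d[u_s]$.

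\textbf{Main obstacle.} The routine parts are the second-variation computation and the application of \eqref{Eq:usM-1}. The genuinely delicate part is the explicit path for the upper bound, specifically making the deformation "from a map with nonnegative first component to $\psi_n^+$ with controlled energy" rigorous: near the zero set of such a map one must lift carefully to a phase, and one must interpolate the $x$-scale of the phase to land exactly on $\chi(x/n)$ while keeping the energy below $\mcF_d[u_s]$ (rather than merely bounded). I would handle this by first deforming $u_s$ (through its unstable direction) not just to "first component $\geq 0$" but all the way to an explicit $\mathbb S^1$-valued map of the form $(\cos\theta(x),\sin\theta(x))$ with $\theta$ a fixed odd profile — doing the energy bookkeeping once on this one-dimensional family — and then noting that the space of such maps with $\theta$ ranging over odd decreasing profiles from $0$ to $\pi/2$ is path-connected with energy tending to $0$, so it connects to $\psi_n^+$ below the soliton level. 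This keeps all the estimates one-dimensional and explicit.
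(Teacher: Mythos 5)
Your proposal is correct and follows essentially the same route as the paper: the lower bound via the intermediate-value selection of a slice with $\int_{-d}^d h(t_*)_1(0,y)\,dy = 0$, the expansion \eqref{Eq:usM-0}, the component split with \eqref{Eq:usM-1} handling $w_2$, and the upper bound via the unstable direction $(\mathrm{sech}(x/\sqrt2),0)$ followed by a $y$-independent polar-coordinate deformation of modulus and phase to $\psi_n^\pm$. The one caveat is that you invoke \eqref{Eq:MPI-1} as a black box for the $w_1$ term, whereas in the paper that inequality is actually \emph{proved inside this very lemma}: one expands $w_1(x,y) = a_0(x) + \sum_{k\ge1}\big(a_k(x)\cos\frac{k\pi y}{2d} + b_k(x)\sin\frac{k\pi y}{2d}\big)$, notes that the zero-average condition gives $a_0(0)=0$ so that \eqref{Eq:usM-1} handles the zeroth mode, and treats each higher mode by the one-dimensional factorization $\zeta = \eta\,\mathrm{sech}\frac{x}{\sqrt2}$ together with the gain $\frac{k^2\pi^2}{4d^2} \ge \frac12$ valid for $k\ge1$ and $d\le\sqrt2\pi/2$; to make your argument self-contained you should supply this short Fourier computation rather than cite \eqref{Eq:MPI-1}, since the paper offers no proof of it elsewhere.
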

\begin{proof}
\underline{Step 1:} We show that $c_d \geq \mcF_d[u_s]$ when $d \leq \sqrt{2}\pi/2$.

Indeed, fix $h \in \Gamma_d$. Note that, by definition, $\psi_n^\pm(0,\cdot) = \pm 1$. Thus, by continuity, there exists $t_0 \in (-1,1)$ such that
\begin{equation}
\int_{-d}^d h(t_0)_1(0,y)\,dy = 0.
	\label{Eq:ZA-1}
\end{equation}
To conclude this step, it suffices to show that $\mcF_d[h(t_0)] \geq \mcF_d[u_s]$. Let $w = h(t_0) - u_s$. As in the proof of Theorem \ref{Thm:S=>M}, it suffices to show that
\[
\int_{\Omega_d} [|\nabla w|^2 - (1 - |u_s|^2)|w|^2]\,dx\,dy \geq 0.
\]
By the phase imprinting condition \eqref{Eq:Imprinting}, we have $w_2(0,\cdot) = 0$. Hence, by \eqref{Eq:usM-1},  
\[
\int_{\Omega_d} [|\nabla w_2|^2 - (1 - |u_s|^2)w_2^2]\,dx\,dy \geq 0.
\]
It remains to prove that
\begin{equation}
\int_{\Omega_d} [|\nabla w_1|^2 - (1 - |u_s|^2)w_1^2]\,dx\,dy \geq 0.
	\label{Eq:ZA-2}
\end{equation}

To prove \eqref{Eq:ZA-2}, we write
\[
w_1(x,y) = a_0(x) + \sum_{k=1}^\infty \Big(a_k(x) \cos\frac{k\pi y}{2d} + b_k(x) \sin\frac{k\pi y}{2d}\Big).
\]
Then \eqref{Eq:ZA-2} becomes
\begin{multline}
2d\int_{-\infty}^\infty [(a_0')^2 - (1 - |u_s|^2)a_0^2]\,dx \\
	+ d\sum_{k=1}^\infty \int_{-\infty}^\infty \Big[(a_k')^2 + (b_k')^2 - \Big(1 - \frac{k^2\pi^2}{4d^2} - |u_s|^2\Big)(a_k^2 + b_k^2)\Big]\,dx \geq 0.
	\label{Eq:ZA-3}
\end{multline}
Observe that by \eqref{Eq:ZA-1}, $a_0(0) = 0$. We may then apply \eqref{Eq:usM-1} to obtain
\[
\int_{-\infty}^\infty [(a_0')^2 - (1 - |u_s|^2)a_0^2]\,dx \geq 0. 
\]
Therefore, to prove \eqref{Eq:ZA-3}, it suffices to show that
\begin{equation}
\int_{-\infty}^\infty \Big[(\zeta')^2    - \Big(1 - \frac{\pi^2}{4d^2} - |u_s|^2\Big)\zeta^2 \Big]\,dx   \geq 0,
	\label{Eq:ZA-4}
\end{equation}
where $\zeta$ stands for either $a_k$ or $b_k$. To this end, we recall the function $A(x) = \mathrm{sech}\frac{x}{\sqrt{2}}$ and write $\zeta = A \eta$. Using
\[
A'' = \Big(\frac{1}{2} - \mathrm{sech}^2\frac{x}{\sqrt{2}}\Big)A,
\]
we compute
\begin{align*}
&\int_{-\infty}^\infty \Big[(\zeta')^2    - \Big(1 - \frac{\pi^2}{4d^2} - |u_s|^2\Big)\zeta^2 \Big]\,dx\\
	&\qquad = \int_{-\infty}^\infty \Big[((A\eta)')^2    + \Big(\frac{\pi^2}{4d^2} - \mathrm{sech}^2\frac{x}{\sqrt{2}}\Big)A^2 \eta^2 \Big]\,dx\\
	&\qquad = \int_{-\infty}^\infty \Big[A^2(\eta')^2 + (\eta^2 A)' A'    + \Big(\frac{\pi^2}{4d^2} - \mathrm{sech}^2\frac{x}{\sqrt{2}}\Big)A^2 \eta^2 \Big]\,dx\\
	&\qquad = \int_{-\infty}^\infty \Big[A^2(\eta')^2  + \Big(\frac{\pi^2}{4d^2} - \frac{1}{2}\Big)A^2 \eta^2 \Big]\,dx \geq 0,
\end{align*}
where we have used $0 < d \leq \sqrt{2}\pi/2$. This proves \eqref{Eq:ZA-4} and concludes Step 1.

\bigskip
\noindent
\underline{Step 2:} We show that $c_d \leq \mcF_d[u_s]$ when $d \leq \sqrt{2}\pi/2$.

To this end, we exhibit a path $h \in \Gamma_d$ such that $ \mcF_d[u_s] = \max_{t \in [-1,1]} \mcF_d[h(t)]$. In our construction, $h(t)$ is independent of $y$. We fix $h(0) = u_s$. From the proof of Remark \ref{Rem:usUnstable}, we can fix some small $t_1 > 0$ such that
\[
\mcF_d[u_s + (tA,0)] < \mcF_d[u_s] \text{ for } t \in [-t_1,t_1] \setminus \{0\}.
\]
We declare $h(t) = u_s + (tA,0)$ for $t \in [-t_1,t_1]$ and proceed to construct $h(t)$ for $|t| > t_1$. We will only consider the case $t > t_1$, as the case $t < -t_1$ is similar. 

Noting that $h(t_1)_1 = t_1 A > 0$ and $|h(t_1)|^2 < \mathrm{sech}^2\frac{x}{\sqrt{2}} + \tanh^2 \frac{x}{\sqrt{2}} = 1$ in $\Omega_d$. Thus, we may write
\[
h(t_1)(x) = \rho(x) (\cos\tilde \chi(x), \sin\tilde \chi(x))
\]
where $\rho$ is even, $0 < \rho < 1$, $\tilde \chi$ is odd, $-\pi/2 < \tilde \chi  < \pi/2$,  and $\tilde \chi(\pm \infty) = \pm \pi/2$. For $t > t_1$, we suppose the ansatz
\[
h(t)(x) = \rho_t(x) (\cos\tilde \chi_t(x), \sin\tilde \chi_t(x)).
\]
We have
\[
\mcF_d[h(t)] = 2d\int_{-\infty}^\infty \Big[\frac{1}{2}(\rho_t')^2 + \frac{1}{2}\rho_t^2 (\tilde \chi_t')^2 + \frac{1}{4} (1 - \rho_t^2)^2\Big]\,dx.
\]
We can now define $h(t)$ for $t \in (t_1, t_2]$ for some $t_2$ slightly large than $t_1$ by keeping $\rho_t = \rho$ while $\chi_t$ is obtained by first stretching $\tilde \chi$ and then deforming it to some $\chi_{\tilde n}$ for some very large $\tilde n > n$, so that the contribution of the term $ \rho_t^2 (\tilde \chi_t')^2$ is always no more than $ \rho^2 (\tilde \chi')^2$ and eventually becomes as negligible as desired. We next define $h(t)$ for $t \in (t_2,t_3]$ for some $t_3$ slightly large than $t_2$ by keeping $\chi_t = \chi_{t_2} = \chi_{\tilde n}$, while letting $\rho_t = \frac{t_3 - t}{t_3 - t_2}\rho + \frac{t - t_2}{t_3 - t_2}$. In this deformation, $(\rho_t')^2$ decreases, $\rho_t$ increases, $(1 - \rho_t^2)^2$ decreases, while the term $\rho_t^2 (\tilde \chi_t')^2$ remains as small as one wishes. At $t = t_3$, we have $\rho_{t_3} = 1$ and $\tilde\chi_{t_3} = \chi_{\tilde n}$. For $t > t_3$, we simply keep $\rho_t = 1$ and deform $\chi_t$ back to $\chi_n$. Along the whole path $h$, we have maintained $\mcF_d[h(t)] < \mcF_d[u_s]$ except for the case $t = 0$. This gives the desired path $h \in \Gamma_d$ and concludes Step 2 and the proof.
\end{proof}

Consider next the case $d > \sqrt{2}\pi/2$ where we treat separately the upper bound and lower bound for $c_d$.

\begin{lemma}\label{Lem:M2}
Let $\Gamma_d$ and $c_d$ be as in Theorem \ref{Thm:MPC}. If $d > \sqrt{2}\pi/2$, then
\[
c_d \leq \mcF_d[u_d^\pm].
\]
\end{lemma}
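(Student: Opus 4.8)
The plan is to construct an explicit path $h \in \Gamma_d$ along which the maximum energy is exactly $\mcF_d[u_d^+]$ (the energy of $u_d^-$ being the same), so that $c_d \leq \max_{t} \mcF_d[h(t)] = \mcF_d[u_d^\pm]$. The path is built in three stages. In the middle stage I place $u_d^+$ itself at $t = 0$, i.e. $h(0) = u_d^+$. The idea, as indicated in the introductory discussion, is that $u_d^+$ is exactly the Ginzburg--Landau solution on $\Omega_d$ whose vortex sits at the origin; if we view $(U_d, V_d)$ as defined on all of $\RR^2$ via repeated even reflection across the lines $y = (2\ell+1)d$, then the $y$-translate $(U_d, V_d)(\cdot, \cdot - s)$ restricted to $\Omega_d$ has the same energy as $u_d^+$ for every $s$, because the reflection symmetry makes the energy density $2d$-periodic in $y$ and the translate still satisfies the phase imprinting condition \eqref{Eq:Imprinting}. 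So for $t$ in a first subinterval, say $t \in [0, t_1]$, I would let $h(t)$ be the restriction to $\Omega_d$ of this translated, periodically extended map with $s = s(t)$ increasing from $0$ to $d$; at $t = t_1$ the vortex has reached the boundary line $y = d$ (equivalently, by periodicity, $y = -d$), so $h(t_1)$ has no zero in the open strip $\Omega_d$ and in fact $h(t_1)_1 \geq 0$ everywhere (since after translating the vortex to the top, the sign structure of $U_d$ in \eqref{Eq:UVSign} shows the first component does not change sign on $\Omega_d$). Along this first stage the energy is constantly $\mcF_d[u_d^+]$.

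For the second stage I use the fact, to be used repeatedly in this section, that any map in $\mcA_d^{\rm x}$ with a non-negative first component can be continuously deformed within $\mcA_d^{\rm x}$ to $\psi_n^+$ with essentially no increase in energy (this is the "deformation lemma" the paper announces it will prove; I would invoke it or reproduce its short proof). Concretely, $h(t_1)$ is nowhere zero with $h(t_1)_1 \geq 0$, so writing $h(t_1) = \rho\,(\cos\theta, \sin\theta)$ with $|\theta| < \pi/2$ (using $|h(t_1)| \leq 1$ with equality only possibly in the limit, or truncating first), one first homotopes $\rho \nearrow 1$ — which only decreases the potential term and the $|\nabla \rho|^2$ term and at worst rescales the $\rho^2|\nabla\theta|^2$ term downward — and then homotopes the phase $\theta$ to $\chi(\cdot/n)$, stretching and flattening it so the Dirichlet energy of the phase becomes as small as desired. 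Arranging the schedule so every intermediate map has energy $\leq \mcF_d[u_d^+]$, this brings $h$ from $h(t_1)$ to $\psi_n^+$ by $t = 1$. The third stage is the mirror construction on $t \in [-1, 0]$: apply the sign flip $(u_1, u_2) \mapsto (-u_1, u_2)$ throughout, which sends $u_d^+$ to $u_d^-$ (same energy, same sign-structure argument) and $\psi_n^+$ to $\psi_n^-$; so $h(-1) = \psi_n^-$, $h$ is continuous at $t = 0$, and the energy bound is preserved. This yields $h \in \Gamma_d$ with $\max_t \mcF_d[h(t)] = \mcF_d[u_d^\pm]$.

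The main obstacle, and the place needing genuine care rather than routine bookkeeping, is the first stage: verifying that translating the periodically-reflected solitonic vortex keeps us inside the affine space $u_s + H^1(\Omega_d, \RR^2)$ with the correct boundary behaviour and symmetry, and keeps the energy \emph{exactly} constant. Membership in $u_s + H^1$ requires that the translated map still approaches $(0, \pm 1)$ as $x \to \pm\infty$ with an $H^1$ difference; this uses the super-polynomial (indeed exponential-type) decay of $(U_d, V_d) - u_s$ in $x$ from Remark \ref{Rem:PD}, uniformly after reflection, so that the $y$-translate still has the same $x \to \pm\infty$ limits and the reflections do not destroy square-integrability. One must also check the phase imprinting symmetry \eqref{Eq:Imprinting} survives the $y$-translation — it does, since \eqref{Eq:Imprinting} only couples $x \mapsto -x$ and the reflections/translations act solely in $y$, and $U_d$ is even in $x$ while $V_d$ is odd in $x$. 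Finally one should confirm the endpoint claim that at $s = d$ the map $h(t_1)$ has $h(t_1)_1$ of one sign on $\Omega_d$: after shifting the vortex line to $y = d$, on $\Omega_d = \{-d < y < d\}$ we are looking at the reflected-extended $U_d$ on the interval which, by the sign pattern in \eqref{Eq:UVSign} together with even reflection across $y = d$, is of a single sign; a brief case-check of which sign, and a possible global sign flip to make it $\geq 0$, completes this point. The rest is the standard phase-flattening homotopy, which is elementary.
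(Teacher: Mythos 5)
Your overall route is the same as the paper's: translate the evenly-reflected solitonic vortex in $y$ until the vortex sits on $\partial\Omega_d$ (energy exactly conserved, by the even reflection across $y=d$ together with the oddness symmetry \eqref{Eq:Imparity} of the energy density), then deform the resulting map, whose first component is non-negative, down to $\psi_n^+$, and mirror the construction via $(u_1,u_2)\mapsto(-u_1,u_2)$ for the $\psi_n^-$ side. Your stage-one checks (membership in $u_s+H^1$, preservation of \eqref{Eq:Imprinting}, the sign of the translated first component) are all correct and match what the paper does.

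There are, however, two concrete problems in your second stage. First, $h(t_1)$ still vanishes at $(0,\pm d)$ (the translated vortex sits there), so you cannot immediately write $h(t_1)=\rho(\cos\theta,\sin\theta)$ with a continuous phase; the paper first perturbs by $(t-t_1)(\sqrt{1-|h(t_1)|^2},0)$ to make the first component strictly positive on $\bar\Omega_d$ (at the cost of at most $\delta/2$ in energy) before passing to polar coordinates. Second, and more seriously, your claim that raising $\rho\nearrow 1$ "at worst rescales the $\rho^2|\nabla\theta|^2$ term downward" is backwards: increasing $\rho$ \emph{increases} $\rho^2|\nabla\theta|^2$, and there is no a priori bound guaranteeing the total energy stays below $\mcF_d[u_d^+]$ if you raise the modulus while the phase still carries $O(1)$ Dirichlet energy. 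The paper's order of operations is the cure: first stretch the phase to $\chi_{\tilde n}$ with $\tilde n$ huge (which only decreases the phase-gradient contribution and makes it negligible), then raise the modulus to $1$, then deform the phase to $\chi_n$. Finally, note that you do not need the maximum along the path to equal $\mcF_d[u_d^\pm]$ \emph{exactly}; it suffices to build, for each $\delta>0$, a path with maximum at most $\mcF_d[u_d^\pm]+\delta$, and then let $\delta\to0$ in the infimum defining $c_d$ — this is what makes the bookkeeping in stage two painless.
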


\begin{proof} Let $u_d = u_d^+$ and fix $\delta > 0$. We need to construct $h \in \Gamma_d$ such that $\mcF_d[h(t)] \leq \mcF_d[u_d] + \delta$ for all $t \in [-1,1]$.

Extend $u_d$ to $\Omega_{3d}$ by evenly reflecting across $y = \pm d$. Slightly abusing notation, we still denote the extension as $u_d$.

We declare that $h(0) = u_d$. We will only give the construction of $h(t)$ for $t > 0$. That for $t < 0$ is similar. For some small $t_1 > 0$ and $t \in (0,t_1]$, let $h(t)(x,y) = u_d(x,y-\frac{t}{t_1}d)$. Then $\mcF_d[h(t)] = \mcF_d[u_d]$ for $t \in [0,t_1]$. Moreover, $h(t_1)$ satisfies
\[
\begin{cases}
|h(t_1)|(x,y) < 1 \text{ in } \bar\Omega_d,\\
h(t_1)_1(x,y) > 0 \text{ in } \bar\Omega_d \setminus \{(0,\pm d)\},\\
h(t_1)_2(x,y) < 0 \text{ for } x > 0, h(t_1)_2(x,y) = 0 \text{ for } x = 0, \text{ and } h(t_1)_2(x,y) > 0 \text{ for } x < 0.
\end{cases}
\]
We next define $h(t)$ for $t \in (t_1, t_2]$ by $h(t) = h(t_1) + (t - t_1)(\sqrt{1 - |h(t_1)|^2},0)$ where $t_2 - t_1 > 0$ is sufficiently small such that $|h(t)| < 1$ in $\bar\Omega_d$ and $\mcF[h(t)] \leq \mcF[h(t_1)] + \delta/2$ for $t \in [t_1, t_2]$. It is readily seen that
\[
\begin{cases}
|h(t_2)|(x,y) < 1 \text{ in } \bar\Omega_d,\\
h(t_1)_1(x,y) > 0 \text{ in } \bar\Omega_d ,\\
h(t_1)_2(x,y) < 0 \text{ for } x > 0, h(t_1)_2(x,y) = 0 \text{ for } x = 0, \text{ and } h(t_1)_2(x,y) > 0 \text{ for } x < 0.
\end{cases}
\]
At this point, we can follow the procedure in Step 2 to deform $h(t_2)$ to $\psi_n^+$: We first deform the phase of $h(t)$ to $\chi_{\tilde n}$ for some large $\tilde n$ while keeping its modulus fixed, then deform the modulus of $h(t)$ to $1$ while keeping the phase of $h(t)$ fixed, and eventually deform the phase of $h(t)$ to $\chi_n$ while keeping the modulus fixed. Here $\tilde n$ is chosen such that along the deformation, the energy is not enlarged more than $\delta/2$. We have thus defined $h \in \Gamma_d$ with $\mcF_d[h(t)] \leq \mcF_d[u_d] + \delta$ for all $t \in [-1,1]$. The proof is completed.
\end{proof}

We turn to lower bounding $c_d$.
\begin{lemma}\label{Lem:M3X}
Let $\Gamma_d$ and $c_d$ be as in Theorem \ref{Thm:MPC}. Then
\[
c_d \geq \frac{1}{16} \text{ for all } d > \sqrt{2}\pi/2.
\]
\end{lemma}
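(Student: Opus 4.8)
The plan is to show that any path $h \in \Gamma_d$ must pass through a map whose energy is bounded below by a universal constant, and the natural candidate for the obstruction along the path is a topological one: $\psi_n^-$ and $\psi_n^+$ are $\mathbb{S}^1$-valued maps which are \emph{not} homotopic as such, so somewhere along any continuous path between them in $\mcA_d^{\rm x}$ there must be a map $u$ which either takes a value in the unit disk bounded away from $\mathbb{S}^1$, or wraps around in an incompatible way. Concretely, I would argue by contradiction: suppose $h \in \Gamma_d$ has $\max_t \mcF_d[h(t)] < \frac{1}{16}$.

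First I would extract a pointwise consequence of the energy smallness. If $\mcF_d[u] < \frac{1}{16}$ then in particular $\int_{\Omega_d} \frac{1}{4}(1-|u|^2)^2 < \frac{1}{16}$; but this alone does not force $|u|$ close to $1$ everywhere. So instead I would use the full energy together with elliptic regularity, or more simply a one-dimensional slicing argument. For each fixed $y \in (-d,d)$, restrict $u = h(t)$ to the line $\RR \times \{y\}$; the one-dimensional energy $\int_\RR \big(\frac12 |\partial_x u(x,y)|^2 + \frac14(1-|u(x,y)|^2)^2\big)\,dx$ is, after integrating in $y$, at most $\mcF_d[u]/(2d)$ times $2d$... more carefully, by Fubini there is a set of $y$'s of positive measure on which the one-dimensional slice energy is small. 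On such a slice, $u(\cdot,y)$ is an $H^1$ curve in $\RR^2$ running from near $(0,-1)$ (at $x=-\infty$, forced by the boundary condition \eqref{BC} and the fact that $u - u_s \in H^1$) to near $(0,1)$ (at $x = +\infty$). The key elementary estimate is that any such curve connecting a neighborhood of $(0,-1)$ to a neighborhood of $(0,1)$ must either pass through a point with $|u| \le \frac12$ (costing $\int \frac14(1-|u|^2)^2 \ge $ const on an interval where $|\partial_x u|$ is controlled, hence by a trade-off a universal amount of energy) or else wind halfway around the circle staying near $|u|=1$, costing $\int \frac12 \rho^2 (\partial_x \phi)^2 \ge \frac12 \cdot \frac14 \cdot \pi^2 / L$... but $L$ is not bounded, so this is not directly a lower bound. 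This is the crux of the difficulty.

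The resolution is the homotopy obstruction combined with \emph{which} endpoint the path is at. At $t = -1$, $h(-1) = \psi_n^-$ has first component $-\cos\chi(x/n) \le 0$ near $x = 0$... actually $-\cos\chi$ ranges over $[-1,1]$; the relevant fact is that $\psi_n^-$ winds one way and $\psi_n^+$ the other. The clean statement I would isolate and prove is: \emph{there exists a universal $\varepsilon_0 > 0$ such that if $u \in \mcA_d^{\rm x}$ and $\mcF_d[u] < \varepsilon_0$, then on a.e. slice $\RR \times \{y\}$ the curve $x \mapsto u(x,y)$ stays in the right half-disk $\{|u| < \frac12\} \cup \{u_1 > -\text{something}\}$...}. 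Rather than push this, the cleanest route: define on $\mcA_d^{\rm x}$ a continuous (on $\{\mcF_d < \varepsilon_0\}$) $\mathbb{Z}$-valued-in-spirit quantity — e.g. for a.e. $y$, when the slice avoids the disk $\{|u| \le 1/2\}$, the net winding of its phase from $-\infty$ to $+\infty$ — and observe it is $+\frac12$ (mod $1$) near one endpoint and $-\frac12$ near the other, so it must jump, which forces some slice on some $h(t)$ to enter $\{|u| \le 1/2\}$ on a long interval OR to have large phase oscillation; in the former case a slicing/Fubini argument gives $\mcF_d[h(t)] \ge $ const. The precise numerology $\frac{1}{16}$ should come from: on an $x$-interval $I$ of length $2$ where $|u| \le \frac12$ on the middle and $|u| \ge \frac34$ at the ends (say), one gets $\int_I \frac12|\partial_x u|^2 + \frac14(1-|u|^2)^2 \,dx \ge$ a computable constant by the one-dimensional bathtub/Modica–Mortola estimate $\int \big(\frac12 |\partial_x u|^2 + \frac14 (1-|u|^2)^2\big) \ge \int |\partial_x u|\, |1 - |u|^2|/\sqrt{2} \cdot \sqrt{2}$... giving $\ge \frac{1}{\sqrt 2}\big|\int_{|u|=3/4}^{|u|=0} (1 - s^2)\,ds\big|$-type quantity, then integrate over a set of $y$'s of measure bounded below.

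I expect the main obstacle to be making the topological/winding argument rigorous for $H^1$ maps that are only defined up to slices — in particular (i) justifying that a.e. slice is an absolutely continuous $\RR^2$-valued curve with the stated endpoint behavior at $x = \pm\infty$ (this uses $u - u_s \in H^1(\Omega_d)$ plus the decay built into the space, and a trace/Fubini argument), and (ii) showing the winding-number-type invariant genuinely obstructs the homotopy, i.e. that it is locally constant in $t$ along the portions of the path where no slice enters $\{|u| \le \frac12\}$ and that it differs at the two ends. Once that dichotomy is in place, the energy lower bound in the ``enters the disk'' case is a routine one-dimensional Modica–Mortola computation, and optimizing the thresholds yields the explicit constant $\frac{1}{16}$ (or one simply verifies $\frac{1}{16}$ works with convenient non-optimal thresholds). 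I would present the winding invariant via the simplest possible device: track, for the slice at a fixed good $y$, whether $u(\cdot,y)$ crosses the ray $\{u_1 = 0, u_2 < 0\}$ or the ray $\{u_1 = 0, u_2 > 0\}$ an unequal number of times, which distinguishes $\psi_n^+$ from $\psi_n^-$, and note that changing crossing parity forces passage near the origin.
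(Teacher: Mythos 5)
Your overall intuition (a topological obstruction between $\psi_n^+$ and $\psi_n^-$ forces the path through a map with small modulus somewhere, after which a one-dimensional Modica--Mortola estimate in $x$ gives a universal energy cost) is the right one, and the final computation you sketch is essentially the one the paper performs. But there is a genuine gap in the mechanism you propose for converting the obstruction into energy. Your invariant is defined slice by slice (one winding/crossing count per $y$), and its failure is a pointwise, measure-zero event: at the critical time the path may have a single degenerate slice (think of a map with one small vortex core, for which $|u|\le 1/2$ only on a tiny disk, while all other $y$-slices stay near $\mathbb{S}^1$ and carry \emph{different} windings on the two sides of the core --- so your invariant is not even a single number attached to $h(t)$). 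The Modica--Mortola step needs $|u(0,y)|\le \frac12$ on a set of $y$ of measure bounded below by a multiple of $d$, and nothing in the proposed dichotomy produces that: ``some slice enters the disk'' costs no two-dimensional energy, and you give no argument upgrading it to a positive-measure set. You flag the unbounded-length issue as ``the crux'' but the measure-in-$y$ issue is the one that actually blocks the proof as written.

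The paper's fix is to replace the $\mathbb{Z}$-valued, slice-wise invariant by the continuous real-valued functional $t \mapsto \int_{-d}^d h(t)_1(0,y)\,dy$, which equals $\pm 2d$ at $t=\pm1$ (since $\psi_n^\pm(0,\cdot)=(\pm1,0)$) and is continuous in $t$ by the trace theorem; the intermediate value theorem gives a $t_0$ at which this integral vanishes. At that time the phase imprinting condition forces $h(t_0)_2(0,\cdot)\equiv 0$ as well, so the full average of $u=h(t_0)$ over the central cross-section is zero. After reducing to the case $\|\nabla u\|_{L^2}^2<\frac18$, Wirtinger's inequality in $y$ together with a Poincar\'e estimate in $x$ yields $\int_{-d}^d|u(0,y)|^2\,dy\le \frac{3d}{8}$, hence $|u(0,\cdot)|\le\frac12$ on a set of measure at least $\frac{d}{2}$; only then does the Modica--Mortola bound on each such slice, integrated in $y$ and combined with $d>\sqrt{2}\pi/2$, give $\mcF_d[u]\ge \frac{5d\sqrt2}{48}>\frac1{16}$. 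If you want to keep your framework, the step you must supply is precisely this passage from ``zero average on the central slice'' (or from your parity jump) to ``small modulus on a definite fraction of the central slice''; the averaged functional plus Wirtinger is the cleanest way to do it and also disposes of all the a.e.-slice and continuity-of-winding technicalities you were worried about.
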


\begin{proof}
Let $h \in \Gamma_d$. Since $\psi_n^\pm(0,\cdot) = \pm 1$, there exists $t_0 \in (-1,1)$ such that 
\[
\int_{-d}^d h(t_0)_1(x,y)\,dy = 0.
\]
To conclude, it suffices to show that $\mcF_d[h(t_0)] \geq \frac{1}{16}$. 

For simplicity of notation, we write $u = h(t_0)$. Since $\mcF_d[u] \geq \frac{1}{2}\|\nabla u\|_{L^2(\Omega_d)}^2$, we may assume without loss of generality that $\|\nabla u\|_{L^2(\Omega_d)}^2 < \frac{1}{8}$. We claim that
\begin{equation}
\int_{-d}^d |u(0,y)|^2\,dy \leq  \frac{3d}{8}.
	\label{Eq:uCenter}
\end{equation}
Once this claim is proved, we then have that 
\[
|\{y: |u(0,y)| > \frac{1}{2}\}| \leq \frac{3d}{2}  \quad \text{ and } \quad |\{y: |u(0,y)| \leq \frac{1}{2}\}| \geq \frac{d}{2},
\]
and so
\begin{align*}
\mcF_d[u] 
	&\geq 2\int_{-d}^d \int_0^\infty \Big[\frac{1}{2} (\partial_x |u|)^2 + \frac{1}{4} (1 - |u|^2)^2\Big]\,dx\,dy 
	\\
	&\geq \sqrt{2} \int_{\{y: |u(0,y)| \leq \frac{1}{2}\}} \int_0^\infty (1 - |u|^2)\partial_x |u|\,dx\,dy \\
	&\geq \sqrt{2} \frac{d}{2} \Big[s - \frac{s^3}{3}\Big]_{1/2}^1 = \frac{5d\sqrt{2}}{48} > \frac{5\pi}{48} > \frac{1}{16},
\end{align*}
which gives the desired conclusion.

To prove the claim \eqref{Eq:uCenter}, observe first that
\[
\int_{-d}^d |u(x,y) - u(0,y)|^2\,dy = \int_{-d}^d \Big|\int_0^x \partial_x u(s,y)\,ds\Big|^2\,dy 
\leq |x| \int_0^x \int_{-d}^d  |\partial_x u(s,y)|^2\,dy\,ds,
\]
which implies
\begin{equation}
\int_0^d \int_{-d}^d |u(x,y) - u(0,y)|^2\,dy \leq \frac{d^2}{2} \int_0^d \int_{-d}^d  |\nabla u(x,y)|^2\,dy\,dx \leq \frac{d^2}{16}.
	\label{Eq:uC-1}
\end{equation}
Next, if we let
\[
\bar u(x) = \frac{1}{2d} \int_{-d}^d u(x,y)\,dy,
\]
then $\bar u(0) = 0$ and
\[
|\bar u(x)|^2 = |\bar u(x) - \bar u(0)|^2 = \frac{1}{4d^2} \Big(\int_0^x \int_{-d}^d \partial_x u(s,y)\,dy\,ds\Big)^2 \leq \frac{|x|}{2d} \int_0^x \int_{-d}^d |\partial_x u(s,y)|^2\,dy\,ds.
\]
Also, by Wirtinger's inequality, we have
\[
\int_{-d}^d |u(x,y) - \bar u(x)|^2\,dy \leq \frac{4d^2}{\pi^2} \int_{-d}^d |\partial_y u(x,y)|^2\,dy.
\]
Combining the last two inequalities, we get
\[
\int_{-d}^d |u(x,y)|^2\,dy \leq \frac{8d^2}{\pi^2} \int_{-d}^d |\partial_y u(x,y)|^2\,dy + 2|x| \int_0^x \int_{-d}^d |\partial_x u(s,y)|^2\,dy\,ds.
\]
and hence
\begin{equation}
\int_0^d \int_{-d}^d |u(x,y)|^2\,dy \leq d^2 \int_0^d \int_{-d}^d |\nabla u(x,y)|^2\,dy\,dx 
\leq \frac{d^2}{8}. 
	\label{Eq:uC-2}
\end{equation}
The claim \eqref{Eq:uCenter} is readily seen from \eqref{Eq:uC-1} and \eqref{Eq:uC-2}.
\end{proof}

To prove that $c_d \geq \mcF_d[u_d^\pm]$ for $d > \sqrt{2}\pi/2$, we need the following result.

\begin{lemma}\label{Lem:EC}
If $d > \sqrt{2}\pi/2$, then
\[
\int_{\Omega_d} [|\nabla \psi|^2 - (1 - U_d^2 - V_d^2) \psi^2]\,dx\,dy \geq 0
\]
for all $\psi \in H^1(\Omega_d)$ satisfying $\int_{\Omega_d} (1 - U_d^2 - V_d^2) \psi\,dx\,dy = 0$.
\end{lemma}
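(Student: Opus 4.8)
The plan is to recast the asserted inequality as a spectral bound and prove it by a continuation argument in $d$, exploiting the symmetries of the weight $W_d := 1 - U_d^2 - V_d^2$ and Courant's nodal domain theorem. To begin, by Theorem~\ref{Thm:UV} the weight satisfies $W_d > 0$ on $\bar\Omega_d$; it is even in $x$ and in $y$ (since $U_d^2$ and $V_d^2$ are); and by Remark~\ref{Rem:PD} it decays faster than any polynomial as $|x|\to\infty$, so in particular $W_d \in L^1(\Omega_d)$ and $W_d = o\big((1+x^2)^{-1}\big)$. A standard Hardy/Poincar\'e argument on the strip then shows that $\psi \mapsto \int_{\Omega_d} W_d\,\psi^2$ is compact relative to the Dirichlet form, so the weighted Neumann problem
\begin{equation*}
-\Delta\varphi = \mu\,W_d\,\varphi \text{ in }\Omega_d, \qquad \partial_\nu\varphi = 0 \text{ on }\partial\Omega_d
\end{equation*}
has discrete spectrum $0 = \mu_1(d) < \mu_2(d) \le \mu_3(d) \le \cdots \to \infty$ with $\mu_1$ simple (the constants). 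By the variational characterization of $\mu_2$, the inequality of the lemma is equivalent to $\mu_2(d) \ge 1$. Since $-\Delta U_d = W_d U_d$ and $-\Delta V_d = W_d V_d$ with $U_d, V_d$ linearly independent, non-constant, and of finite Dirichlet energy, the value $1$ is already an eigenvalue of multiplicity $\ge 2$, so $\mu_2(d) \le 1$; the real content is the absence of spectrum in $(0,1)$.

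\textbf{Symmetry reduction and continuation in $d$.} As $W_d$ is even in both variables, the problem splits into the four parity sectors $(\pm,\pm)$ in $(x,y)$. On the quarter-domain $Q_d$ the function $U_d$ is one-signed and solves the $(+,-)$-sector problem (Dirichlet on $\{y=0\}$, Neumann elsewhere) at eigenvalue $1$, hence is its ground state; likewise $V_d$ is the ground state of the $(-,+)$-sector at eigenvalue $1$; and the $(-,-)$-sector, carrying one further Dirichlet constraint, has ground state eigenvalue $\ge 1$. Thus $\mu_2(d) < 1$ can occur only through the second eigenvalue $\nu(d)$ of the doubly-even sector. Next I would show $\nu(d) > 1$ for $d$ just above $\sqrt{2}\pi/2$: there $(U_d,V_d) \to u_s$ and $W_d \to \mathrm{sech}^2\frac{x}{\sqrt{2}}$, and the Fourier-in-$y$ estimate already used in Corollary~\ref{Cor:d1} applies (this is the preliminary Lemma~\ref{Lem:ECS-Low}). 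If the lemma failed for some $d$, then by continuity of $\nu(d)$ in $d$ — obtained by rescaling $y$ to a fixed strip and using the uniform decay of $W_d$ together with the smooth dependence of $(U_d,V_d)$ on $d$ from Remark~\ref{Rem:PD} — there is a smallest $d^* > \sqrt{2}\pi/2$ with $\nu(d^*) = 1$, at which the eigenvalue $1 = \mu_2(d^*)$ acquires, besides $U_{d^*}$ and $V_{d^*}$, a doubly-even eigenfunction, and so has multiplicity three (Lemma~\ref{Lem:ECS-d*}).

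\textbf{Contradiction via Courant.} Using the three-dimensional second eigenspace at $d^*$, I would select an eigenfunction $\varphi$ for $\mu_2(d^*) = 1$ whose nodal set is a single simple closed curve meeting $\partial\Omega_{d^*}$ non-tangentially at exactly the point $(0,d^*)$; establishing this relies on real-analyticity of eigenfunctions, the Hardy-weight compactness (to control $\varphi$ at infinity), and a Hopf-type boundary analysis. Reflecting $\varphi$ evenly across $y = d^*$ produces an eigenfunction of a companion weighted Neumann problem on a strip of width $2d^*$ — the even extension of $W_{d^*}$ is admissible because $\partial_y U_{d^*} = \partial_y V_{d^*} = 0$ on $\{y=d^*\}$ — and an eigenvalue comparison shows the reflected function is the \emph{third} eigenfunction of that problem (Lemma~\ref{Lem:Sib3rd}). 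Yet it has four nodal domains, contradicting Courant's nodal domain theorem. Hence $\mu_2(d) = 1$ for all $d > \sqrt{2}\pi/2$, which is the lemma.

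\textbf{Where the difficulty lies.} The spectral set-up, the reduction to $\mu_2 \ge 1$, and the perturbative bound near $\sqrt{2}\pi/2$ are routine. The hard part will be the nodal-set analysis at the critical width $d^*$: showing that some eigenfunction in the three-dimensional eigenspace has a zero set that is exactly one simple closed curve touching the boundary non-tangentially at a single point, and verifying that its even reflection is precisely the third — not a later — eigenfunction of the wider problem, so that the four-nodal-domain count is a genuine contradiction with Courant. The whole scheme survives on the non-compact $\Omega_d$ only because $1 - U_d^2 - V_d^2$ decays faster than the Hardy weight, which is what forces every operator in sight to have compact resolvent.
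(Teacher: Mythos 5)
Your proposal follows essentially the same route as the paper: reduce the inequality to showing $\mu_{d,2}=1$ for the weighted Neumann problem with weight $1-U_d^2-V_d^2$ (compactness coming from the super-polynomial decay), verify this near $d=\sqrt{2}\pi/2$, run a continuation argument to a putative first failure $d_*$ where the eigenvalue $1$ acquires a third, doubly-even eigenfunction, analyze its nodal set, reflect evenly across $y=d_*$ onto the doubled strip, and contradict Courant's theorem via a four-nodal-domain count against the third eigenvalue of the reflected problem (this is precisely the chain Lemma \ref{Lem:ECS-Low} $\to$ Lemma \ref{Lem:ECS-d*} $\to$ Lemma \ref{Lem:Sib3rd} $\to$ Lemma \ref{Lem:ECS}). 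You also correctly locate the genuinely delicate steps — the nodal-set structure at $d_*$ and the identification of the reflection as the \emph{third} eigenfunction of the doubled problem — which is exactly where the paper expends its effort.
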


Let us postpone momentarily the proof of Lemma \ref{Lem:EC}, and instead proceed to see how it implies the assertion that $c_d \geq \mcF_d[(U_d,V_d)]$ for $d > \sqrt{2}\pi/2$.

\begin{lemma}\label{Lem:M3}
Let $\Gamma_d$ and $c_d$ be as in Theorem \ref{Thm:MPC}. If $d > \sqrt{2}\pi/2$, then
\[
c_d \geq \mcF_d[u_d^\pm].
\]
\end{lemma}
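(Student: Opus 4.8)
The plan is to mimic the proof of Step~1 of Lemma \ref{Lem:M1}, but now using $(U_d, V_d)$ in place of $u_s$ and invoking the more delicate eigenvalue inequality of Lemma \ref{Lem:EC} in place of \eqref{Eq:usM-1}. Let $h \in \Gamma_d$ be arbitrary; I must produce some $t_0 \in (-1,1)$ with $\mcF_d[h(t_0)] \geq \mcF_d[u_d^+]$. The key observation is that the functional $t \mapsto \int_{\Omega_d} (1 - U_d^2 - V_d^2)\, h(t)_1(x,y)\,dx\,dy$ is continuous in $t$ (because $h$ is continuous into $u_s + H^1$ and $1 - U_d^2 - V_d^2$ decays super-polynomially by Remark \ref{Rem:PD}, so it pairs against $H^1$-perturbations of $u_s$; one checks that $(1-U_d^2-V_d^2)(u_s)_1 = 0$ since $(u_s)_1 \equiv 0$, so the integral is well-defined for every element of $\mcA_d^{\rm x}$). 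At the endpoints $h(\pm 1) = \psi_n^\pm$ we have $(\psi_n^\pm)_1(x,\cdot) = \pm\cos\chi(x/n)$, which has a definite sign near $x=0$ and the same sign throughout; since $1 - U_d^2 - V_d^2 > 0$, the integral is strictly positive at $t=1$ and strictly negative at $t=-1$. By the intermediate value theorem there is $t_0$ with $\int_{\Omega_d}(1 - U_d^2 - V_d^2)\, h(t_0)_1\,dx\,dy = 0$.

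Write $u = h(t_0)$ and $w = u - (U_d,V_d) \in H^1(\Omega_d,\RR^2)$. As in \eqref{Eq:usM-0} and the analogous computation in Step~2 of Theorem \ref{Thm:UV},
\[
\mcF_d[u] - \mcF_d[(U_d,V_d)] \geq \frac{1}{2}\int_{\Omega_d}\big[|\nabla w|^2 - (1 - U_d^2 - V_d^2)|w|^2\big]\,dx\,dy,
\]
so it suffices to show the right-hand side is nonnegative, and for that it suffices to treat $w_1$ and $w_2$ separately. For the $w_2$-component: the phase imprinting condition \eqref{Eq:Imprinting} forces $w_2(0,\cdot) = V_d(0,\cdot) = 0$, and then the argument of Step~2 of Theorem \ref{Thm:UV} (factoring $w_2 = V_d \zeta$ and using $-\Delta V_d = (1-U_d^2-V_d^2)V_d$) gives $\int_{\Omega_d}[|\nabla w_2|^2 - (1-U_d^2-V_d^2)w_2^2]\,dx\,dy = \int_{\Omega_d} V_d^2|\nabla\zeta|^2 \geq 0$. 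For the $w_1$-component: $w_1 = u_1 - U_d = u_1$ satisfies $\int_{\Omega_d}(1 - U_d^2 - V_d^2) w_1\,dx\,dy = 0$ by the choice of $t_0$, so Lemma \ref{Lem:EC} applies directly and yields $\int_{\Omega_d}[|\nabla w_1|^2 - (1-U_d^2-V_d^2)w_1^2]\,dx\,dy \geq 0$. Adding the two gives the claim.

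I expect the main subtlety to lie not in the inequality-chasing above (which is essentially the routine $(U,V)$-analogue of what has already been done) but in the bookkeeping needed to legitimately apply Lemma \ref{Lem:EC}: one must know $w_1 \in H^1(\Omega_d)$ with the correct constraint, which requires that $U_d \in u_s + H^1$ in the first slot behaves as expected — here $U_d$ itself is an $H^1(\Omega_d)$ function (the first component of $u_d^+ - u_s$), so $w_1 = u_1 - U_d \in H^1(\Omega_d)$ and the constraint is exactly $\int_{\Omega_d}(1-U_d^2-V_d^2)w_1 = \int_{\Omega_d}(1-U_d^2-V_d^2)u_1 - \int_{\Omega_d}(1-U_d^2-V_d^2)U_d$; a quick check using $-\Delta U_d = (1-U_d^2-V_d^2)U_d$ together with the Neumann condition and the super-polynomial decay shows $\int_{\Omega_d}(1-U_d^2-V_d^2)U_d = 0$ (integrate $-\Delta U_d$ against $1$, which is legitimate because of the decay), so the constraint on $w_1$ is genuinely equivalent to the constraint defining $t_0$. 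The only remaining point, the continuity and sign of $t\mapsto \int(1-U_d^2-V_d^2)h(t)_1$, uses the super-polynomial decay from Remark \ref{Rem:PD} to ensure $1 - U_d^2 - V_d^2 \in L^2(\Omega_d)$ and hence pairs continuously against the $H^1 \hookrightarrow L^2_{\rm loc}$ convergence; this is where the decay estimate, highlighted in the introduction as providing ``important compactness,'' does its work.
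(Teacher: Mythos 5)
Your proposal is correct and follows essentially the same route as the paper: the same intermediate-value choice of $t_0$ via the sign of $(\psi_n^\pm)_1$ and the positivity of $1-U_d^2-V_d^2$, the same reduction to the quadratic form, Lemma \ref{Lem:EC} for the $w_1$-component, and the Hardy-type factorization of Step~2 of Theorem \ref{Thm:UV} for the $w_2$-component. The only (immaterial) difference is that you verify $\int_{\Omega_d}(1-U_d^2-V_d^2)U_d\,dx\,dy = 0$ by integrating the equation for $U_d$, whereas the paper gets it at once from the oddness of $U_d$ in $y$; note also your passing line ``$w_1 = u_1 - U_d = u_1$'' is a slip, but your final paragraph fixes the bookkeeping correctly.
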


\begin{proof} Let $d >  \sqrt{2}\pi/2$ and let $h \in \Gamma_d$. We need to show that $\mcF[h(t)] \geq \mcF_d[(U_d,V_d)]$ for some $t \in [-1,1]$. We will abbreviate $(U,V) = (U_d, V_d)$.

Since $\pm\psi^\pm_n > 0$ in $\Omega_d$, there exists $t_0 \in (-1,1)$ such that 
\begin{equation}
\int_{\Omega_d} (1 - U^2 - V^2) h(t_0)_1 \,dx\,dy = 0.
	\label{Eq:O-1}
\end{equation}
We will show that $\mcF_d[h(t_0)] \geq \mcF_d[(U,V)]$. To this end, we write $w = h(t_0) - (U,V)$ and proceed to show as in Step 2 of the proof of Theorem \ref{Thm:UV} that
\[
\int_{\Omega_d} \big[|\nabla w|^2 - (1 - U^2 - V^2) w^2\big]\,dx\,dy \geq 0.
\]
Indeed, by \eqref{Eq:O-1} together with the oddness of $U$ with respect to $y$,
\[
\int_{\Omega_d} (1 - U^2 - V^2) w_1 \,dx\,dy = 0. 
\]
Hence, by Lemma \ref{Lem:EC}, 
\[
\int_{\Omega_d} \big[|\nabla w_1|^2 - (1 - U^2 - V^2) w_1^2\big]\,dx\,dy \geq 0.
\]
On the other hand, by the phase imprinting condition \eqref{Eq:Imprinting}, $w_2 = 0$ on $\{x = 0\}$. We may thus use the argument in Step 2 of the proof of Theorem \ref{Thm:UV} to obtain
\[
\int_{\Omega_d} \big[|\nabla w_2|^2 - (1 - U^2 - V^2) w_2^2\big]\,dx\,dy \geq 0.
\]
Putting things together, we conclude the proof.
\end{proof}

\begin{proof}[Proof of Theorem \ref{Thm:MPC}]
The result follows from Lemmas \ref{Lem:M1}, \ref{Lem:M2}, \ref{Lem:M3X} and \ref{Lem:M3}.
\end{proof}

It remains to prove Lemma \ref{Lem:EC}. Let $X_d$ and $Y_d$ denote the completions of the space of bounded smooth functions on $\bar\Omega_d$ with respect to the norms
\begin{align*}
\|\psi\|_{X_d}^2 &= \int_{\Omega_d} [|\nabla \psi|^2 + (1 - U_d^2 - V_d^2)\psi^2]\,dx\,dy,\\
\|\psi\|_{Y_d}^2 &= \int_{\Omega_d}  (1 - U_d^2 - V_d^2)\psi^2 \,dx\,dy.
\end{align*}
At points in the discussion to follow, we will consider the limit $d \searrow \sqrt{2}\pi/2$. In view of the compactness estimate \eqref{Eq:UD}, as $d \searrow \sqrt{2}\pi/2$ and up to extraction of subsequence, the solitonic vortex solution $(U_d,V_d)$ converges locally smoothly. Since the solitonic vortex solution is a minimizer of $\mcF_{d}$ in $\mcA_{d}$ (by Theorem \ref{Thm1}), any such limit is a minimizer of $\mcF_{\sqrt{2}\pi/2}$ in $\mcA_{\sqrt{2}\pi/2}$ which is the soliton solution $u_s$ (again by Theorem \ref{Thm1}). Thus, as the full sequence of $(U_d,V_d)$ converges locally smoothly as $d \searrow \sqrt{2}\pi/2$ to $u_s$. In view of this fact and abusing of notation, we will write $(U_{\sqrt{2}\pi/2}, V_{\sqrt{2}\pi/2}) = u_s$.

Claim: $X_d$ embeds compactly into $Y_d$. Indeed, consider a bounded sequence $\{\psi_m\} \subset X_d$. After passing to a subsequence, we may assume that that $\psi_m$ converges strongly in $L^2_{\rm loc}(\Omega_d)$ to some $\psi \in X_d$. We need to show that, for any given $\vareps > 0$, it holds that $\|\psi_m - \psi\|_{Y_d}^2 \leq \vareps$ for all large $m$. Note that, if $\eta \in C^\infty(\RR)$ such that $\eta = 0$ in $(-1,1)$ and $\eta = 1$ in $(-\infty,-2) \cup (2,\infty)$, then, by Hardy's inequality,
\[
\int_\Omega \frac{1}{x^2} \eta^2 \phi_m^2\,dx\,dy \leq C \int_{\Omega_d} (\partial_x (\eta\phi_m))^2\,dx\,dy \leq C\|\phi_m\|_{X_d}^2
\]
Hence $\{(1 + x^2)^{-1/2} \psi_m\}$ is bounded in $L^2(\Omega_d)$. Now, for any $a > 0$,  we split
\begin{multline*}
\|\psi_m - \psi\|_{Y_d}^2
	= \int_{\{|x| \leq a, |y| \leq d\}} |\psi_m - \psi|^2\,(1 - U_d^2 - V_d^2) dxdy\\
		 + \int_{\{|x| > a, |y| \leq d\}}|\psi_m - \psi|^2\,(1 - U_d^2 - V_d^2) dxdy.
\end{multline*}
Using the fact that $1 - U_d^2 - V_d^2$ decays faster than any polynomials (see Remark \ref{Rem:PD}) and that $\{(1 + x^2)^{-1/2} \psi_m\}$ is bounded in $L^2(\Omega_d)$, we may choose $a$ sufficiently large so that the second integral on the right hand side is no more than $\vareps/2$. Having fixed $a$, by the convergence of $\psi_m$ to $\psi$ in $L^2_{\rm loc}(\Omega_d)$, we can wait until $m$ is large enough so that the first integral on the right hand side is also no more than $\vareps/2$. We then have $\|\psi_m - \psi\|_{Y_d}^2 \leq \vareps$ for all large $m$. The claim is proved.

In view of the compactness embedding $X_d \hookrightarrow Y_d$, it is standard to show that the Neumann eigenvalue problem
\begin{equation}
\begin{cases}
-\Delta \phi = \mu (1 - U_d^2 - V_d^2) \phi & \text{ in } \Omega_d,\\
\partial_\nu \phi = 0 &\text{ on } \partial\Omega_d
\end{cases}
	\label{Eq:muEVP}
\end{equation}
admits a sequence of eigenvalue-eigenfunction pairs $\{\mu_{d,k},\phi_{d,k}\}$ such that
\[
\mu_{d,1} = 0 < \mu_{d,2} \leq \mu_{d,3} \leq \cdots  \nearrow \infty,
\]
$\|\phi_{d,k}\|_{Y_d} = 1$, $\phi_{d,1} > 0$ is a constant, 
\begin{multline*}
\mu_{d,k+1} 
	=  \inf \Big\{\int_{\Omega_d} |\nabla \psi|^2\,dx\,dy: \psi \in X_d, \|\psi\|_{Y_d} = 1, \\\int_{\Omega_d} (1 - U_d^2 - V_d^2) \psi \psi_{d,j}\,dx\,dy = 0 \text{ for } j = 1, \ldots, k\Big\},
\end{multline*}
and $\psi_{d,k+1}$ is chosen as a minimizer of this minimization problem.

We note the following stability property for the eigenvalue problem \eqref{Eq:muEVP}: 
\begin{equation}
\begin{minipage}{.8\textwidth}
If $d_m \rightarrow d_\infty \in [\sqrt{2}\pi/2,\infty)$ and $(\mu_{d_m},\phi_{d_m})$ is a sequence of eigenvalue-eigenfunction pair for \eqref{Eq:muEVP} with $d = d_m$ such that $\mu_{d_m} \rightarrow \mu_\infty \in [0,\infty)$ and $\|\phi_{d_m}\|_{Y_{d_m}} = 1$, then up to extracting a subsequence, $\phi_{d_m}$ converges to a solution to the eigenvalue problem \eqref{Eq:muEVP} with $d = d_\infty$ and $\mu = \mu_\infty$.
\end{minipage}
\label{Eq:EVPStab}
\end{equation}
Indeed, using $\|\nabla \phi_m\|_{L^2(\Omega_{d_m})}^2 = \mu_{d_m}$ and applying elliptic estimates to the equation obtained by differentiating \eqref{Eq:muEVP}, we have $|\nabla \phi_m| \leq C$ in $\Omega_{d_m}$ for some $C$ independent of $m$. On the other hand, since $(U_{d_m}, V_{d_m}) \rightarrow (U_{d_\infty}, V_{d_\infty})$ locally uniformly and $1 - U_{d_\infty}^2 - V_{d_\infty}^2 > 0$ in $\Omega_{d_\infty}$ (by Theorem \ref{Thm1}), we deduce form the normalization $\|\phi_{d_m}\|_{Y_{d_m}} = 1$ that the average of $\phi_{d_m}$ in a disk around the origin of some small radius, say $1 < d_\infty$, is bounded by a constant independent of $m$. It follows that $|\phi_{d_m}| \leq C + C|x|$. By Ascoli-Arzel\`a's theorem and after passing to a subsequence, $\phi_{d_m}$ converges locally uniformly to some limit $\phi_\infty$. By Lebesgue's dominated convergence theorem and the uniform super-polynomial decay estimate \eqref{Eq:UD}, we see that $\|\phi_\infty\|_{Y_{d_\infty}} = 1$. The fact that $(\mu_\infty, \phi_\infty)$ is a non-trivial solution to the eigenvalue problem \eqref{Eq:muEVP} with $d = d_\infty$ also follows.

It is clear that $U_d$ and $V_d$ are solutions of the eigenvalue problem \eqref{Eq:muEVP} corresponding to the eigenvalue $\mu = 1$. Lemma \ref{Lem:EC} follows from the following stronger statement.

\begin{lemma}
\label{Lem:ECS}
For $d \geq \sqrt{2}\pi/2$, the second eigenvalue of \eqref{Eq:muEVP} is $1$ and has multiplicity $2$ (that is, $\mu_{d,2} = \mu_{d,3} = 1 < \mu_{d,4}$).
\end{lemma}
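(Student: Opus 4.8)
The plan is to identify explicitly two linearly independent second eigenfunctions and then rule out the existence of a third one at the same eigenvalue level, for all $d \geq \sqrt{2}\pi/2$. First I observe that $U_d$ and $V_d$ are eigenfunctions of \eqref{Eq:muEVP} with eigenvalue $\mu = 1$: indeed $-\Delta U_d = (1 - U_d^2 - V_d^2)U_d$ and $-\Delta V_d = (1 - U_d^2 - V_d^2)V_d$, and both satisfy the Neumann condition (after the even reflections across $y = \pm d$ used throughout). They are orthogonal in $Y_d$ because $U_d$ is odd in $y$ while $V_d$ is even in $y$, and each is orthogonal to the constant first eigenfunction $\phi_{d,1}$: $V_d$ is odd in $x$, and $U_d$ is odd in $y$. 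Moreover neither is a constant, so both correspond to eigenvalues strictly bigger than $\mu_{d,1} = 0$. Since $U_d$ has exactly two nodal domains (the half-strips $\{y>0\}$ and $\{y<0\}$, using the sign information in \eqref{Eq:UVSign}), Courant's nodal domain theorem forces $U_d$ to be a second eigenfunction, i.e.\ $\mu_{d,2} = 1$; likewise $V_d$ has exactly two nodal domains $\{x>0\}$, $\{x<0\}$, so it too is a second eigenfunction. Hence $\mu_{d,2} = \mu_{d,3} = 1$.

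It remains to prove that the eigenvalue $1$ has multiplicity \emph{exactly} two, equivalently $\mu_{d,4} > 1$. The strategy is a continuation-plus-contradiction argument in $d$, exactly in the spirit of the program announced for \eqref{Eq:MPI-2} in the introduction and carried out through Lemmas \ref{Lem:ECS-Low}, \ref{Lem:ECS-d*}, \ref{Lem:Sib3rd}. I would first establish the statement for $d$ close to $\sqrt{2}\pi/2$: by the stability property \eqref{Eq:EVPStab} and the fact that $(U_d,V_d) \to u_s$ locally smoothly as $d \searrow \sqrt{2}\pi/2$, the eigenvalue problem \eqref{Eq:muEVP} converges to the limiting problem with weight $1 - |u_s|^2 = \mathrm{sech}^2\frac{x}{\sqrt{2}}$, whose spectrum can be analyzed by separation of variables in $y$ (Fourier modes $\cos\frac{k\pi y}{2d}$, $\sin\frac{k\pi y}{2d}$) combined with the one-dimensional operator $-\partial_x^2 - \mathrm{sech}^2\frac{x}{\sqrt{2}}$ on $\RR$, whose first two eigenvalues are known explicitly ($-1/2$ with eigenfunction $\mathrm{sech}\frac{x}{\sqrt{2}}$ being below the relevant threshold). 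A direct count shows that at $d = \sqrt{2}\pi/2$ the fourth eigenvalue of the limiting problem is strictly above $1$; by \eqref{Eq:EVPStab} and eigenvalue continuity this persists for $d$ slightly larger than $\sqrt{2}\pi/2$, and by openness the set $S = \{d > \sqrt{2}\pi/2 : \mu_{d,4} > 1\}$ is nonempty and open.

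The hard part, which I expect to be the main obstacle, is to show $S$ is also closed in $(\sqrt{2}\pi/2,\infty)$, so that $S$ is everything. Suppose $d_* \in \partial S$; then $\mu_{d_*,4} = 1$, so the eigenspace at $1$ is at least three-dimensional. Using that it already contains $U_{d_*}$ (odd in $y$) and $V_{d_*}$ (odd in $x$), one produces a third independent eigenfunction $\phi$, and after taking appropriate symmetric/antisymmetric combinations one may assume $\phi$ has a definite parity in $x$ and in $y$. A nodal-domain analysis then applies: following the argument announced after \eqref{Eq:MPI-2}, one shows the nodal set of $\phi$ is a simple closed curve meeting $\partial\Omega_{d_*}$ non-tangentially at $(0,d_*)$; reflecting evenly across $y = d_*$ turns $\phi$ into the third eigenfunction of a related weighted Neumann problem on the doubled strip $\{0 < y < 2d_*\}$, where it acquires four nodal domains, contradicting Courant's theorem. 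Here the decisive analytic input — and the subtle point — is that $1 - U_d^2 - V_d^2$ decays faster than the Hardy weight $1/x^2$ (Remark \ref{Rem:PD}), which supplies the compactness in \eqref{Eq:EVPStab} needed to pass to the limit $d \to d_*$ and to control the behavior of eigenfunctions at $x = \pm\infty$; without this, the eigenvalue branches could escape to the essential spectrum. Once closedness is established, $S = (\sqrt{2}\pi/2,\infty)$, and together with the base case at $d = \sqrt{2}\pi/2$ (handled by the limiting problem directly) the lemma follows.
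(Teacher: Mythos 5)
Your overall architecture (explicit spectral analysis at $d=\sqrt{2}\pi/2$, continuation in $d$, contradiction at a critical $d_*$ via nodal domains on a doubled strip) is the paper's strategy, but two of your steps do not hold up as written. The first is your opening claim that Courant's theorem ``forces'' $U_d$ and $V_d$ to be second eigenfunctions because each has exactly two nodal domains. This is a false converse of Courant: the theorem bounds the number of nodal domains of the $n$-th eigenfunction above by $n$, but an eigenfunction with exactly two nodal domains need not realize $\mu_{d,2}$ (Stern's classical examples on the square give arbitrarily high eigenfunctions with only two nodal domains). The point is not cosmetic: the assertion $\mu_{d,2}=1$, i.e.\ $\mu_{d,2}\ge 1$, is precisely the coercivity inequality \eqref{Eq:MPI-2} that the lemma exists to establish, so it cannot be read off in three lines from the sign information in \eqref{Eq:UVSign}. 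In the paper, $\mu_{d,2}=1$ is itself part of what the continuation propagates: Lemma \ref{Lem:ECS-Low} explicitly treats the failure mode $\mu_{d_m,2}<1$ and excludes it by passing to the limit $d_m\searrow \sqrt{2}\pi/2$ and using the two-dimensionality of the limiting eigenspace. Your continuation only tracks the condition $\mu_{d,4}>1$ and therefore silently leans on the invalid step.

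The second gap is in the closedness argument. After reflecting the third eigenfunction across $y=d_*$ you assert that it becomes ``the third eigenfunction'' of the doubled-strip problem and then invoke Courant. But Courant yields a contradiction from four nodal domains only if you know that the eigenvalue $1$ occupies the third slot of the doubled problem's spectrum, and that is not automatic: it is the content of Lemmas \ref{Lem:Sib2nd} and \ref{Lem:Sib3rd}, whose proof requires showing $\tilde\mu_{d,2}<1$ by testing with $\partial_y U_d$ and exploiting the vanishing of the second variation along translations, together with a separate argument pinning $\tilde\mu_{d,3}=1<\tilde\mu_{d,4}$ below $d_*$. Without this input the reflected function could a priori be a fourth or later eigenfunction, for which four nodal domains is perfectly consistent with Courant. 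A related imprecision: the function whose nodal set meets $\partial\Omega_{d_*}$ at $(0,d_*)$ is not the even-even eigenfunction $\phi$ itself (whose nodal set is a closed curve in the interior) but the specific combination $P_{d_*}$ of $U_{d_*}$ and $\phi$ built in Lemma \ref{Lem:ECS-d*}, which is even in $x$ but has no definite parity in $y$; your sketch applies the nodal analysis to the wrong function.
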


Let us treat first the case $d$ is close to $\sqrt{2}\pi/2$.
\begin{lemma}\label{Lem:ECS-Low}
There exists a maximal $d_* \in (\sqrt{2}\pi/2, \infty]$ such that Lemma \ref{Lem:ECS} holds for $d \in [\sqrt{2}\pi/2,d_*)$.
\end{lemma}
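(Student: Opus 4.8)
## Proof Proposal for Lemma \ref{Lem:ECS-Low}

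The plan is to establish Lemma \ref{Lem:ECS} for $d$ close to $\sqrt{2}\pi/2$ by a perturbation argument based at the limit $d = \sqrt{2}\pi/2$, where the solitonic vortex degenerates to the soliton $u_s$, and then to take $d_*$ to be the supremum of all $d$ for which the conclusion persists. First I would analyze the limiting eigenvalue problem at $d = \sqrt{2}\pi/2$: in this case $1 - U_d^2 - V_d^2$ becomes $1 - |u_s|^2 = \mathrm{sech}^2\frac{x}{\sqrt{2}}$, which is independent of $y$, so \eqref{Eq:muEVP} separates via Fourier series in $y$. One is led to one-dimensional weighted eigenvalue problems $-\zeta'' = \mu\, \mathrm{sech}^2\frac{x}{\sqrt 2}\,\zeta - \frac{k^2\pi^2}{4d^2}\zeta$ on $\RR$ for $k = 0, 1, 2, \dots$ (with $k = 0$ constant in $y$, $k \geq 1$ giving a $\cos$ and a $\sin$ mode each). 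For $d = \sqrt{2}\pi/2$ exactly, the $k=0$ mode has its ground state at $\mu = 0$ with constant eigenfunction, and the crucial computation from Corollary \ref{Cor:d1} shows that at this width the relevant one-dimensional operator for the $k = 1$ mode has ground state $\mu = 1$ (this is precisely what the factorization $\varphi = A\eta$, $A = \mathrm{sech}\frac{x}{\sqrt 2}$ computes, since $A$ solves the equation at $\mu = 1$ when $\frac{\pi^2}{4d^2} = \frac12$). One checks that all other modes ($k \geq 2$, or excited states within each $k$) give strictly larger eigenvalues. Hence at $d = \sqrt{2}\pi/2$ the second Neumann eigenvalue of \eqref{Eq:muEVP} equals $1$, is attained exactly by the two first-order modes $A(x)\cos\frac{\pi y}{2d}$ and $A(x)\sin\frac{\pi y}{2d}$, and $\mu_{d,4} > 1$ strictly. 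This matches the fact that at general $d$, $U_d$ and $V_d$ are themselves eigenfunctions at eigenvalue $\mu = 1$.

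Next I would upgrade this to a neighborhood of $\sqrt{2}\pi/2$ using the stability property \eqref{Eq:EVPStab}. The key point is that $\mu_{d,2}$, $\mu_{d,3}$, $\mu_{d,4}$ depend continuously on $d$: this is exactly what \eqref{Eq:EVPStab} delivers, since any sequence of eigenpairs with bounded eigenvalues subconverges to an eigenpair of the limiting problem. Concretely, $U_d$ and $V_d$ are always solutions at $\mu = 1$, and they are linearly independent (by the sign conditions \eqref{Eq:UVSign}, e.g. $U_d \equiv 0$ on $\{y=0\}$ while $V_d$ does not vanish identically there), so $\mu_{d,2} \leq \mu_{d,3} \leq 1$ for all $d > \sqrt{2}\pi/2$. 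On the other hand, if $\mu_{d_m,4} \to 1$ along some sequence $d_m \searrow \sqrt{2}\pi/2$, then \eqref{Eq:EVPStab} would produce a limit eigenfunction at the limiting problem orthogonal (in $Y_{\sqrt 2\pi/2}$) to the constant and to the two first-order modes, yet still at eigenvalue $1$; this contradicts $\mu_{\sqrt2\pi/2,4} > 1$ established above. Hence $\mu_{d,4} > 1$ for $d$ near $\sqrt{2}\pi/2$. Similarly, $\mu_{d,2} > 0$ for $d > \sqrt{2}\pi/2$ follows because $1 - U_d^2 - V_d^2 > 0$ on $\Omega_d$ (so the only $\mu = 0$ eigenfunctions are constants, which span a one-dimensional space). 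Combining, $0 < \mu_{d,2} \leq \mu_{d,3} \leq 1 < \mu_{d,4}$; but since $U_d, V_d$ are independent eigenfunctions at $\mu = 1$, we must have $\mu_{d,2} = \mu_{d,3} = 1$. This proves Lemma \ref{Lem:ECS} holds for $d$ in an interval $[\sqrt{2}\pi/2, d_*)$, and one takes $d_*$ maximal with this property; $d_* > \sqrt 2 \pi/2$ by the above.

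The main obstacle, I expect, is the rigorous handling of the limit as $d \searrow \sqrt{2}\pi/2$, specifically verifying that no eigenvalue can ``escape'' the relevant spectral band and that the essential spectrum does not interfere. The weight $1 - U_d^2 - V_d^2$ is not compactly supported and tends to a strictly positive function in the limit, so a priori one must worry about eigenfunctions spreading to infinity or about the bottom of the continuous spectrum (here, for the shifted operator, continuous spectrum starts at $\mu = \frac{k^2\pi^2}{4d^2} / \sup \mathrm{sech}^2 = \frac{k^2\pi^2}{4d^2}$ type thresholds — finite and relevant). The decay estimate \eqref{Eq:UD}, which gives super-polynomial (faster than Hardy) decay of $1 - U_d^2 - V_d^2$ uniformly for $d$ in compact subintervals, is exactly what makes the embedding $X_d \hookrightarrow Y_d$ compact uniformly near $\sqrt 2 \pi/2$ and makes \eqref{Eq:EVPStab} applicable; threading this uniform compactness carefully through the continuity-of-eigenvalues argument is the delicate part. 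Once that is in place, the rest is a soft spectral-continuity argument.
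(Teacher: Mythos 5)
Your overall strategy -- compute the spectrum of \eqref{Eq:muEVP} at the degenerate width $d = \sqrt{2}\pi/2$ by separating variables, then propagate the conclusion to nearby $d$ via the stability property \eqref{Eq:EVPStab}, orthogonality to constants, and the fact that $U_d, V_d$ are independent eigenfunctions at $\mu = 1$ -- is exactly the paper's. The continuation half of your argument is essentially correct (modulo the minor point that the limit of $\phi_{d_m,4}$ is orthogonal to the \emph{limits} of $\phi_{d_m,2},\phi_{d_m,3}$, so one must first check $\mu_{d_m,2},\mu_{d_m,3}\to 1$ to know those limits span the limiting eigenspace; this is the same bookkeeping the paper does).

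The genuine gap is in the base case, i.e.\ the computation of the spectrum at $d = \sqrt{2}\pi/2$. First, your transverse decomposition is wrong: for the Neumann problem on $(-d,d)$ each transverse frequency is \emph{simple}, so $k \geq 1$ does not contribute ``a $\cos$ and a $\sin$ mode each'' (that would be the periodic case). With $d = \sqrt{2}\pi/2$ the admissible modes are $1$, $\sin\frac{y}{\sqrt{2}}$, $\cos(\sqrt{2}y)$, \dots; the function $A(x)\cos\frac{\pi y}{2d} = \mathrm{sech}\frac{x}{\sqrt{2}}\cos\frac{y}{\sqrt{2}}$, which you place in the second eigenspace, does not satisfy $\partial_y\phi = 0$ at $y = \pm d$ and is not an eigenfunction. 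Second, your claim that ``excited states within each $k$ give strictly larger eigenvalues'' is false for $k = 0$: the first excited state of $-a'' = \mu\,\mathrm{sech}^2\frac{x}{\sqrt{2}}\,a$ is $\tanh\frac{x}{\sqrt{2}}$ with $\mu = 1$ exactly (this is just $V_{\sqrt{2}\pi/2} = u_{s,2}$, which must be in the eigenspace since $V_d$ is an eigenfunction for every $d$ and $V_d \to \tanh\frac{x}{\sqrt{2}}$). The correct eigenspace at $\mu = 1$ is spanned by $\tanh\frac{x}{\sqrt{2}}$ and $\mathrm{sech}\frac{x}{\sqrt{2}}\sin\frac{y}{\sqrt{2}}$; your claimed basis contains one inadmissible function and omits one genuine eigenfunction, so the multiplicity count of $2$ -- on which everything downstream rests -- is reached by an incorrect route, and the crucial inequality $\mu_{\sqrt{2}\pi/2,4} > 1$ is asserted (``one checks'') rather than proved. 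The paper closes exactly this gap by the substitution $\theta = \arccos\tanh\frac{x}{\sqrt{2}}$, which turns the one-dimensional problems \eqref{Eq:mu*EVPk} into associated Legendre equations, identifies $a_k(\theta)\cos(2k\varphi)$ and $b_k(\theta)\cos((2k+1)\varphi)$ with spherical harmonics, and concludes that the full spectrum is $\{\frac{1}{2}\ell(\ell+1)\}$ with $\mu_{d,2}=\mu_{d,3}=1$ and $\mu_{d,4}=3$. You would need this (or an equivalent explicit analysis of every one-dimensional weighted problem, including all excited states) to make the base case rigorous.
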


\begin{remark}
For $\sqrt{2}\pi/2 < d < d_*$, the eigenspace corresponding to the eigenfunction $\mu_{d,2} = 1$ is generated by $U_d$ and $V_d$. When $d = \sqrt{2}\pi/2$, $U_d = 0$ and we cannot use it as a basis element. We will see from the proof below that, in this case, this eigenspace is generated by $\mathrm{sech}\frac{x}{\sqrt{2}}  \sin \frac{y}{\sqrt{2}}$ and $V_d = \tanh \frac{x}{\sqrt{2}}$.
\end{remark}

\begin{proof} 
Consider first the case $d = \sqrt{2}\pi/2$. In this case, the eigenvalue problem \eqref{Eq:muEVP} becomes
\begin{equation}
\begin{cases}
-\Delta \phi = \mu\mathrm{sech}^2 \frac{x}{\sqrt{2}}  \phi & \text{ in } \Omega_{\sqrt{2}\pi/2},\\
\partial_y \phi = 0 & \text{ on } \partial\Omega_{\sqrt{2}\pi/2}.
\end{cases}
	\label{Eq:mu*EVP}
\end{equation}

Since $\partial_y \phi = 0$ on $\{y = \pm \sqrt{2}\pi/2\}$, we can write
\[
\phi(x,y) = a_0(x) + \sum_{k=1}^\infty a_k(x) \cos (k\sqrt{2} y) + \sum_{k=1}^\infty b_k(x) \sin(\frac{(2k+1)y}{\sqrt{2}}).
\]
The Fourier coefficients $a_k$ and $b_k$'s satisfy
\begin{equation}
\begin{cases}
-a_k'' + 2k^2 a_k = \mu\mathrm{sech}^2 \frac{x}{\sqrt{2}} a_k,\\
-b_k'' + \frac{1}{2}(2k+1)^2 b_k = \mu\mathrm{sech}^2 \frac{x}{\sqrt{2}} b_k.
\end{cases}
	\label{Eq:mu*EVPk}
\end{equation}
Introducing the change of variable $\theta = \arccos \tanh \frac{x}{\sqrt{2}}$, the equation \eqref{Eq:mu*EVPk} becomes
\begin{equation}
\begin{cases}
- \frac{1}{\sin\theta}\frac{d}{d\theta}\big(\sin\theta \frac{da_k}{d\theta} \big) + \frac{4k^2}{\sin^2\theta} a_k = 2\mu a_k ,\\
- \frac{1}{\sin\theta}\frac{d}{d\theta}\big(\sin\theta \frac{db_k}{d\theta} \big)  + \frac{(2k+1)^2}{\sin^2\theta} b_k= 2\mu b_k \text{ for } \theta \in (0,\pi).
\end{cases}
	\label{Eq:akde}
\end{equation}
Note that the fact that $\phi \in X_{\sqrt{2}\pi/2}$ implies
\[
\int_0^\pi \Big[\Big(\frac{d a_k}{d\theta}\Big)^2 + \frac{4k^2}{\sin^2\theta} a_k^2\Big]\,\sin\theta d\theta < \infty \text{ and }\int_0^\pi \Big[\Big(\frac{d b_k}{d\theta}\Big)^2 + \frac{(2k+1)^2}{\sin^2\theta} a_k^2\Big]\,\sin\theta d\theta < \infty.
\]
Therefore, introducing an artificial variable $\varphi \in [0,2\pi)$, we see that the functions $a_k(\theta) \cos(2k\varphi)$ and $b_k(\theta) \cos((2k+1)\varphi)$ are spherical harmonics of degree $2k$ and $2k+1$, respectively. It readily follows that the eigenvalue $\mu$ takes the form $\frac{1}{2}\ell(\ell+1)$ for $\ell = 0, 1, \dots$. We deduce that, when $d = \sqrt{2}\pi/2$, $\mu_{d,1} = 0$ is simple and the corresponding eigenspace consists of the constant functions, $\mu_{d,2} = 1$ has multiplicity $2$ (hence $\mu_{d,2} = \mu_{d,3}$) and the corresponding eigenspace is generated by $\cos\theta = \tanh \frac{x}{\sqrt{2}}$ and $\sin\theta \sin \frac{y}{\sqrt{2}}= \mathrm{sech}\,\frac{x}{\sqrt{2}}  \sin \frac{y}{\sqrt{2}}$, and $\mu_{d,4} = 3$. This proves that Lemma \ref{Lem:ECS} holds for $d = \sqrt{2}\pi/2$.

Suppose by contradiction that $d_*$ does not exist. Then there exists a sequence $d_m \searrow \sqrt{2}\pi/2$ such that one of the following cases occur: 
\begin{itemize}
\item $\mu_{d_m,2} \neq 1$, or
\item $\mu_{d_m,2} = 1$ but its multiplicity is not $2$. 
\end{itemize}

Consider the first case, that is $\mu_{d_m,2} \neq 1$. Since $1$ is an eigenvalue with eigenfunctions $U_d$ and $V_d$, we have that $\mu_{d_m,2} < 1$ and there exists a minimal $j_m \geq 3$ such that $\mu_{d_m,j_m} = \mu_{d_m,j_m+1} = 1$. Passing to a subsequence, we may assume $\mu_{d_m,2} \rightarrow \mu_\infty \in [0,1]$. By the stability property \eqref{Eq:EVPStab} and since $\mu_{\sqrt{2}\pi/2,1} = 0$ and $\mu_{\sqrt{2}\pi/2,2}  = 1$, we have $\mu_\infty \in \{0,1\}$. Since $\mu_{d,1}$ is simple with constant eigenfunctions, and the eigenfunction corresponding to $\mu_{d_m,2}$ is $Y_{d_m}$-orthogonal to the constants, $\mu_\infty \neq 0$. Hence $\mu_\infty = 1$. This implies that the eigenspace corresponding to $\mu_{\sqrt{2}\pi/2,2} = 1$ contains at least three pair-wise orthogonal eigenfunctions: $V_{\sqrt{2}\pi/2}$, the limit of $\frac{U_{d_m}}{\|U_{d_m}\|_{Y_{d_m}}}$ and the limit of the normalized eigenfunction corresponding to $\mu_{d_m,2}$. This contradicts the fact that this eigenspace has two dimensions.

Consider the second case, that is $\mu_{d_m,2} = 1$ but its multiplicity is not $2$. Since $U_{d_m}, V_{d_m}$ are eigenfunctions corresponding to this eigenfunction, the multiplicity must be at least three. By the stability property \eqref{Eq:EVPStab}, this implies again that the eigenspace corresponding to $\mu_{\sqrt{2}\pi/2,1} = 2$ has at least three dimensions, which is not possible. The proof is complete.
\end{proof}

The next lemma examines the structure of the second eigenspace for $d = d_*$ if $d_*$ is finite.

\begin{lemma}\label{Lem:ECS-d*}
Let $d_*$ be given by Lemma \ref{Lem:ECS-Low}. If $d_*$ is finite, then the second eigenvalue of \eqref{Eq:muEVP} with $d = d_*$ is $\mu_{d_*,2} = 1$ and has multiplicity $3$. Moreover, the corresponding eigenspace is spanned by $U_{d_*}, V_{d_*}$ and another function denoted by $P_{d_*}$ which is even in $x$ and whose zero set 
\[
N(P_{d_*}) = \{(x,y) \in \overline\Omega_{d_*}: P_{d_*}(x,y) = 0\}
\] satisfies simultaneously the following properties:
\begin{enumerate}[(i)]
\item $N(P_{d_*})$ is a closed simple Lipschitz curve meeting $\partial\Omega_d$ at exactly $(0,d_*)$.
\item $N(P_{d_*})$ is smooth away from $(0,d_*)$.
\item Locally near $(0,d_*)$, $N(P_{d_*})$ consists of two curves which are orthogonal at $(0,d_*)$ and each of which forms a $\frac{\pi}{4}$-angle with $\{y = d_*\}$.
\item $N(P_{d_*}) \cap \{0 < y < d_*\}$ consists of two simple curves which are symmetric about the $y$-axis and which split $\{0 < y < d_*\}$ into three components.
\end{enumerate}
\end{lemma}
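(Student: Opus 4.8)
The plan is to combine the eigenvalue‑stability property~\eqref{Eq:EVPStab}, the maximality of $d_*$, a splitting of the $\mu=1$ eigenspace according to the two reflection symmetries $x\mapsto-x$ and $y\mapsto-y$ (with the strong maximum principle and the Hopf lemma), and a local/global analysis of the nodal set resting on Courant's nodal‑domain theorem. \emph{Step 1: $\mu_{d_*,2}=1$ and the $\mu=1$ eigenspace is at least $3$‑dimensional at $d_*$.} The bound $\mu_{d_*,2}\le1$ is immediate: $U_{d_*},V_{d_*}$ are $\mu=1$ eigenfunctions, they are $Y_{d_*}$‑orthogonal (one is odd in $x$, the other odd in $y$) and orthogonal to the constants. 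The bound $\mu_{d_*,2}\ge1$ follows from continuity of eigenvalues — a consequence of~\eqref{Eq:EVPStab} and the variational characterization of $\mu_{d,k}$ — together with $\mu_{d,2}=1$ for $d<d_*$. By maximality of $d_*$, Lemma~\ref{Lem:ECS} fails either at $d_*$ itself or along a sequence $d_m\searrow d_*$; in the first case $\mu_{d_*,2}=1$ with multiplicity $\neq2$, hence $\ge3$, and in the second case~\eqref{Eq:EVPStab} extracts from the $Y$‑normalized eigenfunctions witnessing the failure a limiting $\mu=1$ eigenfunction at $d_*$ orthogonal to $U_{d_*}$ and $V_{d_*}$, again giving multiplicity $\ge3$.

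\emph{Step 2: the extra eigenfunction is even in $x$ and $y$, and the multiplicity is exactly $3$.} Any $\mu=1$ eigenfunction decomposes into its four parts with respect to the parities in $x,y$, each part again a $\mu=1$ eigenfunction or zero. Since $V_{d_*}>0$ on $\{x>0\}$, its restriction to $Q_{d_*}=(0,\infty)\times(0,d_*)$ is a positive principal eigenfunction for the problem with Dirichlet data on $\{x=0\}$ and Neumann data on $\{y=0\}\cup\{y=d_*\}$; hence $1$ is that principal eigenvalue, it is simple, and the ``odd in $x$, even in $y$'' part is spanned by $V_{d_*}$. Likewise $U_{d_*}<0$ on $\{0<y<d_*\}$ shows the ``even in $x$, odd in $y$'' part is spanned by $U_{d_*}$. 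Comparing the ``odd in $x$, odd in $y$'' problem (an extra Dirichlet condition on $\{y=0\}$) with the previous one and using strict monotonicity of principal eigenvalues together with $V_{d_*}>0$ on $\{y=0\}$, its principal eigenvalue exceeds $1$, so that part vanishes. Thus the extra eigenfunction lies in the ``even in $x$, even in $y$'' sector; let $\Phi_{d_*}$ be a $Y_{d_*}$‑normalized representative ($Y$‑orthogonal to the constants since $\mu=1\neq0$). Exactness of the multiplicity follows by tracking the eigenvalues $\nu_j(d)$ of the all‑Neumann problem on $Q_d$: they depend continuously on $d$, satisfy $\nu_2(d)>1$ on $[\sqrt2\pi/2,d_*)$ (at $d=\sqrt2\pi/2$ one reads $\nu_2=3$ from the spherical‑harmonic computation in Lemma~\ref{Lem:ECS-Low}), so $\nu_2(d_*)\ge1$, whence $\nu_2(d_*)=1$; and $\nu_3(d_*)=1$ is excluded because each $\nu_2$‑eigenfunction on $Q_{d_*}$ has exactly two nodal domains by Courant, which cannot hold on all of a two‑dimensional eigenspace in this domain.

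\emph{Step 3: construction of $P_{d_*}$ and its local nodal structure.} Since $U_{d_*}(0,d_*)<0$ (Hopf lemma on the evenly reflected solution), set
\[
P_{d_*} \;=\; \Phi_{d_*} \;-\; \frac{\Phi_{d_*}(0,d_*)}{U_{d_*}(0,d_*)}\,U_{d_*},
\]
which is even in $x$, has $P_{d_*}(0,d_*)=0$ and $P_{d_*}(0,-d_*)=2\,\Phi_{d_*}(0,d_*)$. First one checks $\Phi_{d_*}(0,d_*)\neq0$: otherwise $P_{d_*}=\Phi_{d_*}$ vanishes to second order at both $(0,\pm d_*)$ (evenness in $x$ and the Neumann condition kill both first derivatives and $-\Delta$ kills the trace of the Hessian there) and, being sign‑changing, would have too many nodal domains, contradicting Courant; hence $N(P_{d_*})$ misses the lower boundary. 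At $(0,d_*)$ we have $P_{d_*}=0$, $\partial_xP_{d_*}=0$ (evenness), $\partial_yP_{d_*}=0$ (Neumann), $\partial_x\partial_yP_{d_*}=0$ (evenness again), and $\partial_x^2P_{d_*}+\partial_y^2P_{d_*}=-\Delta P_{d_*}=0$ there; the Hessian is nonzero by unique continuation, hence a nonzero multiple of $\operatorname{diag}(1,-1)$, so $N(P_{d_*})$ near $(0,d_*)$ is two arcs crossing orthogonally and forming $\pm\pi/4$ with $\{y=d_*\}$ — giving (ii) and (iii). Away from $(0,d_*)$, $P_{d_*}$ has nonvanishing gradient on $\{P_{d_*}=0\}$ (unique continuation in the interior, Hopf on $\{y=\pm d_*\}$ to rule out tangential contact and further boundary contacts), so there the zero set is a smooth curve; moreover $P_{d_*}$ does not change sign near $x=\pm\infty$ (decay of $1-U_{d_*}^2-V_{d_*}^2$ by Remark~\ref{Rem:PD} and maximum‑principle behaviour at infinity), so $N(P_{d_*})$ is compact.

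\emph{Step 4: global structure and the main obstacle.} It remains to show $N(P_{d_*})$ is a single closed simple curve meeting $\partial\Omega_{d_*}$ only at $(0,d_*)$ and that $N(P_{d_*})\cap\{0<y<d_*\}$ consists of two $x$‑symmetric arcs splitting that region into three components. Evenness in $x$ forces $N(P_{d_*})$ to be symmetric about the $y$‑axis; the $45^{\circ}$ crossing at $(0,d_*)$, non‑contact with the lower boundary, the behaviour at infinity, and Courant's bound on the number of nodal domains (applied to $P_{d_*}$ as an eigenfunction of index at most $4$, since the eigenvalue $1$ occupies positions $2,3,4$) together pin the topology down to the asserted configuration. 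Excluding spurious extra closed components of $N(P_{d_*})$ and extra nodal domains is the crux, and where I expect the real work to lie: Courant's theorem only caps the number of nodal domains and that cap is weakened by the triple degeneracy of $\mu=1$, so one must lean on the $x$‑symmetry and on the precise boundary/decay behaviour (and, if needed, on a controlled degeneration as $d\nearrow d_*$, where $(U_d,V_d)\to u_s$ and the nodal picture collapses onto that of an explicit Fourier mode), together with the multiplicity‑exactly‑three statement of Step 2, which itself feeds the Courant count.
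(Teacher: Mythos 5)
Your Steps 1 and 2 follow the paper's route in outline (stability plus maximality gives multiplicity at least $3$; parity decomposition plus positivity of $U_{d_*},V_{d_*}$ confines the extra eigenfunction to the even--even sector), but the proposal has three genuine gaps. First, your exclusion of a two-dimensional even--even eigenspace --- ``each $\nu_2$-eigenfunction has exactly two nodal domains by Courant, which cannot hold on all of a two-dimensional eigenspace'' --- is not a valid argument: a two-dimensional second eigenspace all of whose nonzero members have exactly two nodal domains is perfectly possible (the Neumann Laplacian on a square is an example). The paper instead proves $\dim Z = 3$ by observing that a four-dimensional eigenspace would contain a $\psi$ with $\psi(p)=\nabla\psi(p)=0$ at a prescribed interior point, which is impossible because the two-nodal-domain bound forces interior zeros of second eigenfunctions to be nondegenerate. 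Second, your verification that $\Phi_{d_*}(0,d_*)\neq 0$ (``vanishes to second order at $(0,\pm d_*)$ \dots too many nodal domains'') is unsubstantiated: second-order boundary vanishing produces three local sectors with signs $+,-,+$, and the two $+$ sectors may join into a single global nodal domain, so Courant is not directly violated. The paper proves the stronger statement that $N(\Phi_{d_*})$ contains no boundary point at all: if it did, the double symmetry together with the two-nodal-domain count would force the nodal set to be exactly the segment $\{x=0\}\cap\Omega_{d_*}$, whence $\Phi_{d_*}=\partial_x\Phi_{d_*}=0$ on $\{x=0\}$ and $\Phi_{d_*}\equiv 0$ by unique continuation.

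The third and most serious gap is the one you flag yourself in Step 4: conclusions (i) and (iv) --- that $N(P_{d_*})$ is a single closed simple curve meeting $\partial\Omega_{d_*}$ only at $(0,d_*)$ and that it splits $\{0<y<d_*\}$ into three components --- are precisely what the downstream four-nodal-domain contradiction in the proof of Lemma \ref{Lem:ECS} requires, and you do not prove them. The paper's mechanism is to first show that $N(\Phi_{d_*})$ is a closed simple curve contained in the open strip which, by the $y$-symmetry and the nondegeneracy of interior zeros, must cross the $x$-axis at a pair of points $(\pm x_0,0)$ with $x_0\neq 0$. Since $U_{d_*}$ vanishes on the whole $x$-axis, the combination $P_{d_*}$ of $U_{d_*}$ and $\Phi_{d_*}$ vanishes at $(\pm x_0,0)$; combining this with the $\pi/4$-crossing at $(0,d_*)$, the two-nodal-domain bound and the $x$-symmetry pins down that the arc leaving $(0,d_*)$ into $\{x>0\}$ must reach $(x_0,0)$ before $(-x_0,0)$, which yields (iv). Without first establishing the closed-curve structure of $N(\Phi_{d_*})$ and locating the points $(\pm x_0,0)$ on it, you have no foothold for this connectivity argument, so the proposal as written does not establish the lemma.
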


\begin{proof} We will make use of some arguments appearing previously in \cite{Cheng76, CSLin87-CMP, Liqun95} in the context of eigenvalues of the Laplacian.

By the stability property \eqref{Eq:EVPStab} and the fact that $\mu_{d,2} = \mu_{d,3} = 1$ for $d < d_*$ (by Lemma \ref{Lem:ECS-Low}, we have that $\mu_{d_*,2} = \mu_{d_*,3} = 1$. If $\mu_{d*,4} > 1$, we may argue as in the proof of Lemma \ref{Lem:ECS-Low} to find $\hat d_{*} > d_*$, so that $\mu_{d,2} = \mu_{d,3} = 1 < \mu_{d,4}$ for $d_* \leq d < \hat d_{*}$, contradicting the maximality of $d_*$. Thus, $\mu_{d_*,4} = 1$.

Let $Z$ denote the eigenspace corresponding to $\mu_{d*,2} = 1$. The above shows that $\dim Z \geq 3$. Since $U_{d_*}, V_{d_*} \in Z$, we may find another $\phi \in Z\setminus \{0\}$ which is independent from $U_{d_*}, V_{d_*}$. We claim that we may assume that $\phi$ is even in both $x$ and $y$. To see this, suppose for example that $\phi$ is not even in $x$. In this case, $\phi$ can be split as $\phi_1 + \phi_2$ where $\phi_1$ is even in $x$ and $\phi_2 \neq 0$ is odd in $x$. Since the weight $1 - U_{d_*}^2 - V_{d_*}^2$ is even in $x$, we see that $\phi_2$ also belongs to $Z$. Noting that $\phi_2 = 0$ on $\{x = 0\}$, we may use \eqref{Eq:UV-1b} to get
\[
0 = \int_{\Omega_{d_*}} \big [|\nabla \phi_2|^2 - (1 - U_{d_*}^2 - V_{d_*}^2) \phi_2^2\big]\,dx\,dy \geq \int_{\Omega_{d_*}} V_{d_*}^2 \big|\nabla\big(\frac{\phi_2}{V_{d_*}}\big)\big|^2\,dx\,dy.
\]
This implies that $\frac{\phi_2}{V_{d_*}}$ is constant, that is $\phi_2$ is a multiple of $V_{d_*}$. Since $\phi$ and $V_{d_*}$ are linear independent, we deduce that $\phi_1 \neq 0$. Therefore, we can assume from the beginning that $\phi$ is even in $x$. Similarly, using \eqref{Eq:UV-1a}, we may also assume that $\phi$ is even in $y$. The claim is proved.

We will use a few facts on the zero set of functions in $Z$.

\medskip
\noindent\underline{Fact 1:} The number of nodal domains of any non-zero function $\psi \in Z$ is $2$. This follows from Courant's nodal domain theorem.

\medskip
\noindent\underline{Fact 2:} If $p \in \Omega_d$ is an interior zero of a non-zero function $\psi \in Z$, then $\nabla \psi(p) \neq 0$. Indeed, by \eqref{Eq:muEVP} and unique continuation, the leading non-trivial homogeneous Taylor polynomial of $\psi$ at $p$ is a non-zero harmonic polynomial of some degree $k \geq 1$. Thus, locally around $p$, the zero set of $\psi$ consists or $k$ transversal smooth curves (which meet at $p$ equiangularly). Topological consideration shows that the number of nodal domains of $\psi$ will be more than $2$ if $k \geq 2$. Hence $k = 1$ and so $\nabla \psi(p) \neq 0$.

\medskip
\noindent\underline{Fact 3:} If $p \in \partial\Omega_d$ is a boundary zero of a non-zero function $\psi \in Z$, then exactly one of the two following situations occurs:
\begin{enumerate}[(a)]
\item $\nabla \psi(p) \neq 0$ and, locally near $p$, the zero set of $\psi$ is a smooth curve meeting $\partial \Omega_d$ at $p$ orthogonally.
\item $\nabla \psi(p) = 0$, $\nabla^2 \psi(p) \neq 0$, and, locally near $p$, the zero set of $\psi$ consists of two orthogonal curves each of which forms a $\pi/4$ angle with $\partial\Omega_d$ at $p$.
\end{enumerate}
The proof of this is similar to that of Fact 2, although in this case the degree $k$ of the leading homogeneous Taylor polynomial is either $1$ or $2$, leading to the two indicated possibilities. 

Let us now see that $\dim Z = 3$. Indeed, if this is not true, then we would have $\dim Z \geq 4$ and hence, for any fixed $p \in \Omega_{d_*}$, we may select $\psi \in Z$ such that $\psi(p) = \partial_x \psi(p) = \partial_y\psi(p) = 0$, which contradicts Fact 2.

It remains to show the existence of $P_{d_*}$. Before doing this, let us show that the zero set $N(\phi)$ of $\phi$ is a closed simple curve in $\Omega_{d_*}$. We start by showing that $N(\phi)$ contains no boundary point. Arguing by contradiction, assume that  $N(\phi)$ contains a boundary point. In this case, $N(\phi)$ contains a point on each component of $\partial\Omega_{d_*}$ due to the evenness of $\phi$ with respect to $y$. By Fact 1, at each such boundary point, situation (a) in Fact 3 must hold. Since $N(\phi)$ is symmetric with respect to $x$, Fact 1 again implies that $N(\phi)$ is exactly $\{x = 0\} \cap \Omega_d$. In particular, $\phi$ satisfies in addition to \eqref{Eq:muEVP} the identity
\[
\phi = \partial_x \phi = 0 \text{ on } \{x = 0\} \cap \Omega_d.
\]
By unique continuation, $\phi \equiv 0$, contradicting the choice of $\phi$. Now, by Facts 1 and 2 and since $N(\phi)$ contains no boundary point, $N(\phi)$ is a single closed simple curve or an open curve contained in $\Omega_d$. By the symmetry of $N(\phi)$ with respect to $y$, $N(\phi)$ must intersect the $x$-axis, as otherwise it would have at least two components which are mirror image of one another about the $x$-axis. Let $(x_0,0)$ be a point of intersection of $N(\phi)$ and the $x$-axis. By Fact 2, $\nabla \phi(x_0,0) \neq 0$, and so $x_0 \neq 0$. By the symmetry of $N(\phi)$ with respect to $x$, $N(\phi)$ also contains $(-x_0,0)$. Let $\Gamma_0$ denote an arc connecting $(x_0,0)$ to $(-x_0,0)$ along $N(\phi)$. By the symmetry of $N(\phi)$ with respect to $x$ and its regularity, $N(\phi)$ is then the union of $\Gamma_0$ and its mirror image about the $x$-axis. We conclude that $N(\phi)$ is a closed simple curve in $\Omega_d$.

Since $N(\phi)$ is a closed curve in $\Omega_{d_*}$, we have $\phi(0,d_*) \neq 0$. Let
\[
P_{d_*} = U_{d_*} - \frac{U_{d_*}(0,d_*)}{\phi(0,d_*)} \phi(0,d_*).
\]
It is clear that $P_{d_*}(0,d_*) = 0$. Since $\partial_y P_{d_*} = 0$ on $\partial \Omega_{d_*}$ and $\partial_x P_{d_*} = 0$ on $\{x = 0\}$, we have $\nabla P_{d_*}(0,d_*) = 0$. Hence situation (b) in Fact 3 holds, that is conclusion (iii) holds. This together with Fact 1 implies conclusion (i). In view of Fact 2, conclusion (ii) holds. Next, since both $U_{d_*}$ and $\phi$ vanishes at $(\pm x_0,0)$, we have that $(\pm x_0,0) \in N(P_{d_*})$. Note that as the curve $N(P_{d_*})$ leaves $(0,d_*)$ from the right half-plane, it must first connect to $(x_0,0)$ before reaching $(-x_0,0)$ (since otherwise, before reaching $(-x_0,0)$, $N(P_{d_*})$ would cross the $y$-axis perpendicularly at some point $(0,y_0)$, which, in view of the symmetry about the $y$-axis and conclusion (ii), would imply that $N(P_{d_*})$ would be the union of the arc from $(0,d_*)$ to $(0,y_0)$ on the right half-plane with its mirror image on the left half-plane without ever reaching $(\pm x_0,0)$). By the same reasoning, before reaching $(x_0,0)$, $N(P_{d_*})$ cannot meet the $y$-axis again. Conclusion (iv) follows.
\end{proof}

The goal now is to show that the very existence of $P_{d_*}$ in Lemma \ref{Lem:ECS-d*} will lead to a contradiction. For this, we detour through a related eigenvalue problem. (Our consideration of a different eigenvalue problem is inspired by a line of argument in \cite{ChenGuiYao, JM20-AnnM, JM22-Err-AnnM}, although the details of our proof are independent from these works.) Let $\tilde \Omega_d = \RR \times (0,2d)$, and extend $U_d$ and $V_d$ evenly across $y = d$. Let $\tilde X_d$ and $\tilde Y_d$ denote the completions of the space of bounded smooth functions on $\overline{\tilde \Omega}_d$ with respect to the norms
\begin{align*}
\|\tilde\psi\|_{\tilde X_d}^2 &= \int_{\tilde\Omega_d} [|\nabla \tilde \psi|^2 + (1 - U_d^2 - V_d^2)\tilde\psi^2]\,dx\,dy,\\
\|\tilde\psi\|_{\tilde Y_d}^2 &= \int_{\tilde \Omega_d}  (1 - U_d^2 - V_d^2)\tilde\psi^2 \,dx\,dy.
\end{align*}
 As before, the eigenvalue problem
\begin{equation}
\begin{cases}
-\Delta \tilde\phi = \tilde\mu (1 - U_d^2 - V_d^2) \tilde\phi & \text{ in } \tilde\Omega_d,\\
\partial_\nu \tilde\phi = 0 &\text{ on } \partial\tilde\Omega_d
\end{cases}
	\label{Eq:tmuEVP}
\end{equation}
admits a sequence of eigenvalue-eigenfunction pairs $\{\tilde\mu_{d,k},\tilde\phi_{d,k}\}$ such that
\[
\tilde\mu_{d,1} = 0 < \tilde\mu_{d,2} \leq \tilde\mu_{d,3} \leq \cdots  \nearrow \infty,
\]
$\|\tilde \phi_{d,k}\|_{Y_d} = 1$, $\tilde\phi_{d,1} > 0$ is a constant, 
\begin{multline*}
\tilde\mu_{d,k+1} 
	=  \inf \Big\{\int_{\Omega_d} |\nabla \tilde\psi|^2\,dx\,dy: \tilde\psi \in \tilde X_d, \|\tilde\psi\|_{\tilde Y_d} = 1, \\\int_{\tilde\Omega_d} (1 - U_d^2 - V_d^2) \tilde\psi \tilde\psi_{d,j}\,dx\,dy = 0 \text{ for } j = 1, \ldots, k\Big\},
\end{multline*}
and $\tilde\psi_{d,k+1}$ is chosen as a minimizer of this minimization problem. Like \eqref{Eq:muEVP}, the eigenvalue problem \eqref{Eq:tmuEVP} also has similar a stability property:
\begin{equation}
\begin{minipage}{.8\textwidth}
If $d_m \rightarrow d_\infty \in [\sqrt{2}\pi/2,\infty)$ and $(\tilde\mu_{d_m},\tilde\phi_{d_m})$ is a sequence of eigenvalue-eigenfunction pair for \eqref{Eq:tmuEVP} with $d = d_m$ such that $\tilde\mu_{d_m} \rightarrow \tilde\mu_\infty \in [0,\infty)$ and $\|\tilde\phi_{d_m}\|_{\tilde Y_{d_m}} = 1$, then up to extracting a subsequence, $\tilde\phi_{d_m}$ converges to a solution to the eigenvalue problem \eqref{Eq:muEVP} with $d = d_\infty$ and $\tilde\mu = \tilde\mu_\infty$.
\end{minipage}
\label{Eq:tEVPStab}
\end{equation}

It is clear that, when $d = \sqrt{2}\pi/2$, the eigenvalues to \eqref{Eq:tmuEVP} are identical to those to \eqref{Eq:muEVP}. In particular, by Lemma \ref{Lem:ECS-Low}, $\tilde\mu_{\sqrt{2}\pi/2, 1} = 0 < \tilde \mu_{\sqrt{2}\pi/2,2} = \tilde \mu_{\sqrt{2}\pi/2, 3} = 1 < \tilde \mu_{\sqrt{2}\pi/2,4}$.

For $d > \sqrt{2}\pi/2$, it is clear that $V_d$ is an eigenfunction to \eqref{Eq:tmuEVP} corresponding to eigenvalue $\tilde \mu = 1$. The following lemma asserts that $\tilde \mu = 1$ cannot be the second eigenvalue for $d > \sqrt{2}\pi/2$.

\begin{lemma}\label{Lem:Sib2nd}
For $d > \sqrt{2}\pi/2$, it holds that $\tilde \mu_{d,2} < 1$.
\end{lemma}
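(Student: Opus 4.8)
The plan is to exhibit, for each $d > \sqrt{2}\pi/2$, a function $\psi_0 \in \tilde X_d$ that is $\tilde Y_d$-orthogonal to the constants and has Rayleigh quotient $\int_{\tilde\Omega_d}|\nabla\psi_0|^2 \big/ \int_{\tilde\Omega_d}(1 - U_d^2 - V_d^2)\psi_0^2$ strictly less than $1$; since $\tilde\mu_{d,2}$ is the infimum of this quotient over all such functions, this yields $\tilde\mu_{d,2} < 1$. The weight $1 - U_d^2 - V_d^2$ on $\tilde\Omega_d$ is symmetric under the reflection $\sigma\colon y\mapsto 2d-y$ about the mid-line $\{y=d\}$, so any $\psi_0$ which is \emph{anti-symmetric} under $\sigma$ is automatically orthogonal to the constants. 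Such a $\psi_0$ is determined by its restriction $\varphi:=\psi_0|_{S_d}$ to the half strip $S_d := \RR\times(0,d)$, which must vanish on $\{y=d\}$, satisfy the natural Neumann condition on $\{y=0\}$, and lie in the relevant weighted space, and one has $\int_{\tilde\Omega_d}|\nabla\psi_0|^2 = 2\int_{S_d}|\nabla\varphi|^2$ and $\int_{\tilde\Omega_d}(1-U_d^2-V_d^2)\psi_0^2 = 2\int_{S_d}(1-U_d^2-V_d^2)\varphi^2$. Hence it suffices to produce $\varphi$ on $S_d$, vanishing on $\{y=d\}$, with $\int_{S_d}|\nabla\varphi|^2 < \int_{S_d}(1-U_d^2-V_d^2)\varphi^2$.

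\textbf{The candidate.} I would take the \emph{re-centred first component} of the solitonic vortex, $\varphi(x,y):=U_d(x,d-y)$ on $S_d$. It vanishes on $\{y=d\}$ (since $U_d(\cdot,0)=0$), has finite weighted Dirichlet energy by the super-polynomial decay of $U_d$ (Remark \ref{Rem:PD}), satisfies $\partial_y\varphi=0$ on $\{y=0\}$ (from $\partial_y U_d(\cdot,d)=0$), and solves $-\Delta\varphi = (1-|u_d(x,d-y)|^2)\varphi$ in $S_d$. Testing this equation against $\varphi$ (the boundary contributions vanish since $\varphi=0$ on $\{y=d\}$, $\partial_y\varphi=0$ on $\{y=0\}$, and $\varphi$ decays as $x\to\pm\infty$) gives $\int_{S_d}|\nabla\varphi|^2 = \int_{S_d}(1-|u_d(x,d-y)|^2)\,U_d(x,d-y)^2\,dx\,dy$. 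Changing variables $y\mapsto d-y$ here and in $\int_{S_d}(1-|u_d(x,y)|^2)U_d(x,d-y)^2$, a short computation shows that the required inequality is equivalent to
\[
J := \int_{S_d}\big[\,|u_d(x,d-y)|^2 - |u_d(x,y)|^2\,\big]\big[\,U_d(x,d-y)^2 - U_d(x,y)^2\,\big]\,dx\,dy > 0 .
\]

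\textbf{Proving $J>0$; the main obstacle.} Each bracketed factor is odd under $y\mapsto d-y$, so the integrand is even and $J = 2\int_{\{0<y<d/2\}}(\cdots)$, where $d-y>y$. Thus $J>0$ follows once one knows that, for each fixed $x$, the functions $y\mapsto U_d(x,y)^2$ and $y\mapsto |u_d(x,y)|^2$ are non-decreasing on $(0,d)$: both bracketed factors are then $\geq 0$ on $\{0<y<d/2\}$, and they are strictly positive on a set of positive measure near $\{y=0\}$ — there $U_d(x,y)^2\to 0 < U_d(x,d)^2$ (note $U_d(\cdot,d)<0$ by the Hopf lemma) and $|u_d(x,y)|^2\to|u_d(x,0)|^2<|u_d(x,d)|^2$ using the $|u_d|$-monotonicity and $U_d(\cdot,d)\neq 0$. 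Establishing these two monotonicities is the crux: one must show $\partial_y(U_d^2)\geq 0$ and $\partial_y(|u_d|^2)\geq 0$ on $S_d$. I would try to get them by a maximum-principle analysis of the $y$-derivatives — $\partial_y U_d$ and $\partial_y(|u_d|^2)$ both vanish on $\{y=0\}$ and $\{y=d\}$ and decay at $x=\pm\infty$, and satisfy linear elliptic equations obtained by differentiating \eqref{EL} — or by a moving-plane/reflection comparison for the Ginzburg--Landau system across $\{y=d/2\}$, exploiting the sign structure \eqref{Eq:UVSign}. Should the monotonicity prove delicate to establish uniformly in $d$, a fall-back is to mimic Lemmas \ref{Lem:ECS-Low}--\ref{Lem:ECS-d*}: verify $\tilde\mu_{d,2}<1$ for $d$ slightly above $\sqrt{2}\pi/2$ by a first-order perturbation computation around the explicit eigenpair $\big(1,\ \mathrm{sech}\tfrac{x}{\sqrt2}\cos\tfrac{\pi y}{2d}\big)$ of the limiting ($d=\sqrt2\pi/2$) problem, and then propagate it to all $d>\sqrt2\pi/2$ by a maximality argument based on the stability property \eqref{Eq:tEVPStab} together with a Courant nodal-domain analysis at the putative extremal width.
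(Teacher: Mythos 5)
Your reduction is sound and in fact coincides with the paper's first step: the paper also passes to the half strip $\RR\times(0,d)$ with a Dirichlet condition on $\{y=d\}$ (via the minimization problem \eqref{Eq:S2-0} and odd extension across $y=d$), so that it suffices to exhibit one function vanishing on $\{y=d\}$ whose quadratic form $\int\big[|\nabla\varphi|^2-(1-U_d^2-V_d^2)\varphi^2\big]$ is negative. Your computation with the candidate $\varphi(x,y)=U_d(x,d-y)$ — the boundary terms, the identity obtained by testing its equation against itself, and the reformulation of the desired inequality as $J>0$ — is also correct. The genuine gap is the proof that $J>0$: it rests entirely on the two monotonicity statements $\partial_y(U_d^2)\geq 0$ and $\partial_y(|u_d|^2)\geq 0$ on $\RR\times(0,d)$, which you acknowledge as ``the crux'' but do not establish, and which do not follow from anything proved in the paper. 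This is not a routine omission. For $|u_d|^2$ in particular, the large-$|x|$ asymptotics are treacherous: writing $U_d\sim c\,e^{-\pi x/(2d)}\sin\frac{\pi y}{2d}$ and solving the linearized equation for $V_d-1$, the transverse ($\cos\frac{\pi y}{d}$) dependence of $1-|u_d|^2$ cancels at leading order, so the sign of $\partial_y(|u_d|^2)$ far from the vortex is governed by subleading corrections and is not at all apparent; and a moving-plane or maximum-principle argument for the $y$-derivatives faces the non-cooperative coupling in the linearization of the Ginzburg--Landau system. So the key step of your primary route is open, and it is not even clear that the approach is salvageable without substantial new input. The fall-back you sketch (perturbation off $d=\sqrt{2}\pi/2$ plus a continuation/Courant argument) is likewise only a program, not a proof.

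For comparison, the paper closes the argument with a different test function that avoids any monotonicity: it takes $\psi=\partial_y U_d$, which also vanishes on $\{y=d\}$ (Neumann condition) and is even in $y$. The exact identity \eqref{Eq:S2-2} — the vanishing of the second variation along the translation mode $\partial_y u_d$, i.e.\ $T_1=0$ from the proof of Remark \ref{Rem:uvUnstable} — expresses the sum of the quadratic forms of $\partial_y U_d$ and $\partial_y V_d$ (plus a nonnegative term) as zero, and the Hardy-type decomposition \eqref{Eq:UV-1a} applied to $\partial_y V_d$ (which vanishes on $\{y=0\}$, where $U_d$ vanishes) shows the $\partial_y V_d$ contribution is strictly positive, since $\partial_y V_d$ is not a multiple of $U_d$. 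Hence the $\partial_y U_d$ contribution is strictly negative, which is exactly what is needed. If you want to complete your write-up along the paper's lines, replacing $U_d(x,d-y)$ by $\partial_y U_d(x,y)$ and invoking \eqref{Eq:S2-2}--\eqref{Eq:S2-3} is the efficient route.
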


\begin{proof}
We only need to show that there exists a non-trivial solution to \eqref{Eq:tmuEVP} with $\tilde \mu < 1$. Consider the minimization problem
\begin{equation}
\lambda_{d,1} := \inf\Big\{\int_{\tilde\Omega_{d/2}} |\nabla \psi|^2\,dx\,dy: \psi \in \tilde X_{d/2}, \|\psi\|_{\tilde Y_{d/2}} = 1, \psi = 0 \text{ on } \{y = d\}\Big\}.
	\label{Eq:S2-0}
\end{equation}
Since $\tilde X_{d/2} \hookrightarrow \tilde Y_{d/2}$ compactly, there exists a solution to this minimization problem. Denote such solution as $\psi_d$. Extend $\psi_d$ across $y = d$ as an odd function. Then $\psi_d$ solves \eqref{Eq:tmuEVP} with $\mu = \lambda_{d,1}$. Therefore, we only need to prove that $\lambda_{d,1} < 1$. Equivalently, we need to show that there exists a function $\psi$ such that
\begin{equation}
\int_{\tilde\Omega_{d/2}} \Big[|\nabla \psi|^2 - (1 - U_d^2 - V_d^2) \psi^2\Big]\,dx\,dy < 0.
	\label{Eq:S2-1}
\end{equation}
More explicitly, we will show that $\psi = \partial_y U_d$ satisfies \eqref{Eq:S2-1}. 

We start by recalling the fact that $T_1 = 0$ in the proof of Remark \ref{Rem:uvUnstable}. This means
\begin{multline}
\int_{\Omega_d} \Big[(|\nabla \partial_y U_d|^2 + |\nabla \partial_y V_d|^2) - (1 - U_d^2 - V_d^2) ((\partial_y U_d)^2 + (\partial_y V_d)^2)\\
	 + \frac{1}{2}(\partial_y (U_d^2 + V_d^2))^2\Big]\,dx\,dy = 0.
	 \label{Eq:S2-2}
\end{multline}
Noting that, since $\partial_y V_d = 0$ on $\{y = 0\}$, we may apply \eqref{Eq:UV-1a} to get
\begin{equation}
\int_{\Omega_d} \Big[ |\nabla \partial_y V_d|^2 - (1 - U_d^2 - V_d^2)  (\partial_y V_d)^2\Big]\,dx\,dy \geq \int_{\Omega_d} U_d^2 \big|\nabla\big(\frac{\partial_y V_d}{U_d}\big)\big|^2\,dx\,dy.
	\label{Eq:S2-3}
\end{equation}
The right hand side of \eqref{Eq:S2-3} must be positive, since $\partial_y V_d \not\equiv 0$, $\partial_y V_d = 0$ on $\{x = 0\}$ while $U_d < 0$ on $\{x = 0, y > 0\}$. Combining \eqref{Eq:S2-2} and \eqref{Eq:S2-3} and recalling the symmetry of $U_d$ and $V_d$, we get
\[
\int_{\tilde\Omega_{d/2}} \Big[|\nabla (\partial_y U_d)|^2 - (1 - U_d^2 - V_d^2) (\partial_y U_d)^2\Big]\,dx\,dy
	\leq -\frac{1}{2}\int_{\Omega_d}U_d^2 \big|\nabla\big(\frac{\partial_y V_d}{U_d}\big)\big|^2 \,dx\,dy < 0.
\]
This shows that $\psi = \partial_y U_d$ satisfies \eqref{Eq:S2-1} as declared. 
\end{proof}

The next lemma concerns the third eigenvalue $\tilde \mu_{d,3}$ and the fourth eigenvalue $\tilde \mu_{d,4}$ for $\sqrt{2}\pi/2 < d < d_*$.

\begin{lemma}\label{Lem:Sib3rd}
Let $d_*$ be as in Lemma \ref{Lem:ECS-Low}. Then for $\sqrt{2}\pi/2 < d < d_*$, the third eigenvalue of \eqref{Eq:tmuEVP} is $1$ and is simple (that is, $\tilde \mu_{d,3} = 1 < \tilde \mu_{d,4}$). Moreover, if $d_*$ is finite, then $\tilde\mu_{d_*,3} = 1$.
\end{lemma}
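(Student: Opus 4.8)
The idea is to split the spectrum of \eqref{Eq:tmuEVP} along the reflection $y\mapsto 2d-y$ of $\tilde\Omega_d$ (under which its weight is invariant, the extension of $U_d,V_d$ across $\{y=d\}$ being even), reduce the whole statement to a single inequality for a mixed Dirichlet--Neumann eigenvalue problem on a half-strip, and then establish that inequality with Courant's nodal domain theorem. Eigenfunctions of \eqref{Eq:tmuEVP} even about $y=d$ restrict to $\Sigma_d:=\RR\times(0,d)$ as eigenfunctions of the Neumann--Neumann problem with weight $w_d:=1-U_d^2-V_d^2$, and these coincide with the restrictions of the \emph{even-in-$y$} eigenfunctions of \eqref{Eq:muEVP}; eigenfunctions of \eqref{Eq:tmuEVP} odd about $y=d$ restrict to $\Sigma_d$ as eigenfunctions $\lambda_{d,1}<\lambda_{d,2}\le\cdots\nearrow\infty$ of the problem $-\Delta\phi=\lambda w_d\phi$ in $\Sigma_d$ with $\partial_y\phi=0$ on $\{y=0\}$ and $\phi=0$ on $\{y=d\}$ (discreteness and compactness hold exactly as for \eqref{Eq:muEVP}, thanks to the super-polynomial decay of $w_d$ in Remark \ref{Rem:PD}; moreover $\lambda_{d,1}$ is simple). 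Here $\lambda_{d,1}$ is precisely the quantity named in the proof of Lemma \ref{Lem:Sib2nd}, which showed $\lambda_{d,1}<1$.

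\textbf{Reduction.} Using $\mu_{d,2}=1$ (Lemma \ref{Lem:ECS-Low} for $\sqrt2\pi/2\le d<d_*$, Lemma \ref{Lem:ECS-d*} for $d=d_*$) one sees that $0$ is the only eigenvalue of \eqref{Eq:muEVP} below $1$; and since the eigenspace at $\mu=1$ is $\operatorname{span}\{U_d,V_d\}$ with $U_d$ odd in $y$ and $V_d$ even in $y$ (for $d=d_*$ it is $\operatorname{span}\{U_{d_*},V_{d_*},P_{d_*}\}=\operatorname{span}\{U_{d_*},V_{d_*},\phi\}$ with $U_{d_*}$ odd and $V_{d_*},\phi$ even in $y$, by Lemma \ref{Lem:ECS-d*}), the even-about-$y=d$ eigenvalues of \eqref{Eq:tmuEVP} that are $\le1$ are exactly $\{0,1\}$ when $\sqrt2\pi/2<d<d_*$, and exactly $\{0,1,1\}$ when $d=d_*$, with all remaining ones $>1$. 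Since also $\tilde\mu_{d,1}=0$ and $\tilde\mu_{d,2}=\lambda_{d,1}<1$ by Lemma \ref{Lem:Sib2nd}, merging the two sectors shows that the conclusion of the lemma is \emph{equivalent} to the single inequality $\lambda_{d,2}>1$ (for $\sqrt2\pi/2<d\le d_*$).

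\textbf{The key inequality $\lambda_{d,2}>1$.} Suppose to the contrary $\lambda_{d,2}\le1$ and let $g$ be an eigenfunction for $\lambda_{d,2}$ on $\Sigma_d$. As a second eigenfunction of the Dirichlet--Neumann problem it changes sign, yet by Courant's theorem it has exactly two nodal domains in $\Sigma_d$. Let $\tilde g$ be the extension of $g$ to $\tilde\Omega_d=\RR\times(0,2d)$ that is odd about $y=d$. Since $g$ vanishes on $\{y=d\}$ with continuous normal derivative, $\tilde g$ is $C^1$ across $\{y=d\}$ and solves the equation on both sides, hence (by elliptic regularity) is a smooth eigenfunction of \eqref{Eq:tmuEVP} with eigenvalue $\lambda_{d,2}$. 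As $\tilde g\equiv0$ on $\{y=d\}$, no nodal domain of $\tilde g$ crosses $\{y=d\}$, so $\tilde g$ has exactly $2+2=4$ nodal domains in $\tilde\Omega_d$. On the other hand, by the sector analysis the only eigenvalues of \eqref{Eq:tmuEVP} strictly below $\lambda_{d,2}\le1$ are $0$ and $\lambda_{d,1}$, so $\lambda_{d,2}$ is the third eigenvalue $\tilde\mu_{d,3}$ of \eqref{Eq:tmuEVP}; Courant's theorem then bounds the number of nodal domains of $\tilde g$ by $3$, a contradiction. Hence $\lambda_{d,2}>1$, which by the reduction gives $\tilde\mu_{d,3}=1<\tilde\mu_{d,4}$ for $\sqrt2\pi/2<d<d_*$ and $\tilde\mu_{d_*,3}=1$ when $d_*$ is finite.

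\textbf{Main obstacle.} The delicate point is the inequality $\lambda_{d,2}>1$: one must check carefully that Courant's nodal domain theorem is available on the unbounded strips $\Sigma_d$ and $\tilde\Omega_d$ (this uses the compact embedding from the super-polynomial decay of $1-U_d^2-V_d^2$, and unique continuation in the proof of Courant for possibly degenerate eigenvalues), that the odd reflection across $\{y=d\}$ produces a genuine eigenfunction of \eqref{Eq:tmuEVP} with the stated nodal count, and that the sector decomposition indeed identifies $\lambda_{d,2}$ with $\tilde\mu_{d,3}$ under the hypothesis $\lambda_{d,2}\le1$. The remaining steps are bookkeeping with the two sector spectra and Lemmas \ref{Lem:ECS-Low}, \ref{Lem:ECS-d*}, \ref{Lem:Sib2nd}.
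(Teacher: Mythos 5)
Your proof is correct, but it is organized differently from the paper's. The paper proceeds by continuation in $d$: it introduces a maximal threshold $\tilde d_*\in(\sqrt2\pi/2,d_*]$ up to which the conclusion holds, assumes $\tilde d_*<d_*$, and at $d=\tilde d_*$ extracts from the (now degenerate) third eigenspace of \eqref{Eq:tmuEVP} a function independent of $V_{\tilde d_*}$, shows it must be even in $x$ and odd about $y=\tilde d_*$ (using the multiplicity-two statement of Lemma \ref{Lem:ECS-Low} to kill the even-about-$y=\tilde d_*$ part), and then uses Courant to force it to be sign-definite on each half-strip; its restriction is therefore a \emph{first} eigenfunction of the mixed problem \eqref{Eq:S2-0}, giving $\lambda_{\tilde d_*,1}=1$ and contradicting Lemma \ref{Lem:Sib2nd}. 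You instead work pointwise in $d$: you decompose the spectrum of \eqref{Eq:tmuEVP} into the even and odd sectors about $y=d$, identify the even sector with the even-in-$y$ spectrum of \eqref{Eq:muEVP} (pinned down by Lemmas \ref{Lem:ECS-Low} and \ref{Lem:ECS-d*}) and the odd sector with the mixed problem, reduce the whole lemma to $\lambda_{d,2}>1$, and prove that inequality by the ``four nodal domains versus Courant bound of three'' argument applied to the odd reflection of the \emph{second} mixed eigenfunction — which is in fact the same device the paper saves for the final step of the proof of Lemma \ref{Lem:ECS}. The ingredients are identical (Lemmas \ref{Lem:ECS-Low}, \ref{Lem:ECS-d*}, \ref{Lem:Sib2nd}, Courant's theorem made available by the compactness coming from Remark \ref{Rem:PD}, and unique continuation), but your route avoids the maximality/stability bookkeeping and yields the slightly stronger quantitative output $\lambda_{d,2}>1$ for every $d\in(\sqrt2\pi/2,d_*]$, at the price of having to verify the sector bookkeeping carefully (in particular the simplicity of $\lambda_{d,1}$, which you correctly flag, is what places $\lambda_{d,2}$ at index three and makes the Courant bound bite). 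One small remark: for the contradiction you do not actually need the exact count of two nodal domains for $g$ — ``at least two'' already gives at least four for $\tilde g$ — and for $d=d_*$ the lemma only requires $\lambda_{d_*,2}\ge 1$, so your equivalence is marginally stronger than necessary there; neither point affects correctness.
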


\begin{proof}
We only need to show that $\tilde \mu_{d,3} = 1 < \tilde \mu_{d,4}$ for $\sqrt{2}\pi/2 < d < d_*$, as the last assertion follows from this and the stability property \eqref{Eq:tEVPStab}.

Arguing as in the proof of Lemma \ref{Lem:ECS-Low} and using Lemma \ref{Lem:Sib2nd}, we find a maximal $\tilde d_* \in (\sqrt{2}\pi/2, d_*]$ such that, for $\sqrt{2}\pi/2 < d < \tilde d_*$, it holds that $\tilde\mu_{d,1} = 0 < \tilde \mu_{d,2} < \tilde \mu_{d,3} = 1 < \tilde \mu_{d,4}$. 

Arguing by contradiction, assume that the conclusion of the lemma does not hold, so that $\tilde d_* < d_*$ and $\tilde \mu_{\tilde d_*,4} = 1$. 

Let $\tilde Z$ be the eigenspace corresponding to the eigenvalue $\tilde \mu_{\tilde d_*,3} = 1$. We know that $\tilde Z$ contains $V_{\tilde d_*}$ and $\dim \tilde Z \geq 2$. Pick $\tilde \phi \in \tilde Z\setminus \{0\} \not\equiv 0$ which linearly independent from $V_{\tilde d_*}$. Arguing as in the proof of Lemma \ref{Lem:ECS-d*}, if we decompose $\tilde \phi$ as a sum of a function which is even in $x$ and a function which is odd in $x$, then the odd part is a multiple of $V_{\tilde d_*}$. Thus, we may assume without loss of generality that $\tilde \phi$ is even in $x$.

Write $\tilde\phi = \tilde \phi_1 + \tilde\phi_2$ where $\tilde\phi_1$ is even about $y =  \tilde d_*$ and $\tilde \phi_2$ is odd about $y =  \tilde d_*$. Note that both $\tilde \phi_1$ and $\tilde \phi_2$ are even in $x$. Extend $\tilde \phi_2$ evenly across $y = 0$. Then $\tilde \phi_2$ solves the eigenvalue problem \eqref{Eq:muEVP} for $d = \tilde d_*$ and $\mu = 1$. Since $\tilde \phi_2$ are even both in $x$ and $y$, it is $Y_{\tilde d_*}$-orthogonal to both $U_{\tilde d_*}$ and $V_{\tilde d_*}$. As the eigenvalue $\mu = 1$ of \eqref{Eq:muEVP} has multiplicity two (by Lemma \ref{Lem:ECS-Low} and since $\tilde d_* < d_*$), this implies that $\tilde \phi_2 \equiv 0$. In other words, $\tilde \phi$ is odd about $y =  \tilde d_*$.

Recall that, by Courant's nodal domain theorem, the number of nodal domain of $\tilde\phi$ is at most $3$ (as $\tilde \mu = 1$ is the third eigenvalue for \eqref{Eq:tmuEVP}). Since the zero set of $\tilde \phi$ contains the line $y = \tilde d_*$ and is symmetric about this line, we deduce that $\tilde \phi$ does not change sign in $\{0 < y < \tilde d_*\}$ (as well as in $\{ \tilde d_* < y < 2 \tilde d_*\}$). It is well-known that this implies that the restriction of $\tilde\phi$ to $\{0 < y <  \tilde d_*\}$ solves the minimization problem \eqref{Eq:S2-0} with $d = \tilde d_*$, and so $1 = \mu_{\tilde d_*, 3} = \lambda_{\tilde d_*,1} < 1$, which is absurd. This completes the proof.
\end{proof}

We are now ready to prove Lemma \ref{Lem:ECS}.

\begin{proof}[Proof of Lemma \ref{Lem:ECS}]
Suppose by contradiction that the conclusion does not hold. Then the constant $d_*$ in Lemma \ref{Lem:ECS-Low} is finite. Let $P_{d_*}$ be given by Lemma \ref{Lem:ECS-d*}. Extend $P_{d_*}$ evenly across $y = d_*$. Then $P_{d_*}$ solves the eigenvalue problem \eqref{Eq:tmuEVP} with $d = d_*$ and $\tilde\mu = 1$. Since $\tilde \mu_{d_*,3} = 1$ (by Lemma \ref{Lem:Sib3rd}), we have by Courant's nodal domain theorem that the number of nodal domain of $P_{d_*}$ in $\tilde \Omega_{d_*}$ is at most $3$. However, by Lemma \ref{Lem:ECS-d*}(iv), the number of nodal domains of $P_{d_*}$ in $\tilde \Omega_{d_*}$ is in fact $4$. This contradiction concludes the proof.
\end{proof}

\section{$3D$ equilibrium configurations}\label{Sec:3D}

We turn to study the three-dimensional case. In Subsection \ref{SSec3.1}, we prove Theorem \ref{Thm1Ext2} on the existence of the $3D$ solitonic vortex and spoke wheel solutions and indicate how it can be extended to include the case of more general domains as well as higher dimensions. In Subsection \ref{SSec3.2}, we prove Theorem \ref{Thm:MPC-3D} on the mountain pass energy. In Subsection \ref{SSec3.3}, we prove Theorem \ref{Thm:RingCand} on the existence of a candidate vortex ring solution.

\subsection{Minimization under symmetry assumptions}\label{SSec3.1}

Instead of proving Theorem \ref{Thm1Ext2} on the existence of the solitonic vortex solution, we prove a more general result. The conditions \eqref{Eq:Imparity} and \eqref{Eq:kSym} are replaced by the following condition, defined for a given $\ell \in \NN^*$:
\begin{equation}
\begin{cases}
u_1\text{ is even about the planes }  z\cos \frac{q\pi}{\ell}= y \sin \frac{q\pi}{\ell} ,  0 \leq q \leq \ell-1,\\
u_1 \text{ is odd about the planes }  z\cos \frac{(2q+1)\pi}{2\ell}= y \sin \frac{(2q+1)\pi}{2\ell} ,  0 \leq q \leq \ell-1,\\
u_2 \text{ is even about the planes }  z\cos \frac{q\pi}{2\ell}= y \sin \frac{q\pi}{2\ell} ,  0 \leq q \leq 2\ell-1.
\end{cases}
	\label{Eq:2ellSym}
\end{equation}
Here we have written $x' = (y,z)$. 

Let $\mcA_{d,\ell}$ be the set of maps $u$ in $u_s + H^1(\Omega_d,\RR^2)$ satisfying the phase imprinting condition \eqref{Eq:Imprinting} as well as \eqref{Eq:2ellSym}. Let $j_{\ell,1}'$ be the smallest positive zero of $J_\ell'$, the derivative of the Bessel function $J_\ell$ of the first kind and of order $\ell$.

\begin{theorem}
\label{Thm1Ext2Gen}
Let $N = 3$ and $\ell \in \NN^*$.
\begin{enumerate}[(i)]
\item If $0 < d \leq \sqrt{2}j_{\ell,1}'$, then the soliton solution $u_s$ is the unique minimizer of $\mcF_d$ in $\mcA_{d,\ell}$.

\item If $d > \sqrt{2}j_{\ell,1}'$, then the soliton solution $u_s$ is an unstable critical point of $\mcF_d$ in $\mcA_{d,\ell}$ and $\mcF_d$ has exactly two minimizers in $\mcA_{d,\ell}$ of the form $(\pm u_1, u_2)$ with $u_1 \neq 0$. Moreover, the zero set of these minimizers is the union of $\ell$ line segments which are on the $(y,z)$-plane and intersect equiangularly at the origin. They are the intersection of the planes $z\cos \frac{(2q+1)\pi}{2\ell}= y \sin \frac{(2q+1)\pi}{2\ell}$,  $0 \leq q \leq \ell-1$, with the $(y,z)$-plane.
\end{enumerate}
\end{theorem}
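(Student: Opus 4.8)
The plan is to run the two–dimensional arguments of Corollary~\ref{Cor:d1} and Theorems~\ref{Thm:S=>M} and~\ref{Thm:UV}, replacing the one–dimensional Wirtinger inequality on $(-d,d)$ by a cross–sectional Bessel eigenvalue estimate on the disk $D_d=\{(y,z):y^2+z^2<d^2\}$. Under \eqref{Eq:2ellSym}, a competitor $\varphi$ for the first–direction stability of $u_s$ vanishes on the planes $z\cos\frac{(2q+1)\pi}{2\ell}=y\sin\frac{(2q+1)\pi}{2\ell}$; in polar coordinates $x'=(\rho\cos\theta,\rho\sin\theta)$ this means that on the fundamental sector of $D_d$ one has Neumann conditions on $\{\theta\equiv0\ (\mathrm{mod}\ \pi/\ell)\}$, Dirichlet conditions on $\{\theta\equiv\frac{\pi}{2\ell}\ (\mathrm{mod}\ \pi/\ell)\}$, and Neumann on $\{\rho=d\}$, so the smallest eigenvalue of the cross–sectional Laplacian in this symmetry class is $(j_{\ell,1}'/d)^2$, realised by $J_\ell(\frac{j_{\ell,1}'}{d}\rho)\cos(\ell\theta)$. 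Hence, for such $\varphi$,
\[
\int_{\Omega_d}\Big(|\nabla\varphi|^2-\mathrm{sech}^2\tfrac{x}{\sqrt2}\,\varphi^2\Big)\,dx\,dx'\geq\int_{\Omega_d}\Big((\partial_x\varphi)^2+\tfrac{(j_{\ell,1}')^2}{d^2}\varphi^2-\mathrm{sech}^2\tfrac{x}{\sqrt2}\,\varphi^2\Big)\,dx\,dx',
\]
and factoring $\varphi=\mathrm{sech}\frac{x}{\sqrt2}\,\eta$ and using $\partial_x^2\mathrm{sech}\frac{x}{\sqrt2}=(\tfrac12-\mathrm{sech}^2\frac{x}{\sqrt2})\mathrm{sech}\frac{x}{\sqrt2}$ exactly as in Corollary~\ref{Cor:d1} turns the right side into $\int_{\Omega_d}\mathrm{sech}^2\frac{x}{\sqrt2}\big((\partial_x\eta)^2+(\tfrac{(j_{\ell,1}')^2}{d^2}-\tfrac12)\eta^2\big)$, which is $\geq0$ precisely when $d\leq\sqrt2 j_{\ell,1}'$. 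Running the same computation with $\varphi=\mathrm{sech}\frac{x}{\sqrt2}\,J_\ell(\frac{j_{\ell,1}'}{d}\rho)\cos(\ell\theta)$ — an admissible variation in $\mcA_{d,\ell}$ with $\partial_\nu\varphi=0$ on $\partial\Omega_d$ since $J_\ell'(j_{\ell,1}')=0$ — gives the negative second variation, hence the instability of $u_s$ when $d>\sqrt2 j_{\ell,1}'$.

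For part (i) I would combine this with the three–dimensional analogue of Theorem~\ref{Thm:S=>M}: if $u_s$ is stable in the first direction among variations obeying the $u_1$–part of \eqref{Eq:2ellSym}, then $u_s$ is the unique minimizer of $\mcF_d$ in $\mcA_{d,\ell}$. Its proof is identical — the expansion \eqref{Eq:usM-0} reduces matters to $\int_{\Omega_d}(|\nabla w|^2-(1-|u_s|^2)|w|^2)\geq0$, the $w_1$–contribution is controlled by first–direction stability, and the $w_2$–contribution uses $w_2(0,\cdot)=0$ (from \eqref{Eq:Imprinting}) together with \eqref{Eq:usM-1}, whose proof (factorization $w_2=u_{s,2}\zeta$, then $\int u_{s,2}^2|\nabla\zeta|^2\geq0$, then density) is dimension–free; the equality case yielding uniqueness is as in Theorem~\ref{Thm:S=>M}. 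Together with the stability computation of the previous paragraph this settles (i) for $0<d\leq\sqrt2 j_{\ell,1}'$.

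For part (ii) I would mimic Theorem~\ref{Thm:UV} on a fundamental domain of the symmetry group. Let $Q_{d,\ell}=\{x>0\}\times\{0<\rho<d,\ 0<\theta<\frac{\pi}{2\ell}\}$, a fundamental domain for the group generated by $x\mapsto-x$ and the reflections in \eqref{Eq:2ellSym} (a dihedral group of order $4\ell$ acting on the cross–section), and let $\mcB_{d,\ell}$ be the set of maps in $u_s+H^1(Q_{d,\ell},\RR^2)$ with $u_2=0$ on $\{x=0\}$ and $u_1=0$ on $\{\theta=\frac{\pi}{2\ell}\}$. Restriction maps $\mcA_{d,\ell}$ into $\mcB_{d,\ell}$, and conversely any minimizer of $\bar\mcF_d:=\mcF_d|_{Q_{d,\ell}}$ in $\mcB_{d,\ell}$ extends by reflection to an element of $\mcA_{d,\ell}$ with proportional energy, because the even/odd prescription for $u_1$ and the evenness prescription for $u_2$ are consistent one–dimensional representations of the group (after extension the Euler--Lagrange equation holds weakly across the reflection planes, hence classically by elliptic regularity). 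One then constructs a minimizer $(U,V)$ of $\bar\mcF_d$ in $\mcB_{d,\ell}$ with $U\leq0$, $V\geq0$, $U^2+V^2\leq1$ exactly as in Step~1 of Theorem~\ref{Thm:UV}: truncate, symmetrize the signs, extract a Palais--Smale sequence via Ekeland's principle, pass to a local $H^1$ limit, and run the asymptotic analysis of \cite{AS-23} as $x\to\infty$ to obtain $\nabla U,\nabla V\to0$, $U\to0$, $V\to1$ uniformly, hence $V-1\in L^2(Q_{d,\ell})$ and $(U,V)\in\mcB_{d,\ell}$. The strong maximum principle gives $V>0$ on $\{x>0\}$ and the dichotomy $U\equiv0$ or $U<0$ in the interior; $U\equiv0$ is excluded since it would force $\mcF_d[(0,V)]\geq\mcF_d[u_s]$, contradicting the instability of $u_s$ for $d>\sqrt2 j_{\ell,1}'$. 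The strict bound $U^2+V^2<1$ on $\overline{\Omega_d}$ follows from the maximum principle for $\Psi=1-U^2-V^2$ on a slightly enlarged cylinder, as before. After reflection the zero set of $U$ is the union of the $\ell$ planes $z\cos\frac{(2q+1)\pi}{2\ell}=y\sin\frac{(2q+1)\pi}{2\ell}$, $0\le q\le\ell-1$, while $V$ vanishes exactly on $\{x=0\}$, so the zero set of $(U,V)$ is precisely the asserted $\ell$ line segments. Finally, minimality in $\mcA_{d,\ell}$ and uniqueness up to the sign of $U$ come from the coercivity inequalities \eqref{Eq:UV-1a}--\eqref{Eq:UV-1b}, whose proofs are dimension–free once the Hopf lemma on $\{\theta=\frac{\pi}{2\ell}\}$ (for $U$) and on $\{x=0\}$ (for $V$) shows $w_1/U$ and $w_2/V$ are locally Lipschitz; here one uses that the exceptional sets where a factor and its gradient vanish simultaneously lie in finitely many line segments, which have zero $H^1$–capacity in $\RR^3$, so the approximation step of Theorem~\ref{Thm:UV}, Step~2 still applies. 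Then $u_1=c_1U$, $u_2=c_2V$, and \eqref{BC} together with the two expressions for $-\Delta u_2$ force $c_2=1$ and $c_1^2=1$, as in Step~3 of Theorem~\ref{Thm:UV}.

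I expect the main obstacle to be the existence step in (ii): showing that $\inf_{\mcB_{d,\ell}}\bar\mcF_d$ is attained and that the limit lies in $\mcB_{d,\ell}$, i.e. controlling the loss of compactness in the unbounded $x$–direction and adapting the decay estimates of \cite{AS-23} from the strip to the cylinder and to the wedge $Q_{d,\ell}$, including behaviour along the edges of the wedge. The remaining steps should be a fairly mechanical transcription of the two–dimensional arguments with the interval $(-d,d)$ replaced by the disk $D_d$.
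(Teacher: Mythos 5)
Your proposal is correct and follows essentially the same route as the paper: reduce to first‑direction stability via the (dimension‑free) argument of Theorem \ref{Thm:S=>M}, identify the threshold $\sqrt{2}j_{\ell,1}'$ through the cross‑sectional Poincar\'e/Bessel eigenvalue in the symmetry class \eqref{Eq:varphiSym}, and for $d>\sqrt{2}j_{\ell,1}'$ run Step 1--3 of Theorem \ref{Thm:UV} on the wedge fundamental domain $(0,\infty)\times dS$ with $S$ a sector of opening $\pi/(2\ell)$. Your added observation that the exceptional sets in the Hardy-type factorization are now line segments of zero $H^1$-capacity in $\RR^3$ is a correct (and needed) adaptation that the paper leaves implicit.
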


\begin{proof}
First, the proof of Theorem \ref{Thm:S=>M} applies verbatim yielding that $u_s$ is a minimizer of $\mcF_d$ in $\mcA_{d,\ell}$ if and only if $u_s$ is stable in the first direction within $\mcA_{d,\ell}$. The threshold value $d = \sqrt{2}j'_{\ell,1}$ at which $u_s$ loses it stability is computed in the same way as in the proof of Corollary \ref{Cor:d1}, but now using the Poincar\'e inequality
\begin{equation*}
\int_{|x'| < 1} |\nabla_{x'}\varphi|^2\,dx' \geq (j'_{\ell,1})^2\int_{|x'| < 1}  \varphi ^2\,dx'
\end{equation*}
for all $\varphi \in H^1(\{|x'| < 1\})$ satisfying
\begin{equation}
\begin{cases}
\varphi \text{ is even about the lines }  z\cos \frac{q\pi}{\ell}= y \sin \frac{q\pi}{\ell} ,  0 \leq q \leq \ell-1,\\
\varphi  \text{ is odd about the lines }  z\cos \frac{(2q+1)\pi}{2\ell}= y \sin \frac{(2q+1)\pi}{2\ell} ,  0 \leq q \leq \ell-1.
\end{cases}
	\label{Eq:varphiSym}
\end{equation}

Consider the case $d > \sqrt{2}j'_{\ell,1}$. By the above, $\mcF_d$ has a minimizer $u$ in $\mcA_{d,\ell}$ which is different from $u_s$. Let
\begin{align*}
S  &= \Big\{(r\cos\theta,r\sin\theta): 0 < r < 1, 0 < \theta < \frac{\pi}{2p}\Big\}.
\end{align*}
We may then construct the functions $(U,V)$  with $U \leq 0$ and $V \leq 0$ as in Step 1 of the proof of Theorem \ref{Thm:UV} with $Q_d = (0,\infty) \times dS$. The rest of the proof remains unchanged yielding that the restriction to $Q_d$ of any minimizers of $\mcF_d$ in $\mcA_{d,\ell}$ must take the form $(\pm U,V)$. Since every map in $\mcA_{d,\ell}$ is uniquely determined by its restriction to $Q_d$, we deduce that $\mcF_d$ has exactly two minimizers in $\mcA_{d,\ell}$. Furthermore, any such minimizers vanish along the lines $x = 0$, $z\cos \frac{(2q+1)\pi}{2\ell}= y \sin \frac{(2q+1)\pi}{2\ell}$,  $0 \leq q \leq \ell-1$. These form $\ell$ vortex lines passing through the origin. 
 The proof is complete.
\end{proof}

We note that Theorem \ref{Thm1Ext2} remains valid if the circular cross-section of $\Omega_d$ is replaced by a dilation of a domain $\Sigma$ which satisfies a symmetry coherent with \eqref{Eq:2ellSym}, namely $\Sigma$ is symmetric about the lines $z\cos \frac{q\pi}{2\ell}= y \sin \frac{q\pi}{2\ell}  $, $0 \leq q \leq 2\ell-1$. See Figure \ref{Fig1} for an illustration. In this case, the threshold value $\sqrt{2}j_{\ell,1}'$ is replaced by $\sqrt{2\lambda_{\Sigma,\ell}}$ where $\lambda_{\Sigma,\ell}$ be largest positive number such that
\begin{equation*}
\int_\Sigma |\nabla_{x'}\varphi|^2\,dx' \geq \lambda_{\Sigma,\ell}\int_\Sigma  \varphi ^2\,dx' \text{ for all for all $\varphi \in H^1(\Sigma)$ satisfying \eqref{Eq:varphiSym}}.
\end{equation*}

\begin{figure}[h]
\begin{center}
\begin{tikzpicture}
\draw[smooth,samples=100,domain = 0:360] plot ({(1.5 + .5*cos(6*\x))*cos(\x)},{(1.5 + .5*cos(6*\x))*sin(\x)});
\draw (-2,0)--(2,0);
\draw (1,1.73)--(-1,-1.73);
\draw (1,-1.73)--(-1,1.73);
\draw[dashed] (0,-1)--(0,1);
\draw[dashed] (.87,-.5)--(-.87,.5);
\draw[dashed] (-.87,-.5)--(.87,.5);
\end{tikzpicture}

\caption{A $2D$ domain $\Sigma$ for which one can use the proof of Theorem \ref{Thm1Ext2} to construct critical point on the domain $\RR \times d\Sigma$ where vortex lines on the central plane $x = 0$ can be arranged at the three solid lines or the three dashed lines.}
\label{Fig1}
\end{center}
\end{figure}
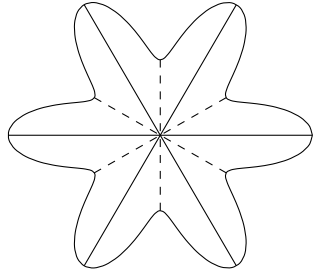

As a final remark for this subsection, we note that our treatment is applicable for higher dimensional cross-section $\Sigma \subset \RR^{N-1}$, $N \geq 4$. We make the following assumptions on $\Sigma$:
\begin{equation}
\begin{minipage}{.85\textwidth}
There is a discrete subgroup $G \leq O(N-1)$ which leaves $\Sigma$ invariant, i.e. $g(\Sigma) = \Sigma$ for every $g \in G$,
\end{minipage}
\label{Eq:S1}
\end{equation}
and
\begin{equation}
\begin{minipage}{.85\textwidth}
There is a reflection matrix $j \in O(N-1) \setminus G$ which leaves $\Sigma$ invariant and the $G$-orbits invariant, i.e. $j(\Sigma) = \Sigma$ and $jG(x') = G(x')$ for every $x' \in \RR^{N-1}$.
\end{minipage}
\label{Eq:S2}
\end{equation}
Let $\mcA_{d,G,j}$ be the set of maps $u$ in $u_s + H^1(\RR \times d\Sigma,\RR^2)$ satisfying the phase imprinting condition \eqref{Eq:Imprinting} as well as
\[
\begin{cases}
u(x,x') = u(x,gx') \text{ for all } g \in G, x' \in d\Sigma,\\
u_1(x,x') = -u_1(x,jx'), u_2(x,x') =  u_2(x,jx') \text{ for all } x' \in d\Sigma.
\end{cases}
\]
Let $\lambda_{\Sigma,G,j}$ be largest positive number such that
\begin{equation*}
\int_\Sigma |\nabla_{x'}\varphi|^2\,dx' \geq \lambda_{\Sigma,G,j}\int_\Sigma  \varphi ^2\,dx'
\end{equation*}
for all $\varphi \in H^1(\Sigma)$ satisfying
\[
\begin{cases}
\varphi(x') = \varphi(gx') \text{ for all } g \in G, x' \in \Sigma,\\
\varphi(x') = -\varphi(x,x') \text{ for all } x' \in  \Sigma.
\end{cases}
\]
(Note that $\lambda_{\Sigma,G,j} > 0$ due to the fact that $\int_\Sigma \varphi\,dx' = 0$.)

\begin{theorem}
\label{Thm1ExtG}
Let $N \geq 4$ and $\Sigma \subset \RR^{N-1}$ be a bounded domain with smooth boundary satisfying \eqref{Eq:S1} and \eqref{Eq:S2}. 
\begin{enumerate}[(i)]
\item If $0 < d \leq \sqrt{2\lambda_{\Sigma,G,j}}$, then the soliton solution $u_s$ is the unique minimizer of $\mcF_d$ in $\mcA_{d,G,j}$.

\item If $d > \sqrt{2\lambda_{\Sigma,G,j}}$, then the soliton solution $u_s$ is an unstable critical point of $\mcF_d$ in $\mcA_{d,G,j}$ and $\mcF_d$ has at least two minimizers in $\mcA_{d,G,j}$ of the form $(\pm u_1, u_2)$ with $u_1 \neq 0$. 
\end{enumerate}
\end{theorem}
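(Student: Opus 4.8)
The plan is to follow the template already established for the case $N=2$ and $\ell=1$ (Theorems~\ref{Thm:S=>M}, \ref{Thm:UV}, Corollary~\ref{Cor:d1}) and observe that the only place the specific geometry of the cross-section and the specific symmetry group enter is through a Poincar\'e-type inequality on $\Sigma$. Concretely, the first step is to prove the analogue of Theorem~\ref{Thm:S=>M}: if $u_s$ is stable in the first direction within $\mcA_{d,G,j}$, then it is the unique minimizer of $\mcF_d$ in $\mcA_{d,G,j}$. For this one checks that the proof of Theorem~\ref{Thm:S=>M} goes through verbatim. The key point is that a first-component perturbation $\varphi$ compatible with the $(G,j)$-symmetry vanishes on the fixed-point set of $j$, which has positive codimension, so in dimension $N \geq 4$ a point still has zero Newtonian capacity and the factorization $\psi = u_{s,2}\zeta$ with $\zeta \in C^{0,1}_c$ works as before (the density/mollification argument in Step~2 of Theorem~\ref{Thm:S=>M} only needs that the $x'$-direction is a finite cylinder, which remains true). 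Hence $u_s$ is a minimizer in $\mcA_{d,G,j}$ if and only if it is stable in the first direction there.

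The second step is to identify the stability threshold. Stability of $u_s$ in the first direction amounts to
\[
\int_{\RR\times d\Sigma}\Big(|\nabla\varphi|^2 - \mathrm{sech}^2\tfrac{x}{\sqrt 2}\,\varphi^2\Big)\,dx\,dx' \geq 0
\]
for all admissible $\varphi$. Separating the $x$ and $x'$ variables and using the Poincar\'e inequality $\int_\Sigma|\nabla_{x'}\varphi|^2 \geq \lambda_{\Sigma,G,j}\int_\Sigma \varphi^2$ on the symmetric class, one reduces, exactly as in Corollary~\ref{Cor:d1}, to the one-dimensional inequality
\[
\int_{-\infty}^\infty\Big((\partial_x\varphi)^2 + \tfrac{\lambda_{\Sigma,G,j}}{d^2}\varphi^2 - \mathrm{sech}^2\tfrac{x}{\sqrt 2}\varphi^2\Big)\,dx \geq 0,
\]
which, after the substitution $\varphi = A\eta$ with $A = \mathrm{sech}\tfrac{x}{\sqrt 2}$ and $A'' = (\tfrac12 - \mathrm{sech}^2\tfrac{x}{\sqrt2})A$, holds precisely when $\tfrac{\lambda_{\Sigma,G,j}}{d^2} \geq \tfrac12$, i.e. $d \leq \sqrt{2\lambda_{\Sigma,G,j}}$. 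Testing with the separated function realizing the Poincar\'e constant shows the inequality fails for $d > \sqrt{2\lambda_{\Sigma,G,j}}$, giving instability there. This proves part~(i) and the instability assertion in part~(ii).

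For the existence of at least two minimizers of the form $(\pm u_1, u_2)$ with $u_1 \neq 0$ when $d > \sqrt{2\lambda_{\Sigma,G,j}}$, I would run the direct method in the class $\mcA_{d,G,j}$: by truncation assume $|u|\leq 1$, and by replacing $u = (u_1,u_2)$ with $(-|u_1|\,\mathrm{sgn}(\text{on a fundamental domain}), \dots)$ one cannot globally, but one can restrict to a fundamental domain of the $G$-action intersected with $\{x > 0\}$ and argue as in Step~1 of Theorem~\ref{Thm:UV}: on that domain impose $u_1 = 0$ on the part of $\partial(\text{domain})$ coming from the odd ($j$-type) reflections and $u_2 = 0$ on $\{x=0\}$, obtain a minimizer $(U,V)$ with $U\leq 0$, $V\geq 0$, $U^2+V^2 \leq 1$ using Ekeland's variational principle and the same $V-1 \in L^2$ argument (which only uses the $x\to\infty$ asymptotics, unchanged in higher dimensions), then reflect back to a minimizer on $\RR\times d\Sigma$. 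The strong maximum principle gives $V > 0$ for $x>0$ and, via Corollary~\ref{Cor:d1}-type comparison (if $U\equiv 0$ then $u_s$ would be a minimizer, contradicting instability), $U < 0$ in the interior of the fundamental domain; reflecting gives $u_1 \neq 0$, and the sign-flip $(u_1,u_2)\mapsto(-u_1,u_2)$ produces a second minimizer. Because the fundamental-domain boundary may be only Lipschitz and the orbit structure is more intricate than in 2D, I would only claim \emph{at least} two minimizers rather than exactly two, and I would not attempt to describe the nodal set precisely — this is why the theorem statement is weaker than Theorem~\ref{Thm1}.

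The main obstacle is the replacement of the ``point has zero capacity'' and clean reflection arguments: in higher dimensions the fixed-point sets of the reflections in $G$ and of $j$ are hyperplanes, and the vanishing sets where one imposes Dirichlet conditions are codimension-one, so the factorization $w_1 = U\eta$, $w_2 = V\zeta$ with Lipschitz $\eta,\zeta$ near the intersection of several such hyperplanes requires checking that $U$ (resp. $V$) vanishes to exactly first order there with non-vanishing normal derivative, via Hopf's lemma applied on each smooth piece; the corners of the fundamental domain need a small separate argument (removability of the singular set, which has zero $H^1$-capacity since it has codimension $\geq 2$). I expect the rest — the quadratic-form computation in Step~2 of Theorem~\ref{Thm:UV}, the uniqueness-up-to-sign in Step~3, and the $x\to\infty$ decay — to transfer with no essential change.
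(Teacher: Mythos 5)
Your treatment of part (i) and of the instability for $d > \sqrt{2\lambda_{\Sigma,G,j}}$ coincides with the paper's: the proof of Theorem~\ref{Thm:S=>M} is reused verbatim and the threshold is read off from the Poincar\'e inequality on $\Sigma$ in the symmetric class exactly as in Corollary~\ref{Cor:d1}. The divergence is in part (ii), where you propose to rebuild the full sign-structure of Theorem~\ref{Thm:UV} on a fundamental domain of the $G$-action. That is what the paper does for the explicit dihedral symmetries of Theorem~\ref{Thm1Ext2Gen}, but it is \emph{not} what it does here, and in the generality of hypotheses \eqref{Eq:S1}--\eqref{Eq:S2} your route has a genuine gap: $G$ is only assumed to be a discrete subgroup of $O(N-1)$, so it need not be generated by reflections, there need not be a fundamental domain whose boundary pieces carry Dirichlet conditions for $u_1$, and the rearrangement underlying ``impose $u_1\le 0$ on a fundamental domain'' is not available — globally, $(u_1,u_2)\mapsto(-|u_1|,|u_2|)$ destroys the $j$-oddness of $u_1$, and on a fundamental domain the re-extension by the group action is not in general an admissible $H^1$ competitor. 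This is precisely why the theorem only claims ``at least two'' minimizers and why the paper remarks that exact multiplicity is not pursued.

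The paper's actual argument is softer and bypasses all of this. It takes \emph{any} minimizer $u$ of $\mcF_d$ in $\mcA_{d,G,j}$ (existence needs no sign information: compactness for $u_1$ comes from $u_1=0$ on the $j$-fixed hyperplane plus Poincar\'e, and for $u_2$ from the $V-1\in L^2$ argument), notes $u\ne u_s$ since $u_s$ is unstable, and shows $u_1\not\equiv 0$ by contradiction: if $u_1\equiv 0$, then by Fubini there is a cross-sectional fiber $x_0'$ whose one-dimensional energy is at most that of the soliton profile (inequality \eqref{Eq:testx0'}); the $x'$-independent map $u_2(\cdot,x_0')e_2$ then lies in $\mcA_{\tilde d,G,j}$ for every $\tilde d$, and uniqueness of the minimizer on thin cylinders forces $u_2(\cdot,x_0')=u_{s,2}$, whence all inequalities in \eqref{Eq:Pretest} saturate and $\mcF_d[u]=\mcF_d[u_s]$ — contradiction. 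Then $(\pm u_1,u_2)$ are two distinct minimizers. Your parenthetical ``if $U\equiv 0$ then $u_s$ would be a minimizer, contradicting instability'' is exactly this key step, so the repair is short: apply that energy comparison directly to an arbitrary minimizer and drop the fundamental-domain scaffolding, which is neither needed for the statement as formulated nor justified for general $(G,j)$.
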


In some situation, we may be able to show that $\mcF_d$ has exactly two minimizers in $\mcA_{d,G,j}$, but due to a lack of physical contexts, we have not pursued this further.

\begin{proof}
The proof is the same except for the existence of at least two minimizers when $d > \sqrt{2\lambda_{\Sigma,G,j}}$. In this case, note that $\mcF_d$ has a minimizer $u$ in $\mcA_{d,G,j}$ which is different from $u_s$.  To conclude, it suffices to show that $u_1 \neq 0$, since this implies that $(-u_1, u_2)$ is then a different minimizer of $\mcF_d$ in $\mcA_{d,G,j}$.

Arguing by contradiction, we assume that $u_1 \equiv 0$. We then have
\begin{multline}
\int_{d\Sigma} \int_{-\infty}^\infty \Big[\frac{1}{2}(\partial_x u_2)^2 + \frac{1}{4}(1 - u_2^2)^2\Big]\,dx\,dx' \leq \mcF_d[u]\\
	 \leq \mcF_d[u_s] = d^{N-1} |\Sigma|\int_{-\infty}^\infty \Big[\frac{1}{2}(\partial_x u_s)^2 + \frac{1}{4}(1 - u_s^2)^2\Big]\,dx.
	 \label{Eq:Pretest}
\end{multline}
In particular, there exists $x_0' \in d\Sigma$ such that
\begin{equation}
 \int_{-\infty}^\infty \Big[\frac{1}{2}(\partial_x u_2(x,x_0'))^2 + \frac{1}{4}(1 - u_2(x,x_0')^2)^2\Big]\,dx \leq \int_{-\infty}^\infty \Big[\frac{1}{2}(\partial_x u_s)^2 + \frac{1}{4}(1 - u_s^2)^2\Big]\,dx.
 	\label{Eq:testx0'}
\end{equation}
Consequently, for any $\tilde d > 0$, the map $\tilde u(x,x') := u_2(x,x_0') e_2 \in \mcA_{\tilde d,G,j}$ satisfies
\[
\mcF_{\tilde d}[\tilde u] \leq \mcF_{\tilde d} [u_s].
\]
Since $u_s$ is the unique critical point of $\mcF_{\tilde d}$ in $\mcA_{\tilde d, G,j}$ for small $\tilde d$, we deduce that $u_2(x,x_0') = u_s(x)$. Since this holds for all $x_0'$ such that \eqref{Eq:testx0'} holds, we infer that the chain of inequalities in \eqref{Eq:Pretest} are all saturated, and hence $\mcF_d[u] = \mcF_d[u_s]$, which contradicts the fact that $u_s$ is not a minimizer for $\mcF_d$ in $\mcA_{d,G,j}$. The proof is complete.
\end{proof}

\subsection{Estimate of $3D$ mountain-pass energy in $\mcA_d^{\rm x}$}\label{SSec3.2}

\begin{proof}[Proof of Theorem \ref{Thm:MPC-3D}]
\underline{Step 1:} We show that $c_d \geq \mcF_d[u_s]$ when $d \leq \sqrt{2}j'_{1,1}$.

This is more or less identical to Step 1 of the proof of Lemma \ref{Lem:M1}. We thus only give the key points. For a given $h \in \Gamma_d$, we select $t_0 \in (-1,1)$ such that
\begin{equation}
\int_{d\Sigma} h(t_0)_1(0,y,z)\,dy\,dz = 0
	\label{Eq:ZA-1-3D}
\end{equation}
and proceed to prove $\mcF_d[h(t_0)] \geq \mcF_d[u_s]$. With $w = h(t_0) - u_s$, this reduces to proving that
\begin{equation}
\int_{\Omega_d} \big[|\nabla w_1|^2 - (1 - |u_s|^2)w_1^2\big]\,dx\,dx' \geq 0.
	\label{Eq:ZA-2-3D}
\end{equation}

Let $\lambda_1 = 0 < \lambda_2 = (j'_{1,1})^2 \leq \ldots$ be the Neumann eigenvalues of $-\Delta$ in the unit disk $\mathbb{D}$ and let $\phi_1, \phi_2, \ldots$ be the corresponding orthonormal basis of eigenfunctions in $L^2(\mathbb{D})$. Write
\[
w_1(x,y,z) =\sum_{k=0}^\infty a_k(x) \phi_k(x'/d).
\]
Inequality \eqref{Eq:ZA-2-3D} becomes
\[
\sum_{k=1}^\infty \int_{-\infty}^\infty \Big[(a_k')^2  - \Big(1 - \frac{\lambda_k}{d^2} - |u_s|^2\Big) a_k^2 \Big]\,dx \geq 0.
\]
We then use \eqref{Eq:usM-1} to treat the terms involving $a_0$ and use the proof of \eqref{Eq:ZA-4} to treat the remaining terms (while keeping in mind that $d \leq \sqrt{2}j'_{1,1} = \sqrt{2\lambda_2}$). We omit the details.

\bigskip
\noindent
\underline{Step 2:} We show that $c_d \leq \mcF_d[u_s]$ when $d \leq \sqrt{2}j'_{1,1}$.

This is identical to Step 2 of the proof of Lemma \ref{Lem:M1}.

\bigskip
\noindent
\underline{Step 3:} We show that $c_d < \mcF_d[u_s]$ when $d > \sqrt{2}j'_{1,1}$.

To this end, we construct $h \in \Gamma_d$ such that $\mcF_d[h(t)] < \mcF_d[u_s]$ for all $t \in [-1,1]$.

Consider the function $g: \RR^2 \rightarrow \RR$ given by.
\[
g(a_1,a_2) = \mcF_d\Big[u_s + \Big(a_1 A + a_2 A \phi_2(x'/d),0\Big)\Big],
\]
where $A = \mathrm{sech}\frac{x}{\sqrt{2}}$ and $\phi_2$ is a normalized eigenfunction of $-\Delta$ in $\Sigma$ corresponding to the eigenvalue $\lambda_2 = (j'_{1,1})^2$. Note that $g(0,0) = \mcF_d[u_s]$, $\nabla g(0,0) = 0$, $\frac{\partial^2 g}{\partial a_1 \partial a_2}(0,0) = 0$. By Remark \ref{Rem:usUnstable}, 
\[
 \frac{\partial^2 g}{\partial a_1^2}(0,0) < 0.
 \]
We claim that
\[
\frac{\partial^2 g}{\partial a_2^2}(0,0) < 0.
\]
Indeed, we have
\begin{align*}
\frac{\partial^2 g}{\partial a_2^2}(0,0) 
	&= \int_{\Omega_d} [|\nabla(A \phi_2(x'/d))|^2 - \mathrm{sech}^2\frac{x}{\sqrt{2}} A^2 \phi_2(x'/d)^2]\,dx\,dx'\\
	&= d^2\int_{-\infty}^\infty [(A')^2 + \frac{(j'_{1,1})^2}{d^2} A^2 - \mathrm{sech}^2\frac{x}{\sqrt{2}} A^2  ]\,dx\\
	&= d^2\int_{-\infty}^\infty \Big(\frac{3}{2}\tanh^2\frac{x}{\sqrt{2}} + \frac{(j'_{1,1})^2}{d^2}   - 1 \Big)\mathrm{sech}^2\frac{x}{\sqrt{2}}\,dx\\
	&= d^2\int_{-1}^1 \Big(\frac{3}{2} \tau^2 + \frac{(j'_{1,1})^2}{d^2}   - 1\Big)d\tau\\
	&= d^2\Big(  \frac{2(j'_{1,1})^2}{d^2}   - 1\Big) < 0.
\end{align*}
Therefore, there exists $\delta > 0$ such that
\[
g(a_1, a_2) < \mcF_d[u_s] \text{ on } \{a_1^2 + a_2^2 = \delta^2\}.
\]
We now define the path $h$ is by first connecting $\psi_n^\pm$ to $u_s + (\pm \delta A, 0)$ as in Step 2  of the proof of Lemma \ref{Lem:M1}, and then connecting $u_s + (\delta A, 0)$ to $u_s + (-\delta A, 0)$ by $u_s + (a_1 A + a_2 A \phi_2(x'/d),0)$ along the circle $a_1^2 + a_2^2 = \delta^2$. Clearly, $\mcF_d[h(t)] < \mcF_d[u_s]$ along this path and hence $c_d < \mcF_d[u_s]$.

\bigskip
\noindent
\underline{Step 4:} We show that there exists $\underline{c}(\Sigma) > 0$ such that $c_d \geq \underline{c}(\Sigma)$ when $d > \sqrt{2} j'_{1,1}$.

This is similar to the proof of Lemma \ref{Lem:M3X}. We omit the details.
\end{proof}

\subsection{Candidate vortex ring solution}\label{SSec3.3}

We turn to the question of the existence of a critical point of $\mcF_d$ in $\mcA_d^{\rm rad}$ which is different from the vortex solution when $d$ is larger than $\sqrt{2}j'_{0,1}$. As pointed out in the introduction, the argument in Subsection \ref{SSec3.2} proves that the mountain pass energy $c_d^{\rm rad}$ satisfies the bounds \eqref{Eq:3DMPRad-1} and \eqref{Eq:3DMPRad-2}. However, it is not clear if $\mcF_d$ (as a function on $\mcA_d^{\rm x}$) satisfies the Palais-Small condition at energy level $c_d^{\rm rad}$. We therefore cannot directly apply the mountain pass theorem to obtain a critical point at this energy level. Instead, we consider solutions of the Euler-Lagrange equation on cylinders $\Omega_{R,d} :=[-R,R] \times d\Sigma$ for finite $R$ and attempt to obtain a candidate solution by sending $R \rightarrow \infty$. The outcome of this is Theorem \ref{Thm:RingCand}.

For $R \in (0,\infty)$, consider the functional
\[
\mcF_{R,d}[u] = \int_{\Omega_{R,d}} \Big(\frac{1}{2} |\nabla u|^2 + \frac{1}{4} (1 - |u|^2)^2\Big)\,dx\,dx'
\]
for maps $u$ belonging to the set $\mcA_{R,d}^{\rm rad}$ consisting of maps in $H^1(\Omega_{R,d},\RR^2)$ satisfying the phase imprinting condition \eqref{Eq:Imprinting}, the symmetry condition \eqref{Eq:RadSym} and the boundary condition $u(\pm R,\cdot) = (0,\pm 1)$. Sometimes, we also consider the set $\mcA_{R,d}^{\rm x}$ consisting of maps in $H^1(\Omega_{R,d},\RR^2)$ satisfying the phase imprinting condition \eqref{Eq:Imprinting} and the boundary condition $u(\pm R,\cdot) = (0,\pm 1)$. By a slight abuse of notation, we let $\Omega_{\infty,d} = \Omega_d, \mcF_{\infty,d} = \mcF_d$, $\mcA_{\infty,d}^{\rm rad} = \mcA_d^{\rm rad}$, and $\mcA_{\infty,d}^{\rm x} = \mcA_d^{\rm x}$.

There are three special critical points of $\mcF_{R,d}$ in $\mcA_{R,d}^{\rm rad}$ which depend only on $x$. The first one is the counterpart of the soliton which is given by
\[
u_{s,R}(x) = \Xi_R(x)e_2
\]
where $\Xi_R$ is given implicitly by
\[
\int_0^{\Xi_R(x)} (q_R + \frac{1}{2}(1 - s^2)^2)^{-1/2}\,ds = x \text{ for } x \in [-R,R],
\]
where $q_R$ is the unique positive constant such that 
\[
\int_0^{1} (q_R + \frac{1}{2}(1 - s^2)^2)^{-1/2}\,ds = R.
\]
As $R \nearrow \infty$, we have $q_R \searrow 0$, $\Xi_R(x) \searrow \tanh\frac{x}{\sqrt{2}}$ locally uniformly in $(-\infty,\infty)$ and
\begin{equation}
\mcF_{R,d}[u_{s,R}] \searrow \mcF_d[u_s] = \frac{2\sqrt{2}|\Sigma|d^2}{3}.
	\label{Eq:RSolEnergy}
\end{equation}
The other two critical points are given by
\begin{equation}
\underline{u}_{R,d}^\pm(x,x') = \rho_{R,d}(x) \Big(\pm \cos \chi_{R,d}(x), -\sin \chi_{R,d}(x) \Big),
	\label{Eq:uuRd}
\end{equation}
where $\rho_{R,d} > 0$, $-\frac{\pi}{2} \leq \chi_{R,d} \leq \frac{\pi}{2}$ and $(\rho_{R,d},\chi_{R,d})$ minimizes the functional
\[
\int_{-R}^R \Big[\frac{1}{2} (\rho')^2 +  \frac{1}{2}\rho^2 (\chi')^2 + \frac{1}{4} (1 - \rho^2)^2\Big]\,dx
\]
defined for even $\rho \in H^1((-R,R))$ and odd $\chi \in H^1((-R,R))$ with boundary conditions $\rho(\pm R) = 1$ and $\chi(\pm R) = \pm \frac{\pi}{2}$. Note that
\begin{equation}
\underline{c}_{R,d} := \mcF_{R,d}[\underline{u}_{R,d}^\pm] \leq \mcF_{R,d}\Big[\Big(\pm \cos \frac{\pi x}{2R}, -\sin \frac{\pi x}{2R} \Big)\Big] = \frac{\pi^2 |\Sigma| d^2}{4R^2}.
	\label{Eq:RMinEnergy}
\end{equation}

It turns out that $\underline{u}_{R,d}^\pm$ are in fact the only two minimizers of $\mcF_{R,d}$ in $\mcA_{R,d}^{\rm x}$. This is an immediate consequence of the following result, a version of which appeared earlier in \cite{INSZ-ENS}.

\begin{lemma}\label{Lem:Pos=>Min}
Let $d > 0$, $R \in (0,\infty]$ and $u \in \mcA_{R,d}^{\rm x}$ be a critical point of $\mcF_{R,d}$. If $u_1 > 0$ in $\Omega_{R,d}$, then $\mcF_{R,d}$ has exactly two minimizers in $\mcA_{R,d}^{\rm x}$ which are $(\pm u_1, u_2)$.
\end{lemma}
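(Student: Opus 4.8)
The plan is to mimic the strategy already used for Theorem~\ref{Thm:UV} and Theorem~\ref{Thm:S=>M}, namely to show that a critical point $u = (u_1,u_2)$ with $u_1 > 0$ automatically satisfies the sign condition $u_1^2 + u_2^2 < 1$ on $\overline{\Omega}_{R,d}$ (with $u_2$ changing sign appropriately in $x$), and then to establish a convexity-type inequality of the form
\begin{equation*}
\mcF_{R,d}[u + w] - \mcF_{R,d}[u] \geq \frac{1}{2}\int_{\Omega_{R,d}} \Big[ u_1^2 \Big|\nabla\big(\tfrac{w_1}{u_1}\big)\Big|^2 + u_2^2 \Big|\nabla\big(\tfrac{w_2}{u_2}\big)\Big|^2 \Big]\,dx\,dx'
\end{equation*}
for every admissible perturbation $w$ with $u + w \in \mcA_{R,d}^{\rm x}$. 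Granting this, the right-hand side is nonnegative, so $u$ is a minimizer; and if $u + w$ is also a minimizer then $w_1/u_1$ and $w_2/u_2$ are constant, which — combined with the boundary condition $u(\pm R,\cdot)=(0,\pm1)$, forcing $u_2 \not\equiv 0$ with the right sign, and the Euler--Lagrange equations — pins down $u_1 = \pm u_1$ and $u_2$ unchanged, giving exactly the two minimizers $(\pm u_1, u_2)$.

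First I would record the structural facts. Since $u_1 > 0$ in $\Omega_{R,d}$ and $u$ satisfies the phase imprinting condition \eqref{Eq:Imprinting}, $u_2$ is odd in $x$ and hence $u_2 = 0$ on $\{x=0\}$; by the strong maximum principle applied to $-\Delta u_2 = (1-|u|^2)u_2$ on $\{x>0\}$ together with $u_2(\pm R,\cdot) = \pm 1$, we get $u_2 > 0$ for $x > 0$ and $u_2 < 0$ for $x < 0$. The bound $|u| < 1$ in the interior follows from the strong maximum principle for $\Psi := 1 - |u|^2$ exactly as in Step~1 of the proof of Theorem~\ref{Thm:UV} (using even reflection across the relevant boundary pieces and the boundary value $|u| = 1$ at $x = \pm R$ to rule out $\Psi \equiv 0$). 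Next, writing $\eta := w_1/u_1$ and $\zeta := w_2/u_2$, I would justify — by the Hopf-lemma argument at the start of Step~1 of the proof of Theorem~\ref{Thm:S=>M}, using $u_1 > 0$ on $\overline{\Omega}_{R,d}$ (so no vanishing at all for $u_1$) and $\partial_x u_2 > 0$ on $\{x=0\}$ — that $\eta, \zeta$ are Lipschitz up to the boundary after the usual density reduction to $w \in C_c^\infty$; then the integration-by-parts computation using $-\Delta u_i = (1-|u|^2)u_i$ yields $\int (|\nabla w_i|^2 - (1-|u|^2)w_i^2) = \int u_i^2|\nabla(w_i/u_i)|^2$, and the expansion of $\mcF_{R,d}[u+w] - \mcF_{R,d}[u]$ (as in \eqref{Eq:usM-0}) gives the displayed inequality after discarding the nonnegative term $\tfrac{1}{4}(2u\cdot w + |w|^2)^2$.

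The main obstacle is the density/boundary-regularity step when $R = \infty$: one must approximate a general $w \in H^1(\Omega_d,\RR^2)$ with $u + w \in \mcA_d^{\rm x}$ by compactly supported smooth perturbations without increasing the quadratic form in the limit, and one must handle the behavior of $w$ as $x \to \pm\infty$ — this is where the decay properties of $u$ (and the fact that $u_1 > 0$ stays bounded below on compacta but not uniformly as $x \to \pm\infty$, since $u_1 \to 0$) need care, exactly the kind of subtlety flagged in the footnote to \eqref{Eq:usM-3}. For the finite-$R$ case this is routine, but for $R = \infty$ I would argue as in Step~2 of the proof of Theorem~\ref{Thm:S=>M}: reduce to $w$ of compact support via a cutoff $\eta(x/k_j)$, mollify, and pass to the limit using Fatou on the nonnegative right-hand side — accepting a one-sided inequality, which is all that is needed. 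The final uniqueness assertion is then a short argument: from equality in the convexity inequality, $w_2/u_2 \equiv c_2$; the asymptotics $u_2(\pm R,\cdot) = \pm1$ (or $u_2(x,\cdot) \to (0,\pm1)$ as $x \to \pm\infty$ when $R=\infty$) force $c_2 = 1$, hence $w_2 = 0$ and $u_2$ is unchanged; comparing $-\Delta u_2 = (1 - u_1^2 - u_2^2)u_2$ for the two minimizers then forces the first components to have equal squares, i.e. $w_1 = c_1 u_1$ with $c_1^2 = 1$.
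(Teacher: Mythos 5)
There is a genuine gap in your structural step, and it stems from choosing the ``wrong'' factorizing functions. You propose to run the Theorem~\ref{Thm:UV}-style argument with $w_1 = u_1\eta$ \emph{and} $w_2 = u_2\zeta$, which forces you to first establish that $u_2$ vanishes only on $\{x=0\}$ (with nonzero normal derivative there) and has a sign on $\{x>0\}$. Your justification --- ``by the strong maximum principle applied to $-\Delta u_2 = (1-|u|^2)u_2$ on $\{x>0\}$ together with $u_2(\pm R,\cdot)=\pm1$, we get $u_2>0$ for $x>0$'' --- does not work: the strong maximum principle upgrades $u_2\geq 0$, $u_2\not\equiv 0$ to $u_2>0$, but for an \emph{arbitrary} critical point with $u_1>0$ you have no a priori sign information on $u_2$ in $\{x>0\}$ (the zeroth-order coefficient $1-|u|^2$ has the wrong sign for a direct maximum principle, and unlike in Theorem~\ref{Thm:UV} the sign of the second component is not arranged by construction). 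Without this, the quotient $w_2/u_2$ need not be defined and the key inequality for the second component is unproven; and you cannot obtain the sign of $u_2$ from minimality, since minimality is what you are trying to prove.

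The fix is to notice that the hypothesis $u_1>0$ already hands you an everywhere-positive zero mode of the \emph{scalar} linearized operator $-\Delta-(1-|u|^2)$ with the correct boundary behaviour ($u_1=0$ at $x=\pm R$, resp.\ $u_1\in H^1$ decaying at infinity), and to use it to factor \emph{both} components: for any admissible scalar $\psi$ (vanishing at $x=\pm R$, resp.\ in $H^1$), the substitution $\psi=u_1\eta$ and integration by parts give
\begin{equation*}
\int_{\Omega_{R,d}}\big(|\nabla\psi|^2-(1-|u|^2)\psi^2\big)\,dx\,dx'
\;\geq\;\int_{\Omega_{R,d}} u_1^2\,\Big|\nabla\big(\tfrac{\psi}{u_1}\big)\Big|^2\,dx\,dx'\;\geq\;0,
\end{equation*}
applied to $\psi=w_1$ and $\psi=w_2$ separately. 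This is exactly what the paper does (phrased as: the operator is positive semi-definite with kernel spanned by $u_1$), it requires no information whatsoever about the sign or zero set of $u_2$ and no bound $|u|\leq 1$, and it makes the equality case cleaner: equality forces $w=cu_1$ for a constant vector $c$, the vanishing of the quartic term gives $(2c_1+|c|^2)u_1+2c_2u_2=0$, and the boundary values at $x=\pm R$ (or the asymptotics) give $c_2=0$ and $c_1\in\{0,-2\}$, i.e.\ exactly the two minimizers $(\pm u_1,u_2)$. The remaining ingredients of your write-up (the expansion \eqref{Eq:usM-0}, the density/Fatou argument for $R=\infty$, the Hopf lemma at $x=\pm R$) are correctly identified and carry over unchanged.
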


\begin{corollary}
Let $d > 0$ and $R \in (0,\infty)$. Then $\underline{u}_{R,d}^\pm$ are the only two minimizers of $\mcF_{R,d}$ in $\mcA_{R,d}^{\rm x}$. Consequently, $\underline{c}_{R,d}$ is the minimal energy of $\mcF_{R,d}$ in $\mcA_{R,d}^{\rm x}$.
\end{corollary}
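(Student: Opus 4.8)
The plan is to read off the corollary from Lemma~\ref{Lem:Pos=>Min}. The only thing one needs to check is that $\underline{u}_{R,d}^+$ is a critical point of $\mcF_{R,d}$ in $\mcA_{R,d}^{\rm x}$ whose first component is strictly positive in the open cylinder $(-R,R)\times d\Sigma$. Granting this, Lemma~\ref{Lem:Pos=>Min} applied with $u = \underline{u}_{R,d}^+$ gives that $\mcF_{R,d}$ has exactly two minimizers in $\mcA_{R,d}^{\rm x}$, namely $\bigl(\pm(\underline{u}_{R,d}^+)_1,(\underline{u}_{R,d}^+)_2\bigr)$, which are precisely $\underline{u}_{R,d}^\pm$ by the formula \eqref{Eq:uuRd}. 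The ``consequently'' clause is then immediate: the minimal value of $\mcF_{R,d}$ on $\mcA_{R,d}^{\rm x}$ equals $\mcF_{R,d}[\underline{u}_{R,d}^\pm] = \underline{c}_{R,d}$ by the definition \eqref{Eq:RMinEnergy}.

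That $\underline{u}_{R,d}^+$ is a critical point of $\mcF_{R,d}$ in $\mcA_{R,d}^{\rm x}$ is part of its construction: it is $x'$-independent, so the lateral Neumann condition on $[-R,R]\times\partial(d\Sigma)$ holds automatically and the Euler--Lagrange equation $-\Delta u = (1-|u|^2)u$ reduces, for the polar ansatz $u = \rho(\cos\chi,-\sin\chi)$, to the Euler--Lagrange system of the reduced one-dimensional functional displayed before \eqref{Eq:RMinEnergy}; since $(\rho_{R,d},\chi_{R,d})$ minimizes that functional it solves the system, and membership in $\mcA_{R,d}^{\rm x}$ follows from the parities of $\rho_{R,d},\chi_{R,d}$ and their prescribed boundary values.

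The substantive point is the positivity of $(\underline{u}_{R,d}^+)_1 = \rho_{R,d}\cos\chi_{R,d}$. Since $\rho_{R,d}>0$, it suffices to show $-\tfrac{\pi}{2} < \chi_{R,d}(x) < \tfrac{\pi}{2}$ for $x\in(-R,R)$. The variation of the reduced functional in $\chi$ yields the first integral $\rho_{R,d}^2\,\chi_{R,d}' \equiv c$ on $[-R,R]$ for some constant $c$. Integrating and using $\chi_{R,d}(\pm R) = \pm\tfrac{\pi}{2}$ gives $c\int_{-R}^R \rho_{R,d}^{-2}\,dx = \pi > 0$, hence $c>0$; as $\rho_{R,d}>0$ this forces $\chi_{R,d}' = c\,\rho_{R,d}^{-2}>0$ throughout, so $\chi_{R,d}$ is strictly increasing. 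Together with the endpoint values this gives the strict bound $|\chi_{R,d}|<\tfrac{\pi}{2}$, hence $\cos\chi_{R,d}>0$, on $(-R,R)$, and therefore $(\underline{u}_{R,d}^+)_1>0$ there.

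I expect the strict monotonicity of $\chi_{R,d}$ (equivalently, the positivity of the first component) to be the only genuinely nontrivial step; everything else is either recorded already in the construction of $\underline{u}_{R,d}^\pm$ or a direct application of Lemma~\ref{Lem:Pos=>Min}. One bookkeeping subtlety worth noting is that ``$u_1>0$ in $\Omega_{R,d}$'' in Lemma~\ref{Lem:Pos=>Min} must be understood as positivity in the open cylinder, since the boundary condition $u(\pm R,\cdot)=(0,\pm1)$ forces $u_1$ to vanish on the two end caps; this is exactly what the monotonicity argument delivers.
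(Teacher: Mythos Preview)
Your proposal is correct and follows the paper's approach: the paper states the corollary as an immediate consequence of Lemma~\ref{Lem:Pos=>Min} applied to $\underline{u}_{R,d}^+$ (the proof block that follows in the paper is actually the proof of the lemma, not the corollary). You supply one detail the paper leaves implicit in the construction of $\underline{u}_{R,d}^\pm$, namely the strict inequality $|\chi_{R,d}|<\tfrac{\pi}{2}$ on $(-R,R)$ via the first integral $\rho_{R,d}^2\chi_{R,d}'=c>0$; this is a clean and correct way to secure the hypothesis $u_1>0$ needed for the lemma.
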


\begin{proof}
Let $W$ be the space of maps in $H^1(\Omega_{R,d})$ such that $u + w \in \mcA_{R,d}^{\rm x}$. As in the proof of Theorem \ref{Thm:S=>M}, we have for any $w \in W$ that
\begin{align}
\mcF_{R,d}[u + w] - \mcF_{R,d}[u]
	&= \frac{1}{2}\int_{\Omega_{R,d}} \Big(|\nabla w|^2 - (1 - |u|^2) |w|^2\Big) \,dx\,dx' \nonumber\\
		&\qquad+ \frac{1}{4}\int_{\Omega_{R,d}} (2u \cdot w + |w|^2)^2 \,dx\,dx'.
	\label{Eq:ABC-1}
\end{align}
Now, note that $u_1 > 0$ in $\Omega_{R,d}$, $u_1 \in W$ and $-\Delta u_1 - (1 - |u|^2) u_1 = 0$ in $\Omega_{R,d}$. Thus, $-\Delta - (1 - |u|^2)$ is positive semi-definite on $W$ and its kernel is generated by $u_1$. This implies that the right hand side of \eqref{Eq:ABC-1} is non-negative and thus $u$ is a minimizer of $\mcF_{R,d}$ in $\mcA_{R,d}^{\rm x}$. Suppose that $u + w$ is another minimizer of $\mcF_{R,d}$ in $\mcA_{R,d}^{\rm x}$. Then we must have that $w = u_1 c$ for some constant vector $c \in \RR^2$, and
\[
0 
	= 2u \cdot w + |w|^2 
	= 2u \cdot c u_1  + |c|^2 u_1^2 = (2c_1 + |c|^2)u_1^2 + 2c_2u_1 u_2 \text{ in } \Omega_{R,d}
\]
Since $u_1 > 0$, this implies that
\[
(2c_1 + |c|^2)u_1 + 2c_2 u_2 = 0 \text{ in } \Omega_{R,d}.
\]
By the boundary condition at $x = \pm R$, this implies further that $c_2 = 0$, which then implies further that $c_1 \in \{0,-2\}$. This means that $u+w \in \{(\pm u_1, u_2)\}$. As $\mcF_{R,d}[(-u_1,u_2)] = \mcF_{R,d}[u]$, this completes the proof.
\end{proof}

\begin{lemma}\label{Lem:FRdStrictMin}
Let $d > 0$ and $R \in (0,\infty)$. The second variation of $\mcF_{R,d}$ at $\underline{u}_{R,d}^\pm$ is positive definite. More precisely, there exists $\delta_{R,d} > 0$ such that
\[
\frac{d^2}{dt^2}\Big|_{t = 0} \mcF_{R,d}[\underline{u}_{R,d}^\pm + tw] 
	\geq \delta_{R,d}\|w\|_{H^1(\Omega_{R,d})}^2
\]
for all $w \in H^1(\Omega_{R,d},\RR^2)$ satisfying the phase imprinting condition \eqref{Eq:Imprinting} and $w(\pm R, \cdot) = 0$.
\end{lemma}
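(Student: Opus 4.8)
The plan is to deduce the coercivity from the semi-definiteness of the second variation — which is essentially contained in the proof of the corollary preceding this lemma — combined with a routine Rellich-type compactness argument made possible by the boundedness of $\Omega_{R,d}$.

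Write $u = \underline{u}_{R,d}^\pm$ and let $W$ denote the space of $w \in H^1(\Omega_{R,d},\RR^2)$ satisfying the phase imprinting condition \eqref{Eq:Imprinting} and $w(\pm R,\cdot) = 0$ (the same space as in the proof of the preceding corollary). A computation as in \eqref{Eq:ABC-1} gives, for $w \in W$,
\[
II_u[w] := \frac{d^2}{dt^2}\Big|_{t=0}\mcF_{R,d}[u + tw] = \int_{\Omega_{R,d}}\big(|\nabla w|^2 - (1 - |u|^2)|w|^2\big)\,dx\,dx' + 2\int_{\Omega_{R,d}}(u\cdot w)^2\,dx\,dx'.
\]
First I would record the kernel structure. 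By the proof of the corollary above, $u_1 > 0$ in $\Omega_{R,d}$, $-\Delta u_1 = (1 - |u|^2)u_1$, and the ground-state substitution $w = u_1\psi$ shows that the form $Q[w] := \int_{\Omega_{R,d}}\big(|\nabla w|^2 - (1-|u|^2)|w|^2\big)\,dx\,dx'$ is non-negative on $W$ with kernel $\RR\,(u_1,0)$; the substitution is legitimate up to the part of the boundary where $u_1$ vanishes (namely $\{x = \pm R\}$) by the same density argument as in the proof of Theorem \ref{Thm:S=>M}. Hence $II_u \ge 0$ on $W$, and if $II_u[w] = 0$ then both $Q[w] = 0$ and $\int_{\Omega_{R,d}}(u\cdot w)^2 = 0$; the former forces $w = c\,(u_1,0)$ for a constant $c$, whence $u\cdot w = c u_1^2$ and the latter forces $c = 0$. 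Thus $II_u$ has trivial kernel on $W$.

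Next I would argue by contradiction for the quantitative bound. If no admissible $\delta_{R,d}$ exists, there is a sequence $\{w_j\}\subset W$ with $\|w_j\|_{H^1(\Omega_{R,d})} = 1$ and $II_u[w_j]\to 0$. Since $\Omega_{R,d}$ is bounded, after passing to a subsequence $w_j\rightharpoonup w_\infty$ in $H^1(\Omega_{R,d})$ and $w_j\to w_\infty$ in $L^2(\Omega_{R,d})$; as $W$ is a closed subspace, $w_\infty\in W$. Using that $1 - |u|^2$ and $|u| = \rho_{R,d}$ are bounded and that $w\mapsto\int|\nabla w|^2$ is weakly lower semicontinuous, one gets $II_u[w_\infty]\le\liminf_j II_u[w_j] = 0$, so $II_u[w_\infty] = 0$ and hence $w_\infty = 0$ by the previous step. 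But then $\int_{\Omega_{R,d}}(1-|u|^2)|w_j|^2\to 0$ and $\int_{\Omega_{R,d}}(u\cdot w_j)^2\to 0$, so
\[
\int_{\Omega_{R,d}}|\nabla w_j|^2\,dx\,dx' = II_u[w_j] + \int_{\Omega_{R,d}}(1-|u|^2)|w_j|^2\,dx\,dx' - 2\int_{\Omega_{R,d}}(u\cdot w_j)^2\,dx\,dx' \longrightarrow 0,
\]
which together with $\|w_j\|_{L^2(\Omega_{R,d})}\to 0$ contradicts $\|w_j\|_{H^1(\Omega_{R,d})} = 1$.

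I do not expect a genuine obstacle: the only point requiring care is the identification of the kernel of $Q$ on $W$, which rests on the strict positivity of $u_1$ in the interior and on the validity of the ground-state decomposition near $\{x = \pm R\}$, both handled exactly as in Section~2; the remainder is a textbook compactness argument.
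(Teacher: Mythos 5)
Your proposal is correct and rests on the same two ingredients as the paper's proof: the positive semi-definiteness of $-\Delta - (1-|u|^2)$ on $W$ with kernel generated by $u_1$ (established in the proof of Lemma \ref{Lem:Pos=>Min}) together with the fact that the cross term $\int (u\cdot w)^2$ does not vanish on that kernel; the paper gets the quantitative constant by minimizing $J$ over the $L^2$-unit sphere of $W$ and bootstrapping algebraically to the $H^1$-norm, which is interchangeable with your Rellich-type contradiction argument. The only place you are slightly too quick is the assertion that the kernel of $Q$ on $W$ is $\RR\,(u_1,0)$: the ground-state substitution applied componentwise gives a priori the two-dimensional kernel $\{(c_1u_1,c_2u_1): c\in\RR^2\}$, and to cut it down you must either invoke the parity constraint in \eqref{Eq:Imprinting} (since $u_1=\pm\rho_{R,d}\cos\chi_{R,d}$ is even in $x$ while $w_2$ must be odd, $c_2u_1\in W$ forces $c_2=0$), or, as the paper does, keep $w=u_1c$ and observe that $u\cdot w=\rho_{R,d}^2\big(c_1\cos^2\chi_{R,d}-c_2\sin\chi_{R,d}\cos\chi_{R,d}\big)\equiv 0$ already forces $c=0$ because $\chi_{R,d}$ is nonconstant.
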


\begin{proof}
Let $W$ denote the space of $w \in H^1(\Omega_{R,d},\RR^2)$ satisfying the phase imprinting condition \eqref{Eq:Imprinting} and $w(\pm R, \cdot) = 0$. For $w \in W$, define
\[
J[w] = \frac{d^2}{dt^2}\Big|_{t = 0} \mcF_{R,d}[\underline{u}_{R,d}^\pm + tw] 
	= \int_{\Omega_{R,d}} \Big[|\nabla w|^2 - (1 - |\underline{u}_{R,d}^\pm|^2)|w|^2 + 4 (\underline{u}_{R,d}^\pm \cdot w)^2\Big]\,dx\,dx'.
\]
We saw in the proof of Lemma \ref{Lem:Pos=>Min} that $-\Delta + (1 - |\underline{u}_{R,d}^\pm|^2)$ is positive semi-definite on $W$. Thus, the direct method shows that the minimization problem
\[
\min \Big\{J[w]: w \in W, \|w\|_{L^2(\Omega_{R,d})} = 1\Big\}
\]
is attained by some $w_* \in W$ with $\|w_*\|_{L^2(\Omega_{R,d})} = 1$. Note that $J[w_*] > 0$, for if $J[w_*] = 0$, then $w_* = (\underline{u}_{R,d}^+)_1 c$ for some constant vector $c \in \RR^2$, and
\[
0 
	= \underline{u}_{R,d}^+ \cdot w_*
	= \underline{u}_{R,d}^+ \cdot c (\underline{u}_{R,d}^+)_1
	= \rho_{R,d}^2 \Big(c_1 \cos^2 \chi  - c_2\sin \chi\cos  \chi\Big),
\]
which implies $c = 0$ and $w_* = 0$ contradicting the fact that $\|w_*\|_{L^2(\Omega_{R,d})} = 1$.

Now, we have
\[
J[w] \geq J[w_*] \int_{\Omega_{R,d}} |w|^2\,dx\,dx' \quad \text{ for all } w \in W.
\]
As the definition of $J$ also gives
\[
J[w] + \int_{\Omega_{R,d}} |w|^2\,dx\,dx' \geq \int_{\Omega_{R,d}} |\nabla w|^2\,dx\,dx'  \quad \text{ for all } w \in W,
\]
we deduce that
\[
(1 + 2J[w_*]^{-1}) J[w] \geq J[w] + \int_{\Omega_{R,d}} 2|w|^2\,dx\,dx' \geq \|w\|_{H^1(\Omega)}^2  \quad \text{ for all } w \in W.
\]
The conclusion follows with $\delta_{R,d} = (1 + 2J[w_*]^{-1})^{-1}$.
\end{proof}

In view of Lemma \ref{Lem:FRdStrictMin} and the fact that $\mcF_{R,d}[\underline{u}_{R,d}^+] = \mcF_{R,d}[\underline{u}_{R,d}^-] = \underline{c}_{R,d}$, the mountain pass theorem 
\cite{Rabinowitz} implies the existence of a critical point of $\mcF_{R,d}$ in $\mcA_{R,d}^{\rm rad}$ which is different from $\underline{u}_{R,d}^\pm$. More precisely, if we define
 \begin{align}
\Gamma_{R,d}^{\rm rad}
	&= \Big\{h \in C([-1,1],\mcA_{R,d}^{\rm rad}): h(\pm 1)  = \underline{u}_{R,d}^\pm\Big\}, \label{Eq:GamRdDef}\\
c_{R,d} ^{\rm rad}
	&= \inf_{h \in \Gamma_{R,d}} \max_{t \in [-1,1]} \mcF_{R,d}[h(t)],
\label{Eq:MPRdEnergy}
\end{align}
then $c_{R,d}^{\rm rad} > \underline{c}_{R,d}$ and is a critical value of $\mcF_{R,d}$ in $\mcA_{R,d}^{\rm rad}$. We would like to show that, along a sequence $R \rightarrow \infty$, a sequence of corresponding critical points converges to a critical point of $\mcF_d$ which is different from the soliton solution when $d > \sqrt{2} j'_{0,1}$. To this end, we need some preparation.

Note that, the proof of Lemma \ref{Lem:M3X} can be applied yielding 
\begin{equation}
c_{R,d}^{\rm rad} \geq \underline{c}^{\rm rad}\text{ for } R \geq d
	\label{Eq:cRdLB}
\end{equation}
for the same positive constant $\underline{c}^{\rm rad}$ as in \eqref{Eq:3DMPRad-2}. 

\begin{remark}
Let $d > 0$. There exists $\underline{n}_{d}$ such that provided $n \geq \underline{n}_{d}$ in the definition \eqref{Eq:GamDef} and \eqref{Eq:MPEnergy} of $\Gamma_d$ and $c_d$, we have 
\[
\liminf_{R \rightarrow \infty} c_{R,d}^{\rm rad} \geq c_d^{\rm rad}.
\]
\end{remark}

\begin{proof} By \eqref{Eq:3DMPRad-2}, when $n$ is sufficiently large, $c_d^{\rm rad}$ is independent of $n$. We make two observations:
\begin{itemize}
\item Every map in $\mcA_{R,d}^{\rm rad}$ extends to a map in $\mcA_d^{\rm rad}$ with the same energy by simply assigning the value $(0,1)$ for $x > R$ and $(0,-1)$ for $x < -R$.
\item The map $\underline{u}_{R,d}^+$ can be connected to $\psi_n^+$ by a path whose maximal energy is of order $O(n^{-2} + R^{-2})$: First, the energy of $\underline{u}_{R,d}^+$ is $O(R^{-2})$. As in Step 2 of the proof of Lemma \ref{Lem:M1}, one starts by deforming the modulus $\rho_{R,d}$ to $1$ via a linear convex combination while keeping the phase fixed, gaining at most $O(R^{-2})$ in energy along the way. One then deforms the phase via a linear convex combination to $\chi(x/n)$ while keeping the modulus fixed, gaining at most $O(n^{-2} + R^{-2})$ in energy along the way.
\end{itemize} 
Thus, for any path in $\Gamma_{R,d}$, there corresponds a path in $\Gamma_d$ whose maximal energy is at most that of the original path plus $O(n^{-2} + R^{-2})$. The assertion follows.
\end{proof}

Our next lemma shows that the obtained critical point at the mountain pass critical value $c_{R,d}$ is different from the soliton $u_{s,R}$ for $d > \sqrt{2}j'_{0,1}$ and sufficiently large $R$.

\begin{lemma}\label{Lem:MPR}
Suppose $d > \sqrt{2}j'_{0,1}$. There exists $\underline{R}_d > 0$ such that the mountain pass energy $c_{R,d}$ defined by \eqref{Eq:MPRdEnergy} satisfies
\begin{equation}
	c_{R,d}^{\rm rad} < \mcF_{R,d}[u_{s,R}] \text{ for } R > \underline{R}_d.
	\label{Eq:3DMPR}
\end{equation}
\end{lemma}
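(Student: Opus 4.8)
Here is the plan.

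The plan is to follow Step~3 of the proof of Theorem~\ref{Thm:MPC-3D} on the finite cylinder, with $u_{s,R}$ in place of the soliton: I will build, for all large $R$, a path $h\in\Gamma_{R,d}^{\rm rad}$ from $\underline u_{R,d}^-$ to $\underline u_{R,d}^+$ along which $\mcF_{R,d}$ stays strictly below $\mcF_{R,d}[u_{s,R}]$. The path passes through a two-parameter family of perturbations of $u_{s,R}$ realising a two-dimensional negatively curved subspace of the second variation of $\mcF_{R,d}$ at $u_{s,R}$ in $\mcA_{R,d}^{\rm rad}$ (this is where the hypothesis $d>\sqrt2 j'_{0,1}$ enters), and at its two ends it is closed off by the modulus/phase deformation of Step~2 of the proof of Lemma~\ref{Lem:M1}.

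\emph{Step 1 (a two-dimensional unstable family at $u_{s,R}$).} Write $\Sigma=\mathbb D$ for the unit disk. Let $A(x)=\mathrm{sech}\tfrac{x}{\sqrt2}$, let $\Phi$ be a radial Neumann eigenfunction of $-\Delta$ on $\Sigma$, normalised by $\int_\Sigma\Phi^2=1$, with eigenvalue the first nonzero Neumann eigenvalue $(j'_{0,1})^2$ (so that $\int_\Sigma\Phi=0$ and $\int_\Sigma|\nabla\Phi|^2=(j'_{0,1})^2$), and let $\eta_R\in C_c^\infty((-R,R))$ be even, $\equiv1$ on $[-R+1,R-1]$, with $0\le\eta_R\le1$; set $B_R=A\eta_R$. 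For $a=(a_1,a_2)\in\RR^2$ put $p_a=a_1B_R+a_2B_R\,\Phi(x'/d)$ and $g_R(a)=\mcF_{R,d}[u_{s,R}+(p_a,0)]$; since $B_R$ vanishes at $x=\pm R$, this map lies in $\mcA_{R,d}^{\rm rad}$. As $u_{s,R}=\Xi_Re_2$ is a critical point with zero first component, $u_{s,R}\cdot(p_a,0)\equiv0$, so a direct expansion gives
\[
g_R(a)=\mcF_{R,d}[u_{s,R}]+Q_R(a)+\tfrac14\!\int_{\Omega_{R,d}}\!p_a^4\,dx\,dx',\qquad Q_R(a)=\tfrac12\!\int_{\Omega_{R,d}}\!\big[|\nabla p_a|^2-(1-\Xi_R^2)p_a^2\big]dx\,dx',
\]
so $g_R$ has neither a linear nor a cubic part. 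Because $\Xi_R$ depends only on $x$ and $\int_\Sigma\Phi=0$, the mixed coefficient of $Q_R$ vanishes, and separating the $x$ and $x'$ integrals gives $Q_R(a)=\tfrac12(\alpha_Ra_1^2+\beta_Ra_2^2)$ with
\[
\alpha_R=d^2|\Sigma|\!\int_{-R}^{R}\!\big[(B_R')^2-(1-\Xi_R^2)B_R^2\big]dx,\qquad
\beta_R=d^2\!\int_{-R}^{R}\!\Big[(B_R')^2+\tfrac{(j'_{0,1})^2}{d^2}B_R^2-(1-\Xi_R^2)B_R^2\Big]dx.
\]
Using $\Xi_R\searrow\tanh\tfrac{x}{\sqrt2}$ locally uniformly (whence $0\le1-\Xi_R^2\le\mathrm{sech}^2\tfrac{x}{\sqrt2}$ and $1-\Xi_R^2\to\mathrm{sech}^2\tfrac{x}{\sqrt2}$), $B_R\to A$ in $H^1(\RR)$, dominated convergence, and the computation from the proof of Corollary~\ref{Cor:d1} (based on $A''=(\tfrac12-\mathrm{sech}^2\tfrac{x}{\sqrt2})A$), one gets $\alpha_R\to-\sqrt2\,d^2|\Sigma|<0$ and $\beta_R\to\sqrt2\big(2(j'_{0,1})^2-d^2\big)$, which is negative precisely because $d>\sqrt2 j'_{0,1}$. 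Hence there are $\mu>0$ and $R_1$ with $Q_R(a)\le-\mu|a|^2$ for all $a$ whenever $R\ge R_1$. Since also $\int_{\Omega_{R,d}}B_R^4\le d^2|\Sigma|\int_\RR A^4<\infty$ uniformly in $R$, we obtain $g_R(a)-\mcF_{R,d}[u_{s,R}]\le-\mu|a|^2+C_1|a|^4$ with $C_1$ independent of $R$; choosing $\delta_R:=R^{-1/4}$, for $R$ large this gives $g_R(a)<\mcF_{R,d}[u_{s,R}]$ on $0<|a|\le\delta_R$ together with $\mcF_{R,d}[u_{s,R}]-g_R(a)\ge\tfrac12\mu R^{-1/2}$ on $|a|=\delta_R$. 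Finally, a routine estimate based on $\Xi_R\ge\tanh\tfrac{x}{\sqrt2}$ for $x\ge0$, the exponential decay of $A$, and the behaviour $1-\Xi_R(x)^2\sim2\sqrt{q_R}\,(R-|x|)$ near $x=\pm R$ shows that for $R$ large $|u_{s,R}+(p_a,0)|<1$ on $\Omega_{R,d}$ whenever $0<|a|\le\delta_R$.

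\emph{Step 2 (closing the path).} Connect $u_{s,R}+(-\delta_RB_R,0)$ to $u_{s,R}+(\delta_RB_R,0)$ through the semicircle $s\mapsto u_{s,R}+(p_{a(s)},0)$, $a(s)=\delta_R(-\cos\pi s,\sin\pi s)$, $s\in[0,1]$, along which $\mcF_{R,d}=g_R(a(s))<\mcF_{R,d}[u_{s,R}]$. It remains to join each endpoint $u_{s,R}+(\pm\delta_RB_R,0)$ to $\underline u_{R,d}^\pm$ within $\mcA_{R,d}^{\rm rad}$ below the level $\mcF_{R,d}[u_{s,R}]$; I treat the $+$ case. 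The map $(\delta_RB_R,\Xi_R)$ depends only on $x$, is nowhere zero (it equals $(\delta_R,0)$ at $x=0$), and has modulus $\le1$ with equality only at $x=\pm R$, so it can be written $\rho_0(\cos\tilde\chi_0,\sin\tilde\chi_0)$ with $\rho_0$ even, $0<\rho_0\le1$, $\rho_0(\pm R)=1$, $\tilde\chi_0$ odd, $\tilde\chi_0(\pm R)=\pm\tfrac\pi2$; similarly $\underline u_{R,d}^+=\rho_{R,d}(\cos\chi_{R,d},\sin\chi_{R,d})$ (up to an irrelevant sign on the first component) with $\rho_{R,d}$ even, $0<\rho_{R,d}\le1$, $\rho_{R,d}(\pm R)=1$, $\chi_{R,d}$ odd, $\chi_{R,d}(\pm R)=\pm\tfrac\pi2$, and $\mcF_{R,d}[\underline u_{R,d}^+]=\underline c_{R,d}\le\tfrac{\pi^2|\Sigma|d^2}{4R^2}$ by \eqref{Eq:RMinEnergy}. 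As in Step~2 of the proof of Lemma~\ref{Lem:M1}, deform in four stages, always through maps $\rho(\cos\chi,\sin\chi)$ with $\rho$ even, $\rho(\pm R)=1$, $\chi$ odd, $\chi(\pm R)=\pm\tfrac\pi2$ (hence in $\mcA_{R,d}^{\rm rad}$): (a) move the phase linearly from $\tilde\chi_0$ to the slowly varying $\chi_\ast(x):=\tfrac{\pi x}{2R}$, keeping $\rho=\rho_0$; (b) raise $\rho$ linearly from $\rho_0$ to $1$, keeping the phase $\chi_\ast$; (c) lower $\rho$ linearly from $1$ to $\rho_{R,d}$, keeping the phase $\chi_\ast$; (d) move the phase linearly from $\chi_\ast$ to $\chi_{R,d}$, keeping $\rho=\rho_{R,d}$. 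By convexity of $t\mapsto t^2$ and $0\le\rho\le1$, the energy along (a) and (b) is at most $g_R(\delta_R,0)+\tfrac12 d^2|\Sigma|\int_{-R}^R(\chi_\ast')^2dx=g_R(\delta_R,0)+\tfrac{\pi^2|\Sigma|d^2}{4R}$, which for $R$ large is below $\mcF_{R,d}[u_{s,R}]$ since the slack $\tfrac12\mu R^{-1/2}$ dominates $\tfrac{\pi^2|\Sigma|d^2}{4R}$; along (c) and (d) it is at most $\mcF_{R,d}[\underline u_{R,d}^+]+\tfrac{\pi^2|\Sigma|d^2}{4R}=O(R^{-1})$, which is below $\mcF_{R,d}[u_{s,R}]\to\tfrac{2\sqrt2|\Sigma|d^2}{3}>0$ for $R$ large (see \eqref{Eq:RSolEnergy}). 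Concatenating (reversed on the $-$ side) and reparametrising yields $h\in\Gamma_{R,d}^{\rm rad}$ with $\max_t\mcF_{R,d}[h(t)]<\mcF_{R,d}[u_{s,R}]$, so $c_{R,d}^{\rm rad}<\mcF_{R,d}[u_{s,R}]$ for $R$ larger than some $\underline R_d$.

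\emph{On the main difficulty.} The instability half of the argument — the existence, for $d>\sqrt2 j'_{0,1}$, of a two-dimensional negatively curved subspace at $u_{s,R}$ — is immediate from the eigenvalue computation together with $\Xi_R\to\tanh\tfrac{x}{\sqrt2}$, exactly as in Corollary~\ref{Cor:d1} and Step~3 of Theorem~\ref{Thm:MPC-3D}. The genuine difficulty is the uniform-in-$R$ bookkeeping when closing the path: one must keep the modulus strictly below $1$ all along (which is why $B_R$ is chosen to vanish at $x=\pm R$, and which relies on the sharp rate $1-\Xi_R^2\sim2\sqrt{q_R}(R-|x|)$ near the caps together with $\Xi_R\ge\tanh\tfrac{x}{\sqrt2}$), and one must ensure the $O(R^{-1})$ energy cost of flattening and unflattening the modulus and of threading a slowly varying phase — plus the $O(R^{-2})$ cost $\underline c_{R,d}$ — stays under the available slack, which is only of order $\delta_R^2$. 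Balancing these competing requirements is what forces the scaling $\delta_R=R^{-1/4}$; any $\delta_R$ with $R^{-1/2}\ll\delta_R^2\ll1$ would serve.
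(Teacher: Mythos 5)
Your overall strategy coincides with the paper's: reduce \eqref{Eq:3DMPR} to exhibiting a two-dimensional negatively curved family $u_{s,R}+\bigl(a_1A_R+a_2A_R\phi_2(|x'|/d),0\bigr)$ at $u_{s,R}$, then close the path to $\underline{u}_{R,d}^\pm$ by the modulus/phase deformations of Step~2 of Lemma~\ref{Lem:M1} and Step~3 of Theorem~\ref{Thm:MPC-3D}. The genuine difference is the choice of the cap-adjusted profile. You take $A_R=A\eta_R$ (a cutoff of $\mathrm{sech}\tfrac{x}{\sqrt2}$) and pass to the limit $R\to\infty$ by dominated convergence, using $0\le 1-\Xi_R^2\le\mathrm{sech}^2\tfrac{x}{\sqrt2}$; this makes the negativity of $\alpha_R$ and $\beta_R$ an immediate consequence of the infinite-cylinder computation (Corollary~\ref{Cor:d1} and Step~3 of Theorem~\ref{Thm:MPC-3D}) and avoids the paper's bookkeeping. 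The paper instead takes $A_R=(1-\Xi_R^2)^\alpha$ with $\alpha\in(\tfrac12,1-\tfrac{(j'_{0,1})^2}{d^2})$ and computes \eqref{Eq:MPR-2} explicitly via the ODE for $\Xi_R$; this is more laborious but buys something you have to pay for separately: since $2\alpha>1$ and $1-\Xi_R^2\le1$, one gets $\delta^2A_R^2\le\delta^2(1-\Xi_R^2)\le 1-\Xi_R^2$ for free, so the perturbed map automatically has modulus at most $1$ and the polar decomposition at the endpoints of the semicircle is immediate. Your careful quantitative balancing ($\delta_R=R^{-1/4}$, slack of order $R^{-1/2}$ against path costs of order $R^{-1}$) is correct and in fact more explicit than what the paper records.

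The one place where your write-up falls short of a proof is precisely the modulus bound $\delta_R^2B_R^2\le 1-\Xi_R^2$ near the caps, which you assert as ``a routine estimate.'' It is true, but it is not a consequence of the facts you cite as stated: $\Xi_R\ge\tanh\tfrac{x}{\sqrt2}$ gives an \emph{upper} bound on $1-\Xi_R^2$, whereas here you need a \emph{lower} bound, and the asymptotic $1-\Xi_R^2\sim 2\sqrt{q_R}(R-|x|)$ is only useful once one knows how fast $q_R\to0$ (the paper only states $q_R\searrow0$). The missing ingredients are: (i) a quantitative lower bound such as $q_R\ge 8e^{-2\sqrt2 R}$, obtained from $R=\int_0^1(q_R+\tfrac12(1-s^2)^2)^{-1/2}\,ds\ge\tfrac1{\sqrt2}\mathrm{arcsinh}\sqrt{2/q_R}$; and (ii) the ODE estimate $(e^{\sqrt2 x}(1-\Xi_R^2))'\ge-2\sqrt{q_R}\,e^{\sqrt2 x}$, giving $1-\Xi_R^2(x)\ge e^{-\sqrt2 x}-\sqrt{2q_R}$ for $x\ge0$. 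Together these show $1-\Xi_R^2\gtrsim A^2$ away from the caps and $1-\Xi_R^2\gtrsim\sqrt{q_R}\gtrsim e^{-\sqrt2 R}\gtrsim A(x)^2$ on $[R-M,R-\tfrac12]$, which is enough since $\delta_R\to0$ and $B_R$ vanishes on the remaining sliver. So the gap is fillable, but as written this step is asserted rather than proved, and it is the one point where your choice of $A_R$ is strictly more expensive than the paper's.
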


\begin{proof} As in Step 3 of the proof of Theorem \ref{Thm:MPC-3D}, it suffices to show that, when $R$ is sufficiently large, we can adjust the function $A(x) = \mathrm{sech} \frac{x}{\sqrt{2}}$ to a function $A_R: [-R,R] \rightarrow (0,\infty)$ with $A_R(\pm R) = 0$ such that the function $g_R: \RR^2 \rightarrow \RR$ given by
\[
g_R(a_1,a_2) = \mcF_{R,d}\Big[u_{s,R} + \Big(a_1 A_R + a_2 A_R \phi_2(|x'|/d),0\Big)\Big]
\]
satisfies $\frac{\partial^2 g_R}{\partial a_1^2}(0,0) < 0$ and $\frac{\partial^2 g_R}{\partial a_2^2}(0,0) < 0$. Here $\phi_2$ is a normalized radially symmetric eigenfunction of $-\Delta$ in the unit disk corresponding to the eigenvalue $(j'_{0,1})^2$. (We continue to have $g_R(0,0) = \mcF_{R,d}[u_{s,R}]$, $\nabla g_R(0,0) = 0$ and $\frac{\partial^2 g_R}{\partial a_1 \partial a_2}(0,0) = 0$.)

We compute
\begin{align*}
\frac{\partial^2 g}{\partial a_1^2}(0,0) 
	&= \int_{\Omega_{R,d}} [(A_R')^2 - (1 - \Xi_R^2) A_R^2  ]\,dx\,dx'\\
	&= \pi d^2 \int_{-R}^R [(A_R')^2   - (1 - \Xi_R^2) A_R^2  ]\,dx,\\		 
\frac{\partial^2 g}{\partial a_2^2}(0,0) 
	&= \int_{\Omega_{R,d}} \big[|\nabla(A_R \phi_2(x'/d))|^2 - (1 - \Xi_R^2) A_R^2 \phi_2(x'/d)^2\big]\,dx\,dx' \\
	&= d^2\int_{-R}^R \Big[(A_R')^2 + \frac{(j'_{0,1})^2}{d^2} A_R^2 - (1 - \Xi_R^2) A_R^2 \Big ]\,dx. 
\end{align*}
Therefore, we only need to find $A_R \in H_0^1((-R,R))$ such that 
\begin{equation}
\int_{-R}^R \Big[(A_R')^2 + \frac{(j'_{0,1})^2}{d^2} A_R^2 - (1 - \Xi_R^2) A_R^2  \Big]\,dx
	< 0.
	\label{Eq:MPR-1}
\end{equation}

We assume that ansatz
\[
A_R = (1 - \Xi_R^2)^{\alpha} \in H_0^1((-R,R)),
\]
where $\alpha \in ( \frac{1}{2}, 1)$ is to be fixed. Using the identities
\[
(\Xi_R')^2 = q_R  + \frac{1}{2}(1 - \Xi_R^2)^2 = q_R + \frac{1}{2} A_R^{\frac{2}{\alpha}} \text{ and } \Xi_R'' = - (1 - \Xi_R^2)\Xi_R = - A_R^{\frac{1}{\alpha}} \Xi_R,
\]
we compute
\begin{align*}
-A_R' 
	&=  2\alpha \Xi_R \Xi_R' A_R^{1-\frac{1}{\alpha}},\\
-A_R''
	&= 2(1-\alpha) \Xi_R^2 (\Xi_R')^2 A_R^{1-\frac{2}{\alpha}}  + 2\alpha (\Xi_R')^2 A_R^{1-\frac{1}{\alpha}} + 2\alpha \Xi_R \Xi_R'' A_R^{1-\frac{1}{\alpha}}\\
	&=  2[\alpha + (1-2\alpha) \Xi_R^2] (\Xi_R')^2 A_R^{1-\frac{2}{\alpha}}  + 2\alpha \Xi_R \Xi_R'' A_R^{1-\frac{1}{\alpha}} \\
	&=  \Big\{\frac{2q_R [\alpha + (1-2\alpha) \Xi_R^2]}{(1 - \Xi_R^2)^2}    +    \alpha + (1-4\alpha) \Xi_R^2 \Big\}  A_R .
\end{align*}
This implies that
\[
\int_{-R}^R (A_R')^2\,dx
	= \int_{-R}^R \Big\{\frac{2q_R [\alpha + (1-2\alpha) \Xi_R^2]}{(1 - \Xi_R^2)^2}    +   \alpha + (1-4\alpha) \Xi_R^2 \Big\}  A_R^2\,dx,
\]
and hence
\begin{align}
&\int_{-R}^R \Big[(A_R')^2 + \frac{(j'_{0,1})^2}{d^2} A_R^2 - (1 - \Xi_R^2) A_R^2  \Big]\,dx\nonumber\\
	&\qquad = \int_{-R}^R \Big\{\frac{2q_R [\alpha + (1-2\alpha) \Xi_R^2]}{(1 - \Xi_R^2)^2}   + \frac{(j'_{0,1})^2}{d^2} +  (\alpha - 1) + (2-4\alpha) \Xi_R^2 \Big\}  A_R^2\,dx.\label{Eq:MPR-2}
\end{align}
Note that, since $\alpha \in (\frac{1}{2},1)$, the terms $\alpha - 1$ and $(2-4\alpha) \Xi_R^2$ are negative and helpful in achieving \eqref{Eq:MPR-1}. (This is the reason why we used $A_R''$ rather than the exact formula for $A_R'$ in evaluating the integral of $(A_R')^2$.) Particularly, we will discard the term $(2-4\alpha) \Xi_R^2$ in subsequent discussion. For the other terms, observe that, on the one hand,
\begin{align*}
\int_{-R}^R   \frac{ q_R^{1/2} [\alpha + (1-2\alpha) \Xi_R^2]}{(1 - \Xi_R^2)^2}   A_R^2\,dx
	&= \int_{-R}^R     q_R^{1/2} [\alpha + (1-2\alpha) \Xi_R^2]  (1 - \Xi_R^2)^{2\alpha - 2} \,dx\nonumber\\
	&\leq \int_{-R}^R   \Xi_R' [\alpha + (1-2\alpha) \Xi_R^2]  (1 - \Xi_R^2)^{2\alpha - 2} \,dx\nonumber\\
	&= \int_{-1}^1 [\alpha + (1-2\alpha) \tau^2]  (1 - \tau^2)^{2\alpha - 2} \,d\tau = c_1(\alpha) < \infty.
\end{align*}
On the other hand, provided $R$ is sufficiently large such that $\Xi_R' = (q_R  + \frac{1}{2}(1 - \Xi_R^2)^2)^{1/2} \leq 1$ (recall that $q_R \searrow 0$ as $R \nearrow \infty$),
\[
\int_{-R}^R A_R^2\,dx \geq \int_{-R}^R (1 - \Xi_R^2)^{2\alpha}\Xi_R' \,dx = \int_{-1}^1 (1 - \tau^2)^{2\alpha}\,d\tau = c_2(\alpha) > 0.
\]
Therefore, by fixing $\alpha \in (\frac{1}{2},1 - \frac{(j'_{0,1})^2}{d^2})$ (which is possible since $d > \sqrt{2}j'_{0,1} $), we have from \eqref{Eq:MPR-2} that
\[
\int_{-R}^R \Big[(A_R')^2 + \frac{(j'_{0,1})^2}{d^2} A_R^2 - (1 - \Xi_R^2) A_R^2  \Big]\,dx
	\leq 2q_R^{1/2}c_1(\alpha) - \Big(\frac{(j'_{0,1})^2}{d^2} +   \alpha - 1\Big) c_2(\alpha)
\]
and so
\[
\lim_{R \rightarrow \infty} \int_{-R}^R \Big[(A_R')^2 + \frac{(j'_{0,1})^2}{d^2} A_R^2 - (1 - \Xi_R^2) A_R^2  \Big]\,dx \leq - \Big(\frac{(j'_{0,1})^2}{d^2} +   \alpha - 1\Big) c_2(\alpha) < 0.
\]
This proves that \eqref{Eq:MPR-1} holds for large $R$ and concludes the proof.
\end{proof}

\begin{lemma}\label{Lem:MPRChoice}
Suppose $d > \sqrt{2}j'_{0,1}$. There exists $\underline{R}_d > 0$ such that, for each $R \in (\underline{R}_d,\infty)$, there are two critical points $\hat u^{\pm }_{R,d}  = (\pm \hat U_{R,d}, \hat V_{R,d})$ of $\mcF_{R,d}$ in $\mcA_{R,d}^{\rm rad}$ with energy $c_{R,d}^{
\rm rad}$ satisfying 
\[
\begin{cases}
\hat U_{R,d}^2 + \hat V_{R,d}^2 \leq 1 \text{ and } \mathrm{sign}(x) \hat V_{R,d} \geq 0 \text{ in }\Omega_{R,d},\\
\hat U_{R,d} \text{ changes sign } on \{x = 0\} \cap \Omega_{R,d}.
\end{cases}
\]
In particular $\hat u^{\pm}_{R,d}$ has a zero on $\{x = 0\} \cap \Omega_{R,d}$ but is not identically zero therein, and it is different from the minimizers $u^\pm_{R,d}$ and the soliton $u_{s,R}$.
\end{lemma}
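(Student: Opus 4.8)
The plan is to produce $\hat u_{R,d}^+$ as a mountain pass critical point joining the two minimizers $\underline u_{R,d}^\pm$, performed inside a convex constraint set so as to capture the size and sign conditions on $\hat V_{R,d}$, and then to deduce the sign change of $\hat U_{R,d}$ on $\{x=0\}$ from the strict energy inequalities $\underline c_{R,d} < c_{R,d}^{\rm rad} < \mcF_{R,d}[u_{s,R}]$, the right-hand one coming from Lemma~\ref{Lem:MPR} (valid once $R > \underline R_d$). Concretely, Lemma~\ref{Lem:FRdStrictMin} makes $\underline u_{R,d}^\pm$ strict local minima of $\mcF_{R,d}$ on $\mcA_{R,d}^{\rm rad}$, and since $\Omega_{R,d}$ is bounded $\mcF_{R,d}$ satisfies the Palais--Smale condition there, so the mountain pass theorem gives a critical value $c_{R,d}^{\rm rad} > \underline c_{R,d}$. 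To build in $\mathrm{sign}(x)\,u_2 \ge 0$ (the bound $|u|\le 1$ being automatic at any critical point by the maximum principle for $1-|u|^2$) I would work inside the closed convex set
\[
\mathcal{K} = \big\{u \in \mcA_{R,d}^{\rm rad} : |u| \le 1 \text{ and } \mathrm{sign}(x)\,u_2 \ge 0 \text{ in } \Omega_{R,d}\big\},
\]
which contains $\underline u_{R,d}^\pm$, together with the retraction $P : \mcA_{R,d}^{\rm rad} \to \mathcal{K}$ that first replaces $u_2$ by $\mathrm{sign}(x)\,|u_2|$ (admissible since the phase imprinting condition makes $u_2$ odd in $x$, hence $|u_2|$ vanishes on $\{x=0\}$; this preserves the $H^1$ norm and the energy) and then rescales $u$ to have modulus at most $1$ (which does not increase either term of the energy). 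As $P$ is continuous, energy non-increasing, and the identity on $\mathcal{K}$ (and fixes $\underline u_{R,d}^\pm$), composing paths with $P$ shows that the mountain pass value over paths in $\mathcal{K}$ is still $c_{R,d}^{\rm rad}$, and the usual deformation argument (running the negative gradient flow briefly and then applying $P$) yields a genuine critical point $\hat u_{R,d}^+ = (\hat U_{R,d},\hat V_{R,d}) \in \mathcal{K}$ at level $c_{R,d}^{\rm rad}$; set $\hat u_{R,d}^- = (-\hat U_{R,d},\hat V_{R,d})$.

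The heart of the matter is to show $\hat U_{R,d}$ changes sign on $\{x=0\}\cap\Omega_{R,d}$. Since $\hat V_{R,d}\equiv 1$ on $\{x=R\}$ it is not identically zero, so the strong maximum principle gives $\hat V_{R,d} > 0$ on $\{x>0\}\cap\Omega_{R,d}$ and the Hopf lemma on the subdomain $\{x>0\}$ gives $\hat V_{R,d}=0$ with $\partial_x \hat V_{R,d} > 0$ on $\{x=0\}$. Suppose for contradiction that $\hat U_{R,d}$ does not change sign on $\{x=0\}$; passing to $\hat u_{R,d}^-$ if needed, assume $\hat U_{R,d}\ge 0$ there. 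On $Q:=\{x>0\}\cap\Omega_{R,d}$ I would carry out the ground-state substitution $\hat U_{R,d} = \hat V_{R,d}\,\zeta$: using the Euler--Lagrange equations for $\hat U_{R,d}$ and $\hat V_{R,d}$, the function $\zeta$ solves $\operatorname{div}(\hat V_{R,d}^2\nabla\zeta)=0$ in $Q$, with $\zeta=0$ on $\{x=R\}$, the conormal Neumann condition on the lateral boundary, and --- because $\hat U_{R,d}$ is even in $x$ and $\hat U_{R,d}\ge 0$ on $\{x=0\}$ --- with $\liminf\zeta\ge 0$ at every point of $\{x=0\}$ (the limit being $+\infty$ where $\hat U_{R,d}(0,\cdot)>0$ and $0$ where $\hat U_{R,d}(0,\cdot)=0$). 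Exhausting $Q$ by $\{x>\varepsilon\}\cap\Omega_{R,d}$, where the equation is uniformly elliptic, and applying the weak minimum principle together with the Hopf lemma on the lateral boundary, I get $\zeta\ge 0$ on $Q$, hence $\hat U_{R,d}\ge 0$ on $Q$ and, by parity, on $\Omega_{R,d}$. If $\hat U_{R,d}\equiv 0$, then $\hat u_{R,d}^+=(0,\hat V_{R,d})$ with $\hat V_{R,d}$ a critical point of the scalar Ginzburg--Landau functional that is positive on $\{x>0\}$; the scalar form of the ground-state argument of Theorem~\ref{Thm:S=>M} shows such a positive critical point is unique with the given boundary data, hence $\hat V_{R,d}=\Xi_R$ and $\hat u_{R,d}^+=u_{s,R}$, contradicting $c_{R,d}^{\rm rad}<\mcF_{R,d}[u_{s,R}]$. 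If $\hat U_{R,d}\not\equiv 0$, then $\hat U_{R,d}>0$ in $\Omega_{R,d}$ (being nonnegative and superharmonic), so Lemma~\ref{Lem:Pos=>Min} and the corollary following it force $\hat u_{R,d}^+\in\{\underline u_{R,d}^\pm\}$, contradicting $c_{R,d}^{\rm rad}>\underline c_{R,d}$. In either case we have a contradiction, so $\hat U_{R,d}$ changes sign on $\{x=0\}\cap\Omega_{R,d}$; consequently $\hat u_{R,d}^\pm=(\pm\hat U_{R,d},\hat V_{R,d})$ vanishes on $\{x=0\}\cap\Omega_{R,d}$ but is not identically zero there, $\hat u_{R,d}^+\ne\hat u_{R,d}^-$, and the energy bounds already exclude $\hat u_{R,d}^\pm\in\{\underline u_{R,d}^\pm,u_{s,R}\}$.

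The main obstacle I anticipate is the second paragraph --- localizing the sign change to the cross-section $\{x=0\}$ rather than just somewhere in $\Omega_{R,d}$. The substitution $\zeta=\hat U_{R,d}/\hat V_{R,d}$ degenerates along $\{x=0\}$, where $\hat V_{R,d}$ vanishes and $\zeta$ may blow up, so the minimum principle must be run on the exhausting subdomains $\{x>\varepsilon\}$, with the lower bound on the boundary values of $\zeta$ extracted from the parity of $\hat U_{R,d}$ in $x$ and the Hopf estimate $\partial_x\hat V_{R,d}>0$ on $\{x=0\}$; the edge $\{x=0\}\cap\partial(d\Sigma)$ needs a minor extra argument, or can be bypassed since it carries no capacity. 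The remaining pieces --- the mountain pass scheme, the retraction onto $\mathcal{K}$, and the reduction of the one-signed cases to $\underline u_{R,d}^\pm$ and $u_{s,R}$ --- are routine given the results already in the paper.
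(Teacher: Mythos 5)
Your argument is correct in outline and reaches the same conclusion, but it implements the two key technical steps differently from the paper. For the existence step, the paper does not use your pointwise projection $P$ (reflection of $u_2$ plus modulus truncation); it instead defines a ``half-cylinder replacement'' $j(v)$ by minimizing the energy on $\{x>0\}\cap\Omega_{R,d}$ subject to $v$'s own trace on $\{x=0\}$ and reflecting, proves $j$ is single-valued via the positivity of the second component, and then runs Ekeland's principle on the projected paths $j(h_m)$ so that the resulting critical point satisfies the fixed-point identity $j(\hat u_{R,d})=\hat u_{R,d}$. For the sign-change step, the paper exploits precisely this identity: if $\hat U_{R,d}\ge 0$ on $\{x=0\}$, then $\hat u_{R,d}$ and $(|\hat U_{R,d}|,\hat V_{R,d})$ have the same energy and the same trace on $\{x=0\}$, so uniqueness of $j$ forces $\hat U_{R,d}\ge 0$ on the half-cylinder --- no degenerate minimum principle is needed. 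Your ground-state substitution $\zeta=\hat U_{R,d}/\hat V_{R,d}$ with $\operatorname{div}(\hat V_{R,d}^2\nabla\zeta)=0$ can be made to work, but it is the delicate point of your write-up: the uniform lower bound $\hat V_{R,d}(x,x')\ge cx$ needed to control $\zeta$ on $\{x=\varepsilon\}$ requires a Hopf-type estimate at the edge $\{x=0\}\cap\partial\Omega_{R,d}$ (a Serrin corner lemma or local reflection across the Neumann face), or alternatively the cleaner variant of testing the equation for $\hat U_{R,d}$ against $\hat U_{R,d}^-$ (which vanishes on $\{x=0\}$) and invoking the weighted Hardy-type inequality as in Step~2 of the proof of Theorem~\ref{Thm:UV}; you flag this but do not close it. Your endgame (ruling out $\hat U_{R,d}\equiv 0$ via uniqueness of the positive scalar solution, and $\hat U_{R,d}>0$ via Lemma~\ref{Lem:Pos=>Min}, against the strict inequalities $\underline c_{R,d}<c_{R,d}^{\rm rad}<\mcF_{R,d}[u_{s,R}]$) matches the paper. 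One practical remark: the paper's choice of $j$ is not just a convenience here --- the property $j(\hat u_{R,d})=\hat u_{R,d}$ is reused in the proof of Lemma~\ref{Lem:MPLimit} to get minimality of the limit $\hat u_d$ with respect to perturbations supported in $\{x\ge 0\}$, which your projection $P$ does not deliver; so if you adopt your construction you would need to re-derive that half-space minimality separately when passing to $R\to\infty$.
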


\begin{proof}
We claim that, for every given $v \in \mcA_{R,d}^{\rm rad}$, there exists a unique $j(v) \in \mcA_{R,d}^{\rm rad}$ such that
\[
\mcF_{R,d}[j(v)] \leq \mcF_{R,d}[\tilde v] \text{ for all } \tilde v \in \mcA_{R,d}^{\rm x} \text{ with } \tilde v|_{\{x = 0\}} = v|_{\{x = 0\}}.
\] 
Indeed, if we let $\Omega_{R,d}^+ = \{x > 0\} \cap \Omega_{R,d}$, then a candidate $j(v)$ is obtained by first minimizing
\[
\int_{\Omega_{R,d}^+} \Big[\frac{1}{2} |\nabla \tilde v|^2 + \frac{1}{4}(1 - |\tilde v|^2)^2\Big]\,dx
\]
among maps in $H^1(\Omega_{R,d}^+,\RR^2)$ which agree with $v$ on $\{x = 0\}$ and $\{x = R\}$, and then reflecting evenly reflecting the first component of $j(v)$ and oddly reflecting the second component of $j(v)$ across $\{x = 0\}$. To prove the claim, we only need to show that such a minimizer is unique. Replacing $j_2(v)$ by $|j_2(v)|$ in $\Omega_{R,d}^+$, we may assume $j_2(v) \geq 0$ in $\Omega_{R,d}^+$. Since $-\Delta j_2(v) = (1 - |j(v)|^2)j_2(v)$ in $\Omega_{R,d}^+$ and $j_2(v) = 1$ on $\{x = R\}$, the strong maximum principle implies that $j_2(v) > 0$ in $\Omega_{R,d}^+$. Thus, the kernel of $-\Delta - (1 - |j(v)|^2)$ in the space of functions $H^1(\Omega_{R,d}^+)$ which vanish at $\{x = 0\} \cup \{x = R\}$ is trivial. The proof of Lemma \ref{Lem:Pos=>Min} can then be applied yielding that $j(v)$ is the unique minimizer.

We note that the above argument also shows that
\begin{itemize}
\item $j_2(v) > 0$ in $\Omega_{R,d}^+$; and
\item $\mcF_{R,d}[u_{s,R}] \leq \mcF_{R,d}[w]$ for all $w \in \mcA_{R,d}^{\rm rad}$ with $w|_{\{x = 0\}} = 0$.
\end{itemize}

Let $\{h_{(m)}\} \subset \Gamma_{R,d}$ be such that $\sup_{t \in [-1,1]} \mcF_{R,d}[h_{(m)}(t)] \rightarrow c_{R,d}^{\rm rad}$ as $m \rightarrow \infty$. Using a standard truncation argument we may assume that $|h_{(m)}(t)| \leq 1$ in $\Omega_{R,d}$. Note that, by the uniqueness and minimizing properties of $j$, we have that $j(h_m) \in \Gamma_{R,d}$ and $|j(h_m(t)| \leq 1$ in $\Omega_{R,d}$. It follows that $\sup_{t \in [-1,1]} \mcF_{R,d}[j(h_{(m)}(t))] \rightarrow c_{R,d}$ as $m \rightarrow \infty$. An application of Ekeland's principle (see e.g. \cite[Corollary 4.3]{MawhinWillem}) then gives a sequence $\{u_m\} \subset \mcA_{R,d}^{\rm rad}$ and a sequence $\{t_m\} \subset [-1,1]$ such that
\[
\begin{cases}
\mcF_{R,d}[u_m] \rightarrow c_{R,d},\\
- \Delta u_m - (1 - |u_m|^2)u_m \rightarrow 0 \text{ in } H^{-1}(\Omega_{R,d},\RR^2),\\
u_m - j(h_m(t_m)) \rightarrow 0 \text{ in } H^1(\Omega_{R,d},\RR^2).
\end{cases}
\]
A routine argument then shows that, after possibly passing to a subsequence, $u_m$ as well as $j(h_m(t_m))$ converge in $H^1(\Omega_{R,d},\RR^2)$ to a critical point $\hat u_{R,d} = \hat u_{R,d}^+ = (\hat U_{R,d},\hat V_{R,d})$ of $\mcF_{R,d}$ in $\mcA_{R,d}^{\rm rad}$ and $\mcF_{R,d}[\hat u_{R,d}] = c_{R,d}^{\rm rad}$. Since $|j(h_m(t_m)| \leq 1$ and $j_2(h_m(t_m)) < 0$ in $\Omega_{R,d}^+$, we have that $\hat U^2_{R,d} + \hat V_{R,d}^2 \leq 1$ and $\textrm{sign}(x) \hat V_{R,d} \geq 0$ in $\Omega_{R,d}$. Since $j^2 = j$, we also have that 
\begin{equation}
j(\hat u_{R,d}) = j(\lim_{m\rightarrow \infty} j(h_m(t_m))) = \lim_{m\rightarrow \infty} j^2(h_m(t_m)) = \lim_{m\rightarrow \infty} j(h_m(t_m)) = \hat u_{R,d}.
	\label{Eq:jMPR}
\end{equation}
It is clear that $\hat u_{R,d}^- := (-\hat U_{R,d},\hat V_{R,d})$ is also a critical point of $\mcF_{R,d}$ in $\mcA_{R,d}^{\rm rad}$. 

So far, everything works for any positive finite $R$. We now impose that $R$ is sufficiently large so that $c_{R,d}^{\rm rad} < \mcF_{R,d}[u_{s,R}]$ (by Lemma \ref{Lem:MPR}) and $c_{R,d}^{\rm rad} > \underline{c}_{R,d}$ (by \eqref{Eq:RMinEnergy} and \eqref{Eq:cRdLB}). Then $\hat u_{R,d}^\pm$ are different from the minimizers $\underline{u}_{R,d}$ and the soliton $u_{s,R}$. To conclude, it remains to show that $\hat U_{R,d}$ changes sign on $\{x = 0\}$. Arguing by contradiction, assume that $\hat U_{R,d}$ does not change sign on $\{x = 0\}$. Without loss of generality, we may assume $\hat U_{R,d} \geq 0$ on $\{x = 0\}$. Using the fact that $j(\hat u_{R,d}) = \hat u_{R,d}$, the uniqueness and minimizing properties of $j(\hat u_{R,d})$ and the fact that $\hat u_{R,d}$ and $(|\hat U_{R,d}|, \hat V_{R,d})$ have the same energy relative to $\Omega_{R,d}^+$, we have that $\hat U_{R,d} \geq 0$ in $\Omega_{R,d}^+$ and hence in $\Omega_{R,d}$. Since $-\Delta \hat U_{R,d} = (1 - |\hat u_{R,d}|^2)\hat U_{R,d}$ in $\Omega_{R,d}$, the strong maximum principle implies that either $\hat U_{R,d} > 0$ or $\hat U_{R,d} \equiv 0$ in $\Omega_{R,d}$. The former case is not possible as Lemma \ref{Lem:Pos=>Min} would then imply $\hat u_{R,d}$ is a minimizer of $\mcF_{R,d}$ in $\mcA_{R,d}^{\rm x}$. The latter case is also not possible as the minimizing property of $\hat u_{R,d}$ and $u_{s,R}$ would imply that $\hat u_{R,d} = u_{s,R}$. The proof is complete.
\end{proof}

\begin{lemma}\label{Lem:MPLimit}
Suppose $d > \sqrt{2}j'_{0,1}$. Let $\hat u_{R,d}^\pm = (\pm \hat U_{R,d},\hat V_{R,d})$ be as in Lemma \ref{Lem:MPRChoice}. Then, along a sequence $R \rightarrow \infty$, $\hat u_{R,d}^\pm $ converge in $C_{\rm loc}^\infty(\bar \Omega_d)$ to $\hat u_d^\pm = (\pm \hat U_d,\hat V_d)$ which satisfies \eqref{Eq:CritNoBC}, the phase imprinting condition \eqref{Eq:Imprinting}, the symmetry condition \eqref{Eq:RadSym} and the property \eqref{Eq:hatUVSign}. Moreover, $\hat U_d$ changes sign on $\{x = 0\} \cap \Omega_d$ and hence $\hat u_d^\pm$ are different from the soliton solution $u_s$. Finally, as $x \rightarrow \pm\infty$, $\hat u_d^\pm \rightarrow (\pm a_d, b_d) \in \mathbb{S}^1$ with $b_d \in (0,1]$.
\end{lemma}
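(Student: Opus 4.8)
### Proof proposal for Lemma \ref{Lem:MPLimit}

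The plan is to pass to the limit $R \to \infty$ in the family $\hat u_{R,d}^\pm$ from Lemma \ref{Lem:MPRChoice}, using the uniform energy bounds and elliptic estimates to extract a locally convergent subsequence, and then to show that the sign-change property and the asymptotic behaviour are preserved in the limit. First I would record the uniform bounds: by \eqref{Eq:RMinEnergy}, \eqref{Eq:cRdLB} and Lemma \ref{Lem:MPR} we have $\underline{c}^{\rm rad} \leq c_{R,d}^{\rm rad} \leq \mcF_{R,d}[u_{s,R}] \leq \mcF_{2R_0,d}[u_{s,2R_0}]$ for $R$ large, so $\sup_R \mcF_{R,d}[\hat u_{R,d}^\pm] < \infty$. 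Since $|\hat u_{R,d}^\pm| \leq 1$ and each $\hat u_{R,d}^\pm$ solves the elliptic system $-\Delta \hat u_{R,d}^\pm = (1 - |\hat u_{R,d}^\pm|^2)\hat u_{R,d}^\pm$ with $\partial_n \hat u_{R,d}^\pm = 0$ on the lateral boundary $\partial\Omega_d \cap \Omega_{R,d}$, standard interior and boundary elliptic estimates give bounds on $\hat u_{R,d}^\pm$ in $C^k_{\rm loc}(\bar\Omega_d)$ uniform in $R$; by Ascoli--Arzel\`a and a diagonal argument, along a sequence $R \to \infty$ we obtain $\hat u_{R,d}^\pm \to \hat u_d^\pm = (\pm \hat U_d, \hat V_d)$ in $C^\infty_{\rm loc}(\bar\Omega_d)$, and the limit solves \eqref{Eq:CritNoBC}. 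The phase imprinting condition \eqref{Eq:Imprinting} and the radial symmetry \eqref{Eq:RadSym} pass to the limit trivially since they are closed conditions under local uniform convergence; similarly $\hat U_d^2 + \hat V_d^2 \leq 1$ and $\mathrm{sign}(x)\hat V_d \geq 0$ pass to the limit, and the strong maximum principle applied to $\hat V_d$ (which satisfies $-\Delta \hat V_d = (1 - |\hat u_d^\pm|^2)\hat V_d$ in $\{x > 0\}$ and is $\geq 0$ there) upgrades this to $\hat V_d > 0$ for $x > 0$; applied to $\Psi = 1 - \hat U_d^2 - \hat V_d^2$ on $\Omega_{3d}$ (after even reflection across $y = \pm d$, as in Step 1 of the proof of Theorem \ref{Thm:UV}) it upgrades $\Psi \geq 0$ to $\Psi > 0$ unless $\nabla \hat U_d \equiv \nabla \hat V_d \equiv 0$, which is excluded since $\hat V_d$ is non-constant. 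This gives \eqref{Eq:hatUVSign}.

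Next I would show $\hat U_d$ changes sign on $\{x = 0\} \cap \Omega_d$. The key quantitative input is the mountain pass lower bound: by \eqref{Eq:cRdLB} and the Remark preceding Lemma \ref{Lem:MPR} (or just by the uniform lower bound from the proof of Lemma \ref{Lem:M3X} adapted to cylinders), there is a point $t_0 = t_{0,R}$ on an almost-optimal path with $\int_{d\Sigma} (j(h_m(t_{0,R})))_1(0,x')\,dx' = 0$ at which the energy is bounded below by a universal constant; since $\hat u_{R,d}^\pm$ arises as a limit of such $j(h_m(t_m))$, we keep $\int_{d\Sigma} \hat U_{R,d}(0,x')\,dx' = 0$ in the limit, so $\int_{d\Sigma} \hat U_d(0,x')\,dx' = 0$. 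Thus either $\hat U_d \equiv 0$ on $\{x = 0\}$ or it changes sign there. If $\hat U_d \equiv 0$ on $\{x = 0\}$, then by the $j$-minimizing property encoded in \eqref{Eq:jMPR} — which passes to the limit — $\hat u_d^\pm$ would be the energy-minimizing extension of its own boundary data on $\{x = 0\}$, and since that boundary data is $(0, \hat V_d(0,\cdot))$ with $\hat V_d(0,\cdot)$ determined, a comparison with $u_s$ (minimizing the inner integral slice-by-slice, as in the $U \equiv 0$ sub-case of Step 1 of Theorem \ref{Thm:UV}) would force $\hat u_d^\pm = u_s$; but then $\mcF_{R,d}[\hat u_{R,d}^\pm] \to \mcF_d[u_s]$, contradicting $c_{R,d}^{\rm rad} < \mcF_{R,d}[u_{s,R}]$ together with $\mcF_{R,d}[u_{s,R}] \searrow \mcF_d[u_s]$ and the lower bound $c_{R,d}^{\rm rad} \geq \underline{c}^{\rm rad}$... more carefully, one uses that $c_{R,d}^{\rm rad} \geq \underline c^{\rm rad} > 0$ does not directly contradict $\to \mcF_d[u_s]$, so instead I would argue as in Lemma \ref{Lem:MPRChoice}: $\hat U_d \equiv 0$ combined with \eqref{Eq:jMPR} and strong maximum principle for $\hat U_{R,d}$ forces, for large $R$, either $\hat U_{R,d} > 0$ (excluded by Lemma \ref{Lem:Pos=>Min}, as $\hat u_{R,d}$ is not a minimizer) or $\hat U_{R,d} \equiv 0$ (forcing $\hat u_{R,d} = u_{s,R}$, excluded by $c_{R,d}^{\rm rad} < \mcF_{R,d}[u_{s,R}]$); hence $\hat U_{R,d}$ changes sign on $\{x = 0\}$ for each large $R$, and I would transfer this to the limit using that $\int_{d\Sigma}\hat U_{R,d}(0,\cdot) = 0$ plus the a priori gradient bound forces $\hat U_{R,d}$ to be bounded away from $0$ in $L^2(d\Sigma)$ on the slice $\{x=0\}$ uniformly in $R$ — this last uniform non-degeneracy, which prevents $\hat U_{R,d}|_{\{x=0\}} \to 0$, will require the Lemma \ref{Lem:M3X}-type lower bound on $\mcF_{R,d}[\hat u_{R,d}]$.

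Finally, for the asymptotic behaviour as $x \to \pm\infty$, I would argue as in \cite[Section 2.1]{AS-23} (and as invoked in Step 1 of the proof of Theorem \ref{Thm:UV}): from $\mcF_d[\hat u_d^\pm] < \infty$ and the equation, all derivatives of $\hat u_d^\pm$ are bounded, the energy density is integrable, hence $\nabla \hat u_d^\pm \to 0$ uniformly on slices as $x \to \pm\infty$, and $1 - |\hat u_d^\pm|^2 \to 0$; together with the radial symmetry this forces $\hat u_d^\pm(x,\cdot)$ to approach a constant on $\mathbb{S}^1$ as $x \to +\infty$, say $(\pm a_d, b_d)$ with $a_d^2 + b_d^2 = 1$; the sign condition $\hat V_d \geq 0$ for $x > 0$ gives $b_d \geq 0$, and $b_d = 0$ is ruled out because then $\hat V_d$ would be a nonnegative solution of $-\Delta \hat V_d = (1 - |\hat u_d|^2)\hat V_d$ in $\{x > 0\}$ with $\hat V_d \to 0$, while $\hat V_d > 0$ in $\{x>0\}$; in fact one shows $b_d > 0$ by noting that the average $\bar{\hat V}_d(x) = \frac{1}{|d\Sigma|}\int_{d\Sigma}\hat V_d(x,x')\,dx'$ is positive and concave on $(0,\infty)$ (since $-\Delta \hat V_d > 0$ there) hence has a positive limit. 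The phase imprinting condition \eqref{Eq:Imprinting} then yields $\hat u_d^\pm(x,\cdot) \to (\pm a_d, -b_d)$... wait, \eqref{Eq:Imprinting} gives $\hat u_d(-x,x') = \overline{\hat u_d(x,x')}$, so the limit at $-\infty$ is $(\pm a_d, -b_d)$; matching the statement's notation one records $\hat u_d(x,\cdot) \to (a_d, \pm b_d)$. The main obstacle I anticipate is the transfer of the sign-change property of $\hat U_{R,d}$ on $\{x=0\}$ to the limit: one must prevent the nodal slice from escaping to spatial infinity and must ensure $\|\hat U_{R,d}(0,\cdot)\|_{L^2(d\Sigma)}$ stays bounded below uniformly in $R$ — this is where the $\frac{1}{16}$-type energy lower bound of Lemma \ref{Lem:M3X}, adapted to the finite cylinder, does the essential work, and one has to be careful that the constant there does not degenerate as $R \to \infty$.
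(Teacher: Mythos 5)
Your compactness setup (uniform energy bound, elliptic estimates, $C^\infty_{\rm loc}$ convergence, passage of the symmetries and of $\hat U_d^2+\hat V_d^2\le 1$, ${\rm sign}(x)\hat V_d\ge 0$ to the limit, and the concavity of the slice average $\bar V$ to produce the limits $(a_d,b_d)$) matches the paper. But your treatment of $b_d>0$ is circular, and this is where the paper spends most of its effort. The strong maximum principle only gives the dichotomy ``$\hat V_d>0$ in $\{x>0\}$ or $\hat V_d\equiv 0$''; you invoke ``$\hat V_d>0$ in $\{x>0\}$'' and ``$\bar V$ is positive'' to conclude $b_d>0$, but that positivity is exactly the alternative you must first establish. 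The case $\hat V_d\equiv 0$ (equivalently $b_d=0$, $a_d=\pm1$) is perfectly consistent with everything you have derived up to that point: $\hat u_d=(\hat U_d,0)$ with $\hat U_d$ even in $x$, solving the scalar equation, with a zero on $\{x=0\}$ and $\hat U_d\to\pm1$ at both ends. The paper's Step~2 rules this out by a genuinely separate argument: it first shows $\hat U_d$ must change sign (else Lemma \ref{Lem:Pos=>Min} forces $\hat u_d\equiv(1,0)$), and then runs a sliding-method comparison with the family $\tanh\big(\sqrt{2-4\delta}\,(x-x_2)/2\big)$, using an exponential decay estimate for $\partial_x\hat U_d$, to conclude $\hat U_d\equiv\tanh\frac{x-x_0}{\sqrt 2}$ --- incompatible with $\hat U_d\to 1$ as $x\to-\infty$. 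Nothing in your proposal substitutes for this.

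The second gap is the transfer of the sign change of $\hat U_{R,d}$ on $\{x=0\}$ to the limit, which you correctly flag as the main obstacle but do not resolve. Your route rests on $\int_{d\Sigma}\hat U_{R,d}(0,\cdot)\,dx'=0$, which is not available: the Ekeland/mountain-pass construction selects $t_m$ near the energy maximum of the path, not at the zero-mean slice, so the critical point need not satisfy that constraint; and even granting it you would still need the uniform non-degeneracy $\|\hat U_{R,d}(0,\cdot)\|_{L^2}\ge c>0$, which you only assert ``will require'' a Lemma \ref{Lem:M3X}-type bound. The paper instead argues by contradiction in the limit: if $\hat U_d\ge 0$ on $\{x=0\}$, the minimizing property inherited from the projection $j$ (equation \eqref{Eq:jMPR}) propagates the sign to all of $\Omega_d$, the argument of Lemma \ref{Lem:Pos=>Min} then upgrades $\hat u_d$ to a global minimizer in the affine space $(a_d,b_d\tanh\frac{x}{\sqrt 2})+H^1(\Omega_d,\RR^2)$, and the explicit test maps $\zeta_n$ show that space has zero infimum, forcing $\mcF_d[\hat u_d]=0$ and contradicting the non-constancy of $\hat V_d$ (which is why Step~2 must come first). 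You should adopt, or at least reconstruct, both of these arguments; as written the proposal does not prove the two assertions ($b_d>0$ and the sign change of $\hat U_d$ on $\{x=0\}$) that distinguish $\hat u_d^\pm$ from the soliton.
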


To dispel confusion, we are not able to assert at this point that $u_d^\pm \in \mcA_d^{\rm rad}$ (which is equivalent to $(a_d, b_d) = (0,1)$).

\begin{proof}
By elliptic estimates, along a sequence $R \rightarrow \infty$, $(\hat U_{R,d}, \hat V_{R,d})$ converges in $C^\infty_{\rm loc}(\bar \Omega_d)$ to a smooth map $\hat u_d^+ = \hat u_d = (\hat U_d,\hat V_d) \in C^\infty(\bar\Omega_d)$ which satisfies 
\begin{equation}
\begin{cases}
-\Delta \hat u_d = (1 - |\hat u_d|^2)\hat u_d \text{ in } \Omega_d,\\
\partial_\nu \hat u_d = 0 \text{ on } \partial \Omega_d,\\
\hat U_d^2 + \hat V_d^2 \leq 1 \text{ and } \mathrm{sign}(x) \hat V_d \geq 0 \text{ in } \Omega_d,\\
\hat U_d \text{ has a zero on } \{x = 0\} \cap \bar \Omega_d,\\
\mcF_d[\hat u_d] \leq \liminf_{R \rightarrow \infty} c_{R,d}^{\rm rad} \leq \liminf_{R \rightarrow \infty} \mcF_{R,d}[u_{s,R}] = \mcF_d[u_s].
\end{cases}
	\label{Eq:udTC}
\end{equation}
Moreover, referring to \eqref{Eq:jMPR} in the proof of Lemma \ref{Lem:MPRChoice}, we have that
\[
\mcF_d[\hat u_d] \leq \mcF_d[\hat u_d + w] \text{ for all } w \in H_0^1(\{x \geq 0\} \cap \Omega_d,\RR^2)
\]
where equality holds if and only if $w \equiv 0$. In addition, by elliptic estimates and finiteness of $\mcF_d[\hat u_d]$, we have that $|\hat u_d| \rightarrow 1$ and $|\nabla \hat u_d| \rightarrow 0$ as $|x| \rightarrow \infty$. In addition, using the third line of \eqref{Eq:udTC}, the strong maximum principle and the Hopf lemma, it is routine to show that the first line of \eqref{Eq:hatUVSign} holds, and either the second line of \eqref{Eq:hatUVSign} holds or $\hat V_d \equiv 0$. It remains to show that $\hat u_d$ has the stated limiting behavior as $x \rightarrow \pm \infty$ (which rules out the case $\hat V_d \equiv 0$) and $\hat U_d$ changes sign on $\{x = 0\} \cap \Omega_d$.

\medskip
\noindent\underline{Step 1:} We show that there exists $(a_d,b_d) \in \mathbb{S}^1$ with $b_d \geq 0$ such that $U_d \rightarrow a_d$ and $V_d \rightarrow \pm b_d$ as $x\rightarrow \pm \infty$.

We only consider the limit as $x \rightarrow \infty$. The other side follows from the phase imprinting condition \eqref{Eq:Imprinting}.

Let 
\[
\bar V(x) = \frac{1}{\pi d^2}\int_{\{|x'| < d\}} \hat V_d(x,x')\,dx'.
\]
Since $\hat V_d \geq 0$ for $x > 0$, $\hat U_d^2 + \hat V_d^2 \leq 1$ and $-\Delta \hat V_d = (1 - \hat U_d^2 - \hat V_d^2)\hat V_d$, we have that $\bar V$ is concave in $(0,\infty)$. Since $0 \leq \bar V \leq 1$, it follows that $\bar V(x) \nearrow b_d \in [0,1]$ as $x \nearrow \infty$. Since $\nabla \hat V_d \rightarrow 0$ as $x \rightarrow \infty$, we have that $\hat V_d \rightarrow b_d$ as $x \rightarrow \infty$. As $\hat U_d^2 + \hat V_d^2 \rightarrow 1$ and $\nabla \hat U_d \rightarrow 0$ as $x \rightarrow \infty$, it also follows that $\hat U_d \rightarrow a_d$ as $x \rightarrow \infty$ for some $a_d$ satisfying $a_d^2 + b_d^2 = 1$.

\medskip
\noindent\underline{Step 2:} We show that $b_d = 0$ cannot hold.

Suppose by contradiction that $b_d = 0$. Then $a_d \in \{\pm 1\}$. We will only consider the case $a_d = 1$, as the other case is similar.

Since $\bar V$ is nonpositive and convex in $(0,\infty)$, $\bar V(0) = 0$ and $\bar V \rightarrow b_d = 0$ as $x \rightarrow \infty$, we have that $\bar V \equiv 0$ in $(0,\infty)$. Since $\hat V_d\geq 0$, this implies that $\hat V_d \equiv 0$ in $\{x > 0\} \cap \Omega_d$ and hence on all of $\Omega_d$ by the phase imprinting condition \eqref{Eq:Imprinting}. In particular, we have
\begin{equation}
\begin{cases}
-\Delta \hat U_d = (1 - \hat U_d^2)\hat U_d \text{ in } \Omega_d,\\
-\partial_\nu \hat U_d = 0 \text{ on } \partial\Omega_d,\\
\hat U_d(x,\cdot) \rightarrow 1 \text{ as } x \rightarrow \infty.
\end{cases}
	\label{Eq:Sliding-0}
\end{equation}

We claim that $\hat U_d$ changes sign in $\Omega_d$. Arguing indirectly, assume that $\hat U_d$ does not change sign in $\Omega_d$. Then since $\hat U_d \rightarrow 1$ as $x \rightarrow \pm \infty$, we have that $\hat U_d \geq 0$ in $\Omega_d$. By \eqref{Eq:Sliding-0}, the strong maximum principle implies that $\hat U_d > 0$ in $\Omega_d$. The proof of Lemma \ref{Lem:Pos=>Min} can now be applied to show that $u_d$ minimizes $\mcF_d$ in $(1,0) + H^1(\Omega_d)$ and hence $\hat u_d \equiv (1,0)$. This contradicts the fact that $\hat U_d$ has a zero on $\{x = 0\} \cap \bar \Omega_d$. The claim is proved.

By the claim, the fact that $\hat U_d \rightarrow 1$ as $x \rightarrow \infty$, and by the phase imprinting condition \eqref{Eq:Imprinting}, there exists $x_0 \geq 0$ such that
\[
\hat U_d > 0 \text{ in } \{x > x_0\} \text{ and } \min U_d(x_0,\cdot) = 0.
\]
We proceed to use the sliding method (\cite{BN91}) to show that
\begin{equation}
\hat U_d(x,y) = \tanh \frac{x - x_0}{\sqrt{2}} \text{ in } \Omega_d,
	\label{Eq:Sliding-C}
\end{equation}
which would contradict the fact that $\hat U_d \rightarrow 1$ as $x \rightarrow -\infty$ and conclude this step. 

Fix some small $\delta > 0$ for the moment. Differentiating the first equation in \eqref{Eq:Sliding-0}, we have 
\[
\begin{cases}
-\Delta (\partial_x \hat U_d) = (1 - 3\hat U_d^2) \partial_x \hat U_d \text{ in } \Omega_d,\\
\partial_\nu (\partial_x \hat U_d) = 0 \text{ on } \partial \Omega_d.
\end{cases}
\]
Fix some $x_1 > x_0$ such that $\hat U_d^2 \geq 1 - \delta$ in $\{x \geq x_1\}$. Let
\[
B_\mu(x) = \Big(\sup_{x = x_1} |\partial_x \hat U_d|\Big) e^{-\sqrt{2 - 3\delta}(x - x_1)} + \mu e^{\sqrt{2 - 3\delta}x}, \quad \mu > 0.
\]
Note that $B_\mu \geq 0$ and satisfies
\[
\begin{cases}
-\Delta B_\mu = -(2 - 3\delta) B_\mu \geq (1 - 3U_d^2) B_\mu \text{ in } \{x > x_1\},\\
B_\mu \geq |\partial_x \hat U_d| \text{ on } \{x = x_1\},\\
\partial_\nu B_\mu = 0 \text{ on } \partial \Omega_d,\\
B_\mu \rightarrow \infty \text{ as } x \rightarrow \infty.
\end{cases}
\]
A standard argument using the strong maximum principle and the Hopf lemma then shows that $|\partial_x \hat U_d| \leq B_\mu$ in $\{x > x_1\}$ for any $\mu > 0$. Sending $\mu \rightarrow 0$, we obtain
\begin{equation}
|\partial_x \hat U_d| \leq \Big(\sup_{x = x_1} |\partial_x \hat U_d|\Big) e^{-\sqrt{2 - 3\delta}(x - x_1)} \text{ in } x > x_1.
	\label{Eq:pxUdDecay}
\end{equation}

For $x_2 \geq x_0$, consider the function
\[
T_{\delta,x_2}(x) = \tanh \frac{\sqrt{2 - 4\delta}(x - x_2)}{2}.
\]
By \eqref{Eq:pxUdDecay}, the fact that $\hat U_d \rightarrow 1$ as $x \rightarrow \infty$ and the positivity of $\hat U_d$ in $\{x > x_0\}$, we have that 
\begin{itemize} 
\item $\hat U_d \geq T_{\delta,x_2}$ in $\{x > x_0\}$ for all large $x_2$, and
\item for any $x_2 \geq x_0$, $\hat U_d > T_{\delta,x_2}$ for all large $x$.
\end{itemize}
Let
\[
x_3 = \inf\Big\{x_2 \geq x_0: \hat U_d \geq T_{\delta,x_2} \text{ in }\{x > x_0\}\Big\}.
\]
We have
\[
\begin{cases}
-\Delta T_{\delta,x_3} = (1-2\delta)(1 - T_{\delta,x_3}^2)T_{\delta,x_3} \leq (1 - T_{\delta,x_3}^2)T_{\delta,x_3} \text{ in } \{x > x_3\},\\
-\Delta \hat U_d = (1 - \hat U_d^2) \hat U_d \text{ in } \{x > x_3\},\\
\hat U_d \geq T_{\delta,x_3} \text{ in } \{x > x_3\},\\
\hat U_d > T_{\delta,x_3} \text{ for all large } x,\\
\end{cases}
\]
By the strong maximum principle, we then have that $\hat U_d > T_{\delta,x_3}$ in $\{x > x_3\}$. The minimality of $x_3$ then implies that $\hat U_d = T_{\delta,x_3} = 0$ somewhere on $\{x = x_3\}$. Since $\hat U_d > 0$ in $\{x > x_0\}$, we deduce that $x_3 = x_0$. We deduce that $\hat U_d > T_{\delta,x_0}$ in $\{x > x_0\}$. Sending $\delta \rightarrow 0$, we obtain
\begin{equation}
\hat U_d \geq T_{0,x_0} = \tanh \frac{x - x_0}{\sqrt{2}} \text{ in } \{x \geq x_0\}.
	\label{Eq:Sliding-1}
\end{equation}

Next, note that, in view of \eqref{Eq:Sliding-0} and the fact that $\hat U_d \geq -1$, we have by the strong maximum principle and the Hopf lemma that 
\[
\inf_{\Omega_d} \hat U_d > -1.
\]
This together with \eqref{Eq:Sliding-1} implies that $\hat U_d \geq T_{0,x_2}$ in $\Omega_d$ for all large $x_2$. Let
\[
x_4 = \inf\Big\{x_2 \geq x_0: \hat U_d \geq T_{0,x_2} \text{ in }\Omega_d\Big\}.
\]
The minimality of $x_4$ together with \eqref{Eq:Sliding-1} and the fact that $\hat U_d \rightarrow 1$ and $T_{0,x_4} \rightarrow - 1$ as $x \rightarrow \infty$ implies that there exists $\hat U_d = T_{0,x_4}$ somewhere in $\bar\Omega_d$. Recalling \eqref{Eq:Sliding-0} and the fact that $-\Delta T_{0,x_4} = (1 - T_{0,x_4}^2)T_{0,x_4}$, we deduce from the strong maximum principle and the Hopf lemma that $\hat U_d \equiv T_{0,x_4}$. Since $\hat U_d$ vanishes somewhere on $\{x = x_2\}$, this also implies $x_4 = x_2$ and proves \eqref{Eq:Sliding-C}. As pointed out earlier, this give a contradiction to the behavior of $\hat U_d$ as $x \rightarrow -\infty$ and so concludes Step 2.

\medskip
\noindent\underline{Step 3:} We show that $\hat U_d$ changes sign on $\{x = 0\} \cap \Omega_d$.

Suppose by contradiction that $\hat U_d$ does not change sign on $\{x = 0\} \cap \Omega_d$, e.g. $\hat U_d \geq 0$ on $\{x = 0\} \cap \Omega_d$. By the minimizing property of $\hat u_{R,d}$, we have that $(\hat U_d,\hat V_d)$ is minimizing with respect to smooth perturbations compactly supported in $\{ x > 0 \} \cap \Omega_d$. It follows that $\hat U_d \geq 0$ in $\{x \geq 0\} \cap \Omega_d$, and hence on all of $\Omega_d$ by the phase imprinting condition \eqref{Eq:Imprinting}. We may then follow the proof of Lemma \ref{Lem:Pos=>Min} to see that $(\hat U_d,\hat V_d)$ is minimizing with respect to smooth perturbations compactly supported in $\Omega_d$. It follows that
\[
\mcF_d[(\hat U_d,\hat V_d) \leq \mcF_d[u] \text{ for all } u \in (a_d ,b_d\tanh \frac{x}{\sqrt{2}})  + H^1(\Omega_d,\RR^2) =: \mcA_*.
\]
On the other hand, if we write $(a_d,b_d) = (\cos \chi_*,\sin \chi_*)$ for some $\chi_* \in [-\pi/2,0)$ and if $\chi_0 \in C^\infty(\RR)$ satisfying $\chi_0 \equiv \chi_*$ in $[1,\infty)$, then the map
\[
\zeta_n(x,x') = ( \cos \chi_0(x/n), \sin \chi_0(x/n))
\]
belongs to $\mcA_*$ and has $\mcF_d[\zeta_n] \rightarrow 0$ as $n \rightarrow \infty$. It follows that $\mcF_d[(\hat U_d,\hat V_d)] = 0$, which is impossible since $\hat V_d$ is not a constant. We conclude that $\hat U_d$ changes sign on $\{x = 0\} \cap \Omega_d$.
\end{proof}

\begin{proof}[Proof of Theorem \ref{Thm:RingCand}] This follows from Lemma \ref{Lem:MPLimit}.
\end{proof}



\section*{Rights retention statement.} 

For the purpose of Open Access, the authors have applied a CC BY public copyright licence to any Author Accepted Manuscript (AAM) version arising from this submission.

\section*{Acknowledgements.} The research of AA is supported by France 2030 PIA
funding: ANR-21-EXES-0003 and she was partially supported by the UKRC (Grant 2019-55900).

\bibliographystyle{siam}
\bibliography{LiquidCrystals,ref}

\def\cprime{$'$}
\begin{thebibliography}{10}

\bibitem{AGS}
{\sc A.~Aftalion, P.~Gravejat, and E.~Sandier}, {\em Bifurcating solitonic
  vortices in a strip}, arXiv:2411.16482,  (2024).

\bibitem{AS-23}
{\sc A.~Aftalion and E.~Sandier}, {\em Solitons and solitonic vortices in a
  strip}, Nonlinear Anal., 228 (2023), pp.~Paper No. 113184, 12.

\bibitem{BN91}
{\sc H.~Berestycki and L.~Nirenberg}, {\em On the method of moving planes and
  the sliding method}, Bol. Soc. Brasil. Mat. (N.S.), 22 (1991), pp.~1--37.

\bibitem{BBH}
{\sc F.~Bethuel, H.~Brezis, and F.~H{\'e}lein}, {\em {G}inzburg--{L}andau
  vortices}, vol.~13, Springer, 1994.

\bibitem{betsaut2}
{\sc F.~Bethuel, P.~Gravejat, and J.-C. Saut}, {\em Travelling waves for the
  {G}ross-{P}itaevskii equation {II}}, Communications in mathematical physics,
  285 (2009), pp.~567--651.

\bibitem{bgs}
{\sc F.~Bethuel, P.~Gravejat, and D.~Smets}, {\em Asymptotic stability in the
  energy space for dark solitons of the {G}ross-{P}itaevskii equation}, Ann.
  Sci. Éc. Norm. Supér., 48 (2015), pp.~1327--1381.

\bibitem{betsaut}
{\sc F.~Bethuel and J.-C. Saut}, {\em Travelling waves for the
  {G}ross-{P}itaevskii equation {I}}, vol.~70, 1999, pp.~147--238.

\bibitem{brand}
{\sc J.~Brand and W.~P. Reinhardt}, {\em Solitonic vortices and the fundamental
  modes of the “snake instability”: Possibility of observation in the
  gaseous {B}ose--{E}instein condensate}, Physical Review A, 65 (2002),
  p.~043612.

\bibitem{PRLrings}
{\sc A.~Bulgac, M.~M. Forbes, M.~M. Kelley, K.~J. Roche, and G.~Wlaz\l{}owski},
  {\em Quantized superfluid vortex rings in the unitary {F}ermi gas}, Phys.
  Rev. Lett., 112 (2014), p.~025301.

\bibitem{bbde1}
{\sc S.~Burger, K.~Bongs, S.~Dettmer, W.~Ertmer, K.~Sengstock, A.~Sanpera,
  G.~Shlyapnikov, and M.~Lewenstein}, {\em Dark solitons in {Bose}--{Einstein}
  condensates}, Phys. Rev. Lett., 83 (1999), pp.~5198--5201.

\bibitem{PRLsoli}
{\sc S.~Burger, K.~Bongs, S.~Dettmer, W.~Ertmer, K.~Sengstock, A.~Sanpera,
  G.~V. Shlyapnikov, and M.~Lewenstein}, {\em Dark solitons in
  {B}ose--{E}instein condensates}, Phys. Rev. Lett., 83 (1999), pp.~5198--5201.

\bibitem{ChenGuiYao}
{\sc H.~Chen, C.~Gui, and R.~Yao}, {\em Uniqueness of critical points of the
  second {N}eumann eigenfunctions on triangles}, arXiv:2311.12659,  (2023).

\bibitem{Cheng76}
{\sc S.~Y. Cheng}, {\em Eigenfunctions and nodal sets}, Comment. Math. Helv.,
  51 (1976), pp.~43--55.

\bibitem{chevysol}
{\sc F.~Chevy}, {\em Solitons with a twist}, Physics, 7 (2014), p.~82.

\bibitem{collot25}
{\sc C.~Collot, P.~Germain, and E.~Pacherie}, {\em Estimates for the
  {G}ross--{P}itaevskii equation linearized around a vortex}, arXiv preprint
  arXiv:2503.02953,  (2025).

\bibitem{dalibard2025cours}
{\sc J.~Dalibard}, {\em Cours au coll{\`e}ge de france 2025: Solitons et ondes
  de mati\`ere},  (2025).

\bibitem{dauxois}
{\sc T.~Dauxois and M.~Peyrard}, {\em Physics of solitons}, Cambridge
  University Press, 2006.

\bibitem{densch}
{\sc J.~Denschlag, J.~E. Simsarian, D.~L. Feder, C.~W. Clark, L.~A. Collins,
  J.~Cubizolles, L.~Deng, E.~W. Hagley, K.~Helmerson, W.~P. Reinhardt, et~al.},
  {\em Generating solitons by phase engineering of a {B}ose--{E}instein
  condensate}, Science, 287 (2000), pp.~97--101.

\bibitem{PRLsol2}
{\sc S.~Donadello, S.~Serafini, M.~Tylutki, L.~P. Pitaevskii, F.~Dalfovo,
  G.~Lamporesi, and G.~Ferrari}, {\em Observation of solitonic vortices in
  {B}ose--{E}instein condensates}, Phys. Rev. Lett., 113 (2014), p.~065302.

\bibitem{Ekeland74}
{\sc I.~Ekeland}, {\em On the variational principle}, J. Math. Anal. Appl., 47
  (1974), pp.~324--353.

\bibitem{GPS22}
{\sc P.~Gravejat, E.~Pacherie, and D.~Smets}, {\em On the stability of the
  {G}inzburg--{L}andau vortex}, Proc. Lond. Math. Soc. (3), 125 (2022),
  pp.~1015--1065.

\bibitem{GS15}
{\sc P.~Gravejat and D.~Smets}, {\em Asymptotic stability of the black soliton
  for the {G}ross--{P}itaevskii equation}, Proceedings of the London
  Mathematical Society, 111 (2015), pp.~305--353.

\bibitem{IN-IHP24}
{\sc R.~Ignat and L.~Nguyen}, {\em Local minimality of {$\Bbb{R}^N$}-valued and
  {$\Bbb{S}^N$}-valued {G}inzburg--{L}andau vortex solutions in the unit ball
  {$B^N$}}, Ann. Inst. H. Poincar\'e{} C Anal. Non Lin\'eaire, 41 (2024),
  pp.~663--724.

\bibitem{INSZ_CRAS}
{\sc R.~Ignat, L.~Nguyen, V.~Slastikov, and A.~Zarnescu}, {\em Stability of the
  vortex defect in the {L}andau-de {G}ennes theory for nematic liquid
  crystals}, C. R. Math. Acad. Sci. Paris, 351 (2013), pp.~533--537.

\bibitem{INSZ3}
{\sc R.~Ignat, L.~Nguyen, V.~Slastikov, and A.~Zarnescu}, {\em Stability of the
  melting hedgehog in the {L}andau-de {G}ennes theory of nematic liquid
  crystals}, Arch. Ration. Mech. Anal., 215 (2015), pp.~633--673.

\bibitem{INSZ_CRAS18}
{\sc R.~Ignat, L.~Nguyen, V.~Slastikov, and A.~Zarnescu}, {\em Uniqueness of
  degree-one {G}inzburg--{L}andau vortex in the unit ball in dimensions
  {$N\geq7$}}, C. R. Math. Acad. Sci. Paris, 356 (2018), pp.~922--926.

\bibitem{INSZ-ENS}
\leavevmode\vrule height 2pt depth -1.6pt width 23pt, {\em On the uniqueness of
  minimisers of {G}inzburg--{L}andau functionals}, Ann. Sci. \'{E}c. Norm.
  Sup\'{e}r. (4), 53 (2020), pp.~589--613.

\bibitem{JR}
{\sc C.~Jones and P.~Roberts}, {\em Motions in a {B}ose condensate. {IV}.
  {A}xisymmetric solitary waves}, Journal of Physics A: Mathematical and
  General, 15 (1982), p.~2599.

\bibitem{JM20-AnnM}
{\sc C.~Judge and S.~Mondal}, {\em Euclidean triangles have no hot spots}, Ann.
  of Math. (2), 191 (2020), pp.~167--211.

\bibitem{JM22-Err-AnnM}
\leavevmode\vrule height 2pt depth -1.6pt width 23pt, {\em Erratum: Euclidean
  triangles have no hot spots}, Ann. of Math. (2), 195 (2022), pp.~337--362.

\bibitem{Komi}
{\sc S.~Komineas}, {\em Vortex rings and solitary waves in trapped
  {B}ose--{E}instein condensates}, The European Physical Journal Special
  Topics, 147 (2007), pp.~133--152.

\bibitem{kopapaL}
{\sc S.~Komineas and N.~Papanicolaou}, {\em Vortex rings and {L}ieb modes in a
  cylindrical {B}ose--{E}instein condensate}, Physical review letters, 89
  (2002), p.~070402.

\bibitem{kopapawaves}
\leavevmode\vrule height 2pt depth -1.6pt width 23pt, {\em Nonlinear waves in a
  cylindrical {B}ose--{E}instein condensate}, Physical Review A, 67 (2003),
  p.~023615.

\bibitem{kopapasol}
\leavevmode\vrule height 2pt depth -1.6pt width 23pt, {\em Solitons, solitonic
  vortices, and vortex rings in a confined {B}ose--{E}instein condensate},
  Physical Review A, 68 (2003), p.~043617.

\bibitem{zwierlein}
{\sc M.~J. Ku, B.~Mukherjee, T.~Yefsah, and M.~W. Zwierlein}, {\em Cascade of
  solitonic excitations in a superfluid {F}ermi gas: From planar solitons to
  vortex rings and lines}, Physical review letters, 116 (2016), p.~045304.

\bibitem{PRLsol}
{\sc M.~J.~H. Ku, W.~Ji, B.~Mukherjee, E.~Guardado-Sanchez, L.~W. Cheuk,
  T.~Yefsah, and M.~W. Zwierlein}, {\em Motion of a solitonic vortex in the
  {BEC-BCS} crossover}, Phys. Rev. Lett., 113 (2014), p.~065301.

\bibitem{kibble}
{\sc G.~Lamporesi, S.~Donadello, S.~Serafini, F.~Dalfovo, and G.~Ferrari}, {\em
  Spontaneous creation of {K}ibble--{Z}urek solitons in a {B}ose--{E}instein
  condensate}, Nature Physics, 9 (2013), pp.~656--660.

\bibitem{CSLin87-CMP}
{\sc C.~S. Lin}, {\em On the second eigenfunctions of the {L}aplacian in {${\bf
  R}^2$}}, Comm. Math. Phys., 111 (1987), pp.~161--166.

\bibitem{Liqun95}
{\sc Z.~Liqun}, {\em On the multiplicity of the second eigenvalue of
  {L}aplacian in {${\bf R}^2$}}, Comm. Anal. Geom., 3 (1995), pp.~273--296.

\bibitem{manton}
{\sc N.~Manton and P.~Sutcliffe}, {\em Topological solitons}, Cambridge
  University Press, 2004.

\bibitem{MawhinWillem}
{\sc J.~Mawhin and M.~Willem}, {\em Critical point theory and {H}amiltonian
  systems}, vol.~74 of Applied Mathematical Sciences, Springer-Verlag, New
  York, 1989.

\bibitem{mironescu}
{\sc P.~Mironescu}, {\em On the stability of radial solutions of the
  {G}inzburg--{L}andau equation}, Journal of Functional Analysis, 130 (1995),
  pp.~334--344.

\bibitem{Brand-PRL14}
{\sc A.~Mu\~noz Mateo and J.~Brand}, {\em Chladni solitons and the onset of the
  snaking instability for dark solitons in confined superfluids}, Phys. Rev.
  Lett., 113 (2014), p.~255302.

\bibitem{mateo}
\leavevmode\vrule height 2pt depth -1.6pt width 23pt, {\em Stability and
  dispersion relations of three-dimensional solitary waves in trapped
  {B}ose--{E}instein condensates}, New Journal of Physics, 17 (2015),
  p.~125013.

\bibitem{newell}
{\sc A.~C. Newell}, {\em Solitons in mathematics and physics}, SIAM, 1985.

\bibitem{rabec}
{\sc F.~Rabec}, {\em {Effets p\'eriodiques induits dans les condensats de
  {B}ose--{E}instein: oscillations de {B}loch de solitons et fraction
  superfluide}}, theses, {Sorbonne Universit{\'e}}, 2025.

\bibitem{Rabinowitz}
{\sc P.~H. Rabinowitz}, {\em Minimax methods in critical point theory with
  applications to differential equations}, vol.~65 of CBMS Regional Conference
  Series in Mathematics, Published for the Conference Board of the Mathematical
  Sciences, Washington, DC, 1986.

\bibitem{rousset09}
{\sc F.~Rousset and N.~Tzvetkov}, {\em Transverse nonlinear instability for
  two-dimensional dispersive models}, Ann. Inst. H. Poincar\'e{} C Anal. Non
  Lin\'eaire, 26 (2009), pp.~477--496.

\bibitem{LevinPRL14}
{\sc P.~Scherpelz, K.~Padavi\ifmmode~\acute{c}\else \'{c}\fi{},
  A.~Ran\ifmmode~\mbox{\c{c}}\else \c{c}\fi{}on, A.~Glatz, I.~S. Aranson, and
  K.~Levin}, {\em Phase imprinting in equilibrating {F}ermi gases: The
  transience of vortex rings and other defects}, Phys. Rev. Lett., 113 (2014),
  p.~125301.

\bibitem{tao}
{\sc T.~Tao}, {\em Why are solitons stable?}, Bulletin of the American
  Mathematical Society, 46 (2009), pp.~1--33.

\bibitem{heavysol}
{\sc T.~Yefsah, A.~T. Sommer, M.~J. Ku, L.~W. Cheuk, W.~Ji, W.~S. Bakr, and
  M.~W. Zwierlein}, {\em Heavy solitons in a fermionic superfluid}, Nature, 499
  (2013), pp.~426--430.

\end{thebibliography}

\end{document}